\numberwithin{equation}{section}
\numberwithin{table}{section}
\declaretheorem[style=plain,parent=section]{theorem}
\declaretheorem[style=plain,sibling=theorem]{corollary}
\declaretheorem[style=plain,sibling=theorem]{lemma}
\declaretheorem[style=plain]{claim}
\declaretheorem[name={Claim}, style=plain, unnumbered]{claimNoNum}
\declaretheorem[name={Claim A}, style=plain, unnumbered]{claimA}
\declaretheorem[name={Claim B}, style=plain, unnumbered]{claimB}
\declaretheorem[style=plain,sibling=theorem]{proposition}
\declaretheorem[style=plain,sibling=theorem]{question}
\declaretheorem[style=definition,sibling=theorem]{definition}
\declaretheorem[style=definition, qed=\hfill $\diamond$, sibling=definition]{example}
\declaretheorem[style=remark,sibling=theorem]{remark}
\theoremstyle{definition}
\newtheorem*{exampleth}{Proof}
\newenvironment{subproof}{\begin{exampleth}}{\hfill $\diamond$ \end{exampleth}}
\DeclareMathOperator{\Hom}{Hom}
\DeclareMathOperator{\Supp}{Supp}
\newcommand{\Q}{\mathbb{Q}}
\newcommand{\R}{\mathbb{R}}
\newcommand{\CC}{\mathbb{C}}
\newcommand{\Z}{\mathbb{Z}}
\newcommand{\N}{\mathbb{N}}
\newcommand{\cF}{\mathcal{F}}
\newcommand{\cO}{\mathcal{O}}
\newcommand{\cP}{\mathcal{P}}
\newcommand{\cS}{\mathcal{S}}
\newcommand{\mfkm}{\mathfrak{m}}
\DeclareMathOperator{\lcm}{lcm}
\newcommand{\tv}{\mathcal{X}} 
\newcommand{\NP}{\cN\cP} 
\newcommand{\NF}{\cN\cF}
\providecommand {\from}{{\colon}}
\providecommand{\spec}{\operatorname{Spec}}
\providecommand{\Hom}{\operatorname{Hom}}
\providecommand{\codim}{\operatorname{codim}}
\providecommand{\rk}{\operatorname{rk}}
\DeclareMathOperator{\Trop}{Trop} 
\DeclareMathOperator{\ptrop}{\Trop_{>0}}
\newcommand{\relo}[1]{\ensuremath{{#1}^{\circ}}} 
\newcommand{\init}{\ensuremath{\operatorname{in}}}
\newcommand{\initwf}[2]{\ensuremath{\init_{#1}(#2)}} 
\newcommand{\rDG}[1]{\ensuremath{\partial_{#1}\Gamma}} 
\newcommand{\rDGi}[2]{\ensuremath{\partial_{#1}{#2}}} 
\newcommand \Rp {\ensuremath{\R_{\geq 0}}}
\newcommand \Ccurve {\mathcal{C}}
\newcommand{\cN}{\mathcal{N}}
\newcommand{\nodesT}[1]{\ensuremath{
    {V}^{\circ}({#1})}} 
\newcommand{\leavesT}[1]{\ensuremath{\partial\,#1}} 
\newcommand{\roottree}{r}
\newcommand{\nodesTevRoot}[1]{\ensuremath{\partial_{#1}\Gamma}}
\newcommand{\pag}[1]{\ensuremath{\partial_{#1}\Gamma}}
\newcommand{\starT}[2]{\ensuremath{\operatorname{Star}_{#1}(#2)}} 
\newcommand{\wtuv}[2]{\ensuremath{\ell_{#1,#2}}} 
\newcommand{\du}[1]{\ensuremath{d_{#1}}} 
\newcommand{\valv}[1]{\ensuremath{\delta_{#1}}} 
\newcommand{\Nw}[1]{\ensuremath{N(\partial #1)}} 
\newcommand{\Mw}[1]{\ensuremath{M(\partial #1)}} 
\newcommand{\Nwp}[1]{\ensuremath{N(#1)}} 
\newcommand{\Mwp}[1]{\ensuremath{M(#1)}} 
\newcommand{\ww}{\ensuremath{\omega}} 
\newcommand{\wu}[1]{\ensuremath{w_{#1}}} 
\newcommand{\wtuvG}[3]{\ensuremath{\wtuv{#1}{#2}^{#3}}}
\newcommand{\wtNve}[2]{\ensuremath{m_{#1,#2}}} 
\newcommand{\wtNveL}[3]{\ensuremath{m_{#1,#2,#3}}} 
\newcommand{\zu}{\ensuremath{\underline{z}}}
\newcommand{\zexp}[1]{\ensuremath{\zu^{#1}}} 
\newcommand{\fvi}[2]{\ensuremath{f_{#1,#2}}} 
\newcommand{\gvi}[2]{\ensuremath{g_{#1,#2}}} 
\newcommand{\fgvi}[2]{\ensuremath{F_{#1,#2}}} 
\newcommand{\hvi}[2]{\ensuremath{h_{#1,#2}}} 
\newcommand{\sG}[1]{\ensuremath{\mathcal{S}(#1)}}
\newcommand{\cvei}[3]{\ensuremath{c_{#1,#2,#3}}} 
\newcommand{\cvi}[2]{\ensuremath{c_{#1,#2}}}
\newcommand{\simplex}[1]{\operatorname{\Delta}_{#1}}
\newcommand{\conv}{\operatorname{conv}}
\newcommand{\baryc}[2]{\operatorname{Bar}(#1;#2)}
\newcommand{\emb}{\ensuremath{\operatorname{\iota}}} 
\newcommand{\distGuv}[3]{\ensuremath{\operatorname{dist}_{#1}(#2,#3)}} 
\newcommand{\projT}[1]{\ensuremath{p_{#1}}}
\newcommand{\boxedo}[1]{\ensuremath{\boxed{#1}}}
\begin{document}
\title[Local tropicalizations of splice type surface singularities]{Local tropicalizations of splice type surface singularities}
\author[M.A. Cueto, P. Popescu-Pampu, D. Stepanov]{Maria Angelica Cueto, Patrick Popescu-Pampu${}^{\S}$ and Dmitry Stepanov\\
{(with an appendix by Jonathan Wahl)}}
\thanks{${\S}$ \emph{Corresponding author}}

\date{}
\keywords{Surface singularities, complete intersection singularities, tropical geometry, Newton non-degeneracy}
\subjclass[2020]{\emph{Primary}:  14B05, 14T90, 32S05; \emph{Secondary}: 14M25, 57M15}

\begin{abstract} 
   Splice type surface singularities were introduced by Neumann and Wahl 
   as a generalization of the class of Pham-Brieskorn-Hamm complete intersections of   
   dimension two. Their construction depends on a weighted tree called a splice diagram. 
  In this paper, we study these singularities from the tropical viewpoint. We characterize their local 
  tropicalizations as the cones over the appropriately embedded associated splice diagrams. 
  As a corollary, we reprove some of Neumann and Wahl's earlier results on these singularities 
  by purely tropical methods, and show that splice type surface singularities are Newton 
  non-degenerate complete intersections in the sense of Khovanskii. We also confirm that under suitable coprimality conditions on its weights, the diagram can be uniquely recovered from the local tropicalization.
  
  As a corollary of the Newton non-degeneracy property, we obtain an alternative proof of a recent theorem of de Felipe, Gonz\'alez P\'erez and Mourtada, stating 
  that embedded resolutions of any plane curve singularity can be achieved by a single toric morphism, 
  after re-embedding the ambient smooth surface germ in a higher-dimensional 
  smooth space. 
  The paper ends with an appendix by Jonathan Wahl, providing a criterion of regularity of a sequence in a ring of convergent power series, given the regularity of an associated sequence of initial forms. 
 \end{abstract}

\begin{center}
    {\bf (Published online in Mathematische Annalen on 19 December 2023. https://doi.org/10.1007/s00208-023-02755-y)}
    \bigskip
\end{center}

\dedicatory{To Walter Neumann, on the occasion of his 75\textsuperscript{th}~birthday.}

\maketitle

\vspace{-2ex}
\section{Introduction}\label{sec:introduction}

\emph{Splice diagrams} are finite trees with half-edges weighted by integers and with nodes (internal vertices) decorated by $\pm$ signs.  If the half-edge weights around each node are pairwise coprime, we say that the splice diagram is \emph{coprime}. This class of weighted trees was first introduced by
Siebenmann~\cite{S 80} in 1980 to encode graph manifolds which are integral homology spheres. Coprime splice diagrams with only + node decorations and positive half-edge weights were used by Eisenbud and Neumann in~\cite{EN 85} to study special kinds of links in integral homology spheres, in particular those corresponding to curves on normal surface singularities with integral homology sphere links. One of the main theorems of~\cite{EN 85} states that such integral homology spheres are described by positively-weighted coprime splice diagrams satisfying the \emph{edge determinant condition}, namely, that the product of the two weights associated to any fixed internal edge must be greater than the product of the weights of the neighboring half-edges.

Interesting isolated surface singularities arise from splice diagrams. For example, complete intersections of \emph{Pham-Brieskorn-Hamm} hypersurface singularities are associated to star splice diagrams  (i.e., those with a single node). As recognized by Hamm in~\cite[\S 5]{H 69} and \cite{H 72}, in order to determine an isolated singularity in $\CC^n$, all maximal minors of the coefficient matrix $(c_{ij})_{i,j}$ of each  polynomial $f_i := \sum_{j} c_{ij} z_j^{a_j}$ in the Brieskorn system must be non-zero. In turn, work of Neumann~\cite{N 83} shows that universal abelian covers of quasi-homogeneous complex normal surface singularities with rational homology sphere links are complete intersections of Pham-Brieskorn-Hamm hypersurface singularities. 
       
In 2002, Neumann and Wahl~\cite{NW 02} extended this family of complete intersections by defining \emph{splice type surface singularities} associated to splice diagrams whose weights satisfy a special arithmetic property called the \emph{semigroup condition}. These singularities are defined by explicit  splice type systems of convergent power series near the origin, whose coefficients satisfy generalizations of Hamm's maximal minors conditions. Splice type surface singularities and the related class of \emph{splice quotients} (determined by diagrams subject to an additional congruence condition) have been further studied by both authors in~\cite{NW 05bis, NW 05, NW 10},  and by Lamberson, N\'emethi, Okuma and Pedersen in~\cite{L 09, N 12,  NO 08, NO 09, O 06, O 08, O 10, O 15, P 11,  P 18}. For more details, we refer the reader to the surveys~\cite{N 07, O 12, W 06, W 22}.

The present paper uses tropical geometry techniques  to study splice type systems with $n$ leaves associated to splice diagrams satisfying the edge determinant and semigroup conditions. Our first main result recovers and strengthens a  central theorem from~\cite{NW 05bis} (see~\autoref{thm:spliceicis}).  More precisely:

    \begin{theorem}\label{thm:main1}
         Splice type systems are Newton non-degenerate complete 
         intersection systems of equations.  
         The associated splice type singularities are isolated, irreducible and   
         not contained in any coordinate subspace of the corresponding ambient space $\CC^n$.
     \end{theorem}

\noindent
This statement plays a major role in the proof of the Neumann-Wahl Milnor fiber conjecture for splice type singularities with integral homology sphere links~\cite{NW 05} obtained by the present authors. For an overview of  this proof, we refer the reader to~\cite{CPPS 22}. Motivated by \autoref{thm:main1}, we formulate \autoref{ques:minNNDemb}.

The notion of Newton non-degeneracy (in the sense of Kouchnirenko~\cite{K 76} and Khovanskii~\cite{K 77}) is clossely related to the notion of the initial form of a series relative to a weight vector, which lies  at the core of tropical geometry (see~\autoref{sec:local-trop}, in particular~\autoref{def:initwf}). A regular sequence of convergent power series $(f_1,\ldots, f_s)$ in 
$\CC\{z_1, \dots, z_n\}$ defining a germ $(X,0) \hookrightarrow \CC^n$ is a 
\emph{Newton non-degenerate complete intersection system} if for each weight vector 
$\wu{}$ with positive entries, the associated initial forms $(\initwf{\wu{}}{f_i})_i$ determine 
hypersurfaces  of the algebraic torus $(\CC^*)^n$  
whose sum is a normal  crossings divisor in the neighborhood of their 
intersection.  This condition is automatically  satisfied 
whenever the intersection is empty. Surprisingly, not many examples of Newton 
non-degenerate complete intersection systems are known in codimension two or higher. 
\autoref{thm:main1} contributes a large class of examples of such systems.

       Newton non-degeneracy enables the resolution of the germ $(X,0) \hookrightarrow \CC^n$  by a single toric morphism. Indeed, works of Varchenko~\cite{V 76} (for hypersurfaces) and Oka~\cite[Chapter III, Theorem (3.4)]{O 97} (for complete intersections) show that such a morphism may be defined by  a regular subdivision of the positive orthant refining the dual fan of the Newton polyhedron of each function $f_i$. Furthermore, the complete dual fan is not needed to achieve a resolution. Indeed,~\cite[Theorem III.3.4]{O 97} allows us to restrict to the subfan corresponding to the orbits intersecting the strict transform of the given germ.

       The support of this subfan depends only on the ideal defining the germ $(X,0)$ but not
       on the particular generators $f_i$. 
       It is the finite part of the so-called \emph{local tropicalization} of the 
       embedding $(X,0) \hookrightarrow \CC^n$. This local version of the standard notion of 
       tropicalization of a subvariety of an algebraic torus was first introduced by the last two 
       authors in~\cite{PPS 13} as a tool to study arbitrary subgerms of $\CC^n$        or, more generally, arbitrary morphisms from analytic or formal germs 
       to germs of toric varieties. 
       
       As was mentioned earlier, splice diagrams record topological information about the link of 
       splice type singularities. Indeed, starting from a normal surface singularity with  
       a rational homology sphere link, Neumann and Wahl~\cite{NW 02} build a 
       splice diagram  which determines the dual tree of the minimal normal crossings resolution 
       (up to a finite ambiguity) whenever this graph is not a star tree. 
       This diagram is 
       homeomorphic  to the dual tree: it is obtained  by disregarding all bivalent vertices of the dual tree. 
       It satisfies the edge determinant condition, but not necessarily the semigroup 
       or the congruence conditions. When $(X,0)  \hookrightarrow \CC^n$ has an integral 
       homology sphere link, the ambiguity disappears and the dual tree is completely determined by the splice diagram.

       Given a splice diagram $\Gamma$ with $n$ leaves, the construction of Neumann and Wahl associates a weight vector $\wu{u}\in (\Z_{>0})^n$ to each vertex $u$ of $\Gamma$. These vectors induce a piecewise linear embedding of $\Gamma$ into the standard simplex in $\R^n$ after appropriate normalization. Our second main result shows the close connection between $\Gamma$, the local tropicalization of the associated splice type system in $\CC^n$ and its resolution diagrams:      
       
       \begin{theorem}\label{thm:main2}
          Let $\Gamma$ be a splice diagram satisfying the semigroup condition and 
           let $(X,0)  \hookrightarrow \CC^n$ be the germ defined by an associated 
           splice type system. Then, the finite local tropicalization of $X$ is the cone over an embedding of $\Gamma$ in $\R^n$. Furthermore, in the coprime case,  $\Gamma$ can be uniquely recovered from this fan.
       \end{theorem}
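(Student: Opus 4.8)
The plan is to split the statement into two parts and prove each separately. For the first part --- identifying the finite local tropicalization of $X$ with a cone over an embedded copy of $\Gamma$ --- I would proceed as follows. First, I would recall the explicit combinatorial recipe attaching to each vertex $u$ of $\Gamma$ the weight vector $\wu{u}\in(\Z_{>0})^n$, and check that the normalized vectors $\wu{u}/(\sum_j (\wu{u})_j)$ and the piecewise-linear interpolation along edges assemble into an honest topological embedding $\embb\from\Gamma\hookrightarrow\Delta_{n-1}\subset\R^n$; the edge determinant condition is what guarantees that the images of distinct edges meet only along shared vertices, so no folding occurs. Next, I would show that the cone over $\embb(\Gamma)$ is contained in the local tropicalization: this amounts to producing, for each weight vector $\wu{}$ in the relative interior of an edge-cone, a point in the intersection $V(\initwf{\wu{}}{f_i})_i\cap(\CC^*)^n$, which the semigroup condition makes possible because it ensures the relevant initial forms are themselves quasi-homogeneous splice-type systems for the sub- and contracted diagrams, allowing an inductive construction. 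For the reverse inclusion, I would argue that any weight vector $\wu{}$ outside the cone over $\embb(\Gamma)$ forces at least one initial form $\initwf{\wu{}}{f_i}$ to become a monomial (times a unit), hence $V(\initwf{\wu{}}{f_i})_i$ misses the torus, so $\wu{}$ is not in the tropicalization; this uses the precise shape of the Newton polyhedra of the $f_i$ together with the structure of the weight vectors $\wu{u}$.

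For the second part --- unique recovery of $\Gamma$ in the coprime case --- the strategy is purely combinatorial once the first part is in hand. The finite local tropicalization is now a one-dimensional fan (a cone over a finite graph), and I would reconstruct $\Gamma$ as an abstract tree by reading off its ray structure: the rays of the fan are exactly $\Rp\cdot\embb(u)$ for leaves and nodes $u$, together with the subdivision points forced along edges; the underlying tree of $\Gamma$ (ignoring valences-two) is recovered as the face poset of the fan. The harder combinatorial point is recovering the \emph{weights}: I would show that the primitive integer generator of each ray determined by a node $u$, read in the coordinates of $\R^n$, encodes $\wu{u}$ up to a scalar, and then invert the Neumann--Wahl formula expressing $\wu{u}$ in terms of the edge weights. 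The coprimality hypothesis on the weights around each node is precisely what makes this inversion unambiguous: without it, different weight assignments can produce proportional vectors. I expect to invoke the edge determinant condition once more here to rule out degenerate configurations.

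The main obstacle, I anticipate, is the reverse inclusion in the first part: showing that the local tropicalization is no larger than the cone over $\embb(\Gamma)$. For an arbitrary complete intersection one only has the containment $\Trop(X)\subseteq$ (the Gr\"obner-type fan coming from the defining equations), and equality of supports with the expected cone requires controlling \emph{all} initial degenerations, including those for weight vectors on the boundary between combinatorial chambers, where several monomials of an $f_i$ may simultaneously attain the minimum. Handling these requires a careful case analysis of which monomials in a splice-type series $\fvi{i}{}$ can share a weight, and this is where the semigroup condition must be used in its full strength --- it is exactly the hypothesis that keeps the ``extra'' admissible monomials from ever creating new torus points. I would organize this as a sequence of lemmas isolating, for each face of the candidate fan, the corresponding initial system and verifying it has (or lacks) torus solutions, proceeding by induction on the number of nodes of $\Gamma$ via the splitting of $\Gamma$ at a node into smaller splice diagrams.
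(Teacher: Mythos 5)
Your overall architecture --- embed $\Gamma$ into the simplex, prove the two inclusions for the tropicalization, then recover the diagram from the primitive generators of the rays using coprimality --- matches the paper's, and your recovery step is essentially the paper's argument (\autoref{thm:coprimePrimitiveAndWeightOne} and \autoref{thm:recoverGammaFromTrop}: coprimality forces each $\wu{u}$ to be primitive, and the weights are then extracted by gcd/lcm computations and pruning end-nodes). However, both inclusions in the first part have genuine gaps. For the inclusion of the cone over $\emb(\Gamma)$ \emph{into} the tropicalization, exhibiting a point of the torus where all the initial forms $\initwf{\wu{}}{\fgvi{v}{i}}$ vanish is not sufficient: membership of $\wu{}$ in $\Trop \sG{\Gamma}$ requires the initial \emph{ideal} to be monomial-free, and for a non-principal ideal the initial forms of a chosen generating set need not generate the initial ideal. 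You need either the commutative-algebra input of \autoref{thm:newL3.3} (if the initial forms are a regular sequence, they do generate the initial ideal --- which is why the paper computes the initial systems explicitly as products of end-curves and Pham--Brieskorn--Hamm germs in \autoref{sec:NewtonND}), or the paper's primary route: first compute the boundary strata of the extended tropicalization to establish that $(X,0)$ is a pure two-dimensional complete intersection, and then use the balancing condition to propagate membership from a single node of $\Gamma$ to all of $\emb(\Gamma)$ (\autoref{pr:StarIsIn}). Your proposal contains neither ingredient.

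For the reverse inclusion, your claim that a weight $\wu{}$ off the cone ``forces at least one initial form $\initwf{\wu{}}{f_i}$ to become a monomial'' fails as soon as some node has valency at least four: a single $\fvi{v}{i}$ involves $\valv{v}$ admissible monomials, and generically several of them tie for the minimum or the minimum is attained by more than one term across the block. The mechanism that actually works is \autoref{lm:threeMonomialsAreEnough}: the \emph{Hamm determinant conditions} allow one to take linear combinations within the block of $\valv{v}-2$ equations at a node so that a monomial lies in the initial ideal as soon as one admissible co-weight is strictly $\wu{}$-smaller than just \emph{two} others, and the entire mine-sweeping induction over stellar subdivisions of star-full subtrees (\autoref{thm:hardInclusion2D}, \autoref{pr:interiorAvoidsTrop}, \autoref{pr:noMixing}) is built on this three-monomial criterion. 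Your proposal never invokes the Hamm conditions, and without them the case analysis you anticipate cannot be closed; you would also need to control the higher-order terms $\gvi{v}{i}$ via the inequalities~\eqref{eq:gviConditions}, which your plan does not mention.
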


       \autoref{thm:main2} shows that the link at the origin of the local tropicalization of $(X,0)  \hookrightarrow \CC^n$ (obtained by intersecting the fan with the $(n-1)$-dimensional sphere) is homeomorphic to the splice diagram $\Gamma$. 
       To the best of our knowledge, \emph{this is the first tropical interpretation of Siebenmann's splice diagrams}. In this spirit, we view Siebenmann's paper~\cite{S 80} as a precursor to tropical geometry (for others, see~\cite[Chapter 1]{MS 15}).

              Our method to characterize the local tropicalization is different from the general one 
       discussed in Oka's book~\cite{O 97} and described briefly above. Namely, we do not use the Newton polyhedra of the collection of series defining the germ. 
       Instead, we use a ``mine-sweeping'' approach, using successive stellar subdivisions of the standard simplex in $\R^n$ dictated by the splice diagram, in order to remove relatively open cones in the positive orthant avoiding the local 
       tropicalization.

       Once the local tropicalization is determined (via~\autoref{thm:main2}), a simple computation confirms the Newton non-degeneracy of the system.  In turn, by analyzing the local tropicalizations of the intersections of the germ with the coordinate subspaces of $\CC^n$, we conclude that $(X,0)$ is an isolated complete intersection surface singularity, thus completing the proof of~\autoref{thm:main1}.
       
   As a consequence of ~\autoref{thm:main1}, we provide an alternative proof of the main theorem of de Felipe, Gonz\'alez P\'erez and Mourtada \cite{FGM 21}, stating that \emph{any germ of a reduced plane curve may be resolved by one toric modification after re-embedding its ambient smooth germ of surface into a higher-dimensional germ $(\CC^n,0)$} (see \autoref{cor:onetoricplanecurve}). The first theorem of this kind was proved by Goldin and Teissier \cite{GT 00} for \emph{irreducible} germs of plane curves.

\smallskip
       
Our paper is organized as follows. In~\autoref{sec:splicetypenotation}, 
we review the definitions and main properties of splice diagrams, splice type systems 
and end-curves associated to rooted splice diagrams, following~\cite{NW 05bis, NW 05}. 
\textcolor{blue}{Sections}~\ref{sec:local-trop} and~\ref{sec:nnondeg} include background 
material about local tropicalizations and Newton non-degeneracy. 
In~\autoref{sec:comb-embedd-splice} 
we show how to embed a given splice diagram $\Gamma$ with $n$ leaves into the standard 
$(n-1)$-simplex in $\R^n$, and we highlight various convexity properties of this embedding. 
The proof of the first part of~\autoref{thm:main2} is discussed in~\autoref{sec:local-trop-newm}, 
while~\autoref{thm:main1} is proven in~\autoref{sec:NewtonND}. 
\autoref{sec:recov-splice-diagr} characterizes local tropicalizations of splice type 
systems defined by a coprime splice diagram and shows how to recover the diagram from 
the tropical fan, thus yielding the second part of~\autoref{thm:main2}. Finally,~\autoref{sec:natur-splice-type} discusses the dependency of the construction of splice type systems on the choice of admissible monomials for arbitrary splice diagrams, in the spirit of~\cite[Section 10]{NW 05bis}. 

\autoref{sec:appendixL3.3}, written by Jonathan Wahl, includes a proof of~\cite[Lemma 3.3]{NW 05bis} that was absent from the literature. This result confirms that given a finite sequence $(f_1,\ldots, f_s)$ in $\CC\{z_1,\ldots, z_n\}$ and a fixed positive integer vector $\wu{}\in {(\Z_{>0})}^n$, the regularity of the sequence $(\initwf{\wu{}}{f_1}, \ldots, \initwf{\wu{}}{f_s})$ of initial forms ensures that the original sequence is regular, and furthermore, that the $\wu{}$-initial ideal must be generated by the sequence of initial forms. This statement can be used to determine if a given $\wu{}$ lies in the local tropicalization of the germ defined by the vanishing of the input sequence (see~\autoref{cor:initialForms}), providing an alternative proof to part of~\autoref{thm:main2}.

\section{Splice diagrams, splice type systems and end-curves from rooted splice diagrams}
\label{sec:splicetypenotation}

In this section, we recall the notions of splice diagram and splice type systems associated to them. The definitions follow closely the work of Neumann and Wahl~\cite{NW 05bis, NW 05}.
\medskip

We start with some basic terminology and notations about trees:

\begin{definition} \label{def:vocabtrees}
 A \emph{tree} is a finite connected graph 
   with no cycles and at least one vertex. 
The \emph{star} of a vertex {$v$} of the tree $\Gamma$ is the set $\boxedo{\starT{\Gamma}{v}}$  of edges adjacent to $v$.   The \emph{valency}  of $v$ is the cardinality of $\starT{\Gamma}{v}$, which we denote by $\boxedo{\valv{v}}$. 
A \emph{node} of a tree is a vertex $v$ whose valency  is greater than one, 
whereas a \emph{leaf} is a one-valent vertex. We denote the set of nodes 
of $\Gamma$ by $\boxedo{\nodesT{\Gamma}}$ and its set of 
leaves by $\boxedo{\leavesT{\Gamma}}$. 
\end{definition}

\noindent When the ambient tree is understood from context, we remove it from the notation and simply write $\starT{}{v}$.

\begin{remark}\label{rm:geodesics} 
Endowing a tree with a metric allows us to consider geodesics on it and distances between vertices. Whenever these notions are invoked, it is understood that each edge of the tree has length one.
\end{remark}

\begin{definition}\label{def:convexHull} 
    Given a subset $W= \{p_1,\ldots, p_k\}$ of vertices of the tree $\Gamma$, we 
denote by $\boxedo{[W]}$ or $\boxedo{[p_1,\ldots, p_k]}$  the subtree of $\Gamma$ spanned by these points. We  call it the \emph{convex hull} of the set $\{p_1,\ldots, p_k\}$ inside $\Gamma$. 
For example, $\Gamma = [\leavesT{\Gamma}]$. 
\end{definition}

Splice diagrams are special kinds of trees enriched with weights around all  nodes, as we now describe:

\begin{definition} 
   A \emph{splice diagram} is a pair $(\Gamma,\{\du{v,e}\}_{v,e})$, where
    $\Gamma$ is a tree without  valency-two vertices, 
   with at least one node,
   and decorated with a weight function  
   on the star of each node $v$ of $\Gamma$, denoted by
  \[
  \starT{\Gamma}{v} \to \Z_{>0}\colon \;  e\mapsto \du{v,e}.
  \]
 We call $\boxedo{\du{v,e}}$ the \emph{weight of $e$ at $v$}. 
 If $u$ is any other vertex of $\Gamma$ such that $e$ lies in the unique geodesic of $\Gamma$ joining $u$ and $v$, we write $\boxedo{\du{v,u}} := \du{v,e}$.
   We view this as the weight in the neighborhood of $v$ pointing towards $u$.
     The \emph{total weight} of a node $v$ of $\Gamma$ is  the product $\displaystyle{\boxedo{\du{v}}:=\!\!\!\!\prod_{e\in \starT{\Gamma}{v}}\!\!\!\! \du{v,e}}$.  
\end{definition}

   \begin{remark} Let $(\Gamma,\{\du{v,e}\}_{v,e})$ be a splice diagram. 
     For simplicity, we remove the collection of weights from the notation and simply use $\Gamma$ to refer to the splice diagram. By a similar abuse of notation,  
     we may view $\starT{\Gamma}{v}$ also as a splice diagram, whose weights around 
     its unique node $v$ are inherited from $\Gamma$.  Splice diagrams with one node 
     will be referred to as \emph{star splice diagrams}, and the underlying graphs as \emph{star trees}.
   \end{remark}

  \begin{definition}\label{def:linkingNumbers}
  Let $u$ and $v$ be  two distinct vertices of the splice diagram $\Gamma$. 
    The \emph{linking number} $\boxedo{\wtuv{u}{v}}$ between $u$ and $v$ is the product of all the  
    weights adjacent to, but not on,  the geodesic $[u,v]$ joining $u$ and $v$. 
    Thus, $\wtuv{v}{u} = \wtuv{u}{v}$. We set $\boxedo{\wtuv{v}{v}}:=\du{v}$ 
    for each node $v$ of $\Gamma$. 
    The \emph{reduced linking number} $\boxedo{\wtuv{u}{v}'}$ is defined via a similar product 
    where we exclude the weights around $u$ and $v$. In particular, $\wtuv{v}{v}' =1$ 
    for each node $v$ of $\Gamma$.
\end{definition}

  \begin{remark}\label{rem:linkvsRedLink}
Given a node $v$ and a leaf $\lambda$ of $\Gamma$, it is immediate to check that $\wtuv{v}{\lambda} \,{\du{v,\lambda}} = \wtuv{v}{\lambda}'\,\du{v}$.
  \end{remark}

Linking numbers satisfy the following useful identity, whose proof is immediate 
    from~\autoref{def:linkingNumbers} (see \cite[Proposition 69]{GGP 18}):
    
\begin{lemma}\label{lm:usefulIdentityLinkingEdge}
      If $u, v, w$ are vertices of $\Gamma$ with $u\in [v,w]$, then 
      $\wtuv{u}{v}\,\wtuv{u}{w} = \du{u} \, \wtuv{v}{w}$.
  \end{lemma}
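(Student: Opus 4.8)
The plan is to unwind the definition of the linking numbers $\wtuv{w}{u}$, $\wtuv{u}{v}$, $\wtuv{w}{v}$ in terms of products of edge-weights of $\Gamma$, and to check that both sides of the claimed identity are the same product. Recall from \autoref{def:linkingNumbers} that for distinct vertices $a,b$ the number $\wtuv{a}{b}$ is the product of all weights $\du{x,e}$ with $x$ a node lying on the geodesic $[a,b]$ and $e\in\starT{\Gamma}{x}$ an edge \emph{not} contained in $[a,b]$; the relevant degenerate case is $\wtuv{v}{v}=\du{v}$, the total weight, which is the product of \emph{all} weights around $v$. The hypothesis $u\in[v,w]$ means the geodesics decompose as $[v,w]=[v,u]\cup[u,w]$, meeting exactly in the vertex $u$.

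The key step is a bookkeeping argument organized by the position of a node $x$ of $\Gamma$ relative to the three geodesics. First I would treat $x=u$ separately: on the left-hand side, $u$ is an interior vertex of $[v,w]$ (if $u$ is a node), so the edges of $\starT{\Gamma}{u}$ split into those lying in $[v,u]$, those lying in $[u,w]$, and those off the geodesic $[v,w]$ entirely. The two factors $\wtuv{w}{u}$ and $\wtuv{u}{v}$ each, by definition, omit the edges around $u$ that lie on the geodesic $[u,w]$, respectively $[v,u]$; hence their product picks up, from the star of $u$, precisely: once the edges off $[v,w]$ (from both factors — careful, this double-counts), so I need to be slightly more careful. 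The cleanest route: $\wtuv{u}{v}$ contributes the weights around $u$ on edges not in $[u,v]$, i.e. the edges in $[u,w]$ together with the edges off $[v,w]$; and $\wtuv{w}{u}$ contributes the weights around nodes strictly between $u$ and $w$ off $[u,w]$, plus the weights around $u$ on edges not in $[u,w]$, i.e. the edges in $[u,v]$ together with the edges off $[v,w]$. Multiplying, at $u$ we get (edges in $[u,w]$) $\cdot$ (edges off $[v,w]$) $\cdot$ (edges in $[u,v]$) $\cdot$ (edges off $[v,w]$) $=\du{u}\cdot(\text{edges off }[v,w])$, since the product of all star-weights at $u$ is $\du{u}$. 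On the right-hand side, $\du{u}$ appears explicitly, and $\wtuv{w}{v}$ contributes exactly the weights around $u$ on edges off $[v,w]$ (since $u$ is interior to $[v,w]$). So the contributions at $x=u$ agree.

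For a node $x\neq u$, exactly one of three cases occurs, and in each the verification is immediate: if $x\in[v,u]\setminus\{u\}$ it is interior to $[v,u]\subseteq[v,w]$, and it contributes the same set of off-geodesic star-weights to $\wtuv{u}{v}$ as to $\wtuv{w}{v}$, while contributing nothing (not on the geodesic $[u,w]$) to $\wtuv{w}{u}$; symmetrically for $x\in[u,w]\setminus\{u\}$; and if $x$ lies off $[v,w]$, it contributes to none of the four linking numbers. Assembling these contributions over all nodes $x$ gives $\wtuv{w}{u}\,\wtuv{u}{v}=\wtuv{w}{v}\,\du{u}$, as claimed; the leaf-endpoint cases $v\in\leavesT{\Gamma}$ or $w\in\leavesT{\Gamma}$ need no special treatment since leaves carry no weights, and the degenerate case $u=v$ (so $\wtuv{u}{v}$ is read as $\du{u}$ by convention, wait — actually $u=v$ forces $\wtuv{v}{v}=\du{v}$ and then the identity reads $\wtuv{w}{v}\,1=\wtuv{w}{v}\,\du{v}/\du{v}$, consistent with $\wtuv{v}{v}'=1$) is checked directly. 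I expect the only genuinely delicate point to be the careful edge-accounting at the vertex $x=u$, making sure the off-geodesic edges at $u$ are counted with the correct multiplicity on each side; everything else is routine.
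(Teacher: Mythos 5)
Your proof is correct and is exactly the argument the paper has in mind: the paper declares the identity ``immediate from Definition~\ref{def:linkingNumbers}'' and omits the details, while you spell out the node-by-node bookkeeping, with the only nontrivial point being the contribution at $x=u$, where the two off-geodesic factors combine with the two on-geodesic edge weights to give $\du{u}$ times a single off-geodesic factor. The parenthetical about the degenerate case $u=v$ is garbled in its phrasing, but the underlying check ($\wtuv{w}{v}\,\wtuv{v}{v}=\wtuv{w}{v}\,\du{v}$ holds by the convention $\wtuv{v}{v}=\du{v}$) is fine.
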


In~\cite[Theorem 1]{N 77}, Neumann gave explicit descriptions of integral homology spheres associated to star splice diagrams as links of Pham-Brieskorn-Hamm surface singularities.
The following definition  was introduced by Neumann and Wahl in \cite[Section 1]{NW 05} to characterize which integral homology spheres may be realized as links of normal surface singularities. Its origins can be traced back to~\cite[page 82]{EN 85}.

\begin{definition}   \label{def:edgedet} 
  Let $\Gamma$ be a splice diagram. Given two adjacent nodes $u$ and $v$  of $\Gamma$,    the \emph{determinant} of the edge $[u,v]$ is the difference between the product of the 
  two decorations on $[u,v]$ and the product of the remaining decorations in the neighborhoods of $u$ and $v$, that is, 
 \begin{equation}\label{eq:edgeDet}
         \boxedo{\det([u,v])} :=  \du{u,v}\du{v,u} -\wtuv{u}{v}.
  \end{equation}
  The splice  diagram $\Gamma$ satisfies the \emph{edge determinant condition} if all edges have   positive  determinants.
\end{definition}

 The next result on integral homology sphere links is due to Eisenbud and Neumann (see~\cite[Theorem 9.4]{EN 85} for details and \autoref{def:coprimeSplice} for the meaning of the coprimality condition). It was strengthened by Pedersen in~\cite[Theorem 1]{P 11} to address the case of rational homology sphere links:
 
\begin{theorem}    \label{thm:charZHSlinks}
    The integral homology sphere links of normal surface singularities are precisely 
    the oriented $3$-manifolds $\Sigma(\Gamma)$ associated to coprime splice diagrams 
    $\Gamma$ which satisfy the edge determinant condition. 
\end{theorem}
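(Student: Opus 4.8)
The plan is to prove this as a direct consequence of Eisenbud--Neumann's plumbing calculus, so the proof is really a matter of correctly quoting~\cite{EN 85} and recording the translation between the two sides. First I would recall that Eisenbud and Neumann encode graph manifolds by \emph{plumbing graphs}, and that a splice diagram $\Gamma$ satisfying the edge determinant condition is one of two equivalent pieces of data: on the topological side it produces an oriented closed $3$-manifold $\Sigma(\Gamma)$ by their splicing construction, and on the algebraic side the edge determinant condition is exactly what guarantees that the plumbing graph one reconstructs from $\Gamma$ is negative definite. The first reduction is therefore: a graph manifold is the link of a normal surface singularity if and only if it bounds a (minimal good resolution) plumbing with negative definite intersection form --- this is Grauert's criterion together with Mumford/Neumann's normal form, and it is where~\cite[Thm.~9.4]{EN 85} enters.

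The main steps, in order, would be: (1) Recall the correspondence (Neumann's plumbing calculus,~\cite{N 77}) between plumbing graphs up to the moves of the calculus and splice diagrams, and the fact that this correspondence sends negative definite plumbing graphs with integral homology sphere boundary precisely to splice diagrams whose edge weights are pairwise coprime around each node. (2) Show that for such a splice diagram, the edge determinants $\det([u,v]) = \du{u,v}\du{v,u} - \wtuv{u}{v}$ are exactly the off-diagonal entries (up to sign and the coprimality normalization) whose positivity is equivalent to negative definiteness of the associated plumbing form; this is the arithmetic heart of the statement and is carried out in~\cite[\S10--12]{EN 85}. (3) Invoke Grauert's theorem: an integral homology sphere $M$ is the link of a normal surface singularity if and only if $M = \Sigma(\Gamma)$ for some $\Gamma$ with a negative definite resolution graph, hence (by steps (1)--(2)) if and only if $M = \Sigma(\Gamma)$ for some $\Gamma$ satisfying the edge determinant condition. (4) Conversely, given any $\Gamma$ with the edge determinant condition, reconstruct the negative definite plumbing graph and invoke Grauert again to realize $\Sigma(\Gamma)$ as a singularity link, which establishes the "precisely" (both inclusions).

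The main obstacle --- and the step I would expect to spend the most care on --- is step (2): making the translation between the edge determinant condition and negative definiteness completely explicit. The subtlety is that the splice diagram is a \emph{collapsed} form of the plumbing graph (bivalent vertices suppressed), so one has to reinsert chains of $(-2)$- and other negatively-weighted curves and check that the determinant $\du{u,v}\du{v,u} - \wtuv{u}{v}$ of an edge corresponds, via the theory of continued fractions governing those chains, to a positivity condition on a Schur complement of the intersection matrix. In a fully written proof I would either carry out this computation by induction on the number of nodes (peeling off a leaf-most node and using~\autoref{lm:usefulIdentityLinkingEdge} to track how linking numbers and edge determinants transform) or, more economically, simply cite~\cite[Thm.~9.4 and \S12]{EN 85} for it, since the theorem as stated is essentially a restatement of their result in splice-diagram language. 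Given the surrounding text I would lean toward the latter: the proof should consist of the reduction to Grauert's criterion plus an explicit pointer to the relevant results of Eisenbud--Neumann, with~\autoref{lm:usefulIdentityLinkingEdge} invoked only if one wants to make the definiteness computation self-contained.
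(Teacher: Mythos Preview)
The paper does not prove this theorem at all: it is stated as a known result, attributed immediately before the statement to Eisenbud and Neumann via the sentence ``As was shown by Eisenbud and Neumann in~\cite[Thm.~9.4]{EN 85}\ldots'', and no proof environment follows. Your proposal correctly identifies this --- you say the proof ``is really a matter of correctly quoting~\cite{EN 85}'' and that you would ``simply cite~\cite[Thm.~9.4 and \S12]{EN 85}'' --- so in that sense you and the paper are in complete agreement on the approach.

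What you add beyond the paper is a sketch of \emph{why} the Eisenbud--Neumann result holds: the reduction to Grauert's contractibility criterion, the translation between splice diagrams and plumbing graphs, and the identification of the edge determinant condition with negative definiteness of the intersection form. This is a reasonable outline of the argument in~\cite{EN 85}, but it goes well beyond what the paper attempts; the paper treats the theorem purely as a black-box citation and moves on. If you were writing this section yourself, the economical choice you mention at the end --- a pointer to~\cite[Thm.~9.4]{EN 85} with perhaps one sentence of context --- is exactly what the paper does.
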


\noindent
The construction of oriented $3$-manifolds from splice diagrams is due to Siebenmann~\cite{S 80}. They are obtained from splicing Seifert-fibered oriented $3$-manifolds $\Sigma (\starT{\Gamma}{v})$ associated to each node $v$ of $\Gamma$ along special fibers of their respective Seifert fibration corresponding to the edges of $\Gamma$.  For each $\Sigma (\starT{\Gamma}{v})$, these special fibers are in bijection with the $\valv{v}$-many edges adjacent to $v$. Each edge $[u,v]$ induces a splicing of both $\Sigma (\starT{\Gamma}{u})$ and $\Sigma (\starT{\Gamma}{v})$ along the oriented fibers corresponding to the edge. These fibers are knots in both  Seifert-fibered manifolds and their linking number is precisely $\wtuv{u}{v}$~(see \cite[Thm. 10.1]{EN 85}).

\smallskip

The following result shows that the edge determinant condition yields Cauchy-Schwarz' type inequalities:

\begin{lemma}\label{lm:CSh} Assume that the splice diagram $\Gamma$ satisfies the edge determinant condition. Then,
    \begin{equation}\label{eq:CSh}
      \du{u}\,\du{v} \geq \wtuv{u}{v}^2, \quad  \text{ for all nodes } u, v \in  \Gamma.
  \end{equation}
Furthermore,   equality holds if and only if $u = v$.
\end{lemma}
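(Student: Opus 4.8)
The plan is to prove the inequality by induction on the number of edges in the geodesic $[u,v]$, using \autoref{lm:usefulIdentityLinkingEdge} to reduce the length of the path at each step, with the edge determinant condition providing the base case. First consider the base case where $u=v$: by definition $\wtuv{v}{v}=\du{v}$, so $\du{v}\du{v}=\du{v}^2=\wtuv{v}{v}^2$, giving equality. Next consider the case where $u$ and $v$ are adjacent nodes. Then the geodesic $[u,v]$ is a single edge, and the edge determinant condition says $\du{u,v}\du{v,u}-\wtuv{u}{v}>0$, i.e.\ $\du{u,v}\du{v,u}>\wtuv{u}{v}$. Multiplying both sides by $\wtuv{u}{v}>0$ and using that $\du{u}=\du{u,v}\cdot(\text{product of other weights at }u)\ge \du{u,v}$, together with the analogous bound $\du{v}\ge\du{v,u}$, I would obtain $\du{u}\du{v}\ge \du{u,v}\du{v,u}\cdot\wtuv{u}{v}\cdot\wtuv{u}{v}^{-1}\cdots$; more carefully, $\du{u}\du{v}\ge \du{u,v}\du{v,u}>\wtuv{u}{v}$, but this only gives a linear bound, so I must be more careful and instead argue $\du{u}\du{v}=\wtuv{u}{v}^2\cdot\frac{\du{u}\du{v}}{\wtuv{u}{v}^2}$ and show the last factor is $>1$. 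Write $\du{u}=\du{u,v}\,A$ and $\du{v}=\du{v,u}\,B$ where $A$ is the product of the weights at $u$ on edges other than $[u,v]$ and $B$ likewise at $v$; note $\wtuv{u}{v}=AB$ since the linking number collects exactly the weights adjacent to but not on the geodesic. Then $\du{u}\du{v}=\du{u,v}\du{v,u}\,AB>\wtuv{u}{v}\cdot AB=\wtuv{u}{v}^2$, using the edge determinant inequality $\du{u,v}\du{v,u}>\wtuv{u}{v}$. This settles the adjacent case with strict inequality.

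For the inductive step, suppose the geodesic $[u,v]$ has length at least two, and let $w$ be the node on $[u,v]$ adjacent to $v$ (so $w\ne v$, and $w$ is a node since internal vertices of a tree with no bivalent vertices are nodes, or $w=u$ in which case the path $[u,w]$ is shorter). By \autoref{lm:usefulIdentityLinkingEdge} applied to the triple with $w\in[u,v]$, we have $\wtuv{u}{w}\,\wtuv{w}{v}=\wtuv{u}{v}\,\du{w}$. Now $[u,w]$ is strictly shorter than $[u,v]$, so by the inductive hypothesis $\du{u}\du{w}\ge\wtuv{u}{w}^2$, and $[w,v]$ is a single edge (or $w=v$, excluded), so by the adjacent case $\du{w}\du{v}>\wtuv{w}{v}^2$. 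Multiplying these two inequalities gives $\du{u}\du{w}^2\du{v}\ge\wtuv{u}{w}^2\wtuv{w}{v}^2=(\wtuv{u}{v}\du{w})^2=\wtuv{u}{v}^2\du{w}^2$, and dividing by $\du{w}^2>0$ yields $\du{u}\du{v}\ge\wtuv{u}{v}^2$. Moreover, since one of the two multiplied inequalities (the adjacent one) is strict, the product inequality is strict, so in fact $\du{u}\du{v}>\wtuv{u}{v}^2$ whenever $u\ne v$. This proves the "furthermore" clause: equality holds precisely when $u=v$.

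The main obstacle I anticipate is bookkeeping in the base case — specifically verifying that $\wtuv{u}{v}=AB$ exactly (the linking number of two adjacent nodes is the product of all weights around $u$ and $v$ except the two decorations on the common edge), which follows directly from \autoref{def:linkingNumbers} but requires care to state correctly, and checking that the node $w$ in the inductive step is indeed a node (so that $\du{w}$ is defined) — this holds because in a tree with no bivalent vertices, every vertex interior to a geodesic of length $\ge 2$ has valency $\ge 3$. A secondary subtlety is ensuring the induction is set up so that at least one factor in the product step always comes from the strictly-inequality adjacent case; peeling off a single edge at the $v$-end guarantees this. Everything else is a routine multiplication of positive integers, using only that weights are strictly positive.
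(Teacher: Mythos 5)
Your proof is correct and follows exactly the route the paper sketches: induction on $\distGuv{\Gamma}{u}{v}$, with the adjacent case settled by the edge determinant condition (via the factorization $\wtuv{u}{v}=AB$) and the inductive step carried out with \autoref{lm:usefulIdentityLinkingEdge}. The details you supply, including the strictness bookkeeping, are accurate.
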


\begin{proof}
The result follows by induction on the distance $\distGuv{\Gamma}{u}{v}$ between $u$ and $v$ (see~\autoref{rm:geodesics}). The base case corresponds to adjacent nodes. \autoref{lm:usefulIdentityLinkingEdge} is used for the inductive step.
\end{proof}

If $\Gamma$ satisfies the edge determinant condition, then the linking numbers verify 
the following inequality, which generalizes 
\autoref{lm:usefulIdentityLinkingEdge}. This inequality is reminiscent of the ultrametric condition for dual graphs of arborescent singularities (see~\cite[Proposition 1.18]{GGPR 19}):

\begin{proposition}\label{pr:hypermetricineq}
         Assume that the splice diagram $\Gamma$ satisfies the edge determinant condition. 
         Then, for all  nodes $u,v$ and $w$ of $\Gamma$, we have 
       $\wtuv{u}{v}\,\wtuv{u}{w}\leq \du{u}\,\wtuv{v}{w}$.
       Furthermore, equality holds if and only if $u\in [v,w]$.
\end{proposition}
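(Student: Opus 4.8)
The plan is to prove the inequality $\wtuv{u}{v}\,\wtuv{u}{w}\leq \du{u}\,\wtuv{v}{w}$ by reducing to the already-established Cauchy--Schwarz inequality of \autoref{lm:CSh} together with the multiplicative identity of \autoref{lm:usefulIdentityLinkingEdge}, using a case analysis based on the position of the node $u$ relative to the geodesic $[v,w]$.

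\textbf{Case 1: $u \in [v,w]$.} Here I expect equality to hold. Since $u$ lies on the geodesic $[v,w]$, \autoref{lm:usefulIdentityLinkingEdge} applies directly and gives $\wtuv{w}{u}\,\wtuv{u}{v} = \wtuv{w}{v}\,\du{u}$, which (using symmetry $\wtuv{w}{u} = \wtuv{u}{w}$, $\wtuv{w}{v} = \wtuv{v}{w}$) is exactly the asserted equality $\wtuv{u}{v}\,\wtuv{u}{w} = \du{u}\,\wtuv{v}{w}$.

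\textbf{Case 2: $u \notin [v,w]$.} Now I must prove a strict inequality. Let $m$ be the unique vertex of $\Gamma$ where the geodesic from $u$ to $[v,w]$ meets $[v,w]$; so $m \in [v,w]$, $m \in [u,v]$ and $m \in [u,w]$, and $m \neq u$. I would like $m$ to be a node; if the graph structure forces branch points to be nodes (valency $\geq 2$), this holds automatically since $m$ is a branch vertex of the tree spanned by $u,v,w$ — the only worry would be if $m$ coincides with a leaf among $v,w$, but leaves have valency one and cannot be a branch point unless $m\in\{v,w\}$, in which case we are back in Case 1. So $m$ is a node distinct from $u$. Apply \autoref{lm:usefulIdentityLinkingEdge} three times to the three geodesics through $m$: from $m \in [u,v]$ we get $\wtuv{u}{m}\,\wtuv{m}{v} = \wtuv{u}{v}\,\du{m}$; from $m \in [u,w]$ we get $\wtuv{u}{m}\,\wtuv{m}{w} = \wtuv{u}{w}\,\du{m}$; and from $m \in [v,w]$ we get $\wtuv{v}{m}\,\wtuv{m}{w} = \wtuv{v}{w}\,\du{m}$. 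Multiplying the first two and dividing by the third yields $\wtuv{u}{v}\,\wtuv{u}{w} = \wtuv{u}{m}^2\,\du{m}^{-1}\,\wtuv{v}{w}$. Thus the desired inequality is equivalent to $\wtuv{u}{m}^2 \leq \du{u}\,\du{m}$, which is precisely \autoref{lm:CSh} applied to the pair of nodes $u$ and $m$; moreover \autoref{lm:CSh} gives strict inequality exactly because $u \neq m$. This proves the strict inequality in Case 2 and completes the equality characterization.

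\textbf{Main obstacle.} The only genuinely delicate point is verifying that $m$ is a node (so that \autoref{lm:CSh} and \autoref{lm:usefulIdentityLinkingEdge}, stated for nodes, apply) and handling the degenerate configurations where two of $u,v,w$ coincide or where the meeting point $m$ equals one of $v,w$ — these collapse either to Case 1 or to the trivial $\wtuv{v}{v}=\du{v}$ conventions, but they must be checked. A secondary bookkeeping point is that the division steps above are division of positive integers and one should either phrase everything multiplicatively (clear denominators throughout) or note that all linking numbers and total weights are strictly positive, so the manipulations are legitimate. I would write the argument multiplicatively to avoid any fractions: from $\wtuv{u}{m}\,\wtuv{m}{v}\cdot \wtuv{u}{m}\,\wtuv{m}{w} = \wtuv{u}{v}\,\wtuv{u}{w}\,\du{m}^2$ and $\wtuv{v}{m}\,\wtuv{m}{w} = \wtuv{v}{w}\,\du{m}$, substitute to get $\wtuv{u}{m}^2\,\wtuv{v}{w}\,\du{m} = \wtuv{u}{v}\,\wtuv{u}{w}\,\du{m}^2$, cancel the positive factor $\du{m}$, and conclude via $\wtuv{u}{m}^2 \leq \du{u}\,\du{m}$.
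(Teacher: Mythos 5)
Your argument is correct and is essentially the paper's proof: both identify the median vertex of the tripod spanned by $u,v,w$ (your $m$, the paper's $a$), apply \autoref{lm:usefulIdentityLinkingEdge} to the three geodesics through it to get $\wtuv{u}{v}\,\wtuv{u}{w} = \wtuv{v}{w}\,(\wtuv{u}{m})^2/\du{m}$, and conclude with \autoref{lm:CSh} applied to the pair $\{u,m\}$, with equality exactly when $m=u$. Your extra care about $m$ being a node is fine but largely automatic here, since $v,w$ are assumed to be nodes and any genuine branch point of the tree has valency at least three.
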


\begin{proof}   
  Consider the tree $T$ spanned by $u,v$ and $w$ and let $a$ be the unique node 
  in the intersections of the three geodesics $[u,v]$, $[u,w]$ and $[v,w]$. We prove 
  the inequality by a direct calculation. By~\autoref{lm:usefulIdentityLinkingEdge}  applied to the triples $\{a, u,v\}$, $\{a, u,w\}$ and $\{a, v,w\}$, we have:
  \[
    \wtuv{a}{u}\,\wtuv{a}{v} = \du{a} \, \wtuv{u}{v},
\quad     \wtuv{a}{u}\,\wtuv{a}{w} = \du{a} \, \wtuv{u}{w} \quad \text{ and } \quad
 \wtuv{a}{w}\,\wtuv{a}{v} = \du{a} \, \wtuv{w}{v}.   
  \]
  These expressions combined with the inequality~\eqref{eq:CSh} applied to the pair $\{a,u\}$ yield:
  \begin{equation*}\label{eq:simpleChain}
    \wtuv{u}{v}\,\wtuv{u}{w} =  \frac{\wtuv{a}{v}\, \wtuv{a}{u}}{\du{a}}\,\frac{\wtuv{a}{w}\, \wtuv{a}{u}}{\du{a}} = \frac{\wtuv{a}{v}\,\wtuv{a}{w}}{\du{a}} \frac{(\wtuv{a}{u})^2}{\du{a}} = \wtuv{v}{w}\frac{(\wtuv{a}{u})^2}{\du{a}} \leq \wtuv{v}{w} \,\du{u}.
    \end{equation*}
  Furthermore, \autoref{lm:CSh} confirms that equality is attained  if and only if $a=u$, that is, if and only if $u$ lies in the geodesic $[v,w]$. This concludes our proof.
\end{proof}

Before introducing splice type systems as defined by Neumann and Wahl in~\cite{NW 05bis, NW 05}, we set up notation arising from toric geometry. We write $\boxedo{n}:=|\leavesT{\Gamma}|$ for the number of leaves of the splice diagram $\Gamma$ (where $|\;|$ denotes the cardinality of a finite set) and  let $\boxedo{\Mwp{\leavesT{\Gamma}}}$ be the free abelian group generated by all leaves of $\Gamma$. We denote by $\boxedo{\Nwp{\leavesT{\Gamma}}}$ its dual lattice and write the associated pairing using dot product notation, i.e. $\wu{}\cdot m$ whenever $\wu{} \in \Nwp{\leavesT{\Gamma}}$ and $m\in \Mwp{\leavesT{\Gamma}}$.
Fixing a basis $\{\boxedo{\wu{\lambda}}: \lambda \in \leavesT{\Gamma}\}$ for $\Nwp{\leavesT{\Gamma}}$ and its dual basis $\{\boxedo{m_{\lambda}}: \lambda \in \leavesT{\Gamma}\}$ for  $\Mwp{\leavesT{\Gamma}}$ identifies both lattices with $\Z^{n}$.   To each $m_{\lambda}$, we associated a variable $\boxedo{z_{\lambda}}$. 
We view $\Mwp{\leavesT{\Gamma}}$ as the lattice of exponents 
of monomials in those variables and $\Nwp{\leavesT{\Gamma}}$ as the associated lattice of weight vectors.

In addition to defining weights for all leaves of $\Gamma$, each node $u$ in $\Gamma$ has an associated  weight vector:
\begin{equation}\label{eq:wu}
  \boxedo{\wu{u}} :=\sum_{\lambda \in \leavesT{\Gamma}} \wtuv{u}{\lambda} \, \wu{\lambda} \in \Nwp{\leavesT{\Gamma}}. 
  \end{equation}

As was mentioned in~\autoref{sec:introduction}, star splice diagrams $\Gamma$ with a unique node $v$ produce Pham-Brieskorn-Hamm singularities using the monomials $\{z_{\lambda}^{\du{v,\lambda}}: \lambda \in \leavesT{\Gamma}\}$. Neumann and Wahl's splice type systems~\cite{NW 05bis, NW 05} generalize this construct to diagrams with more than one node. In addition to satisfying the edge determinant condition, $\Gamma$ must have an extra arithmetic property that allows to replace each monomial $z_v^{\du{v,\lambda}}$ by a suitable monomial associated to the pair $(v,e)$ where $v$ is any node and $e$ is an edge adjacent to it (see~\eqref{eq:admissibleMon}).  This property, which ensures that all monomials associated to a vertex $v$ have the same  $\wu{v}$-degree, will automatically hold for star splice diagrams. 

\begin{definition}\label{def:semgpcond}  
  A splice diagram $\Gamma$ satisfies the \emph{semigroup condition} if for each node $v$ and each edge $e\in \starT{}{v}$,  the total weight  $\du{v}$ of $v$ 
 belongs to the subsemigroup of $(\N, +)$ generated by the set of linking numbers between $v$ and the leaves $\lambda$ seen from $v$ in the direction of $e$, that is, such that $e \subseteq [v,\lambda]$. 
 Therefore, we may write:
\begin{equation}\label{eq:PowersadmisibleMon}
         \du{v} = \sum_{\lambda \in \nodesTevRoot{v,e}} \wtNveL{v}{e}{\lambda} \, \wtuv{v}{\lambda}\,, \quad    
         \text{ or equivalently } \quad \du{v,e} = \sum_{\lambda \in \nodesTevRoot{v,e}} \wtNveL{v}{e}{\lambda} \, \wtuv{v}{\lambda}',
\end{equation}
where $\boxedo{\wtNveL{v}{e}{\lambda}}\in \N$ for all $\lambda$ and
$\boxedo{\nodesTevRoot{v,e}}$ is the set of leaves $\lambda$ of $\Gamma$ with $e\subseteq [v,\lambda]$.
  \end{definition}

Assume that $\Gamma$ satisfies the semigroup condition and pick coefficients $\wtNveL{v}{e}{\lambda}$ satisfying ~\eqref{eq:PowersadmisibleMon}. 
Using these integers we define an exponent vector (i.e., an element of $\Mwp{\leavesT{\Gamma}}$) for each pair $(v,e)$ as above:
\begin{equation}\label{eq:admissibleMonExp}
  \boxedo{\wtNve{v}{e}} := \sum_{\lambda \in \nodesTevRoot{v,e}}
  \wtNveL{v}{e}{\lambda} \,m_{\lambda} \in  \Mwp{\nodesTevRoot{v,e}}\subset \Mwp{\leavesT{\Gamma}}.
\end{equation}
Following~\cite{NW 05}, we refer to it as an \emph{admissible exponent} for $(v,e)$.
Note that the relation~\eqref{eq:PowersadmisibleMon} is equivalent to:
\begin{equation}\label{eq:admMonwuvalue} \wu{v}\cdot \wtNve{v}{e} = \du{v}.
\end{equation}

Each admissible exponent  $\wtNve{v}{e}$ defines an \emph{admissible monomial}, which was denoted by $M_{v,e}$ in~\cite{NW 05}:
\begin{equation}\label{eq:admissibleMon}
\boxedo{\zexp{\wtNve{v}{e}}} := \prod_{\lambda \in \nodesTevRoot{v,e}}
z_{\lambda}^{\wtNveL{v}{e}{\lambda}}.
\end{equation}

\begin{definition}   \label{def:splicesystem}
      Let $\Gamma$ be a splice diagram which satisfies both the edge determinant and the semigroup conditions of \textcolor{blue}{Definitions}~\ref{def:edgedet} and \ref{def:semgpcond}. We fix an order for its set  $\leavesT{\Gamma}$ of $n$ leaves.
        \begin{itemize}
             \item A \emph{strict splice type system} associated to $\Gamma$ 
                 is a finite family  of $(n-2)$ polynomials of the form:
                \begin{equation}\label{eq:surface}
                         \boxedo{ \fvi{v}{i}(\zu)}   :=\sum_{e\in {\starT{
                               }{v}}} \!\!\!\!    \boxedo{\cvei{v}{e}{i}}  \; \zexp{\wtNve{v}{e}} \quad 
                               \text{for all }   i\in \{1,\dots, \valv{v}-2\} \, \text{ and } v\text{  a node of }\Gamma,
                 \end{equation}
                     where  $\wtNve{v}{e} \in \Mw{\Gamma} $ are the admissible exponent vectors defined by~\eqref{eq:admissibleMonExp} for each node  $v\in \Gamma$ and each edge $e \in \starT{}{v}$.
                    We also require the coefficients $\cvei{v}{e}{i}$ to satisfy the \emph{Hamm determinant conditions}.     
                   Namely, for any node $v\in \Gamma$,  if we fix an ordering 
                   of the edges in $\starT{}{v}$,  then all the maximal minors of the matrix of coefficients $(\cvei{v}{e}{i})_{e,i} \in \CC^{\valv{v} \times (\valv{v}-2)}$ must be non-zero.

             \item A \emph{splice type system} $ \boxedo{\sG{\Gamma}} $  
             associated to $\Gamma$ is a finite family  of power series  of the form 
                \begin{equation}\label{eq:surfaceSeries}
                    \boxedo{ \fgvi{v}{i}(\zu)}   := \fvi{v}{i}(\zu)  + \gvi{v}{i}(\zu)
                    \quad \text{for all }   i\in \{1,\dots, \valv{v}-2\} \, \text{ and } v\text{  a node of }\Gamma,
                 \end{equation}
                where the collection $(\fvi{v}{i})_{v,i}$ is a strict splice type system associated to $\Gamma$ and each $\gvi{v}{i}$ is a convergent power series satisfying the following condition for each exponent $m$ in the support of $\gvi{v}{i}$:
                  \begin{equation}\label{eq:gviConditions}
                           \wu{v}\cdot m > \du{v} \quad \text{ and }\quad \wu{u} \cdot m > \wtuv{u}{v} 
                           \quad \text{ for each node } u \text{ of } \Gamma \text{ with } u\neq v.
                  \end{equation}
                                    \item A \emph{splice type singularity} associated to  $\Gamma$ is the subgerm of $(\CC^n, 0)$ defined  by $\sG{\Gamma}$.
     \end{itemize}
\end{definition} 

\begin{remark}\label{rm:gviConditions} 
The inequalities in \eqref{eq:gviConditions} should be compared with the equality imposed in~\eqref{eq:admMonwuvalue}.
As was shown by Neumann and Wahl in~\cite[Lemma 3.2]{NW 05bis}, the right-most inequality in~\eqref{eq:gviConditions} follows from the left-most one and the edge determinant condition. We choose to include both inequalities in~\eqref{eq:gviConditions} for mere convenience since we will need both of them for several arguments in~\autoref{sec:local-trop-newm}.
          \end{remark}

The issue of dependency of the set of germs defined by splice type systems on the choice of admissible monomials is a subtle one. We postpone this discussion to~\autoref{sec:natur-splice-type}.

As was mentioned in~\autoref{sec:introduction}, splice type singularities satisfy the following crucial property, proved by Neumann and Wahl in~\cite[Thm. 2.6]{NW 05bis}.     
An alternative proof of this statement, using local tropicalization, 
will be provided at the end of~\autoref{sec:NewtonND}.
    
    \begin{theorem}   \label{thm:spliceicis}
     Splice type singularities are isolated complete intersection surface singularities. 
\end{theorem}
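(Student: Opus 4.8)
The goal is to prove \autoref{thm:spliceicis}: that a splice type singularity $(X,0) \hookrightarrow \CC^n$ defined by a splice type system $\sG{\Gamma}$ is an isolated complete intersection singularity. The plan has two logically separate parts: (i) show the $n-2$ power series $\fgvi{v}{i}$ form a regular sequence in $\CC\{z_1,\dots,z_n\}$, so that $(X,0)$ is a complete intersection of the expected codimension $n-2$, hence of dimension two; and (ii) show that the singular locus of $X$ is concentrated at the origin. The natural tool throughout is a degeneration to the initial forms with respect to the weight vector $\wu{u}$ of a node $u$, since the conditions \eqref{eq:gviConditions} are precisely designed so that $\initwf{\wu{u}}{\fgvi{v}{i}}$ is controlled: for $v=u$ the initial form is the minimal polynomial $\fvi{u}{i}$, while for $v \neq u$ one needs to track which monomials survive.

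\textbf{Step 1: the complete intersection / regular sequence statement.} First I would establish that $(\fgvi{v}{i})_{v,i}$ is a regular sequence. The clean way is to pass to initial forms with respect to a sufficiently generic positive weight vector $\wu{}$ lying in the relative interior of a maximal cone of the local tropicalization (or, more robustly, to argue via the node weight vectors): by the appendix result \cite[Lemma 3.3]{NW 05bis} (see \autoref{sec:appendixL3.3} and \autoref{cor:initialForms}), if the sequence of $\wu{}$-initial forms is regular in $\CC[z_1^{\pm},\dots,z_n^{\pm}]$ (equivalently, cuts out a variety of the expected dimension in the torus), then the original sequence is regular. So it suffices to exhibit one positive weight vector — say $\wu{u}$ for a well-chosen node, or a generic positive combination — for which the initial forms define a complete intersection of codimension $n-2$ in $(\CC^*)^n$. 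For a star diagram this is essentially Hamm's computation: the initial forms are generalized Pham--Brieskorn polynomials $\sum_\lambda \cvei{v}{e}{i}\, z_\lambda^{\du{v,\lambda}}$ whose Hamm minor conditions force a complete intersection. For a general diagram one argues by an induction on the number of nodes, splicing along an edge; alternatively, one uses the weight vector $\wu{u}$ of a node $u$, computes that $\initwf{\wu{u}}{\fgvi{u}{i}} = \fvi{u}{i}$ depends only on the "leaf variables near $u$" while $\initwf{\wu{u}}{\fgvi{v}{i}}$ for $v\neq u$ reduces (after the estimates \eqref{eq:gviConditions}, \autoref{lm:CSh}, \autoref{pr:hypermetricineq}) to the corresponding minimal polynomials of the sub-diagrams obtained by cutting at $u$, and then a dimension count plus induction gives the claim.

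\textbf{Step 2: isolated singularity.} For the isolatedness statement I would show that for every proper subset $S \subsetneq \{1,\dots,n\}$ the intersection $X \cap \{z_j = 0 : j \in S\}$ has the right dimension and, more importantly, that the restriction of the system to each coordinate stratum is again (essentially) a splice type system for a smaller splice diagram, so that by induction on $n$ the singular locus of $X$ meets each stratum only at $0$. The key combinatorial input is that setting a block of leaf variables to zero corresponds to pruning the corresponding leaves from $\Gamma$; the admissible monomials $\zexp{\wtNve{v}{e}}$ that survive are exactly those supported on the remaining leaves, and the surviving equations still satisfy Hamm's minor conditions (a sub-matrix of maximal minors nonzero). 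Combined with the complete intersection property from Step 1 applied to these smaller diagrams, the Jacobian criterion gives smoothness of $X \setminus \{0\}$: away from all coordinate hyperplanes one checks via the torus degeneration that the initial forms define a smooth complete intersection in $(\CC^*)^n$ (Hamm's minor conditions again), and near a coordinate stratum one uses the inductive structure. This is exactly the strategy the introduction advertises: "by analyzing the local tropicalizations of the germ intersected with the coordinate subspaces of $\CC^n$".

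\textbf{Main obstacle.} The crux is Step 1 — controlling the initial forms $\initwf{\wu{u}}{\fgvi{v}{i}}$ for $v \neq u$ and verifying that the resulting system still has the expected codimension. The difficulty is twofold: one must check that the monomials surviving in the initial form are precisely the admissible ones attached to the "side of $u$ facing $v$", which requires the sharp inequalities in \autoref{pr:hypermetricineq} and \autoref{lm:usefulIdentityLinkingEdge} to rule out cross-terms; and one must organize the induction on the tree (splicing at an edge or pruning at a node) so that Hamm's determinant conditions are inherited by the sub-diagrams. Once the local tropicalization has been identified (via \autoref{thm:main2}, proved in \autoref{sec:local-trop-newm}), this becomes a finite check on each maximal cone, which is why the paper defers the tropical proof to the end of \autoref{sec:NewtonND}; but giving a self-contained argument here without \autoref{thm:main2} is the genuinely technical part, and I expect it to rest on a careful case analysis of the node weight vectors together with the Cauchy--Schwarz-type estimates of \autoref{lm:CSh}.
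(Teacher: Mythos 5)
Your overall plan (degenerate to initial forms at node weight vectors, invoke the appendix criterion for regularity, then analyze coordinate strata for isolatedness) is close in spirit to the paper's toolkit, but as written it has two genuine gaps.

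First, in Step 1 you assert that regularity of the initial forms is ``equivalent'' to their cutting out the expected dimension in $(\CC^*)^n$. This is false, and \autoref{thm:newL3.3} requires regularity of $(\initwf{\wu{}}{f_1},\dots,\initwf{\wu{}}{f_s})$ in the local ring $\cO$, i.e.\ that their common zero locus in $\CC^n$ has codimension $s$; components hiding inside coordinate subspaces are invisible in the torus but destroy regularity (e.g.\ $(z_1z_2,\,z_1z_3)$ has empty zero locus in the torus but is not a regular sequence in $\CC\{z_1,z_2,z_3\}$). Ruling out such excess components of $V(\initwf{\wu{u}}{\fgvi{v}{i}})$ on the coordinate hyperplanes is precisely the hard point: the paper does it either via the end-curve theorem (\autoref{thm:end-curvesNW}, item (3) of \autoref{lm:wu}: the end-curves meet coordinate subspaces only at the origin) or, in its self-contained tropical proof, via the boundary-stratum computations of \autoref{ssec:infinite-tropicalization} (\textcolor{blue}{Corollaries}~\ref{cor:codim2boundaryStataGerm}--\ref{cor:expDimensionsG}). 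Your plan never supplies this input for Step~1.

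Second, in Step 2 the claim that restricting the system to a coordinate stratum yields ``again a splice type system for a smaller splice diagram'' is incorrect: setting $z_\lambda=0$ kills the admissible monomial $z_\lambda^{\du{v,\lambda}}$ and produces the end-curve system of \autoref{def:endcurverSR}, which defines a \emph{curve} (\autoref{cor:codim1boundaryStataGerm}), not a splice type surface singularity, so the proposed induction on $n$ does not get started. Moreover, smoothness of the initial degenerations in the torus does not by itself give smoothness of $X$ away from $0$; one needs the Varchenko--Oka theorem that a Newton non-degenerate complete intersection admits an embedded toric resolution, combined with the fact that $\Trop X$ meets the boundary of $(\Rp)^n$ only along the coordinate rays, so that the toric modification is an isomorphism off the origin --- this is exactly how the paper concludes in \autoref{cor:isolatedSing}. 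Your ``Jacobian criterion'' step needs to be replaced by (or reduced to) that argument, or by Neumann--Wahl's direct computation in \cite[Thm.~2.6]{NW 05bis}.
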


        \begin{figure}[tb]
    \includegraphics[scale=0.7]{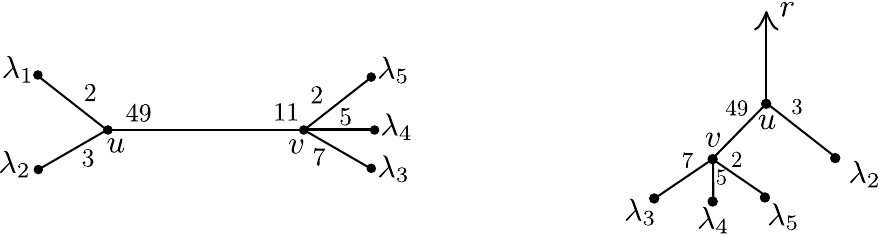}
        \caption{From left to right: a splice diagram and its associated rooted diagram obtained by fixing one of the leaves as its root $\roottree$, and removing one weight from the star of each node.\label{fig:numExampleSimpleEndCurve}}
        \end{figure}

\begin{example}\label{ex:2NodesNW}
  We let $\Gamma$ be the splice diagram to the left of \autoref{fig:numExampleSimpleEndCurve}. 
  Then, $\du{u} = 294$, $\du{v}= 770$ and $\wtuv{u}{v} = 420$ and so the edge determinant condition $\du{u}\,\du{v}>\wtuv{u}{v}^2$ holds for $[u,v]$.

 The semigroup condition is also satisfied, since 
      \begin{equation*}
               49 =   0 \cdot   (2 \cdot 5) +  1 \cdot  ( 2 \cdot 7)   +  1 \cdot (5 \cdot 7) \quad \text{ and } \quad
               11  =   1  \cdot (3) + 4 \cdot  (2) = 3  \cdot (3) + 1 \cdot  (2).
      \end{equation*}
 Thus, we may take as exponents $\wtNve{u}{[u,v]} = (0,0,0,1,1)$ and  $\wtNve{v}{[u,v]} = (1,4,0,0,0)$ or $(3,1,0,0,0)$ in $\Z^5$. 
  A possible  strict splice type system for $\Gamma$ is:
  \begin{equation}\label{eq:numExampleSurface}
    \begin{cases}  \fvi{u}{1}:= \;\;\;\, z_1^{2}\;\; \;-\;\;\;2\; z_2^3\; +\;\;\; z_4\,z_5 , \\
  \fvi{v}{1}:=\;\;\;\; z_1z_2^4 +  z_3^7 + \;\; z_4^5\, - \;2155\; z_5^2   , \\
   \fvi{v}{2}:=  33\, z_1z_2^4 +  z_3^7 + 2\,z_4^5 - \; 2123 \, z_5^2 \,  .
    \end{cases}
  \end{equation}
 An alternative system is obtained by replacing the admissible monomial $z_1z_2^4$ with $z_1^3z_2$.   The coefficients of the system were chosen to simplify the parameterization of the end-curve of the  corresponding splice type surface singularity associated to the leaf $\lambda_1$ (see~\autoref{ex:NumExample3} for details).
\end{example}

        A central role in this paper will be played by tropicalizations and weighted initial forms of series and ideals of $\CC\{z_{\lambda}: \lambda \in \leavesT{\Gamma}\}$, which we discuss in~\autoref{sec:local-trop}. In~\autoref{pr:InitWufvi}, we determine the initial forms of the series $\fgvi{v}{i}$ of a splice type system $\sG{\Gamma}$ with respect to each weight vector $\wu{u}$ from~\eqref{eq:wu}. Its proof is a consequence of the next two lemmas:

\begin{lemma}\label{rm:weightsAdjacent} Assume that $\Gamma$ is a splice diagram satisfying the edge determinant condition. Then,  for any pair  of adjacent nodes $u,v$ of $\Gamma$ we have:
  \begin{equation*}
    \wu{u} \in\; \frac{\wtuv{u}{v}}{\du{v}}\, \wu{v} + \R_{>0}\langle \wu{\lambda}\colon \lambda \in \nodesTevRoot{v,[u,v]}\rangle.
  \end{equation*}
\end{lemma}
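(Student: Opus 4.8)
The plan is to compute $\wu{u}$ in terms of the basis $\{\wu{\lambda}\}$ by splitting the leaves of $\Gamma$ according to which side of the edge $[u,v]$ they lie on, and then to compare the coefficients of those leaves with the coefficients appearing in $\frac{\wtuv{u}{v}}{\du{v}}\wu{v}$. Since $u$ and $v$ are adjacent, removing the edge $[u,v]$ disconnects $\Gamma$ into two subtrees; write $\leavesT{\Gamma} = \nodesTevRoot{u,[u,v]} \sqcup \nodesTevRoot{v,[u,v]}$, where the first set consists of leaves $\lambda$ with $u \in [v,\lambda]$ (equivalently $[u,v]\subseteq [v,\lambda]$) and the second consists of leaves $\lambda$ with $v \in [u,\lambda]$. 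By~\eqref{eq:wu}, $\wu{u} = \sum_{\lambda} \wtuv{u}{\lambda}\,\wu{\lambda}$, so it suffices to show that for every leaf $\lambda \in \nodesTevRoot{u,[u,v]}$ (the side ``away from $v$'') we have the exact equality $\wtuv{u}{\lambda} = \frac{\wtuv{u}{v}}{\du{v}}\,\wtuv{v}{\lambda}$, while for the leaves on the $v$-side the coefficient of $\wu{\lambda}$ in $\wu{u}$ is at least $\frac{\wtuv{u}{v}}{\du{v}}\wtuv{v}{\lambda}$; the latter ``$\geq$'' contributions (which may be strict) are exactly what lands in the cone $\Rp\langle \wu{\lambda}\colon \lambda \in \nodesTevRoot{v,[u,v]}\rangle$.

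The first equality is a direct consequence of~\autoref{lm:usefulIdentityLinkingEdge}: for $\lambda \in \nodesTevRoot{u,[u,v]}$ the node $u$ lies on the geodesic $[v,\lambda]$, so applying the lemma to the triple $\{v, u, \lambda\}$ (with $u$ in the middle) gives $\wtuv{v}{u}\,\wtuv{u}{\lambda} = \wtuv{v}{\lambda}\,\du{u}$. Hmm — this is not quite the relation I wanted; let me instead apply~\autoref{lm:usefulIdentityLinkingEdge} to $\{u, v, \lambda\}$ with $v$ in the middle, valid when $\lambda \in \nodesTevRoot{v,[u,v]}$: then $\wtuv{u}{v}\,\wtuv{v}{\lambda} = \wtuv{u}{\lambda}\,\du{v}$, i.e. $\wtuv{u}{\lambda} = \frac{\wtuv{u}{v}}{\du{v}}\,\wtuv{v}{\lambda}$ \emph{for leaves on the $v$-side}. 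So in fact it is the $v$-side leaves whose coefficients match \emph{exactly} the multiple $\frac{\wtuv{u}{v}}{\du{v}}\wu{v}$, and it is the coefficients of the remaining leaves — those in $\nodesTevRoot{u,[u,v]}$ — that must be shown to be nonnegative after subtracting. But $\wtuv{v}{\lambda} = 0$-type terms do not occur; rather, for $\lambda$ on the $u$-side, $\wtuv{v}{\lambda}$ is still a positive integer, and I must compare $\wtuv{u}{\lambda}$ against $\frac{\wtuv{u}{v}}{\du{v}}\wtuv{v}{\lambda}$. Applying~\autoref{lm:usefulIdentityLinkingEdge} to $\{v,u,\lambda\}$ with $u$ in the middle gives $\wtuv{v}{u}\,\wtuv{u}{\lambda} = \wtuv{v}{\lambda}\,\du{u}$, hence $\frac{\wtuv{u}{v}}{\du{v}}\wtuv{v}{\lambda} = \frac{\wtuv{u}{v}}{\du{v}}\cdot\frac{\wtuv{v}{u}\,\wtuv{u}{\lambda}}{\du{u}} = \frac{\wtuv{u}{v}^2}{\du{u}\,\du{v}}\,\wtuv{u}{\lambda} \leq \wtuv{u}{\lambda}$, where the inequality is~\autoref{lm:CSh}. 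This is exactly the needed nonnegativity of the difference of coefficients for the $u$-side leaves.

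Putting this together: $\wu{u} - \frac{\wtuv{u}{v}}{\du{v}}\wu{v} = \sum_{\lambda \in \nodesTevRoot{v,[u,v]}}\bigl(\wtuv{u}{\lambda} - \tfrac{\wtuv{u}{v}}{\du{v}}\wtuv{v}{\lambda}\bigr)\wu{\lambda} + \sum_{\lambda \in \nodesTevRoot{u,[u,v]}}\bigl(\wtuv{u}{\lambda} - \tfrac{\wtuv{u}{v}}{\du{v}}\wtuv{v}{\lambda}\bigr)\wu{\lambda}$; the first sum vanishes term by term by the middle-$v$ identity, and each coefficient in the second sum is $\geq 0$ by the computation above (via the middle-$u$ identity and~\autoref{lm:CSh}), so the difference lies in $\Rp\langle \wu{\lambda}\colon \lambda \in \nodesTevRoot{u,[u,v]}\rangle$. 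I realize the stated lemma has $\nodesTevRoot{v,[u,v]}$ as the index set of the cone, so I should double-check the orientation conventions in~\autoref{def:semgpcond} for $\nodesTevRoot{v,e}$ — it is the set of leaves $\lambda$ with $e \subseteq [v,\lambda]$, i.e. the leaves on the far side of $v$ through the edge, which is the ``$u$-side'' relative to the edge $[u,v]$ when $e = [u,v]$; so $\nodesTevRoot{v,[u,v]}$ in the paper's notation is precisely my set of $u$-side leaves, and the statement is consistent. The main (and really only) obstacle is bookkeeping: getting the two geodesic-membership cases of~\autoref{lm:usefulIdentityLinkingEdge} assigned to the correct leaf subsets and keeping the $\nodesTevRoot{}$ orientation straight; the arithmetic itself is one application each of~\autoref{lm:usefulIdentityLinkingEdge} and~\autoref{lm:CSh}.
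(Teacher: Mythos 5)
Your argument is correct and is essentially the paper's proof: decompose the leaves according to which side of $[u,v]$ they lie on, use \autoref{lm:usefulIdentityLinkingEdge} to see that the coefficients match exactly on the $v$-side, and check that the leftover coefficients on the $u$-side are nonnegative --- your appeal to \autoref{lm:CSh} is equivalent to the paper's direct use of $\det([u,v])>0$, since $\du{u}\,\du{v}-\wtuv{u}{v}^2=\wtuv{u}{v}\det([u,v])$ for adjacent nodes. The only blemish is notational: in your final displayed decomposition the labels are still swapped (per \autoref{def:semgpcond}, $\nodesTevRoot{v,[u,v]}$ is the $u$-side, where the positive corrections live, and $\nodesTevRoot{u,[u,v]}$ is the $v$-side, where the coefficients cancel), so that display literally asserts membership in the wrong cone; your closing sentence identifies the convention correctly, but the display two lines above it should be fixed to match.
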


\begin{proof} We write $e=[u,v]$. The definition of linking numbers gives  the following expressions for each  $\lambda \in \leavesT{\Gamma}$: 
  \[
  \wtuv{u}{\lambda} =\begin{cases}
  (\wtuv{v}{\lambda}\, \du{u})/(\du{v,u}\,\du{u,v}) & \text{ if }\lambda \in \nodesTevRoot{u,e}, \\
(\wtuv{v}{\lambda}\, \du{v,u}\,\du{u,v})/\du{v} & \text{ if } \lambda \in \nodesTevRoot{v,e}.
  \end{cases}
  \]
  
\noindent 
The statement follows by substituting these expressions in the definition of $\wu{u}$ from~\eqref{eq:wu} and by using the edge determinant condition, i.e.,
\begin{equation}\label{eq:wuwv}
  \wu{u} =  \frac{\du{u}}{\du{v,u}\,\du{u,v}}\! \sum_{\lambda \in \nodesTevRoot{u,e}}
  \wtuv{v}{\lambda}\, \wu{\lambda} + \frac{\du{v,u}\,\du{u,v}}{\du{v}} \sum_{\lambda \in \nodesTevRoot{v,e}}  \!\!\wtuv{v}{\lambda} \,\wu{\lambda} =
  \frac{\wtuv{u}{v}}{\du{v}}\,\wu{v} +  \sum_{\lambda \in \nodesTevRoot{v,e}}   \underbrace{\frac{\det(e)\,\wtuv{v}{\lambda}}{\du{v}}}_{>0}\, \wu{\lambda}.\qedhere
\end{equation}
  \end{proof}

\begin{lemma}\label{lem:keyid}
  Assume that $\Gamma$ satisfies the edge determinant and semigroup conditions. Then, the exponent vector $\wtNve{v}{e}$ from~\eqref{eq:admissibleMonExp} satisfies 
  $\displaystyle{ \wu{u} \cdot \wtNve{v}{e}  \geq \wtuv{u}{v}}$  for all nodes $u$ of $\Gamma$ and each edge $e\in \starT{\Gamma}{v}$.   
  Furthermore, equality holds if and only if $e\not \subseteq [u,v]$.
\end{lemma}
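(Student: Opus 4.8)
The plan is to reduce the inequality to the Cauchy--Schwarz-type bound of \autoref{lm:CSh}, applied one leaf at a time, and then to sum using the semigroup relation \eqref{eq:PowersadmisibleMon}.

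\textbf{Step 1 (unpack the pairing).} Under the identification of $\Nwp{\leavesT{\Gamma}}$ and $\Mwp{\leavesT{\Gamma}}$ as dual lattices we have $\wu{u}\cdot m_{\lambda} = \wtuv{u}{\lambda}$ for every leaf $\lambda$, so from \eqref{eq:admissibleMonExp},
\[
  \wu{u}\cdot\wtNve{v}{e} \;=\; \sum_{\lambda\in\nodesTevRoot{v,e}} \wtNveL{v}{e}{\lambda}\,\wtuv{u}{\lambda}.
\]
It therefore suffices to compare $\wtuv{u}{\lambda}$ with $\tfrac{\wtuv{u}{v}}{\du{v}}\,\wtuv{v}{\lambda}$ for each $\lambda\in\nodesTevRoot{v,e}$ and then weight by $\wtNveL{v}{e}{\lambda}\ge 0$, using $\sum_{\lambda}\wtNveL{v}{e}{\lambda}\,\wtuv{v}{\lambda}=\du{v}$ (which also guarantees some coefficient is positive).

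\textbf{Step 2 (a pointwise identity via medians).} Fix $\lambda\in\nodesTevRoot{v,e}$ and let $a=a(\lambda)$ be the median of the three vertices $u,v,\lambda$, i.e. the unique vertex lying on all three geodesics $[u,v]$, $[v,\lambda]$, $[u,\lambda]$. Since $\lambda$ is a leaf it cannot be an interior vertex of a geodesic, so $a$ is either $v$ or a node. Applying \autoref{lm:usefulIdentityLinkingEdge} to the triples $\{v,a,\lambda\}$, $\{v,a,u\}$, $\{u,a,\lambda\}$ (legitimate, since $a$ lies on each relevant geodesic, with the convention $\wtuv{x}{x}=\du{x}$) and eliminating $\wtuv{a}{\lambda}$ and $\wtuv{a}{u}$, I get
\[
  \du{v}\,\wtuv{u}{\lambda} \;=\; \wtuv{u}{v}\,\wtuv{v}{\lambda}\cdot\frac{\du{v}\,\du{a}}{\wtuv{v}{a}^{2}}.
\]
By \autoref{lm:CSh} (for the pair of nodes $\{v,a\}$, or trivially when $a=v$), $\du{v}\,\du{a}\ge\wtuv{v}{a}^{2}$, with equality exactly when $a=v$; hence the correction factor is $\ge 1$, and equals $1$ precisely when $a=v$.

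\textbf{Step 3 (identify the cases and sum).} The combinatorial point is that, because $e\subseteq[v,\lambda]$ for every $\lambda\in\nodesTevRoot{v,e}$, the geodesics $[v,u]$ and $[v,\lambda]$ begin with the same edge $e$ if and only if $e\subseteq[u,v]$; equivalently, $a(\lambda)\ne v$ for all such $\lambda$ when $e\subseteq[u,v]$, while $a(\lambda)=v$ for all such $\lambda$ when $e\not\subseteq[u,v]$. Combining this dichotomy with the displayed identity and summing over $\lambda$ as in Step 1 yields $\wu{u}\cdot\wtNve{v}{e}=\wtuv{u}{v}$ when $e\not\subseteq[u,v]$ and $\wu{u}\cdot\wtNve{v}{e}>\wtuv{u}{v}$ when $e\subseteq[u,v]$, which is the assertion. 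The only point requiring care is the bookkeeping with the degenerate instances of \autoref{lm:usefulIdentityLinkingEdge} when $a$ coincides with $u$ or $v$, and checking that the median-based case split matches the stated dichotomy exactly; everything else is the two short computations above. (Alternatively, the pointwise inequality $\du{v}\,\wtuv{u}{\lambda}\ge\wtuv{u}{v}\,\wtuv{v}{\lambda}$ with its equality criterion is precisely \autoref{pr:hypermetricineq} allowing a leaf in the third slot, whose proof goes through verbatim, and one could cite that instead.)
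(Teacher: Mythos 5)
Your proof is correct, but it takes a genuinely different route from the paper's. The paper argues by induction on $\distGuv{\Gamma}{u}{v}$, repeatedly rewriting $\wu{u}$ via the recursion~\eqref{eq:wuwv} in terms of the weight vector of the adjacent node $u'$ on $[u,v]$ and invoking~\autoref{pr:hypermetricineq} in the inductive step. You instead expand the pairing as $\sum_{\lambda\in\nodesTevRoot{v,e}}\wtNveL{v}{e}{\lambda}\,\wtuv{u}{\lambda}$ and prove the pointwise bound $\du{v}\,\wtuv{u}{\lambda}\geq \wtuv{u}{v}\,\wtuv{v}{\lambda}$ leaf by leaf, which is exactly the hypermetric inequality of~\autoref{pr:hypermetricineq} with a leaf in the third slot (and, as you note, its proof carries over verbatim since the Cauchy--Schwarz bound~\eqref{eq:CSh} is only applied to the pair of nodes $\{v,a\}$). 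Your observation that the median $a(\lambda)$ equals $v$ for all $\lambda\in\nodesTevRoot{v,e}$ simultaneously, or for none, is the key point that makes the equality criterion come out uniformly, and the positivity of at least one $\wtNveL{v}{e}{\lambda}$ (forced by $\sum_\lambda\wtNveL{v}{e}{\lambda}\wtuv{v}{\lambda}=\du{v}>0$) correctly delivers strictness when $e\subseteq[u,v]$. The degenerate instances ($u=v$, or $a$ equal to $u$ or $v$) are indeed covered by the conventions $\wtuv{x}{x}=\du{x}$ in~\autoref{lm:usefulIdentityLinkingEdge}. Your version is shorter and exposes the lemma as the hypermetric inequality averaged against the semigroup relation; the paper's inductive version has the advantage of reusing formula~\eqref{eq:wuwv}, which is needed anyway for~\autoref{pr:InitWufvi}.
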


\begin{proof}  If $u=v$, then $\wu{v} \cdot \wtNve{v}{e}= \du{v} = \wtuv{v}{v}$.  
  If $u\neq v$, we argue by  induction on the distance $\distGuv{\Gamma}{u}{v}>0$ between $u$ and $v$ in the tree $\Gamma$. If $\distGuv{\Gamma}{u}{v}=1$, we let $e'=[u,v]$. Expression~\eqref{eq:wuwv} yields
  \begin{equation*}\label{eq:interm}
\wu{u} \cdot     \wtNve{v}{e}=    \frac{\wtuv{u}{v}}{\du{v}}  \,   \wu{v}\cdot \wtNve{v}{e} + \frac{\det(e')}{\du{v}} \big (\sum_{\lambda \in \nodesTevRoot{v,e'}} \wtNveL{v}{e}{\lambda}\,\wtuv{v}{\lambda} \big ) = \wtuv{u}{v} + \frac{\det(e')}{\du{v}} \big (\sum_{\lambda \in \nodesTevRoot{v,e'}} \wtNveL{v}{e}{\lambda}\,\wtuv{v}{\lambda} \big ).
  \end{equation*}
  The second summand is always non-negative and it equals zero if and only if $e\neq e'$.

  If $\distGuv{\Gamma}{u}{v}>1$, we let $u'$ be the unique node adjacent to $u$ in  $[u,v]$ and set $e':=[u',u]$. Note that $e\subseteq [v,u]$ if and only if $e\subseteq [v,u']$. Expression~\eqref{eq:wuwv} applied to $\{u,u'\}$, the non-negativity of each $\wtNveL{v}{e}{\lambda}$, the inductive hypotheses on $\{u',v\}$ and~\autoref{pr:hypermetricineq} yield
  \begin{equation*}\label{eq:d>1}\wu{u} \cdot \wtNve{v}{e} = \frac{\wtuv{u}{u'}}{\du{u'}} \underbrace{\wu{u'}\cdot \wtNve{v}{e}}_{\geq \wtuv{u'}{v}}  + \frac{\det(e')}{\du{u'}} \sum_{\lambda \in \nodesTevRoot{u',e'}}
    \underbrace{\wtNveL{v}{e}{\lambda}\wtuv{u'}{\lambda}}_{\geq 0} \; \geq  \frac{\wtuv{u}{u'}\,\wtuv{u'}{v}}{\du{u'}} = \wtuv{u}{v}.
      \end{equation*}
      By construction, equality is achieved if and only if $e\nsubseteq [u',v]$, which is equivalent to $e \nsubseteq [u,v]$.
\end{proof}

In \autoref{sec:NewtonND}, we will be interested in curves obtained from a given splice type system when we choose a  leaf  $\roottree$ of the corresponding splice diagram $\Gamma$ to be its root. We orient the resulting rooted tree $\boxedo{\Gamma_{\roottree}}$ towards the root and remove one weight in the neighborhood of each node, namely the one pointing towards the root,  as seen on the right of~\autoref{fig:numExampleSimpleEndCurve}.
We write $\boxedo{\rDG{\roottree}}$ for the set of $(n-1)$ non-root leaves of 
$\Gamma_{\roottree}$ and assume it is ordered.
 The following definition was introduced in~\cite[Section 3]{NW 05bis} by the name 
 of \emph{splice diagram curves}. 
 
\begin{definition}    \label{def:endcurverSR}
  Assume that the rooted splice diagram $\Gamma_{\roottree}$ satisfies the semigroup condition and consider a fixed strict splice type system $\sG{\Gamma}$ associated to the (unrooted) splice diagram $\Gamma$.
  For each node $v$ of $\Gamma$ and each index $i \in \{ 1, \dots, \valv{v}\}$,  we
     let $\boxedo{\hvi{v}{i}(\zu)} \in \CC[z_{\lambda}\colon \lambda \in \rDG{\roottree}]$ 
    be  the polynomial obtained from $ \fvi{v}{i}(\zu)$ by removing the term 
    corresponding to the unique edge adjacent to $v$  pointing towards $\roottree$.  
    The subvariety of $\CC^{\rDG{\roottree}}\simeq \CC^{n-1}$ defined by the vanishing of $ (\hvi{v}{i}(\zu))_{v,i}$ 
    is called the \emph{end-curve of} $\sG{\Gamma}$ relative to $\roottree$. 
    We denote it by $\boxedo{\Ccurve_\roottree}$.
\end{definition}

A planar embedding of $\Gamma_{\roottree}$ determines an ordering of the edges adjacent to a fixed node $v$ that point away from $\roottree$ (for example, by reading them from left to right). 
Once this order is fixed, by the Hamm determinant conditions,  each group of equations $ (\hvi{v}{i}(\zu)=0)_{i=1}^{\valv{v}-2}$ becomes equivalent to a collection of $\wu{v}$-homogeneous binomial equations  of the form
\[
\zexp{\wtNve{v}{e_j}} - a_{v,j} \zexp{\wtNve{v}{e_{\valv{v}-1}}}= 0 \quad \text{ for } j\in \{1,\ldots, \valv{v}-2\},
\]
with all $a_{v,j}\neq 0$.  The next statement summarizes the main properties of $\Ccurve_{\roottree}$ discussed in~\cite[Theorem 3.1]{NW 05bis}:

\begin{theorem}\label{thm:end-curvesNW} 
  The subvariety $\Ccurve_{\roottree}\subseteq \CC^{n-1}$ is
a reduced complete intersection curve, smooth away from the origin, and meets any coordinate subspace of $\CC^{n-1}$ only at the origin. It has $g$ many components, where $g:=\gcd\{\wtuv{\roottree}{\lambda}: \lambda \in \rDG{\roottree}\}$. All of them are isomorphic to torus-translates of the monomial curve in $\CC^{n-1}$ with parameterization $t\mapsto (t^{\wtuv{\roottree}{\lambda_1}/g}, \ldots, t^{\wtuv{\roottree}{\lambda_{n-1}}/g})$.
  \end{theorem}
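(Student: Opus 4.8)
The plan is to put the equations defining $\Ccurve_{\roottree}$ into purely binomial form, and then to run an induction on the number of nodes of $\Gamma$ in which one cuts off the node nearest the root. By the Hamm determinant conditions, after fixing an order of the edges at each node $v$ pointing away from $\roottree$, the ideal $(\hvi{v}{i}\colon i=1,\dots,\valv{v}-2)$ equals the ideal of binomials $\zexp{\wtNve{v}{e_j}}-a_{v,j}\,\zexp{\wtNve{v}{e_{\valv{v}-1}}}$, $j=1,\dots,\valv{v}-2$, each $a_{v,j}$ being a ratio of maximal minors of the coefficient matrix, hence nonzero. So $\Ccurve_{\roottree}$ is the zero scheme in $\CC^{n-1}$ of $\sum_{v}(\valv{v}-2)=n-2$ binomials. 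On the torus $(\CC^{*})^{n-1}$ these binomials have the form $\zu^{m}=a$ with $a\neq 0$, so in characteristic zero they cut out a reduced scheme whose underlying set, if nonempty, is a coset of the subtorus $T_{L}$ determined by the lattice $L\subseteq\Mwp{\rDG{\roottree}}$ generated by the exponents $\wtNve{v}{e_j}-\wtNve{v}{e_{\valv{v}-1}}$; such a coset has $[L^{\mathrm{sat}}:L]$ connected components. The theorem thus reduces to three assertions: $\rk L=n-2$ (so the torus part is one-dimensional, and nonempty, since $\CC^{*}$ is divisible); $[L^{\mathrm{sat}}:L]=g$; and no component of $\Ccurve_{\roottree}$ lies in a coordinate hyperplane. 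Half of the rank bound is immediate: by \autoref{lm:usefulIdentityLinkingEdge} applied with $v\in[\roottree,\lambda]$ together with the semigroup condition, $\sum_{\lambda\in\nodesTevRoot{v,e}}\wtNveL{v}{e}{\lambda}\,\wtuv{\roottree}{\lambda}=\wtuv{\roottree}{v}$ for every away-edge $e$ at $v$, so $(\wtuv{\roottree}{\lambda})_{\lambda}\in L^{\perp}$ and $T_{L}$ contains the one-dimensional subtorus $\{(s^{\wtuv{\roottree}{\lambda}/g})_{\lambda}:s\in\CC^{*}\}$, which is the ``standard monomial curve'' of the statement.

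For the induction, let $v_0$ be the node of $\Gamma$ adjacent to the edge issuing from $\roottree$ (so $\valv{v_0}\ge 3$), let $e_1,\dots,e_k$ (with $k=\valv{v_0}-1\ge 2$) be its away-edges, let $\Gamma_i$ be the subtree cut off by $e_i$, and let $\Gamma_i^{+}$ denote $\Gamma_i\cup e_i\cup\{v_0\}$ rooted at $v_0$, with the convention that the end-curve of $\Gamma_i^{+}$ is the whole affine line when $\Gamma_i$ is a single leaf. I would first verify that each $\Gamma_i^{+}$ again satisfies the edge determinant and semigroup conditions, that the away-directions at a node of $\Gamma_i$ agree in $\Gamma$ and in $\Gamma_i^{+}$, and that the admissible co-weights of the nodes of $\Gamma_i$ can be chosen compatibly; then the equations $\hvi{v}{i}$ with $v$ a node of $\Gamma_i$ are, as point sets, exactly the defining equations of the end-curve of $\Gamma_i^{+}$, while the remaining equations $\hvi{v_0}{i}$ couple the branches only through the $k$ monomials $\zexp{\wtNve{v_0}{e_i}}$. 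This exhibits $\Ccurve_{\roottree}$ as the set of tuples $(p^{(i)})_{i=1}^{k}$ with $p^{(i)}$ on the end-curve of $\Gamma_i^{+}$ and $\zexp{\wtNve{v_0}{e_i}}(p^{(i)})=a_{v_0,i}\,\zexp{\wtNve{v_0}{e_k}}(p^{(k)})$ for $i<k$. Substituting the inductive monomial parameterization of the $i$-th branch (with exponents $\wtuv{v_0}{\lambda}^{(\Gamma_i^{+})}/g_i$, linking numbers computed in $\Gamma_i^{+}$) and using both the semigroup identity $\sum_{\lambda}\wtNveL{v_0}{e_i}{\lambda}\,\wtuv{v_0}{\lambda}=\du{v_0}$ and the relation $\wtuv{v_0}{\lambda}^{(\Gamma_i^{+})}=\wtuv{v_0}{\lambda}\,\du{v_0,e_i}/\du{v_0}$, one finds that $\zexp{\wtNve{v_0}{e_i}}(p^{(i)})$ is a nonzero constant times $s_i^{\du{v_0,e_i}/g_i}$, so the coupling relations say precisely that $(s_1,\dots,s_k)$ satisfies a Pham--Brieskorn--Hamm-type binomial condition for the star at $v_0$. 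Since \autoref{lm:usefulIdentityLinkingEdge} also gives $\wtuv{\roottree}{\lambda}=\wtuv{\roottree}{v_0}\,\wtuv{v_0}{\lambda}/\du{v_0}$, the exponent of the resulting free parameter in $z_\lambda$ comes out proportional to $\wtuv{v_0}{\lambda}$, hence to $\wtuv{\roottree}{\lambda}$, with a constant independent of the branch; after normalizing, the branch curves assemble into torus-translates of the monomial curve with primitive exponent $(\wtuv{\roottree}{\lambda}/g)_{\lambda}$, the number of translates being $g=\gcd_{\lambda}\wtuv{\roottree}{\lambda}$. The base case is $\Gamma$ a star, where $\Ccurve_{\roottree}$ is the classical Pham--Brieskorn--Hamm curve $\{z_{\lambda_j}^{\du{v_0,\lambda_j}}=a_j\,z_{\lambda_{n-1}}^{\du{v_0,\lambda_{n-1}}}\}$, and all claims are checked directly.

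The coordinate subspaces are handled by the same induction: if $p\in\Ccurve_{\roottree}$ has a vanishing coordinate $z_{\lambda}$ with $\lambda$ in branch $i_0$, then $p^{(i_0)}$ is a point of the end-curve of $\Gamma_{i_0}^{+}$ with a vanishing coordinate, so inductively $p^{(i_0)}$ is the origin; then $\zexp{\wtNve{v_0}{e_{i_0}}}(p^{(i_0)})=0$, so by the coupling relations every $\zexp{\wtNve{v_0}{e_j}}(p^{(j)})=0$, so every $p^{(j)}$ has a vanishing coordinate and is inductively the origin, whence $p=0$ (the base case is equally elementary). Therefore $\Ccurve_{\roottree}$ equals the closure in $\CC^{n-1}$ of its torus part, namely the union of $g$ monomial curves through the origin which pairwise meet only there; since $s\mapsto(s^{\wtuv{\roottree}{\lambda}/g})_{\lambda}$ is an immersion injective on $\CC^{*}$ (its exponents being coprime), $\Ccurve_{\roottree}$ is smooth away from the origin, meets every coordinate subspace of $\CC^{n-1}$ only at the origin, and its irreducible components are torus-translates of that monomial curve. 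Finally $\Ccurve_{\roottree}$ is defined by $n-2$ binomials and has pure dimension $(n-1)-(n-2)$, so it is a complete intersection, hence Cohen--Macaulay with no embedded components; being generically reduced (its torus part is reduced), it is reduced.

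The real difficulty, I expect, is the combinatorial core of the induction: verifying that the sub-diagrams $\Gamma_i^{+}$ inherit the semigroup and edge determinant conditions and that admissible co-weights can be chosen compatibly (so the decomposition of $\Ccurve_{\roottree}$ is valid), and the arithmetic that forces the branch counts $g_i$ and the count for the star in the parameters to combine to exactly $g=\gcd_{\lambda}\wtuv{\roottree}{\lambda}$ --- equivalently, the index $[L^{\mathrm{sat}}:L]=g$. The Pham--Brieskorn--Hamm base case is itself substantial, since it rests on Neumann's identification of the number of components of such link-related curves with a gcd of linking numbers. By contrast, the binomial reduction, the passage to the torus, and the step from ``complete intersection, generically reduced'' to ``reduced'' are formal.
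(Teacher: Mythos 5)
First, be aware that the paper does not prove this theorem at all: it is stated as a summary of \cite[Theorem 3.1]{NW 05bis}, so there is no internal argument to compare yours against. Your outline is, in substance, the Neumann--Wahl route: reduce to binomials via the Hamm minors, pass to the torus where the solution set is a coset of a diagonalizable subgroup, and control everything by an induction that prunes the node adjacent to the root, with the Pham--Brieskorn--Hamm star as base case. The reductions you do carry out are sound: the orthogonality $\sum_{\lambda}\wtNveL{v}{e}{\lambda}\,\wtuv{\roottree}{\lambda}=\wtuv{\roottree}{v}$ does follow from \autoref{lm:usefulIdentityLinkingEdge}; the passage from ``complete intersection, generically reduced'' to ``reduced'' is standard; and the coordinate-subspace and smoothness claims follow once the component structure is known. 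The compatibility of the pruned diagrams with the semigroup and edge determinant conditions, which you flag as a worry, is essentially \autoref{pr:subDiagram} together with~\eqref{eq:linkingNumberRelns} (proved later in the paper).

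The genuine gap is the one you yourself name: the quantitative heart of the theorem --- that the number of components equals $g=\gcd\{\wtuv{\roottree}{\lambda}\colon\lambda\in\rDG{\roottree}\}$, equivalently that $[L^{\mathrm{sat}}:L]=g$, equivalently that the signed maximal minors of the $(n-2)\times(n-1)$ exponent matrix of your binomials are $\pm\,\wtuv{\roottree}{\lambda}$ --- is asserted but never computed, neither in the star base case nor in the inductive assembly of the branch counts $g_i$ with the count coming from the coupling equations in the parameters $s_i$. Since the explicit parameterization and the value of $g$ are the main content of the statement (and are used quantitatively in \autoref{rm:extendedSpliceFan} and \autoref{sec:recov-splice-diagr}), deferring this is not a formality. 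Note also that your orthogonality argument only gives $\rk L\le n-2$; the reverse inequality, hence one-dimensionality of the torus part, again rests on the deferred induction. A cleaner way to close both gaps at once would be to prove the single determinantal identity above directly by induction on the tree, since it yields $\rk L=n-2$ and $[L^{\mathrm{sat}}:L]=g$ simultaneously.
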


\begin{example}\label{ex:NumExample3} 
  We fix the splice diagram from~\autoref{ex:2NodesNW}  and consider its rooted analog obtained by setting the first leaf as its root $\roottree$, as seen in the right of~\autoref{fig:numExampleSimpleEndCurve}. By construction, $\wtuv{\roottree}{2} = 49$, $\wtuv{\roottree}{3}=30$, $\wtuv{\roottree}{4}=42$, $\wtuv{\roottree}{5} = 105$, $\wtuv{r}{u} = 147$, $\wtuv{r}{v}=210$, $\wu{u}=(49,30,42,105)$ and $\wu{v}=(70,10,14,35)$. The equations defining this end-curve are obtained by removing the monomial indexed by the edge pointing towards $\roottree$ in each equation from~\eqref{eq:numExampleSurface}. Since $g=1$, the curve $\Ccurve_{\roottree}$ 
  is reduced and irreducible. It is defined as the solution set to
 \[-2\,z_2^3+ z_4\,z_5 = z_3^7 + z_4^5 - 2155\,z_5^2 = z_3^7+2\,z_4^5 - 2123\, z_5^2=0.\] Linear combinations of the last two  expressions yield the equivalent binomial system:
 \[-2\,z_2^3+ z_4\,z_5 = z_4^5 + 32 z_5^2 = z_3^7 - 2187 z_5^2 = 0.
\]
 An explicit parameterization is given by 
$(z_2,z_3,z_4,z_5)= (-t^{49}, 3\,t^{30}, -2\,t^{42}, t^{105})$. 
\end{example}

The collection  $\hvi{v}{i}(\zu)$ of polynomials defining the end-curve $\Ccurve_{\roottree}$ determines a map $G_{\roottree}\colon \CC^{n-1} \to \CC^{n-2}$. Our next result, which we state for comparison's sake with \autoref{cor:dom-map}, 
discusses the restriction of this map to each coordinate hyperplane of $\CC^{n-1}$:

\begin{corollary}\label{cor:dominant-curve-map}
     For every $ \lambda \in \rDG{\roottree}$, the restriction of $G_{\roottree}$ to the hyperplane $H_{\lambda}$ of  $\CC^{n-1}$ defined by the equation $z_\lambda=0$ is dominant.
\end{corollary}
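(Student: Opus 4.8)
The plan is to reduce the statement to a dimension count, using the explicit parameterization of the components of the full end‑curve $\Ccurve_{\roottree}$ from \autoref{thm:end-curvesNW}. Recall $G_{\roottree}\colon \CC^{n-1}\to \CC^{n-2}$ is the map whose coordinates are the polynomials $(\hvi{v}{i})_{v,i}$, so $\Ccurve_{\roottree} = G_{\roottree}^{-1}(0)$. Fix a non-root leaf $\lambda$ and let $H_\lambda = \{z_\lambda = 0\}\subseteq \CC^{n-1}$. Since $\Ccurve_\roottree$ is a complete intersection curve (so $G_\roottree$ has generically full rank $n-2$ on it), the image $G_\roottree(\CC^{n-1})$ is dense in $\CC^{n-2}$; the content of the corollary is that the image of the \emph{hyperplane} $H_\lambda$ — one dimension smaller in the source — is still dense in $\CC^{n-2}$.

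The key step I would carry out is to examine the fiber over $0$, i.e.\ $\Ccurve_\roottree\cap H_\lambda$, and argue via upper semicontinuity of fiber dimension. By \autoref{thm:end-curvesNW}, $\Ccurve_\roottree$ meets every coordinate subspace of $\CC^{n-1}$ only at the origin, so $\Ccurve_\roottree\cap H_\lambda = \{0\}$, which is $0$-dimensional. Now $\Ccurve_\roottree\cap H_\lambda = (G_\roottree|_{H_\lambda})^{-1}(0)$, so the fiber of $G_\roottree|_{H_\lambda}\colon H_\lambda \to \CC^{n-2}$ over $0$ has dimension $0$. Since $\dim H_\lambda = n-2$, the fiber-dimension theorem forces the closure of the image $\overline{G_\roottree(H_\lambda)}$ to have dimension $\ge (n-2) - 0 = n-2$; as $\CC^{n-2}$ is irreducible of that dimension, $\overline{G_\roottree(H_\lambda)} = \CC^{n-2}$, which is precisely dominance. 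To make the fiber-dimension inequality rigorous one restricts to an irreducible component $Z$ of $H_\lambda$ (here $H_\lambda\cong\CC^{n-2}$ is already irreducible) passing through the origin, applies the standard bound $\dim Z \le \dim\overline{G_\roottree(Z)} + \dim(\text{generic fiber of } G_\roottree|_Z)$, and uses that the generic fiber dimension is at most the special fiber dimension over $0$, which is $0$.

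A subtlety to address — and the main obstacle — is ensuring that the origin actually lies in the image closure in a way that pins down the fiber dimension over $0$, rather than merely bounding it above: one needs $\{0\}$ to be a genuine (nonempty, proper-dimensional) fiber, and one needs the component of $H_\lambda$ through $0$ to dominate with the asserted fiber bound. Since $G_\roottree(0)=0$ (all $\hvi{v}{i}$ vanish at the origin, being sums of monomials of positive $\wu{v}$-degree), the fiber over $0$ is nonempty, and the computation $\Ccurve_\roottree\cap H_\lambda=\{0\}$ from \autoref{thm:end-curvesNW} gives it dimension exactly $0$. Alternatively, and perhaps more cleanly, one can avoid semicontinuity entirely: the $g$ components of $\Ccurve_\roottree$ are torus-translates of the monomial curve $t\mapsto(t^{\wtuv{\roottree}{\lambda_1}/g},\ldots,t^{\wtuv{\roottree}{\lambda_{n-1}}/g})$, and since every $\wtuv{\roottree}{\mu}>0$, each such component meets $H_\lambda$ only at $t=0$, i.e.\ at the origin; thus $\Ccurve_\roottree\setminus H_\lambda$ is dense in $\Ccurve_\roottree$, and restricting $G_\roottree$ to $H_\lambda$ loses exactly the generic point of no component of the fiber — from which a direct Krull-dimension argument on the coordinate ring, or the fiber-dimension theorem as above, yields the claim. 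I would present the semicontinuity-of-fiber-dimension version as the cleanest, citing \autoref{thm:end-curvesNW} for the two inputs: $\Ccurve_\roottree$ is a complete intersection (so $G_\roottree$ is dominant) and $\Ccurve_\roottree\cap H_\lambda = \{0\}$.
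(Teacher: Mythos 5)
Your proposal is correct and follows essentially the same route as the paper: both use \autoref{thm:end-curvesNW} to see that the fiber of $G_{\roottree}|_{H_\lambda}$ over the origin is finite, then upper semicontinuity of fiber dimension and the count $\dim H_\lambda = n-2$ to conclude dominance. The extra discussion of the monomial parameterization is a fine alternative but not needed.
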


\begin{proof}
   By \autoref{thm:end-curvesNW}, the fiber over the origin 
  of the restricted map $G_{\roottree|H_{\lambda}}$ is finite. Upper semicontinuity of fiber dimensions   
  implies that the generic fiber is also $0$-dimensional. Since $\dim H_{\lambda}=n-2$, the map   
  $G_{\roottree\,|H_{\lambda}}$ must be dominant.
\end{proof}

\section{Local tropicalization}
\label{sec:local-trop}

In~\cite{PPS 13}, the last two authors developed a theory of {\em local tropicalizations} 
of algebraic, analytic or formal germs endowed with 
maps to (not necessarily normal) toric varieties, adapting the original formulation 
of global tropicalization  
(see, e.g.~\cite{MS 15}) to the local setting. In this section, we recall the basics on local tropicalizations that will be needed in~\autoref{sec:local-trop-newm}. We focus our attention on germs 
$(Y,0)\hookrightarrow \CC^n$ defined  by ideals $I$ of the ring of convergent power series 
$\boxedo{\cO}:=\CC\{z_1,\ldots, z_n\}$ near the origin, rather than of its completion 
$\boxedo{\hat{\cO}}:=\CC\llbracket z_1,\ldots, z_n \rrbracket $. 
As~\autoref{rm:convergentToPowerSeries} confirms, both local tropicalizations yield the same set.
\medskip

The notion of local tropicalization of an embedded germ is rooted on the construction of initial ideals associated to  non-negative weight vectors, which we now describe.
Any weight vector ${\wu{}} := (\wu{1},\ldots, \wu{n}) \in (\Rp)^n$ induces a real-valued valuation  on $\cO$, known as the \emph{$\wu{}$-weight},  as follows. Given a monomial 
$\boxedo{\zu^{{\alpha}}}:=z_1^{\alpha_1}\cdots z_n^{\alpha_n}$, we set
  $\boxedo{\wu{}(\zu^{{\alpha}})} := \wu{}\cdot {\alpha} = \sum_{i=1}^n \wu{i}\,\alpha_i$. In turn, for each $f=\sum_{\alpha} c_{\alpha} \,\zu^{\alpha}$ with $f\neq 0$ we set
  \begin{equation}\label{eq:valwf}
       \boxedo{\wu{}(f)} := \min\{\wu{}(\zu^{\alpha}): c_\alpha\neq 0\} = 
            \min\{\wu{}\cdot \alpha: c_\alpha\neq 0\}.
  \end{equation}
  We define $\wu{}(0):=\infty$. The set $\{\alpha: c_\alpha\neq 0\}$ is called 
  the \emph{support of $f$}, and it is the basis of the construction of the \emph{Newton polyhedron} and  the
  \emph{Newton fan} of $f$ (see \autoref{def:NewtonFan}).

 \begin{definition}\label{def:initwf} 
    Given $f\in \cO$ and ${\wu{}} \in (\Rp)^n$, the \emph{$\wu{}$-initial form} $\boxedo{\initwf{\wu{}}{f}} \in \cO$ is the sum of the terms in the series $f$ with minimal $\wu{}$-weight $\wu{}(f)$.    
    In turn, given an ideal $I$ of $\cO$, the \emph{$\wu{}$-initial ideal $\boxedo{\initwf{\wu{}}{I}\cO}$ 
    of $I$ in $\cO$}  is  generated by the $\wu{}$-initial forms of all elements of $I$.    
     If ${\wu{}} \in (\R_{>0})^n$, the 
     \emph{$\wu{}$-initial ideal $\boxedo{\initwf{\wu{}}{I}}$ of $I$}  is the ideal of 
     $\CC[z_1, \ldots, z_n]$   
     generated by the $\wu{}$-initial forms of all elements of $I$.  
  \end{definition}

 \begin{example}\label{ex:2NodesNWcont} Given the minimal splice type system from~\autoref{ex:2NodesNW}, we have $\wu{u} = (147, 98, 60, 84, 210)$ and $\wu{v}=(210, 140, 110, 154, 385)$. The polynomial $\fvi{u}{1}$ is $\wu{u}$-homogeneous, whereas $\fvi{v}{1}$ and $\fvi{v}{2}$ are $\wu{v}$-homogeneous. Their initial forms relative to the weight vectors $\wu{u}$ and $\wu{v}$ are:
      \[
    \begin{cases}  
           \init_{\wu{v}}(\fvi{u}{1})=  \; z_1^{2} \;-\;2\; z_2^3, \\
          \init_{\wu{u}}(\fvi{v}{1})=   \;  z_3^7 \;+ \; z_4^5\, - \;2155\; z_5^2   , \\
           \init_{\wu{u}}(\fvi{v}{2})= \;  z_3^7 \;+ 2\,z_4^5 - \; 2123 \, z_5^2 \,  .
    \end{cases}
    \]  
    We will see in \autoref{lm:wu} that  $\initwf{\wu{}}{\sG{\Gamma}} =\langle \initwf{\wu{}}{\fvi{u}{1}}, \initwf{\wu{}}{\fvi{v}{1}}, \initwf{\wu{}}{\fvi{v}{2}}\rangle\cO$ when $\wu{}$ equals  $\wu{u}$ or $\wu{v}$.
 \end{example}

The initial forms of a series determine its Newton fan as follows:
  
  \begin{definition}\label{def:NewtonFan} 
     The  \emph{Newton polyhedron} $\boxedo{\NP(f)}$ of a non-zero element $f\in \cO$  
     is the convex hull of the Minkowski sum of $(\Rp)^n$ and the support of $f$. 
    Given a face $K$ of  $\NP(f)$, we let $\boxedo{\sigma_K}$ be the  closure of the 
    set of weight vectors $\wu{}$ in $(\Rp)^n$ supporting $K$ 
    (that is, such that the convex hull of the support of $\initwf{\wu{}}{f}$ is $K$). 
    The set $\{\sigma_{K}: K \text{ face of } \NP(f)\}$ is the \emph{Newton fan} 
    $\boxedo{\NF(f)}$ of $f$.
  \end{definition}
  
  The map $K \mapsto \sigma_K$ yields an inclusion-reversing bijection between the set of faces of $\NP(f)$ 
  and the Newton fan $\NF(f)$. Furthermore, every face $K$ of $\NP(f)$ satisfies
     \begin{equation} \label{eq:constsumdim}
           \dim K + \dim \sigma_K =n.
     \end{equation}

 \begin{definition}   \label{def:posloctrop} 
     Let $(Y,0) \subseteq \CC^n$ be a  germ defined by an ideal $I$ of $\cO$. The 
     \emph{local tropicalization of $I$} or \emph{of the germ $Y$}, is the set of all vectors 
     $\wu{}\in (\Rp)^n$ such that the $\wu{}$-initial ideal $\initwf{\wu{}}{I}\cO \subseteq \cO$ of $I$  
     is  monomial-free. We denote it by  $\boxedo{\Trop I}$ or $\boxedo{\Trop Y}$.
        In turn, the \emph{positive local tropicalization of  $I$} or \emph{of the germ $Y$},  
        is  the intersection of the local tropicalization with the positive orthant $(\R_{>0})^n$. 
        We denote it by  $\boxedo{\ptrop I}$ or $\boxedo{\ptrop Y}$.
\end{definition}

 \noindent
Even though \autoref{def:posloctrop} depends heavily on the fixed embedding $(Y,0) \subseteq \CC^n$, we omit it from the notation for the sake of simplicity. The next remarks clarify some differences between the present approach and that of~\cite{PPS 13}, which defines local tropicalizations using local valuation spaces~ (see \cite[Definitions 5.13 and 6.7]{PPS 13}). A recent extension of this construction to toric prevarieties by means of Berkovich analytification can be found in~\cite{KSU 21}. 

\begin{remark}\label{rm:convergentToPowerSeries} 
As shown in~\cite[Theorem 11.2]{PPS 13} and \cite[Corollary~4.3]{S 17},
local tropicalizations of ideals in either $\cO$ or $\hat{\cO}$ 
admit several equivalent characterizations  analogous to the 
\emph{Fundamental Theorem of Tropical Algebraic Geometry}~\cite[Theorem 3.2.3]{MS 15}. 
One of them is as Euclidean closures in $(\Rp)^n$ of images of local valuation spaces. 
By~\cite[Corollary 5.17]{PPS 13}, the canonical inclusion 
$(\cO,\mfkm)\hookrightarrow  ( \hat{\cO}, \hat{\mfkm})$  induces an isomorphism 
of local valuation spaces. This implies that extending an ideal in $\cO$ 
to the complete ring $\hat{\cO}$  will yield the same local tropicalization. 
Therefore, we can define local tropicalizations for ideals of $\cO$ 
rather than of $\hat{\cO}$, in agreement with the setting of splice type systems.
\end{remark}

As in the global case, local tropicalizations of hypersurface germs can be obtained from the corresponding Newton fans. Indeed, if the ideal $I$ is principal, generated by $f\in \cO$, 
and $\wu{}\in (\Rp)^n$, then each initial ideal $\initwf{\wu{}}{I}\cO$ is also principal, 
with generator $\initwf{\wu{}}{f}$. 
Equality (\ref{eq:constsumdim}) then yields the following statement (see~\cite[Proposition 11.8]{PPS 13}):

\begin{proposition} \label{prop:loctropprincid}
      The set $\Trop (f)$ is the union of all cones of the Newton fan  of $f$ dual to bounded edges of the Newton polyhedron  of $f$. 
\end{proposition}

\begin{figure}[tb]
  \includegraphics[scale=0.7]{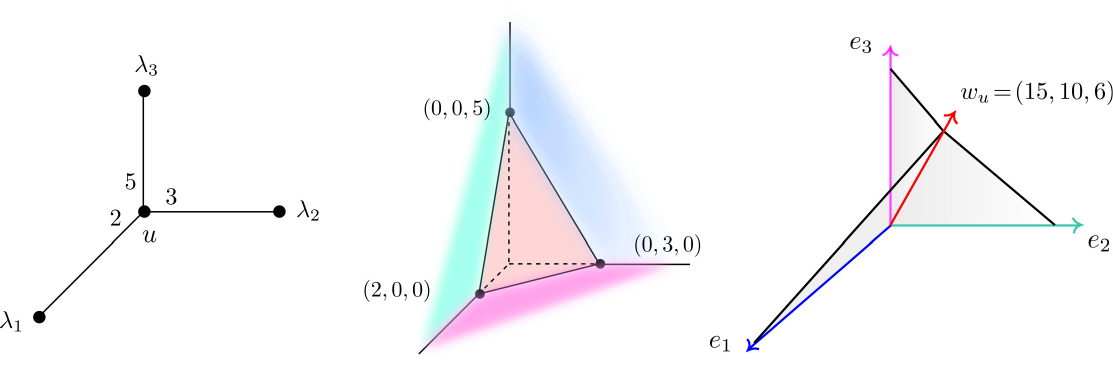}
    \caption{Splice diagram, Newton polyhedron and local tropicalization
      of the $E_8$ surface singularity.\label{fig:ExampleE8SingularityTropical}}
\end{figure}

 \begin{example}\label{ex:E8Tropical}
 The $E_8$ surface singularity is the splice type surface singularity defined by the polynomial $z_1^2+z_2^3+z_3^5$. Its associated splice diagram,  Newton polyhedron and  local tropicalization are depicted in~\autoref{fig:ExampleE8SingularityTropical}. Its local tropicalization is a 2-dimensional fan with four rays spanned by  $e_1$, $e_2$, $e_3$,  and $\wu{u}=(15,10,6)$. Its three maximal cones are spanned by the pairs $\{e_i,\wu{u}\}$ for $i\in \{1,2,3\}$.   These three cones are dual to the three bounded edges of the Newton polyhedron. 
 \end{example}

By contrast, if $I$ has two or more generators, their $\wu{}$-initial forms need not generate  $\initwf{\wu{}}{I}\cO$. 
  However, for the purpose of characterizing $\ptrop I$, it is enough to have a    \emph{tropical basis} for $I$ in the sense of \cite[Definition 10.1]{PPS 13}, 
  i.e.,  a finite set of generators $\{f_1,\ldots, f_s\}$ of 
  $I$ which is a universal standard basis of $I$ in the sense of \cite[Definition 9.8]{PPS 13} and 
  such that for any $\wu{}\in (\R_{>0})^n$, the $\wu{}$-initial ideal 
  $\initwf{\wu{}}{I}\cO$ contains a monomial if and only if one of the initial forms 
  $\initwf{\wu{}}{f_i}$ is a scalar multiple of a monomial.

\begin{remark}   \label{rem:morethantwo}
  Such tropical bases exist by~\cite[Theorem 10.3]{PPS 13} and can be used 
  to determine $\Trop I$ by intersecting the local tropicalizations of the corresponding 
  hypersurface germs. Furthermore, their existence ensures that local tropicalizations 
  are supports of rational fans in $(\Rp)^n$. Indeed, the corresponding fan is obtained 
  by considering the common refinement of the intersection of the local tropicalization 
  of each member of a tropical basis for $I$ combined with~\autoref{prop:loctropprincid}. 
  Furthermore, under the hypothesis that no irreducible 
    component of $(Y,0)$ is included in a coordinate subspace of $\CC^n$, 
    such fans and their refinements are \emph{standard tropicalizing fans} 
    of $Y$ in the sense of \autoref{def:tropicalizingFan} below.
 \end{remark}

\begin{figure}[tb]  
         \includegraphics[scale=0.65]{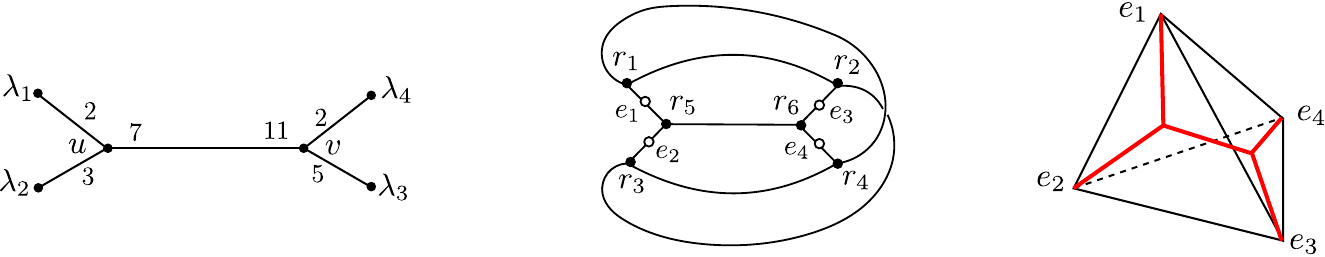}
         \caption{From left to right: Splice diagram, and representations of the global and local tropicalizations of~\autoref{ex:simpleExampleNW}.
             \label{fig:simpleExampleNW}
}               
  \end{figure}

\begin{example} \label{ex:simpleExampleNW} 
   We consider the germ of splice type surface singularity from~\cite[Example 2]{NW 05}, given by
  \begin{equation}\label{eq:NWSimple}
    \begin{cases}
 f_u:= z_1^2 + z_2^3 + z_3z_4 = 0,\\
 f_v:= z_3^5 + z_4^2 + z_1z_2^4 = 0.
  \end{cases}
  \end{equation}
associated to the splice diagram on the left of~\autoref{fig:simpleExampleNW}. A computation with the package \texttt{Tropical.m2}~\cite{TropicalPackage}, available in \texttt{Macaulay2}~\cite{M2}, determines the global tropicalization of the system (see~\cite[Definition 3.2.1]{MS 15}). It is a 2-dimensional fan in $\R^4$ with $f$-vector $(1,6,9)$. Its six rays are generated by the primitive vectors 
  \[
  \begin{aligned}
    r_1&:=(-28,-13,-16,-40), \qquad r_3:=(-2,-7,-6,-15), \qquad r_5:=(21,14,12,30),\\
    r_2&:=(-15, -10, -11, -19), \qquad  r_4:=(-6,-4,-1,-11), \qquad r_6:=(30,20,22,55).
  \end{aligned}
  \]
Notice that $r_5=\wu{u}$ and $r_6=\wu{v}$. Its nine top-dimensional cones are encoded by the graph depicted at the center of the figure.

  The local tropicalization of the germ defined by~\eqref{eq:NWSimple} is obtained by intersecting the global tropicalization with the positive orthant~\cite[Theorem 12.10]{PPS 13}.  We indicate 
  the positions of the four canonical basis elements $\{e_1,\ldots, e_4\}$ of $\R^4$ with unfilled dots inside the edges of the central graph of~\autoref{fig:simpleExampleNW}. As we will see in~\autoref{thm:tropsG}, the local tropicalization of this germ can be obtained as the cone over the red graph depicted in the standard tetrahedron seen in the right of the figure. Note that this graph is homeomorphic to the splice diagram. This fact is general, as confirmed by~\autoref{thm:injectivityrho}.
  \end{example}

\begin{remark}\label{rem:newTerminologyPPS}
     Our choice of terminology for local tropicalizations differs slightly from~\cite{PPS 13}, 
     as we now explain.  As was shown in~\cite[Section 6]{PPS 13}, the local
     tropicalizations of the intersection of a germ $(Y,0)\hookrightarrow \CC^n$ with each 
     coordinate subspace of $\CC^n$ can be glued together to form an \emph{extended fan} 
     in $(\Rp\cup \{\infty\})^n$ called the \emph{local nonnegative tropicalization} 
     of $Y$ in~\cite{PPS 13}. In the present paper, we refer to this structure as an \emph{extended tropicalization} of the germ $Y$, in agreement with Kashiwara and Payne's constructions for global tropicalizations (see~\cite[\S 6.2]{MS 15}). The finite 
     part of this extended tropicalization (i.e., its intersection with $(\Rp)^n$) is the local tropicalization from~\autoref{def:posloctrop}. A precise description  of the boundary 
     strata {of the extended local tropicalization of splice type singularities} 
     is given in~\autoref{ssec:infinite-tropicalization}.
\end{remark}

\begin{remark}\label{rem:loctroprernonred}
    The local tropicalization of a germ $(Y,0) \subseteq \CC^n$ coincides  with the local tropicalization of the associated reduced germ $(Y_{red},0) \subseteq \CC^n$ since the defining ideal of $(Y_{red},0)$ is the radical $\sqrt{I} \subseteq \cO$ and $\wu{}$-initial forms respect products. 
    As a consequence, $\initwf{\wu{}}{\sqrt{I}}$ is monomial-free if and only if the same is true for $\initwf{\wu{}}{I}$. Alternatively, the same statement can be obtained from  the definition of local tropicalization as the image of the local valuation space of $Y$ and the fact that the embedding $Y_{red} \hookrightarrow Y$ induces a homeomorphism of local valuation spaces (see~\cite[Lemma 5.18]{PPS 13}).
\end{remark}

\begin{remark}\label{rem:loctropunion}
    The local tropicalization of a reduced germ $(Y,0) \subseteq \CC^n$ is equal to the 
    union of the local tropicalizations of its irreducible components, and the same is true 
    for their local positive tropicalizations. This is a direct consequence 
    of the fact that the local valuation space of $(Y,0)$ is the union of the local valuation spaces of its irreducible components (see \cite[Lemma 5.18]{PPS 13}).
\end{remark}

The next result determines the local tropicalization of a germ $(Y,0)\hookrightarrow \CC^n$ 
from the positive one:
\begin{proposition}    \label{prop:compartrop} 
  The local tropicalization   
  $\Trop Y$ is the closure of the positive local tropicalization $\ptrop Y$ inside the cone $(\Rp)^n$.
\end{proposition}

\begin{proof}
      By~\textcolor{blue}{Remarks}~\ref{rem:loctroprernonred} and~\ref{rem:loctropunion},~it suffices to consider the case where 
     $(Y,0)$ is irreducible. In this situation, \cite[Theorem 11.9]{PPS 13} shows that 
     the extended local tropicalization of $(Y,0)$ is the closure of the extended positive 
     local tropicalization in the extended non-negative orthant $(\Rp\cup \{\infty\})^n$. 
Applying \autoref{lem:gentopclose} below to the case when $E$ is the extended local 
     tropicalization of $(\CC^n,0)$,  $U=(\Rp)^n$ and $A$ is the extended 
     local tropicalization of $(Y,0)$  confirms the claim about the closure of $\ptrop Y$ in $(\Rp)^n$. 
\end{proof}

The following lemma is a standard statement in general set topology, which may for instance be obtained as an immediate consequence of \cite[Theorem 17.4]{M 00}: 
\begin{lemma}  \label{lem:gentopclose}
   Let $E$ be a topological space and $U$ be an open subset. 
   Then, for any subset $A$ of $E$ we have:
   \[  \mathrm{cl}_U(A \cap U) =  \mathrm{cl}_E(A) \cap U, \]
where $\mathrm{cl}_E(A)$ is the closure of $A$ in $E$ and $\mathrm{cl}_U(A \cap U)$ denotes the closure 
   of $A \cap U$ in $U$.
\end{lemma}

In what follows, we restrict our attention to subgerms of $(\CC^n,0)$ with no components 
included in coordinate subspaces. 
Their local tropicalizations verify the following key property (see~\cite[Proposition 9.21, Theorems 10.3 and 11.9]{PPS 13}):

\begin{proposition}\label{prop:puredimltrop}
Let $(Y,0)$ be a subgerm of $(\CC^n, 0)$ with no irreducible components contained in coordinate subspaces of  $\CC^n$, and let $I \subseteq \cO$ be its defining ideal.   
  Then, $\Trop Y$ is the support of a rational polyhedral fan $\cF$ which satisfies the following conditions: 
        \begin{enumerate} 
           \item   \label{condsamedim}
               the dimension of all the maximal cones of $\cF$  agrees with the complex dimension  of $Y$; 
           \item \label{condnonempty}
                the maximal cones of $\cF$ have non-empty intersections with  $(\R_{>0})^n$;   
            \item \label{condinit}
               given any cone $\tau$ of $\cF$, the  $\wu{}$-initial ideal of $I$ is independent of the choice of $\wu{} \in \relo{\tau}$. 
        \end{enumerate}
\end{proposition}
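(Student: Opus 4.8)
Since this result is \cite[Theorem 11.9]{PPS 13}, a full proof would simply cite that source; let me nonetheless record the strategy one would follow. The plan is to transfer all three assertions to the Structure (Bieri--Groves) Theorem for constant-coefficient tropical varieties. First I would set $Y^{\circ}:=Y\cap(\CC^*)^n$, which by hypothesis is a nonempty irreducible locally closed subvariety of the torus of dimension $d=\dim Y$, with ideal $I^{\circ}\subseteq\CC[z_1^{\pm 1},\ldots,z_n^{\pm 1}]$. The key preliminary step is to identify the positive local tropicalization $\ptrop I$ with $\Trop(Y^{\circ})\cap(\R_{>0})^n$, where $\Trop(Y^{\circ})$ is the tropicalization of $Y^{\circ}$ over $\CC$ with the trivial valuation. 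This follows from the valuation-theoretic characterization of $\ptrop$ in \cite[Theorem 1.1]{PPS 13} together with the Fundamental Theorem \cite[Theorem 3.2.3]{MS 15}: a vector $\wu{}\in(\R_{>0})^n$ lies in $\ptrop I$ iff there is a point of $Y$ over some valued field extension $(\KK,\val)$ of $(\CC,\mathrm{triv})$ with $\val(z_i)=\wu_i$ for all $i$; since every $\wu_i$ is finite and positive, such a point is the same datum as a point of $Y^{\circ}$ with that valuation vector, hence a point witnessing $\wu{}\in\Trop(Y^{\circ})$.

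With this dictionary in hand, the second step invokes the Structure Theorem \cite[Theorem 3.3.5]{MS 15}: $\Trop(Y^{\circ})$ is the support of a pure $d$-dimensional rational polyhedral fan $\cF_0$ refining the Gr\"obner fan of $I^{\circ}$, on whose open cones the initial ideal $\initwf{\wu{}}{I^{\circ}}$ — hence the initial degeneration $\initwf{\wu{}}{Y^{\circ}}$ — is constant; this is property \eqref{condinit} for $\cF_0$. I would then take $\cF$ to be the subfan of those cones of $\cF_0$ whose relative interior meets $(\R_{>0})^n$, together with their faces. Its support is $\Trop(Y^{\circ})\cap(\R_{\geq 0})^n$, which coincides with $\Trop I$ by \autoref{prop:compartrop} (applicable because the irreducible germ $Y$ meets the torus and so lies in no coordinate subspace).

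Finally one checks the three properties for $\cF$. Conditions \eqref{condnonempty} and \eqref{condinit} are built into the construction, and $\ptrop I\neq\emptyset$ because $0\in\overline{Y^{\circ}}$: a formal arc through the origin and generically inside $Y^{\circ}$ gives a $\CC((t))$-point of $Y^{\circ}$ with all coordinates of positive valuation. The substantive point is purity \eqref{condsamedim}: one must show that every cone of $\cF_0$ whose relative interior meets $(\R_{>0})^n$ is a face of a $d$-dimensional cone of $\cF_0$ that still meets $(\R_{>0})^n$. This is where a local refinement of Bieri--Groves is required, and I expect it to be the main obstacle — one can try to combine purity and connectedness in codimension one of $\Trop(Y^{\circ})$ with openness of $(\R_{>0})^n$, or, as in \cite{PPS 13}, argue directly with local valuation spaces, so that the dimension of the initial degeneration along the relevant orbit is read off from $\dim Y$. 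Granting this, $\cF$ is pure of dimension $d$ with every maximal cone meeting $(\R_{>0})^n$, and \eqref{condsamedim}--\eqref{condinit} follow.
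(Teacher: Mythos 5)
The paper offers no proof of this proposition: it is stated with the citation \cite[Thm.~11.9]{PPS 13}, which is exactly your opening move, so at the level of what the paper actually does you match it.

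That said, your supplementary sketch has a genuine gap in its very first step. The germ $(Y,0)$ is analytic, defined by an ideal $I$ of convergent power series in $\cO=\CC\{z_1,\dots,z_n\}$ (the splice type systems to which this proposition is applied are genuinely non-polynomial). There is therefore no Laurent polynomial ideal $I^{\circ}\subseteq\CC[z_1^{\pm1},\dots,z_n^{\pm1}]$ and no global constant-coefficient tropicalization $\Trop(Y^{\circ})$ to which the Structure Theorem \cite[Thm.~3.3.5]{MS 15} applies; the proposed dictionary $\ptrop I=\Trop(Y^{\circ})\cap(\R_{>0})^n$ does not literally make sense, and even for algebraic $Y$ the local tropicalization at $0$ need not agree with the global one on the boundary of the orthant (initial forms that are monomials times units of $\cO$ already cause discrepancies). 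The route actually available in \cite{PPS 13}, and reflected in the surrounding text of this paper, is to use the existence of finite tropical bases in $\cO$ (\cite[Thm.~10.3]{PPS 13}) to exhibit $\Trop I$ as a subfan of the common refinement of the Newton fans of finitely many convergent series; this yields rationality, the fan structure, and the constancy of initial data on relative interiors (condition (3)). Purity (condition (1)) and condition (2) then require a local analogue of Bieri--Groves, proved in \cite{PPS 13} via local valuation spaces. You correctly identify purity as the substantive obstacle, but, as you concede, your sketch does not establish it, and the reduction you propose for the remaining conditions does not apply in the analytic setting.
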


\begin{proof}  
    By~\textcolor{blue}{Remarks}~\ref{rem:loctroprernonred} and~\ref{rem:loctropunion}, 
    we may restrict to the case when $(Y,0)$ is reduced and irreducible. The existence of $\cF$ 
    satisfying the first two conditions is a direct consequence 
   of  \cite[Theorem 11.9]{PPS 13}, which ensures the analogous properties hold for the extended 
   local tropicalization of $(Y,0)$. Note that $\Trop Y$ is non-empty, since $(Y,0)$ meets the 
   dense torus of $\CC^n$ (see \cite[Lemma 7.4]{PPS 13}). Furthermore, the fan structure on 
   $\Trop Y$ induced from a tropical basis $\{f_1,\ldots, f_r\}$ for $I$ discussed 
   in~\autoref{rem:morethantwo} satisfies condition (\ref{condinit}). In turn, any refinement 
   will satisfy conditions (\ref{condsamedim}) and (\ref{condnonempty}) as well. 
\end{proof}

\begin{remark}
   \autoref{prop:puredimltrop} allows us to recover the complex dimension of an irreducible germ 
  $(Y,0)\hookrightarrow \CC^n$ meeting the dense torus from its positive tropicalization 
  (see, for instance, the proofs of~\textcolor{blue}{Corollaries}~\ref{cor:expDimensionsG} 
  and~\ref{cor:codim1TropBoundary}). Indeed, its dimension agrees with the dimension   of any of the top-dimensional cones in any fixed fan satisfying 
  conditions (\ref{condsamedim})--(\ref{condinit}) in~\autoref{prop:puredimltrop}. 
If $(Y,0)$ is not irreducible, a similar procedure  determines the maximal   dimension of a component of the germ meeting the dense torus. 
\end{remark}

\begin{definition}\label{def:tropicalizingFan} 
     Let $(Y,0)$ be a subgerm of $\CC^n$ with no irreducible component 
     contained in a coordinate subspace of  $\CC^n$. Let $I$ be the ideal of $\cO$ defining $(Y,0)$.
    Any fan $\cF$ satisfying all three conditions in~\autoref{prop:puredimltrop} 
    is called a \emph{standard tropicalizing fan} for $(Y,0)$ or for $I$. 
     For every cone $\tau$ of $\cF$ meeting $(\R_{>0})^n$ and  any $\wu{}\in \relo{\tau}$, we write $\boxedo{\initwf{\tau}{I}}:=\initwf{\wu{}}{I}$ for the associated initial ideal in the polynomial ring $\CC[z_1,\ldots, z_n]$ and $\boxedo{\initwf{\tau}{Y}}:=\initwf{\wu{}}{Y}$ for the associated subscheme of $\CC^n$ (see~\autoref{def:initwf}). 
  \end{definition}
  
\begin{remark}\label{rm:fanStructureTrop}
  When $Y$ is a hypersurface germ defined by a series $f \in \cO$, the set of 
  cones of codimension one of the Newton fan of $f$ dual to bounded edges of the Newton polyhedron of $f$ satisfies all three conditions listed in~\autoref{prop:puredimltrop}. Furthermore, it is the coarsest fan with these properties. However, for germs of higher codimension such canonical choice need not always exist. For an example in the global (i.e., polynomial) setting, we refer the reader to~\cite[Example 3.5.4]{MS 15}. 
\end{remark}

The next proposition emphasizes the relevance of tropicalizing fans for producing birational models of irreducible germs  with desirable geometric properties, in the spirit of Tevelev's construction of tropical compactifications of subvarieties of tori~\cite{T 07}. 

\begin{proposition}
   \label{prop:purecodimlift}
   Fix a rational polyhedral fan $\cF$ contained in $(\Rp)^n$ and let  
     $\pi_{\cF}\from \tv_{\cF}\to \CC^n$ be the associated toric morphism. 
     Given an irreducible germ $(Y,0)\hookrightarrow \CC^n$ meeting the dense torus, 
     let $Y_{\cF}$ be the strict transform  of $Y$ under $\pi_{\cF}$  and write   
     $\pi \from Y_{\cF} \to Y$  for the restriction 
     of the map $\pi_{\cF}$ to $Y_{\cF}$. Then, the following properties hold:
       \begin{enumerate}
           \item\label{properness} The restriction $\pi$ is proper 
               if and only if the support $|\cF|$ contains the local tropicalization $\Trop Y$. 
            \item\label{torusOrbitsIntersections} Assume that $\pi$ is proper. Then, the strict transform $Y_{\cF}$ 
               intersects every orbit $S$ of $\tv_{\cF}$ along a non-empty 
               pure-dimensional subvariety with 
               $\codim_{Y_{\cF}}(Y_{\cF}\cap S)= \codim_{\tv_{\cF}} (S)$ 
               if and only if $|\cF| = \Trop Y$. 
       \end{enumerate}
\end{proposition}

\begin{proof} In what follows, we use standard terminology and notation from toric geometry, which can be found in Fulton's book~\cite{F 93}. The proof of~(\ref{torusOrbitsIntersections}) is similar to the global analog~\cite[Proposition 6.4.7 (2)]{MS 15}, so we leave it to the reader.

  It remains to prove assertion~(\ref{properness}). To this end, we consider a fan $\Sigma$ subdividing the 
non-negative cone
$(\Rp)^n$ and containing $\cF$ as a subfan. 
Such a fan exists by~\cite[Theorem 2.8 (III.2)]{E 96}.

We consider the toric varieties $\tv_{\cF}$ and $\tv_{\Sigma}$ associated to the fans $\cF$ and $\Sigma$ and the natural toric morphisms $\pi_{\cF}\colon \tv_{\cF}\to \CC^{n}$ and $\pi_{\Sigma}\colon \tv_{\Sigma}\to \CC^n$. We let $Y_{\Sigma}$ and  $Y_{\cF}$ be the strict transforms of $Y$ under these two maps. The aforementioned varieties and maps fit naturally into the  commutative diagram
\[\xymatrix{
Y_\Sigma \ar@{^{(}->}[r] & \tv_{\Sigma} \ar[rd]_{\pi_\Sigma} & 
& \tv_{\cF} \ar@{_{(}->}[ll] \ar[ld]^{\pi_{\cF}} & Y_{\cF} \ar@{_{(}->}[l] \ar[ld]^{\pi} \\
 & & \CC^n & Y \ar@{_{(}->}[l]
}\]
where the central triangle involves toric morphisms and the horizontal arrows are embeddings.  The vertical map $\pi$ on the right  is the restriction of $\pi_{\cF}$ to $Y_{\cF}$.
In what follows we view
$\tv_{\cF}$ as an open subvariety of $\tv_{\Sigma}$. 
 Note that the toric birational morphism $\pi_\Sigma$ is proper by \cite[Section~2.4]{F 93}, as the defining fans of its source and target have the same support, namely $(\Rp)^n$.

By construction, $\pi$ is proper if and only if $Y_{\Sigma} $  
  is contained in $\tv_{\cF}$. Thus, claim~(\ref{properness}) will follow if we  show that for every cone $\tau$ of $\Sigma$ we have the following equivalence:
  \begin{equation}   \label{eq:equivorbtor}
         \cO_\tau \cap Y_\Sigma\neq \emptyset \Longleftrightarrow
            \tau^{\circ}\cap \Trop Y\neq \emptyset,
     \end{equation}
where $\cO_\tau:=\Hom_{\operatorname{monoid}}(\tau^\perp\cap M,\CC)$ is the corresponding toric orbit.

It remains to prove~\eqref{eq:equivorbtor}. We start with the forward implication and fix  $y_0\in \cO_\tau\cap Y_\Sigma$. Then, there
exists a holomorphic arc in $Y\cap (\CC^*)^n$ parameterized as $t \mapsto y(t)$
such that the limit as $t \to 0$ of its strict transform in $Y_\Sigma$ equals $y_0$, i.e.,
     $$\lim_{t\to 0} y(t)=y_0\in \cO_\tau \cap Y_\Sigma.$$ 
   Such an arc can be built by choosing an irreducible subgerm of a curve $C_{\Sigma}$ of  
    the germ $(Y_\Sigma, y_0)$, not contained in the toric boundary, then by projecting it 
    to a subgerm $C$ of $Y$ via $\pi_{\Sigma}$ and, finally,  by choosing a normalization of $C$, which we identify with $(\mathbb{C},0)$. 

    We consider the weight vector $\wu{}:=\mathrm{ord}\,y(t)$ in the dual lattice $N:=M^{\vee}$ recording 
    the orders of vanishing of the components of $y(t)$. We claim that $\wu{}$ belongs to $\tau^{\circ}\cap \Trop Y$, so  $\tau^{\circ}\cap \Trop Y\ne \emptyset$, as desired.
        To prove this claim, notice that the arc $t \mapsto y_{\mathrm{in}}(t)$ of $Y_\Sigma$  obtained by keeping the $\wu{}$-initial terms $y_{\mathrm{in}}(t)$ of the components of 
$y(t)$ has the same limit when $t \to 0$ as $y(t)$ does. This fact can be checked by working in the affine toric variety associated to the cone $\Rp \langle \wu{}\rangle$.
Properties of limits in toric varieties from~\cite[Section~2.3]{F 93} ensure that $\lim_{t\to 0}y_{\mathrm{in}}(t) \in \cO_\tau$ is  equivalent to the fact that the 
weight vector $\wu{}$  lies in $\tau^{\circ}\cap N$. 
Therefore, we conclude that $\mathrm{ord}\,y(t) \in \tau^{\circ}$. 

It remains to verify that  $\wu{}\in \Trop Y$. To do so, it suffices to notice that $\Trop C \subseteq \Trop Y$ (since $C\subseteq Y$) and that $\Trop C = \Rp \langle \wu{}\rangle$. The latter is a direct consequence of \cite[Corollary 4.3 3')]{S 17} (see also Maurer's paper~\cite{M 80} which includes a  precursor of local tropicalization for germs of space curves).

Finally, to prove the reverse implication of~\eqref{eq:equivorbtor}, we assume that $\tau^{\circ}\cap \Trop Y \neq \emptyset$ and let  $\wu{} \in \tau^{\circ}\cap \Trop Y$ be a primitive lattice vector in $N\simeq \Z^n$. We consider a refinement $\Sigma_{\wu{}}$ of $\Sigma$ such that the ray $\tau_{\wu{}}=\Rp\langle \wu{}\rangle \in \Sigma_{\wu{}}$. By construction,  the orbit $\cO_{\tau_{\wu{}}}$ is mapped via the toric morphism
$\pi_{\wu{}}\colon \tv_{\Sigma_{\wu{}}}\to \tv_{\Sigma}$ to the orbit $\cO_\tau$.

The intersection
of $Y_{\Sigma_{\wu{}}}$ with the orbit $\cO_{\tau_{\wu{}}}$ is determined by the $\wu{}$-initial ideal $\initwf{\wu{}}{I(Y)}$ of the ideal $I(Y)$ defining $Y$, viewed in the Laurent polynomial ring. Since this initial ideal is monomial free because $\wu{}\in \Trop Y$, this intersection is non-empty.
 Since $\cO_{\tau_{\wu{}}} \subseteq Y_{\Sigma_{\wu{}}}$, the map $\pi_{\wu{}}$ ensures that $Y_\Sigma\cap \cO_\tau\neq \emptyset$ as well. This concludes our proof.
\end{proof}

One well-known feature of global tropicalizations of equidimensional subvarieties of toric varieties  
is the so-called \emph{balancing condition}~\cite[Section 3.3]{MS 15}. To this end, tropical varieties 
must be endowed with positive integer weights (called \emph{tropical multiplicities}) along their top-dimensional 
cones. Such multiplicities may be also defined in the local situation. We restrict the exposition to  equidimensional germs, since this is sufficient for the purposes of this paper.
 
  \begin{definition}\label{def:tropMult}
      Let $(Y,0) \hookrightarrow \CC^n$ be an equidimensional germ meeting $(\CC^*)^n$, defined 
      by an ideal $I$ of $\cO$ and let $\cF$ be a standard tropicalizing fan for it.  
      Given a top-dimensional 
      cone $\tau$ of $\cF$, we define the \emph{tropical multiplicity} of $\cF$ at $\tau$ to be the 
      number of irreducible components of $\initwf{\tau}{Y} \cap (\CC^*)^n$, counted with multiplicity.
  \end{definition}

In order to state the balancing condition for local tropicalization, we first define the notion of a pure rational weighted balanced fan. Recall that a fan is pure if all its top-dimensional cells have the same dimension.

  \begin{definition}\label{def:balancingCondition}
Consider a pure rational polyhedral fan $\cF$ in $\R^n$ relative to the lattice $\Z^n$, with positive integer weights on its maximal cones. Fix a cone $\tau$ of $\cF$ of codimension one in $\cF$, and let $\sigma_1,\ldots, \sigma_s$ be the maximal cones of $\cF$ containing $\tau$ as a face. Denote by $m_1,\ldots, m_s$ the corresponding weights. For each $i\in \{1,\ldots, s\}$ consider a vector $\ww_{\sigma_i|\tau} \in \Z^n \cap \sigma_i$ generating the lattice
    \begin{equation}\label{eq:balancing}
    \Lambda_i:=\frac{\Z^n \cap \R\langle \sigma_i\rangle}{\Z^n\cap \R\langle \tau \rangle}.
    \end{equation}
We say that $\cF$ is \emph{balanced at $\tau$} if $\sum_{i=1}^s m_{i} \ww_{\sigma_i|\tau} \in \Z^n \cap \R\langle \tau\rangle$. The fan $\cF$ is \emph{balanced} if it is balanced at each of its codimension one cones. 
  \end{definition}

  \begin{remark} \label{rm:uniquenessVectorBalancing} 
      By construction, the lattice $\Lambda_i$ in~\eqref{eq:balancing} is free of rank one. It has one generator, up to sign. Even though there are many choices for  $\ww_{\sigma_i|\tau}$, their projections onto $\Lambda_i$ give the same generator of the lattice.
  \end{remark}
  
Our last statement in this section confirms that the balancing condition holds for local tropicalizations of equidimensional germs in $\CC^n$ meeting the dense torus $(\CC^*)^n$. Its validity follows from~\cite[Remark 11.3, Theorem 12.10]{PPS 13}:

  \begin{theorem}     \label{thm:balancingCondition}
       Let $(Y,0) \hookrightarrow \CC^n$ be an equidimensional germ meeting $(\CC^*)^n$ and let $\cF$ be any refinement of a standard tropicalizing fan  for $Y$. Then, $\cF$ is a balanced fan when endowed with tropical multiplicities as in \autoref{def:tropMult}. 
  \end{theorem}
  
  \begin{remark}\label{rem:balancedLoc} 
        Balancing for local tropicalizations of equidimensional  germs of surfaces features in the proof of~\autoref{pr:StarIsIn}. This property will help us prove that the embedded splice diagrams in $\R^n$ are included 
    in the local tropicalizations of the corresponding splice type systems 
    (see~\autoref{ssec:proof-supseteq}). Tropical multiplicities will be also used 
    in~\autoref{sec:recov-splice-diagr} to recover the edge weights on any coprime splice 
    diagram $\Gamma$ from the local tropicalization of any splice type surface singularity associated to it.
  \end{remark}


\section{Newton non-degeneracy}
\label{sec:nnondeg}

In this section we discuss the notion of Newton non-degenerate complete intersection in the sense of Khovanskii \cite{K 77}, starting with 
the case of formal power series in $\hat{\cO}$, as introduced by Kouchnirenko in \cite[Sect. 8]{K 75} and \cite[Def. 1.19]{K 76}. Kouchnirenko's definition was later extended by Steenbrink~\cite[Def. 5]{S 14} to $\CC$-algebras of formal power series $\CC\llbracket P \rrbracket$ with exponents on an arbitrary saturated sharp toric monoid $P$. For precursors to this notion we refer the reader to Teissier's work~\cite[Section 5]{T 04}.

\begin{definition}  \label{def:Nnondegfunct} Given a series $f\in \hat{\cO}$, we say that $f$ is \emph{Newton non-degenerate} if for any positive weight vector $\wu{}\in (\R_{>0})^n$, the subvariety of the dense torus $(\CC^*)^n$ defined by  $\initwf{\wu{}}{f}$ is smooth.
\end{definition}

\begin{example}\label{ex:NNDHypersurface} 
      We consider the $E_8$ singularity   from~\autoref{ex:E8Tropical}, 
      defined by $f=z_1^2+z_2^3+z_3^5$. Its local tropicalization is depicted 
      in~\autoref{fig:ExampleE8SingularityTropical}. Since $\initwf{\wu{}}{f}$ is a monomial 
      whenever $\wu{}\notin \ptrop Z(f)$, the Newton non-degeneracy condition only 
      needs to be checked for $\wu{}\in \ptrop Z(f)$.

    The calculations are simplified since $\ptrop Z(f)$ is a subfan of the normal fan to the Newton polyhedron of $f$. If $\wu{}\in \R_{>0} \langle \wu{u}  \rangle$, we have $\initwf{\wu{}}{f}=f$. In turn, weight vectors in the relative interiors of maximal cones of $\ptrop Z(f)$ satisfy
  \[
  \initwf{\wu{}}{f} = \begin{cases}
    z_2^3 + z_3^5 & \text{ if }\wu{}\in \R_{>0} \langle e_1, \wu{u}\rangle,\\
     z_1^2 + z_3^5 & \text{ if }\wu{}\in \R_{>0} \langle e_2, \wu{u}\rangle,\\
     z_1^2 + z_2^3 & \text{ if }\wu{}\in \R_{>0} \langle e_3, \wu{u}\rangle.
  \end{cases}
  \]
All four initial forms describe surfaces that are smooth when restricted to the torus $(\CC^*)^3$. Thus, $Z(f)$ is Newton non-degenerate.
\end{example}

The notion of Newton non-degeneracy extends naturally  to finite sequences of functions. For our purposes, it suffices to restrict ourselves to \emph{regular sequences}, i.e.,  collections $(f_1,\ldots, f_s)$  in $\hat{\cO}$ where

    \begin{enumerate}
        \item $f_1$ is not a zero divisor of $\hat{\cO}$, and
        \item   for each $i\in \{1,\ldots, s-1\}$, the element ${f_{i+1}}$ is not  a zero divisor 
           in the quotient ring $\hat{\cO}/\langle f_1,\,\ldots, f_{i}\rangle \hat{\cO}$.
    \end{enumerate}
As $\hat{\cO}$ is a regular local ring, the germs defined by regular sequences in $\hat{\cO}$ are exactly formal complete intersections at the origin of $\CC^n$.

\begin{definition}  \label{def:Nnondegci}
  Fix a positive integer $s$ and a regular sequence $(f_1, \dots, f_s)$ in $\hat{\cO}$. 
  Consider the germ  $(Y, 0) \hookrightarrow \CC^n$ defined by the ideal $\langle f_1, \dots, f_s\rangle\hat{\cO}$. 
  The sequence $(f_1, \dots, f_s)$ is 
  \emph{a Newton non-degenerate complete intersection system} for $(Y,0)$ if for any  
  positive weight vector $\wu{}\in (\R_{>0})^n$, the hypersurfaces of $(\CC^*)^n$ 
   defined by each $\initwf{\wu{}}{f_i}$ form a normal crossings divisor in a neighborhood of their   
   intersection. Equivalently, the differentials  of the initial forms 
   $\initwf{\wu{}}{f_1}, \dots, \initwf{\wu{}}{f_s}$ 
   must be linearly independent at each point of this intersection. 
\end{definition} 

\begin{remark}\label{rem:emptyCI}
  Notice that \autoref{def:Nnondegci} allows for the initial forms to be monomials. This would determine an empty intersection with the dense torus $(\CC^*)^n$.
\end{remark}

Our main result in this section is a useful tool for proving that a regular sequence in $\hat{\cO}$ defines a Newton non-degenerate system. We make use of this statement in \autoref{thm:NewtonNonDeg}:

\begin{proposition}\label{pr:regSmoothIsNND} 
      Let $s$ be a positive integer, $(f_1, \dots, f_s)$ be a sequence in $\hat{\cO}$ and $\wu{}$ be a weight vector in $(\R_{>0})^n$. Assume that $(\initwf{\wu{}}{f_1}, \ldots, \initwf{\wu{}}{f_s})$ is a regular sequence in $\CC[z_1,\ldots, z_n]$ defining a  subscheme $Z_{\wu{}}$ of $\CC^n$. If $p$ is a smooth point of $Z_{\wu{}}\cap (\CC^*)^n$, then the  differentials at $p$ of the initial forms $\initwf{\wu{}}{f_1}, \dots, \initwf{\wu{}}{f_s}$ are linearly independent.
\end{proposition}

\begin{proof} Fix any point $p\in Z_{\wu{}}\subseteq (\CC^*)^n$. Our regularity hypothesis implies that $\dim(Z_{\wu{}})=n-s$. Since $p$ is a smooth point of $Z_{\wu{}}$, by~\cite[Lemma 4.3.5]{dJP 00} we know that $\cO_{Z_{\wu{}},p}$ is a regular local ring of dimension $\dim(Z_{\wu{}})$. Furthermore, this quantity equals  the embedding dimension $\mathrm{edim}(\cO_{Z_{\wu{}},p})$ of the local ring $\cO_{Z_{\wu{}},p}$.

   In turn, the Jacobian criterion for smoothness (see, e.g., \cite[Theorem 4.3.6]{dJP 00}) enables us to compute the rank of the $n\times s$ Jacobian matrix of the initial forms $\{\initwf{\wu{}}{f_i}\}_{i=1}^s$ evaluated at $p$, in terms of $n$ and this embedding dimension. More precisely, we have
    \[\rk_p(\operatorname{Jac}(\initwf{\wu{}}{f_1}, \ldots, \initwf{\wu{}}{f_s})) = n - \mathrm{edim}(\cO_{Z_{\wu{}},p}) = n-(n-s)=s.
    \]
   This implies that the differentials of the initial forms $\initwf{\wu{}}{f_1}, \ldots, \initwf{\wu{}}{f_s}$ are linearly independent at $p$.
\end{proof}
\begin{example}\label{ex:NW_NND} 
   We consider the splice type system from~\autoref{ex:simpleExampleNW}. As claimed by~\autoref{thm:main1}, the germ defined by it is Newton non-degenerate. By~\autoref{rem:emptyCI} we need only focus on weights $\wu{}\in \ptrop \langle f_u,f_v\rangle\cO$. A direct computation using~\autoref{lem:keyid} shows that the initial forms of $f_u$ and $f_v$ are constant along the relative interiors of maximal cones of $\ptrop \langle f_u,f_v \rangle\cO$. We check that the Newton non-degeneracy condition is satisfied for $\wu{}$ in two of the five maximal cones. The remaining three cases are similar.

Pick $\wu{}\in \R_{>0} \langle \wu{u},\wu{v}\rangle$. Then, $\initwf{\wu{}}{f_u} = z_1^2 + z_2^3$ 
and $\initwf{\wu{}}{f_v} = z_3^5 + z_4^2$. Their differentials are 
$ 2\,z_1 \, dz_1 +  3\,z_2^2 \, d z_2$ and $5\,z_3^4 \, dz_3 +  2\,z_4 \, dz_4$, 
so they are linearly independent everywhere in the torus
$(\CC^*)^4$.
  Similarly, if $\wu{}\in \R_{>0} \langle e_1,\wu{u}\rangle$, 
  we have $\initwf{\wu{}}{f_u} =  z_2^3 + z_3\,z_4$ and $\initwf{\wu{}}{f_v} = z_3^5 + z_4^2$. 
  Their differentials are $3\,z_2^2 \, dz_2 + z_4 \, dz_3 +  z_3 \, dz_4$ 
  and $5\,z_3^4 \, dz_3 +  2z_4 \, dz_4$. They are also linearly 
  independent everywhere in $(\CC^*)^4$.
\end{example}

\begin{remark}   \label{rem:nnci}
\autoref{def:Nnondegci} modifies slightly Khovanskii's original definition from 
\cite[Rem.~4 of Sect.~2.4]{K 77}, by imposing the regularity of the sequence $(f_1,\ldots, f_s)$.   
The generalization to formal power series $\CC\llbracket P \rrbracket$ associated to an arbitrary 
sharp toric monoid $P$ is straightforward, and parallels that done by Steenbrink~\cite{S 14} 
for hypersurfaces. A slightly different notion of \emph{Newton non-degenerate ideals} 
was introduced by Saia in~\cite{S 95} for ideals of finite codimension in  $\hat{\cO}$. 
For a comparison with Khovanskii's approach we refer the reader to Bivi\`a-Ausina's 
work~\cite[Lemma 6.8]{B 07}.  For a general perspective on Newton non-degenerate 
complete intersections, the reader can consult Oka's book~\cite{O 97}. 
A definition of \emph{Newton non-degenerate  algebraic subgerms} of $(\CC^n, 0)$ 
for not necessarily complete intersections was given by Aroca, G{\'o}mez-Morales 
and Shabbir~\cite{AGS 13}. The results of their paper (and the proofs involving Gr\"obner bases) can be extended 
to the analytic and formal contexts by working with standard bases, in the spirit of~\cite{PPS 13}.
Recent work of Aroca, G{\'o}mez-Morales  and Mourtada~\cite{AGM 22} generalize the constructions from~\cite{AGS 13} to subgerms of arbitrary 
  normal affine toric varieties.
\end{remark}

\section{Embeddings and convexity properties of splice diagrams}
  \label{sec:comb-embedd-splice}

  In this section, we describe simplicial fans in the real weight space $\Nwp{\leavesT{\Gamma}}\otimes_{\Z} \R \simeq \R^n$ that arise from splice diagrams and appropriate subdiagrams. These constructions will play a central role in~\autoref{sec:local-trop-newm} when characterizing local tropicalizations of splice type surface singularities. Throughout this section we assume that the splice diagram $\Gamma$ satisfies the edge determinant condition of~\autoref{def:edgedet}. The semigroup condition from~\autoref{def:semgpcond} plays no role.
  \medskip

We let  $\boxedo{\simplex{n-1}}$ be the standard $(n-1)$-simplex 
of the real weight space $\R^n$,  with  
vertices  $\{\wu{\lambda}: \lambda \in \leavesT{\Gamma}\}$. 
We start by defining a piecewise linear map from $\Gamma$ to 
 $\simplex{n-1}$:
\begin{equation}\label{eq:embedGamma}
  \boxedo{\emb} \colon \Gamma \to \simplex{n-1} \qquad\text{ where }\quad \emb(v) = \frac{\wu{v}}{|\wu{v}|}\quad \text{ for each vertex } v \text{ of } \Gamma.   \end{equation}
Here, $|\mathord{\cdot}|$ denotes the $L^1$-norm in $\R^n$. In particular,  $|\wu{v}| = \sum_{\lambda \in \leavesT{\Gamma}} {\wtuv{v}{\lambda}}$ for each node $v$ of $\Gamma$.
After identifying each edge $e$ of $\Gamma$ with the interval $[0,1]$, the map $\emb$ on $e$ is defined by convex combinations of the assignment at its endpoints. The injectivity of $\emb$ will be discussed in~\autoref{thm:injectivityrho}.

 The following combinatorial constructions, in particular Definitions~\ref{def:fullSubtree}, \ref{def:branches} and~\ref{def:barycenter},  play a prominent role in proving~\autoref{thm:tropsG}. Stars of vertices (see~\autoref{def:vocabtrees}) and convex hulls of vertices (see~\autoref{def:convexHull}) are central to many arguments below.

\begin{definition}\label{def:fullSubtree}
A subtree $T$ of the splice diagram $\Gamma$ is \emph{star-full} if $\starT{\Gamma}{v} \subseteq T$
for every node $v$ of $T$. A node of $T$ is called an \emph{end-node} if it is adjacent to exactly one other node of $T$. 
\end{definition}

Every tree with two or more nodes contains at least two end-nodes. The following statement, 
illustrated in ~\autoref{fig:CherryPruning}, describes a method
to produce new star-full subtrees from old ones by \emph{pruning from an end-node}.  Its proof is
straightforward, so we omit it. 

\begin{lemma}\label{lm:cherryPruning} 
Let $T$ be a star-full subtree of $\Gamma$ with $d$ leaves $\{u_1,\ldots, u_d\}$. 
Fix an end-node $v$ of 
$T$ and assume that $u_1,\ldots, u_{\valv{v}-1}$ are the only leaves of $T$ adjacent to $v$. 
Then, the convex hull $T' := [v,u_{\valv{v}},\ldots, u_{d}]$  is  also star-full.
\end{lemma}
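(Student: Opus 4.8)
The plan is to verify directly from \autoref{def:fullSubtree} that $T'$ satisfies the star-full property, using the fact that $T$ already does. First I would clarify the combinatorial picture: since $v$ is an end-node of $T$, it is adjacent in $T$ to exactly one node, call it $w$, and to $\valv{v}-1$ leaves of $T$, which by hypothesis are $u_1,\ldots,u_{\valv{v}-1}$. The edge $[v,w]$ lies on the geodesic $[v,u_j]$ for every $j \geq \valv{v}$ (because removing $v$ from $T$ disconnects $u_1,\ldots,u_{\valv{v}-1}$ from the rest), so $w \in [v,u_{\valv{v}},\ldots,u_d]$, and hence $T' = [v, u_{\valv{v}},\ldots, u_d]$ still contains $v$ and $w$ together with the entire portion of $T$ on the ``$w$-side'' of $v$. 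Concretely, $T'$ is obtained from $T$ by deleting the $\valv{v}-1$ pendant edges $[v,u_1],\ldots,[v,u_{\valv{v}-1}]$ and the leaves $u_1,\ldots,u_{\valv{v}-1}$; nothing else changes.

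Next I would check that $T'$ is genuinely a splice-diagram subtree (a subtree of $\Gamma$ with no valency-two vertices): the vertex $v$ now has valency $1$ in $T'$, so it has become a leaf of $T'$; every other vertex of $T'$ retains its $T$-valency, which is either $1$ or at least $3$ since $T$ had no valency-two vertices. So $T'$ is a legitimate subtree. Then comes the core verification: let $x$ be a node of $T'$ and show $\starT{\Gamma}{x} \subset T'$. Since $v$ is a leaf of $T'$, we have $x \neq v$, so $x$ is a node of $T$ as well (its valency in $T'$ equals its valency in $T$, hence $\geq 3$). Because $T$ is star-full, $\starT{\Gamma}{x} \subset T$. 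I must rule out that one of the edges of $\starT{\Gamma}{x}$ was among the deleted pendant edges $[v,u_1],\ldots,[v,u_{\valv{v}-1}]$; but each such deleted edge is incident only to $v$ and to a leaf $u_j$, neither of which is a node, so none of them can belong to $\starT{\Gamma}{x}$. Therefore $\starT{\Gamma}{x} \subset T' $, which is exactly the star-full condition for $T'$.

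I do not anticipate a genuine obstacle here — the statement is essentially bookkeeping about which edges are removed under pruning — so the main care-point is simply to be precise about the claim that pruning from an \emph{end}-node removes only pendant edges (edges with a leaf endpoint), which is where the end-node hypothesis is used: if $v$ were an arbitrary node rather than an end-node, deleting $v$ and its leaf-neighbors could disconnect $T$ or leave behind a node whose star is no longer contained in the result. One could also phrase the argument without deletions at all: directly note $T' = [v,u_{\valv{v}},\ldots,u_d] \subseteq T$, that the nodes of $T'$ are precisely the nodes $x$ of $T$ with $x \in T'$ and $x \ne v$, and that for such $x$ the geodesics realizing $\starT{\Gamma}{x} \subset T$ avoid the pruned leaves $u_1,\ldots,u_{\valv{v}-1}$ since any geodesic in $T$ from $x$ through one of those leaves must pass through $v$ first — contradicting $x$ being a node on the far side. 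Either formulation is short; I would present the deletion version as it is the most transparent.
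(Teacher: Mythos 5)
Your proof is correct. The paper explicitly omits the proof of this lemma as straightforward, and your direct verification — observing that pruning from an end-node deletes only pendant edges incident to $v$ and to leaves of $T$, so the star in $\Gamma$ of any remaining node of $T'$ is untouched — is precisely the bookkeeping argument the authors had in mind.
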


\begin{figure}[tb]
    \includegraphics[scale=0.5]{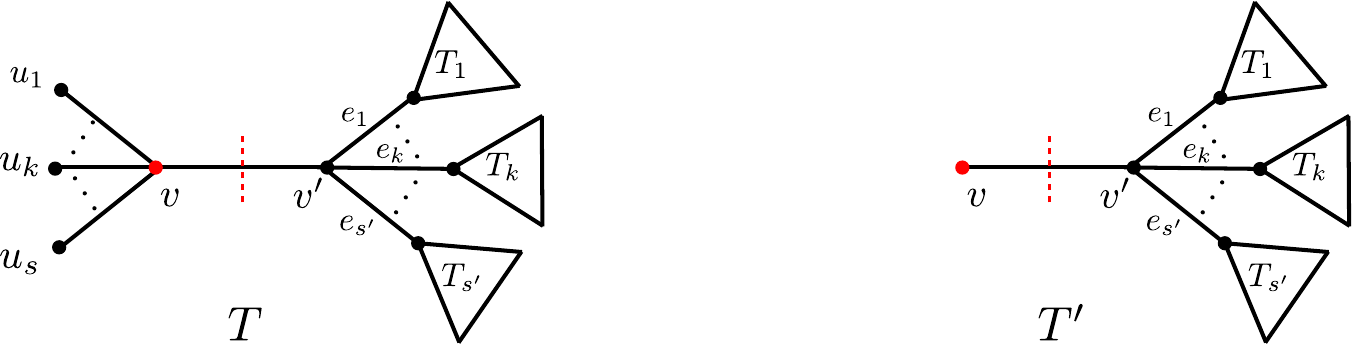}
    \caption{Pruning a star-full subtree $T$ of $\Gamma$ from an end-node $v$ of $T$  produces a new star-full subtree $T'$ with one fewer node, as in~\autoref{lm:cherryPruning}. Here, $s=\valv{v}-1$ and $s'=\valv{v'}-1$.
    \label{fig:CherryPruning}}
\end{figure}

\begin{definition}\label{def:branches}
A \emph{branch} of a tree $T$  adjacent to a node $v$  is a connected component of
  \[
  T\smallsetminus \big (\{v\} \cup \!\!\!\!\bigcup_{e\in \starT{\Gamma}{v}}\relo{e}\big ),
  \]
  where $\boxedo{\relo{e}}$ denotes the interior of the edge $e$.
\end{definition}

\noindent For example, $\{u_1\},\ldots, \{u_{s}\}$ and the convex hull $[v',T_1,\ldots T_{s'}]$ constructed inside the tree $T$ on the left of~\autoref{fig:CherryPruning} are the $\valv{v}$ branches of $T$ adjacent to the node $v$. 
Similarly, the branches of $T'$ adjacent to $v'$ are  $T_1,\ldots, T_{s'}$ and $\{v\}$. 
\smallskip

Star-full subtrees of splice diagrams have a key convexity property rooted in barycentric calculus. This is the
content of~\autoref{pr:convexitySimplices}. The following 
definition plays a crucial role in its proof.

\begin{definition}\label{def:barycenter}
Let $v$ be a node of $\Gamma$ and let $L$ be a set of leaves of $\Gamma$. We define $\baryc{L}{v}$ 
as the \emph{barycenter} of the leaves in $L$ with weights determined by $\wu{v}$, that is:
\begin{equation}
    \boxedo{\baryc{L}{v}} := \sum_{\lambda\in L}  \frac{\wtuv{v}{\lambda}}{\ell}\, \wu{\lambda} \qquad 
    \text{ where } \quad \ell:= \sum_{\lambda\in L}{\wtuv{v}{\lambda}}.
\end{equation}
In particular,  $\baryc{\{\lambda\}}{v} = \wu{\lambda}$ for any leaf $\lambda$.\end{definition}

\begin{remark}\label{rem:convexityBaryc} 
Fix a node  $v$ of $\Gamma$ with adjacent branches $\Gamma_1,\ldots, \Gamma_{\valv{v}}$. Then, the set $\{\baryc{\leavesT{\Gamma_i}}{v}: i=1,\ldots, \valv{v}\}$ is linearly independent, and  a direct computation gives
\begin{equation}\label{eq:lcombbarycenters}
\emb(v) = \sum_{j=1}^{\valv{v}} \big(\frac{1}{|\wu{v}|}\sum_{\lambda \in \leavesT{\Gamma_j}} \wtuv{v}{\lambda}\big) \,\baryc{\leavesT{\Gamma_j}}{v}.
\end{equation}
In particular, $\emb({v})$ lies in the relative interior of the simplex 
$\conv(  \{\baryc{\leavesT{\Gamma_j}}{v} \colon j=1,\ldots, \valv{v}\})\big)$, 
where $\conv$ denotes the affine convex hull inside $\simplex{n-1}$.
\end{remark}

Following the notation from~\autoref{def:semgpcond}, we write $\boxedo{\pag{a}}:=\nodesTevRoot{b,[a,b]}$ and $\boxedo{\pag{b}}:=\nodesTevRoot{a,[a,b]}$ for each pair of adjacent nodes $a,b$ of $\Gamma$.
Barycenters determined by splitting $\Gamma$ along the edge $[a,b]$  are closely related, as the following lemma shows:

\begin{lemma}\label{lm:adjacentBarycenters} 
  Let $a, b$ be two adjacent nodes of $\Gamma$,
  with associated sets of leaves $\pag{a}$ and $\pag{b}$ 
on each side of $\Gamma$. Then:
\begin{enumerate}
  \item \label{bary1} $\baryc{\pag{a}}{a} = \baryc{\pag{a}}{b}$ and 
       $\baryc{\pag{b}}{a} = \baryc{\pag{b}}{b}$ 
       (which   we denote by 
        $\boxedo{\baryc{\pag{a}}{[a,b]}}$ and $\boxedo{\baryc{\pag{b}}{[a,b]}}$, respectively).
  \item \label{bary2} The points $\emb({a})$ and $\emb({b})$ lie in the line segment 
        $[\baryc{\pag{a}}{[a,b]}, \baryc{\pag{b}}{[a,b]}]$.
    \item \label{bary3} $\baryc{\pag{a}}{[a,b]} < \emb({a}) < \emb({b}) < \baryc{\pag{b}}{[a,b]}$, 
          where $<$ is the order given by identifying the  segment 
          $[\baryc{\pag{a}}{[a,b]}, \baryc{\pag{b}}{[a,b]}]$ with $[0,1]$.   
\end{enumerate}
\end{lemma}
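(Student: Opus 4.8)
The plan is to prove the three claims about barycenters associated to the edge $[a,b]$ in succession, since each builds on the previous one.

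\textbf{Part \eqref{bary1}.} First I would unwind the definition of $\baryc{\pag{a}}{a}$ and $\baryc{\pag{a}}{b}$. Both are supported on the leaf set $\pag{a}$, which lies entirely on the $a$-side of the edge $e=[a,b]$; the coefficient of $e_\lambda$ in $\baryc{\pag{a}}{u}$ (for $u\in\{a,b\}$) is $\wtuv{u}{\lambda}/\ell_u$ with $\ell_u = \sum_{\mu\in\pag{a}}\wtuv{u}{\mu}$. The key observation is that for $\lambda,\mu\in\pag{a}$, the geodesics $[a,\lambda]$ and $[b,\lambda]$ differ only in the edge $e$ and the weights around $b$; more precisely, using the definition of linking number and the fact that $e\subseteq[b,\lambda]$ for all $\lambda\in\pag{a}$, one gets $\wtuv{b}{\lambda} = \frac{\du{b,a}}{\du{b}}\,\wtuv{a}{\lambda}\cdot(\text{something independent of }\lambda)$ — concretely $\wtuv{b}{\lambda}/\wtuv{a}{\lambda}$ is the same constant for all $\lambda\in\pag{a}$ (it is the product of all weights around $a$ other than $\du{a,b}$, divided by the product of all weights around $b$ other than $\du{b,a}$, or some such ratio; the exact constant is irrelevant). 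Since the barycenter only depends on the weights up to a common scalar, $\baryc{\pag{a}}{a} = \baryc{\pag{a}}{b}$, and symmetrically for $\pag{b}$. This is essentially the computation already appearing inside the proof of \autoref{rm:weightsAdjacent}, where $\wtuv{u}{\lambda}$ was written in terms of $\wtuv{v}{\lambda}$ on each side of an edge.

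\textbf{Parts \eqref{bary2} and \eqref{bary3}.} For \eqref{bary2}, I would invoke \autoref{rm:weightsAdjacent} (and its symmetric counterpart with $a,b$ swapped): it writes $\wu{a}$ as a nonnegative combination $\frac{\wtuv{a}{b}}{\du{b}}\wu{b} + \sum_{\lambda\in\pag{a}}c_\lambda\wu{\lambda}$ with all $c_\lambda>0$ (the $\det(e)\wtuv{b}{\lambda}/\du{b}$ from \eqref{eq:wuwv}). Normalizing by the $1$-norm turns the relation between $\wu{a}$ and $\wu{b}$ into a relation between $\emb(a)$ and $\emb(b)$ on the simplex; pushing this through, one sees that $\emb(a)$ lies on the segment joining $\emb(b)$ to the point $\sum_\lambda (c_\lambda/\sum c_\mu)\,\wu{\lambda}$, and the latter point is exactly $\baryc{\pag{a}}{[a,b]}$ by Part~\eqref{bary1} once one checks the ratios $c_\lambda = (\det(e)/\du{b})\wtuv{b}{\lambda}$ are proportional to $\wtuv{b}{\lambda}$ (they are, since $\det(e)/\du{b}$ is a common factor). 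By the symmetric statement, $\emb(b)$ lies on the segment from $\emb(a)$ to $\baryc{\pag{b}}{[a,b]}$. Combining the two, all four points $\baryc{\pag{a}}{[a,b]}, \emb(a), \emb(b), \baryc{\pag{b}}{[a,b]}$ are collinear and the middle two lie in the outer segment; the strictness of the inequalities in \eqref{bary3} follows because the coefficients $c_\lambda$ are strictly positive (edge determinant condition, so $\det(e)>0$) and $\pag{a},\pag{b}$ are nonempty (each contains at least one leaf). The ordering $\baryc{\pag{a}}{[a,b]} < \emb(a) < \emb(b) < \baryc{\pag{b}}{[a,b]}$ then drops out from reading off which convex-combination parameter is smaller.

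\textbf{Main obstacle.} The only real subtlety is the bookkeeping in Part~\eqref{bary1}: verifying cleanly that $\wtuv{a}{\lambda}$ and $\wtuv{b}{\lambda}$ are proportional over $\lambda\in\pag{a}$, with a proportionality constant independent of $\lambda$. This is where one must be careful about exactly which weights are "adjacent to but not on" the geodesics $[a,\lambda]$ versus $[b,\lambda]$; the difference between these two geodesics is precisely the edge $e$ together with the star of $b$ minus the edge towards $a$. I expect this to be a short but delicate unwinding of \autoref{def:linkingNumbers}, and once it is in hand, Parts~\eqref{bary2} and~\eqref{bary3} are immediate consequences of \autoref{rm:weightsAdjacent} together with elementary convexity on the simplex.
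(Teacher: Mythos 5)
Your proposal is correct. Parts~(\ref{bary1}) and~(\ref{bary2}) follow essentially the paper's route: the paper also proves~(\ref{bary1}) from the proportionality $\wtuv{b}{\lambda}=\wtuv{a}{\lambda}\,\wtuv{a}{b}/\du{a}$ for $\lambda\in\pag{a}$ (and its mirror), and proves~(\ref{bary2}) by expressing $\wu{a}$ and $\wu{b}$ as positive combinations of the two barycenters. Where you genuinely diverge is Part~(\ref{bary3}): the paper establishes the ordering by a direct computation, showing that $\baryc{\pag{a}}{[a,b]}<\emb(a)<\emb(b)<\baryc{\pag{b}}{[a,b]}$ is equivalent to the single inequality $1>\wtuv{a}{b}^2/(\du{a}\du{b})$, which is exactly the edge determinant condition. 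You instead apply \autoref{rm:weightsAdjacent} in both directions to get two strict ``betweenness'' relations --- $\emb(a)$ strictly between $\baryc{\pag{a}}{[a,b]}$ and $\emb(b)$, and $\emb(b)$ strictly between $\emb(a)$ and $\baryc{\pag{b}}{[a,b]}$ --- and deduce the full ordering from those. This works: writing the positions on $[0,1]$ as $x=sy$ and $y=tx+(1-t)$ with $s,t\in(0,1)$ forces $0<x<y<1$, and the strictness of $s,t<1$ is exactly where $\det([a,b])>0$ enters in your version (it makes the barycenter coefficients in~\eqref{eq:wuwv} strictly positive). The trade-off is that the paper's computation makes the role of the edge determinant condition completely explicit as a single scalar inequality, while your argument is more conceptual and reuses \autoref{rm:weightsAdjacent} as a black box; the one thing you should spell out, rather than leave as ``drops out,'' is the short substitution showing the two betweenness relations combine into the four-point ordering.
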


\begin{proof} 
We start by showing (\ref{bary1}). \autoref{def:linkingNumbers} induces the following identities:
\begin{equation}\label{eq:linksavsb}
    \wtuv{b}{\lambda} = \wtuv{a}{\lambda} \frac{\wtuv{a}{b}}{\du{a}} \quad \text{ for all } \lambda \in \pag{a}, \quad \text{ and } \quad     
    \wtuv{a}{\mu} = \wtuv{b}{\mu} \frac{\wtuv{a}{b}}{\du{b}} \quad 
    \text{ for all }  \mu \in \pag{b}.
\end{equation}
Thus, $a$ and $b$ contribute proportional weights to $\pag{a}$ and  $\pag{b}$, respectively. This implies that the corresponding barycenters agree, proving (\ref{bary1}).

Next, we discuss (\ref{bary2}). To simplify notation, we write $\wu{} = \baryc{\pag{a}}{[a,b]}$ and 
$\wu{}' = \baryc{\pag{b}}{[a,b]}$. Then,~\eqref{eq:linksavsb} yields:
\[\wu{a} = \big (\sum_{\lambda \in \pag{a}} \wtuv{a}{\lambda}\big ) \,\wu{} + 
\big (\sum_{\mu \in \pag{b}} \wtuv{a}{\mu}\big )\, \wu{}' \quad \text{ and }\quad 
\wu{b} = \big (\sum_{\lambda \in \pag{a}} \wtuv{b}{\lambda}\big ) \,\wu{} + 
\big (\sum_{\mu \in \pag{b}} \wtuv{b}{\mu}\big )\, \wu{}'.
\]
The definition of $\emb$ then confirms that  $\emb(a)$ and $\emb(b)$ are convex combinations of $\wu{}$ and $\wu{}'$.

It remains to prove (\ref{bary3}).
The condition $\wu{} < \emb({a}) <\emb({b}) <\wu{}'$ claimed in (\ref{bary3}) is equivalent to:
\begin{equation*}
\big({\sum\limits_{\lambda \in \pag{a}} \wtuv{a}{\lambda}}\big)\,\big({\sum\limits_{\mu \in \pag{b}} \wtuv{a}{\mu}}\big)^{-1} > \big({\sum\limits_{\lambda \in \pag{a}} \wtuv{b}{\lambda}}\big)\,\big({\sum\limits_{\mu \in \pag{b}} \wtuv{b}{\mu}}\big)^{-1}.
\end{equation*}
Rearranging these expressions by sums with common indexing sets and simplifying further using~\eqref{eq:linksavsb} yields the equivalent identity:
\begin{equation}\label{eq:equivIneq}
    1 >  \big({\sum_{\lambda \in \pag{a}} \wtuv{b}{\lambda}}\big)\,
    \big({\sum_{\lambda \in \pag{a}} \wtuv{a}{\lambda}}\big)^{-1} \,
    \big(\sum_{\mu \in \pag{b}} \wtuv{a}{\mu}\big)
    \big({\sum_{\mu \in \pag{b}} \wtuv{b}{\mu}}\big)^{-1} = 
    \frac{\wtuv{a}{b}}{\du{a}}\, \frac{\wtuv{a}{b}}{\du{b}}.
\end{equation}
The edge determinant condition for $[a,b]$ confirms the validity of~\eqref{eq:equivIneq}, and so (\ref{bary3}) holds.
\end{proof}

Our next result shows that the image under $\emb$ of the vertices adjacent to a fixed node $v$ satisfy a convexity property analogous to that of~\autoref{rem:convexityBaryc}:

\begin{proposition}\label{pr:convexityBar}
Let $v$ be a node of $\Gamma$, with adjacent vertices $u_1,\ldots, u_{\valv{v}}$.
Then:
\begin{enumerate}
\item \label{linIndep} $\{\emb(u_i):\; i=1, \ldots,\valv{v}\}$ is linearly independent;
  \item \label{inRelInt} $\displaystyle{\emb({v}) \in
\relo{\big(\conv(\{\emb(u_j) \colon j=1,\ldots, \valv{v}\})\big)}}$.
\end{enumerate}

\end{proposition}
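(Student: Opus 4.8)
The plan is to prove both statements simultaneously by expressing $\emb(v)$ in terms of the images $\emb(u_j)$ of its neighbors, and then tracking the coefficients. Denote by $\Gamma_1,\ldots,\Gamma_{\valv{v}}$ the branches of $\Gamma$ adjacent to $v$, with $u_j\in\Gamma_j$. By~\autoref{rem:convexityBaryc}, $\emb(v)$ lies in the relative interior of the simplex $\conv\big(\{\baryc{\leavesT{\Gamma_j}}{v}:j=1,\ldots,\valv{v}\}\big)$, and that set of barycenters is linearly independent. So it suffices to show that each $\emb(u_j)$ can be used to replace the corresponding barycenter $\baryc{\leavesT{\Gamma_j}}{v}$ in a way that preserves linear independence and keeps $\emb(v)$ in the relative interior.

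First I would treat the case where $u_j$ is a leaf: then $\Gamma_j=\{u_j\}$ and $\emb(u_j)=\wu{u_j}/|\wu{u_j}|=\baryc{\leavesT{\Gamma_j}}{v}$ by~\autoref{def:barycenter}, so there is nothing to prove for that index. For the case where $u_j$ is a node, the key point is~\autoref{lm:adjacentBarycenters} applied to the adjacent pair $\{v,u_j\}$: writing $\pag{v}=\nodesTevRoot{u_j,[v,u_j]}=\leavesT{\Gamma_j}$ (the leaves on the $u_j$-side) and $\pag{u_j}=\nodesTevRoot{v,[v,u_j]}$ (the leaves on the $v$-side, which is $\bigcup_{k\neq j}\leavesT{\Gamma_k}$), part~(\ref{bary2}) tells us $\emb(u_j)$ lies on the segment between $\baryc{\pag{v}}{[v,u_j]}=\baryc{\leavesT{\Gamma_j}}{v}$ and $\baryc{\pag{u_j}}{[v,u_j]}$, and part~(\ref{bary3}) tells us it lies strictly between them, strictly closer to $\baryc{\leavesT{\Gamma_j}}{v}$ than $\emb(v)$ is (since $\baryc{\pag{v}}{[v,u_j]}<\emb(u_j)<\emb(v)$ — note $u_j$ is adjacent to $v$ so the ordering of~(\ref{bary3}) gives $\emb(u_j)$ between the barycenter and $\emb(v)$). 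Moreover $\baryc{\pag{u_j}}{[v,u_j]}$ is itself the barycenter (with $\wu{v}$-weights, by part~(\ref{bary1})) of $\bigcup_{k\neq j}\leavesT{\Gamma_k}$, hence a positive convex combination of $\{\baryc{\leavesT{\Gamma_k}}{v}:k\neq j\}$ with all coefficients positive.

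With these facts in hand, I would run the following substitution/induction argument. Start from the linearly independent set $B=\{\baryc{\leavesT{\Gamma_j}}{v}:j\}$ and the expression~\eqref{eq:lcombbarycenters} exhibiting $\emb(v)$ as a strictly positive convex combination of $B$. For each node-neighbor $u_j$, use~\autoref{lm:adjacentBarycenters} to write $\emb(u_j)=t_j\,\baryc{\leavesT{\Gamma_j}}{v}+(1-t_j)\,b_j$ with $0<t_j<1$ and $b_j\in\conv(\{\baryc{\leavesT{\Gamma_k}}{v}:k\neq j\})$ a strictly-positive-coefficient combination; solving for $\baryc{\leavesT{\Gamma_j}}{v}$ and substituting, one checks that the matrix transforming $B$ into $B'=\{\emb(u_j):j\}$ is triangular-plus-correction with nonzero determinant — concretely, its determinant equals $\prod_j t_j\neq 0$ once one orders things suitably, giving~(\ref{linIndep}). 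For~(\ref{inRelInt}), substituting the expressions for $\baryc{\leavesT{\Gamma_j}}{v}$ in terms of $\emb(u_j)$ into~\eqref{eq:lcombbarycenters} yields $\emb(v)=\sum_j c_j\,\emb(u_j)$ with $\sum_j c_j=1$, and the content is that all $c_j>0$. Here the crucial input is the strict inequality $\emb(u_j)$ lies strictly between $\baryc{\leavesT{\Gamma_j}}{v}$ and $\emb(v)$ from~\autoref{lm:adjacentBarycenters}(\ref{bary3}): this guarantees that when we push $\emb(v)$ from the barycenter simplex to the $\emb(u_j)$-simplex, it does not fall onto a facet. The main obstacle I anticipate is bookkeeping the coefficients carefully enough to see positivity — there are cross-terms because $b_j$ involves all the other barycenters, so one must argue either by a clean induction (peeling off one node-neighbor at a time, using that a point in the relative interior of a simplex stays in the relative interior under a barycentric subdivision move that replaces one vertex by an interior point of an edge from that vertex) or by directly bounding each $c_j$ below by a positive quantity built from the $t_j$'s and the coefficients in~\eqref{eq:lcombbarycenters}. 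I would favor the inductive ``one substitution at a time'' formulation, invoking at each step the elementary fact that if $p$ lies in the relative interior of $\conv(x_1,\ldots,x_m)$ and $y$ lies strictly between $x_1$ and $p$ on their line, then $p$ still lies in the relative interior of $\conv(y,x_2,\ldots,x_m)$ and the latter set is linearly independent whenever the former is.
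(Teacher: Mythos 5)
Your proof is correct and takes essentially the same approach as the paper: both start from the barycentric expression \eqref{eq:lcombbarycenters} and the linear independence of the branch barycenters (\autoref{rem:convexityBaryc}), use \autoref{lm:adjacentBarycenters} to place each $\emb(u_j)$ strictly between $\baryc{\leavesT{\Gamma_j}}{v}$ and $\emb(v)$, and then substitute these points for the barycenters. The only difference is bookkeeping --- the paper substitutes all vertices simultaneously (its equation \eqref{eq:proportional}) and rules out the degenerate case by a sign argument, while you peel off one vertex at a time via your elementary convexity lemma, which is the right formulation to favor since your parenthetical claim that the change-of-basis determinant equals $\prod_j t_j$ is not literally correct once the cross-terms coming from $b_j$ are present.
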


\begin{proof} Both statements are clear if $v$ is only adjacent to leaves of $\Gamma$ (i.e., when $\Gamma$ is a star splice diagram), so we may assume $v$ is adjacent to some node of $\Gamma$. Thus, up to relabeling if necessary, we suppose $\{u_1,\dots, u_s\}$ are leaves of $\Gamma$ and $\{u_{s+1},\ldots, u_{\valv{v}}\}$ are nodes of $\Gamma$ (we set $s=0$  if $v$ is only adjacent to nodes).

  For each $j\in\{1,\ldots, \valv{v}\}$, we let $\Gamma_j$ be the branch of $\Gamma$ adjacent to $v$ containing $u_j$, and   set $\wu{j}:=\baryc{\leavesT{\Gamma_j}}{v}$. \autoref{lm:adjacentBarycenters} (\ref{bary2}) and the definition of barycenters   ensures the existence of $\alpha_j \in (0,1]$ satisfying
\begin{equation}\label{eq:proportional}
    \emb(u_j) = \alpha_j \wu{j} + (1 - \alpha_j)\emb(v)  \qquad \text{ for each }\; j\in \{1, \ldots, \valv{v}\},
\end{equation}
with $\alpha_j = 1$ if and only if $j\in \{1,\ldots, s\}$.

We start by proving~(\ref{linIndep}).  We fix a linear relation $\sum_{j=1}^{\valv{v}} \beta_j \, \emb(u_j) = 0$. Substituting~\eqref{eq:proportional} into this dependency relation  yields
  \begin{equation}\label{eq:depEquiv}\sum_{j=1}^{\valv{v}} (\beta_j\, \alpha_j) \, \wu{j} + B  \emb(v) = 0 \qquad \text{ where } \quad B:=\sum_{j=1}^{\valv{v}} \beta_j\,(1-\alpha_j) .
  \end{equation}
  We claim that $B=0$. Indeed, assuming this is not the case, we  use~\eqref{eq:depEquiv}  to write $\emb(v)$ in terms of $\wu{1},\ldots, \wu{\valv{v}}$. Comparing this expression with~\eqref{eq:lcombbarycenters} and using the linear independence of $\{\wu{1},\ldots, \wu{\valv{v}}\}$ gives:
  \begin{equation}\label{eq:valuesForBetai}\beta_j\,\alpha_j = - \frac{B}{|\wu{v}|} \sum_{\lambda \in \leavesT{T_j}} \wtuv{v}{\lambda} \quad \text{ for each }\; j\in\{1,\ldots, \valv{v}\}.
  \end{equation}
  In particular, all $\beta_i$ are non-zero and have the same sign, namely the opposite sign to $B$.
  Summing up the expressions~\eqref{eq:valuesForBetai} over all $j$ yields $\sum_{j=1}^{\valv{v}}\beta_j = 0$, which cannot happen due to the sign constraint on the $\beta_j$'s. From this it follows that $B=0$.

Since $B=0$, the linear independence of 
  $\{\wu{1}, \ldots, \wu{\valv{v}}\}$ forces $\beta_j\alpha_j=0$ for all $j$. Combining this with our assumption that $\alpha_j>0$ for all $j$, gives $\beta_j=0$ for all $j=1,\dots, \valv{v}$, thus confirms~(\ref{linIndep}).

To finish, we discuss~(\ref{inRelInt}). We let $q_1,\ldots, q_{\valv{v}}$ be the coefficients used in~\eqref{eq:lcombbarycenters} to write $\emb(v)$ as a convex combination of $\wu{1},\ldots, \wu{\valv{v}}$. Substituting the value of each $\wu{j}$ obtained from~\eqref{eq:proportional} in expression~\eqref{eq:lcombbarycenters} yields:
  \begin{equation}\label{eq:inRelInte0}
  \emb(v) = \sum_{j=1}^s  \frac{q_j}{A} \,  \emb(u_j)  + \sum_{j=s+1}^{\valv{v}}  \frac{q_j}{A\,\alpha_j} \, \emb(u_j) \qquad \text{ where } \quad A := 1+ \sum_{j=s+1}^{\valv{v}} \frac{q_j\,(1-\alpha_j)}{\alpha_j}.
  \end{equation}
  The conditions $0<\alpha_j<1$ for $j\in \{s+1,\ldots, \valv{v}\}$, and the definition of $q_1,\ldots, q_{\valv{v}}$ ensure that the right-hand side of~\eqref{eq:inRelInte0} is a positive convex combination of $\emb(u_1), \ldots, \emb(u_{\valv{v}})$, as we wanted to show.
  \end{proof}

Each subtree $T$ of $\Gamma$ determines a polytope via the map $\emb$: 
\begin{equation}\label{eq:simplexT}
   \boxedo{\simplex{T}}:=\conv(\{\emb(u) \colon  u\in \leavesT{T}\})\subseteq \simplex{n-1}. 
\end{equation}
For example, $\simplex{\Gamma}$ is the standard simplex $\simplex{n-1}$.
 We view the next result as a key convexity property of star-full subtrees of splice diagrams.

\begin{lemma}\label{lm:coeffspos}
Fix a star-full subtree $T$ of $\Gamma$. For every node $u$ of $T$, $\emb(u)$ admits an expression of the form
\begin{equation}\label{eq:ucoeffspos}
\displaystyle{\emb(u) = \sum_{\mu\in \leavesT{T}} \alpha_\mu \,\emb(\mu)}\quad 
\text{ with }\quad \displaystyle{\sum_{\mu\in \leavesT{T}} \alpha_\mu = 1}\quad
\text{ and }\quad \alpha_\mu>0\;\text{ for all } \mu \in \leavesT{T}.
\end{equation}
In particular, $\emb(T)\subset \simplex{T}$.
\end{lemma}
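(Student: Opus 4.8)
The plan is to prove this by induction on the number of nodes of the star-full subtree $T$, using the pruning operation of \autoref{lm:cherryPruning} as the inductive mechanism and \autoref{pr:convexityBar} as the geometric input. For the base case, where $T$ has a single node $v$: being star-full, $T$ is the full star $\starT{\Gamma}{v}$, so $\leavesT{T}$ is exactly the set of $\valv{v}$ vertices of $\Gamma$ adjacent to $v$, and \autoref{pr:convexityBar}\,(\ref{inRelInt}) says precisely that $\emb(v)$ lies in the relative interior of $\conv(\{\emb(\mu):\mu\in\leavesT{T}\})$, i.e.\ it is a positive convex combination of the $\emb(\mu)$. That is the claim, and $\emb(T)\subset\simplex{T}$ follows since $\emb$ restricted to each edge is by definition a convex combination of the images of its endpoints.

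For the inductive step, suppose $T$ has at least two nodes, so it is not a star tree and hence has an end-node $v$. Let $v'$ be the unique node of $T$ adjacent to $v$, and let $u_1,\dots,u_{\valv{v}-1}$ be the remaining neighbors of $v$ in $\Gamma$; since $T$ is star-full these all lie in $T$, and having valency one there they are leaves of $T$ (possibly nodes of $\Gamma$, which is why I invoke \autoref{pr:convexityBar} rather than \autoref{rem:convexityBaryc}). Pruning at $v$ yields, by \autoref{lm:cherryPruning}, a star-full subtree $T'$ with one fewer node and $\leavesT{T'}=\{v\}\cup\big(\leavesT{T}\smallsetminus\{u_1,\dots,u_{\valv{v}-1}\}\big)$; in particular $v'$ and every node $u\neq v$ of $T$ are nodes of $T'$. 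Applying \autoref{pr:convexityBar}\,(\ref{inRelInt}) at $v$ gives $\emb(v)=\gamma_0\,\emb(v')+\sum_{i=1}^{\valv{v}-1}\gamma_i\,\emb(u_i)$ with all $\gamma_i>0$, $\sum_i\gamma_i=1$, and $\gamma_0<1$; and the induction hypothesis applied to $T'$ expresses $\emb(v')$, and more generally $\emb(u)$ for any node $u\neq v$ of $T$, as a positive convex combination of the $\emb(\mu)$ with $\mu\in\leavesT{T'}$, i.e.\ involving $\emb(v)$ and the leaves of $T$ other than $u_1,\dots,u_{\valv{v}-1}$. Substituting the expression for $\emb(v')$ into the formula for $\emb(v)$, the coefficient of $\emb(v)$ produced on the right is $\gamma_0\beta_0<1$ (where $\beta_0$ is the positive coefficient of $\emb(v)$ in the expansion of $\emb(v')$), so one may solve for $\emb(v)$; this writes $\emb(v)$ as a combination of $\{\emb(\mu):\mu\in\leavesT{T}\}$ with manifestly positive coefficients, and a short computation using $\sum_{i\ge 1}\gamma_i=1-\gamma_0$ and the telescoping $\sum\beta_l=1-\beta_0$ shows these coefficients sum to $1$. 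Feeding this back into the induction-hypothesis expansion of $\emb(u)$ for a node $u\neq v$ of $T$ and collecting terms handles the remaining nodes, again with positive coefficients summing to $1$.

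Finally, $\emb(T)\subset\simplex{T}$ is immediate from \eqref{eq:ucoeffspos}: $\simplex{T}$ is convex and now contains $\emb(u)$ for every vertex $u$ of $T$ (leaves trivially, nodes by the above), while $\emb$ on each edge is a convex combination of the images of its endpoints. I expect the only genuine work to be the coefficient bookkeeping in the substitution step — tracking positivity and verifying that the weights still sum to $1$ after clearing the $\emb(v)$-term — rather than anything conceptually subtle, since all the convex-geometric content is already packaged in \autoref{pr:convexityBar} and the pruning lemma; the one point requiring a moment's care is that leaves of $T$ need not be leaves of $\Gamma$, so \autoref{pr:convexityBar} (not \autoref{rem:convexityBaryc}) is the right tool.
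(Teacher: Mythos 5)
Your proof is correct and follows essentially the same route as the paper's: induction on the number of nodes, with the star case handled by \autoref{pr:convexityBar}~(\ref{inRelInt}), the inductive step via pruning at an end-node (\autoref{lm:cherryPruning}), and the same substitution-and-solve step using that the coefficient of $\emb(v)$ after substitution is strictly less than $1$. Your remark that leaves of $T$ need not be leaves of $\Gamma$, so that \autoref{pr:convexityBar} rather than \autoref{rem:convexityBaryc} is the right tool, is exactly the point the paper's argument relies on as well.
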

\begin{proof} 
We let $p$ be the number of nodes of $T$ and proceed by induction on  $p$. The statement is vacuous for $p=0$. If $p=1$,
then $T$ is a star tree and the result follows by~\autoref{pr:convexityBar}~(\ref{inRelInt}). For the inductive step, we let 
 $p\geq 2$  and suppose that the result holds for star-full subtrees with 
$(p-1)$ nodes.

 We fix $\leavesT{T} = \{u_1,\ldots, u_d\}$ and we let $v$ be an end-node of $T$. Following~\autoref{fig:CherryPruning}, we let $v'$ be the unique node of $T$ adjacent to 
$v$ and assume  that $u_1,\ldots, u_{\valv{v}-1}$ are adjacent to $v$.
\autoref{pr:convexityBar}~(\ref{inRelInt}) applied
to $\starT{\Gamma}{v}$ gives
\begin{equation}\label{eq:u}
\emb(v) =   \beta_{0} \,\emb(v') + \sum_{j=1}^{\valv{v}-1}\beta_j\,\emb(u_j)    \quad 
\text{ with }\quad \sum_{j=0}^{\valv{v}-1} \beta_j = 1 \quad \text{ and } \quad \beta_j>0 \;\text{ for all } j.
\end{equation}

Using~\autoref{lm:cherryPruning}, we let $T'$ be the star-full
subtree of $\Gamma$ obtained by pruning $T$ from $v$. By construction, $T'$ has $(p-1)$ nodes 
and its leaves are 
$\{v\} \cup \{u_{\valv{v}},\ldots, u_d\}$.
The inductive hypothesis yields the following expressions  for $v'$ and all other (potential) nodes $u\neq v'$ of $T'$:
\begin{equation}\label{eq:vu'}
\emb(v') =  \gamma_0\,\emb(v) + 
\sum_{j=\valv{v}}^d \gamma_j\, \emb(u_j)   \quad \text{ and } \quad     
\emb(u) =  \alpha_0\,\emb(v) + \sum_{j=\valv{v}}^d \alpha_j\, \emb(u_j),
\end{equation}
with $\displaystyle{\alpha_0 +  \sum_{j=\valv{v}}^d \alpha_i
  = \gamma_0+ \sum_{j=\valv{v}}^d \gamma_j = 1}$ and 
  $\alpha_i, \gamma_i > 0$ for all $i\in \{0,\valv{v},\ldots, d\}$.   
Since $0<\gamma_0\beta_0 <1$, substituting the expression for $\emb(v')$ obtained from~\eqref{eq:vu'}
into~\eqref{eq:u} produces the desired positive convex combination for $\emb(v)$:
\begin{equation*}
\emb(v) = \sum_{j=1}^{\valv{v}-1} \frac{\beta_j}{1-\gamma_0\beta_0}\,\emb(u_j) +  \sum_{j=\valv{v}}^d \frac{\beta_0\gamma_j}{1-\gamma_0\beta_0}\,\emb(u_j) .
\end{equation*}
In turn, substituting this identity in both expressions from~\eqref{eq:vu'} gives the positive convex combination statement for all remaining  nodes  of $T$. The inclusion $\emb(T)\subset \simplex{T}$ follows by the convexity of $\simplex{T}$.
\end{proof}

\autoref{pr:convexityBar} and~\autoref{lm:coeffspos} combined have the following natural consequence:
\begin{corollary}\label{cor:inclusionOfsimplices}
  For each pair of star-full subtrees $T, T'$ of $\Gamma$ with $T'\subseteq T$,  we have $\simplex{T'} \subseteq \simplex{T}$.
\end{corollary}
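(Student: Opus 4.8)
The statement is a direct bookkeeping consequence of~\autoref{lm:coeffspos}, so the plan is short. First I would unwind the definition~\eqref{eq:simplexT}: since $T'\subseteq T$, every leaf $\mu$ of $T'$ is in particular a vertex of $T$, hence lies in $T$. Therefore $\emb(\mu)\in\emb(T)$, and by the ``in particular'' clause of~\autoref{lm:coeffspos} we get $\emb(T)\subseteq\simplex{T}$, so $\emb(\mu)\in\simplex{T}$ for every $\mu\in\leavesT{T'}$.

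Next I would conclude by convexity: $\simplex{T'}=\conv(\{\emb(\mu)\colon\mu\in\leavesT{T'}\})$ is the convex hull of a set of points each of which lies in the convex set $\simplex{T}$, hence $\simplex{T'}\subseteq\simplex{T}$.

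It is perhaps worth spelling out, for clarity, the dichotomy that is hidden inside the appeal to~\autoref{lm:coeffspos}: a leaf $\mu$ of $T'$, viewed as a vertex of $T$, is either a leaf of $T$ — in which case $\emb(\mu)$ is literally one of the defining vertices of $\simplex{T}$ — or a node of $T$, in which case~\autoref{lm:coeffspos} (which itself rests on the base case~\autoref{pr:convexityBar} for star trees) expresses $\emb(\mu)$ as a positive convex combination of $\{\emb(\nu)\colon\nu\in\leavesT{T}\}$, so again $\emb(\mu)\in\simplex{T}$.

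There is no real obstacle here: the only thing one must be slightly careful about is that the leaf/node dichotomy for vertices of $T$ is exhaustive, which holds because the subtrees in play carry no valency-two vertices; alternatively one sidesteps this entirely by quoting $\emb(T)\subseteq\simplex{T}$ verbatim from~\autoref{lm:coeffspos} and applying it to the point $\emb(\mu)$.
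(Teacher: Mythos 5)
Your proof is correct and follows exactly the route the paper intends: the paper states \autoref{cor:inclusionOfsimplices} as an immediate consequence of \autoref{pr:convexityBar} and \autoref{lm:coeffspos} without writing out the details, and your argument (each leaf of $T'$ is a vertex of $T$, hence lands in $\simplex{T}$ by the ``in particular'' clause of \autoref{lm:coeffspos}, and then convexity of $\simplex{T}$ finishes) is precisely the intended filling-in. Your remark that the leaf/node dichotomy for vertices of a star-full subtree is exhaustive is a correct and worthwhile sanity check, but as you note it can be bypassed by quoting $\emb(T)\subseteq\simplex{T}$ directly.
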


Our next result is a generalization of~\autoref{pr:convexityBar} and it highlights a key combinatorial property shared by  $\Gamma$ and all its star-full subtrees.
\begin{proposition}\label{pr:convexitySimplices} 
 Let $T$ be a star-full subtree of $\Gamma$. Then:
  \begin{enumerate}
  \item \label{item:LIStar-Full} the weights $\{\emb(u)\colon u \in \leavesT{T}\}$ are linearly independent;
  \item \label{item:simplexStar-Full}$\simplex{T}$ is a simplex of dimension $|\leavesT{T}|-1$;
  \item \label{item:vInStar-Full} for each node $v$ of $T$ we have $\emb(v) \in \relo{\simplex{T}}$.
  \end{enumerate}
\end{proposition}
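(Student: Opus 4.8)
The plan is to prove all three claims simultaneously by induction on the number $p$ of nodes of $T$, mirroring the structure of the proof of~\autoref{lm:coeffspos} and leaning on its conclusion. Observe first that (\ref{item:LIStar-Full}) and (\ref{item:simplexStar-Full}) are logically equivalent: $\simplex{T} = \conv(\{\emb(u)\colon u\in\leavesT{T}\})$ is a simplex of the expected dimension $|\leavesT{T}|-1$ precisely when its $|\leavesT{T}|$ defining points are affinely independent, and since all these points lie on the hyperplane $\{\sum_\lambda x_\lambda = 1\}$, affine independence is the same as linear independence. So it suffices to establish (\ref{item:LIStar-Full}) and (\ref{item:vInStar-Full}).

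For the base cases: $p=0$ is vacuous (or trivial, as $T$ is a single leaf), and $p=1$ is exactly~\autoref{pr:convexityBar}, whose part (\ref{linIndep}) gives (\ref{item:LIStar-Full}) and whose part (\ref{inRelInt}) gives (\ref{item:vInStar-Full}). For the inductive step with $p\geq 2$, I would reuse the pruning setup: fix $\leavesT{T}=\{u_1,\ldots,u_d\}$, pick an end-node $v$ of $T$ with unique adjacent node $v'$ in $T$, assume $u_1,\ldots,u_{\valv{v}-1}$ are the leaves of $T$ adjacent to $v$, and let $T'=[v,u_{\valv{v}},\ldots,u_d]$ be the star-full subtree obtained by pruning (\autoref{lm:cherryPruning}), which has $p-1$ nodes and leaves $\{v,u_{\valv{v}},\ldots,u_d\}$. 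Applying the inductive hypothesis to $T'$: the set $\{\emb(v),\emb(u_{\valv{v}}),\ldots,\emb(u_d)\}$ is linearly independent, and every node of $T'$ (in particular $v'$) maps under $\emb$ into $\relo{\simplex{T'}}$. Separately, applying~\autoref{pr:convexityBar} to the star $\starT{\Gamma}{v}$ gives the relation~\eqref{eq:u}, expressing $\emb(v)$ as a \emph{strictly positive} convex combination of $\emb(v'),\emb(u_1),\ldots,\emb(u_{\valv{v}-1})$, with $\emb(v)$ (and, by part (\ref{linIndep}) there) $\{\emb(v'),\emb(u_1),\ldots,\emb(u_{\valv{v}-1})\}$ linearly independent.

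To get linear independence of $\{\emb(u_1),\ldots,\emb(u_d)\}$: suppose $\sum_{j=1}^d \beta_j\emb(u_j)=0$. Use~\eqref{eq:u} to solve for $\emb(v')$ as a linear combination of $\emb(v)$ and $\emb(u_1),\ldots,\emb(u_{\valv{v}-1})$ (the coefficient $\beta_0$ of $\emb(v')$ in~\eqref{eq:u} is nonzero). The inductive-hypothesis expression for $\emb(v')$ then becomes a linear relation among $\emb(v),\emb(u_{\valv{v}}),\ldots,\emb(u_d)$ on one side and $\emb(u_1),\ldots,\emb(u_{\valv{v}-1})$ on the other; feeding this back into $\sum\beta_j\emb(u_j)=0$ and using that $\{\emb(v),\emb(u_{\valv{v}}),\ldots,\emb(u_d)\}$ is independent (inductive hypothesis on $T'$) forces all $\beta_j=0$ — this is the step that requires the most care, since I must track the two overlapping bases and make sure the argument is not circular. (An alternative, cleaner bookkeeping: the linear span of $\{\emb(u)\colon u\in\leavesT{T}\}$ contains $\emb(v')$ by~\eqref{eq:u} combined with the fact that $\emb(v')\in\simplex{T'}$ by the inductive hypothesis and $\leavesT{T'}\subseteq\{\emb(u_j)\}_j\cup\{\emb(v)\}$, hence a dimension count: $\simplex{T}$ spans the same affine hull as $\simplex{T'}$ does — no, $T'$ has one fewer leaf — so one must instead directly count, and the end-node structure guarantees $\valv{v}-1\geq 1$ genuinely new leaves appear. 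I would present the direct computation.) Finally, for (\ref{item:vInStar-Full}): for the end-node $v$, the positive convex combination for $\emb(v)$ in terms of all $\emb(u_1),\ldots,\emb(u_d)$ obtained at the end of the proof of~\autoref{lm:coeffspos} already certifies $\emb(v)\in\relo{\simplex{T}}$; for any other node $u$ of $T$, $u$ is a node of $T'$, the inductive hypothesis gives $\emb(u)\in\relo{\simplex{T'}}$, and by~\autoref{cor:inclusionOfsimplices} combined with the fact that $\emb(v)\in\relo{\simplex{T}}$ and $\leavesT{T'}=\{v\}\cup\{u_{\valv{v}},\ldots,u_d\}$, a relative-interior point of $\simplex{T'}$ expressed through a relative-interior point of $\simplex{T}$ stays in $\relo{\simplex{T}}$ — concretely, substitute the positive convex combination for $\emb(v)$ into the inductive positive convex combination for $\emb(u)$, exactly as in~\autoref{lm:coeffspos}, to exhibit $\emb(u)$ as a strictly positive convex combination of $\emb(u_1),\ldots,\emb(u_d)$. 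The main obstacle I anticipate is organizing the linear-independence bookkeeping in the inductive step so that the two spanning sets (from $T'$ and from $\starT{\Gamma}{v}$) are combined without hidden circularity; everything else is a routine transcription of the barycentric computations already done for~\autoref{lm:coeffspos}.
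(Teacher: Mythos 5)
Your reduction of (\ref{item:simplexStar-Full}) to (\ref{item:LIStar-Full}), your derivation of (\ref{item:vInStar-Full}) from (\ref{item:simplexStar-Full}) and \autoref{lm:coeffspos}, and your base case $p=1$ all match the paper. The genuine gap is the inductive step for linear independence. The three facts you propose to combine --- (a) $\{\emb(v)\}\cup\{\emb(u_j): j\ge\valv{v}\}$ is independent (inductive hypothesis on the pruned tree $T'$), (b) $\{\emb(v')\}\cup\{\emb(u_j):j<\valv{v}\}$ is independent and $\emb(v)$ is a strictly positive convex combination of it (\autoref{pr:convexityBar}), and (c) $\emb(v')$ is a strictly positive convex combination of $\{\emb(v)\}\cup\{\emb(u_j):j\ge\valv{v}\}$ (\autoref{lm:coeffspos} on $T'$) --- do \emph{not} imply that $\{\emb(u_1),\dots,\emb(u_d)\}$ is independent. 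In $\R^3$, take $y_1=e_1$, $y_2=e_2$, $P=e_3$, $Q=\tfrac13(e_1+e_2+e_3)$, $x_1=-\tfrac13e_1+\tfrac43e_3$, $x_2=-\tfrac13e_2+\tfrac43e_3$: then $\{P,y_1,y_2\}$ and $\{Q,x_1,x_2\}$ are each independent, $P=\tfrac13Q+\tfrac13x_1+\tfrac13x_2$ and $Q=\tfrac13P+\tfrac13y_1+\tfrac13y_2$ are positive convex combinations, yet $3x_1-3x_2+y_1-y_2=0$. So the ``direct computation'' you defer cannot be completed from these ingredients alone; the circularity you flagged is a real obstruction, not a bookkeeping issue. (The leaves of $T$ adjacent to the end-node $v$ need not be leaves of $\Gamma$, so you cannot rescue the step by observing that they are standard basis vectors.)

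The paper proves (\ref{item:LIStar-Full}) for $p\ge 2$ by inducting in the opposite direction: \emph{reverse} induction on $|\leavesT{T}|$, with base case $T=\Gamma$ (where the leaves really are the standard basis vectors), and an inductive step that \emph{enlarges} $T$ to $T'=T\cup\starT{\Gamma}{u_1}$ for a leaf $u_1$ of $T$ that is a node of $\Gamma$. Applying \autoref{lm:coeffspos} to the larger tree $T'$ writes $\emb(u_1)$ as a strictly positive convex combination of $\emb(\leavesT{T'})$; substituting this into a putative dependency $\sum_j\alpha_j\emb(u_j)=0$ among $\leavesT{T}$ yields a dependency among $\leavesT{T'}$ whose coefficients on the genuinely new leaves are $\alpha_1\beta_{u_j}$ with $\beta_{u_j}>0$, so independence for $T'$ forces $\alpha_1=0$ and then all $\alpha_j=0$. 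Separately, your base case $p=0$ is not vacuous: a star-full subtree with no nodes can be an edge $[u,v]$ joining two \emph{nodes} of $\Gamma$, and the independence of $\{\emb(u),\emb(v)\}$ there is exactly where the edge determinant condition enters (the paper computes a $2\times2$ minor of $(\wu{u}\,|\,\wu{v})$ and shows it is positive because $\det([u,v])>0$). That case must be proved, not dismissed.
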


\begin{proof} Item~(\ref{item:simplexStar-Full}) is a direct consequence of~(\ref{item:LIStar-Full}). In turn, item~(\ref{item:vInStar-Full}) follows from~(\ref{item:simplexStar-Full}) and~\autoref{lm:coeffspos}. Thus, it remains to prove~(\ref{item:LIStar-Full}).  
  We distinguish two cases, depending on the number of nodes of $T$, denoted by $p$.

\textbf{Case 1:}  If $p< 2$, then  $T$ is either a vertex, an edge of $\Gamma$, or a star tree. If $T$ is a vertex of $\Gamma$, then the claim holds because $\emb(u)\neq 0$ for any vertex $u$ of $\Gamma$. If $T$ is a star tree, the statement agrees with~\autoref{pr:convexityBar}~(\ref{linIndep}).

  Next, assume $T$ is an edge of $\Gamma$. We consider two scenarios. First, if $T$ joins a leaf $\lambda$ and a node $u$ of $\Gamma$, then the result follows immediately since $\emb(u)$ and $\wu{\lambda}$ are linearly independent.
On the contrary, assume $T$ joins two adjacent nodes of $\Gamma$, say $u$ and
$v$. Pick two leaves $\lambda, \mu$ of $\Gamma$ with $ u\in [\lambda, v]$ and $v \in [u,\mu]$ (i.e., $\lambda$ is on the 
$u$-side and $\mu$ is on the $v$-side of $\Gamma$, as seen from the edge $[u,v]$). 
\autoref{lm:usefulIdentityLinkingEdge}
yields the following formula for the $(\lambda, \mu)$-minor of the
matrix $(\wu{u} | \wu{v})$, involving the determinant  $\det([u,v])$ of the edge $[u,v]$, as defined in~\eqref{eq:edgeDet}:
\[  \begin{pmatrix}
    \wtuv{\lambda}{u} &     \wtuv{\lambda}{v}\\
\wtuv{\mu}{u} & \wtuv{\mu}{v}\\
    \end{pmatrix} = \wtuv{\lambda}{u}\,\wtuv{\mu}{v}\,\frac{\du{u,v} \du{v,u} - \wtuv{u}{v} }{\du{u,v}\du{v,u}} = \frac{\wtuv{\lambda}{u}\,\wtuv{\mu}{v} \det([u,v])}{\du{u,v}\du{v,u}}.
\]
The last expression is positive by the edge determinant condition, so  $\{\emb(u),\emb(v)\}$ is linearly independent.

\textbf{Case 2:}  If $p\geq  2$, we know that $d:=|\leavesT{T}|\geq 4$. We prove the result by reverse induction on $d$.  When $d=n$, we have $T=\Gamma$ and there is nothing to prove since $\emb(\lambda)=\wu{\lambda}$ for all $\lambda\in \leavesT{\Gamma}$. 
For the inductive step, we take $d<n$ and label the leaves of $T$ by
$u_1$, $u_2,\ldots, u_d$. We assume that the linear independence holds for any star-full subtree with $k\geq d+1$ leaves. Without loss of generality we assume $u_1$ is a node of $\Gamma$ (one must exist since $T\neq \Gamma$ is star-full).  Set $s=\valv{u_1}-1$.

  Next, we define $T' := T\cup \starT{\Gamma}{u_1}$. By construction, $T'$ is a star-full subtree of $\Gamma$ with $(d+s-1)$ leaves.  
Since $u_1$ is a node of $T'$,~\autoref{lm:coeffspos} applied to $T'$ yields a positive convex combination:
\begin{equation}\label{eq:newRhoU}
  \emb(u_1) = \sum_{v\in \leavesT{T'}} \beta_v \,\emb(v)
\quad 
\text{ with }\quad  \sum_{v\in \leavesT{T'}} \beta_v = 1 \quad \text{ and }\quad \beta_v>0 \quad \text{for all } v\in \leavesT{T'}. 
\end{equation}

To prove the linear independence for the $d$ points $\emb(u_1),\ldots, \emb(u_d)$, we fix a potential  dependency relation   $\sum_{j=1}^d\alpha_j \, \emb(u_j) = 0 $.
Substituting~\eqref{eq:newRhoU} into it gives a linear dependency relation for the leaves of $T'$:
\[
\sum_{j=2}^d(\alpha_j + \alpha_1\, \beta_{u_j})\, \emb(u_j) + \sum_{j=d+1}^{d+s} (\alpha_1\,\beta_{u_j}) \emb(u_j)=0,
\]
where $\{u_{d+1},\ldots, u_{d+s}\}$ are the leaves of $T'$ adjacent to $u_1$.
The inductive hypothesis applied to $T'$ and the positivity of each $\beta_{v}$ with $v\in \leavesT{T'}$ forces  $\alpha_j=0$ for all $j=1,\ldots, d$. Thus, (\ref{item:LIStar-Full}) holds.
\end{proof}

Next, we state the main result in this section, which is a natural consequence of~\autoref{lm:coeffspos}:

\begin{theorem}\label{thm:injectivityrho} 
The map $\emb$ from~\eqref{eq:embedGamma} is injective.
\end{theorem}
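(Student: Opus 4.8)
The plan is to prove injectivity of $\emb$ by exploiting the convexity statements already established, namely~\autoref{lm:coeffspos} and~\autoref{pr:convexitySimplices}, together with the monotonicity along edges recorded in~\autoref{lm:adjacentBarycenters}~(\ref{bary3}). The key conceptual point is that $\emb$ restricted to the nodes is injective because star-full subtrees yield honest simplices, and that one can separate any two points of $\Gamma$ by a suitable star-full subtree.

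First I would handle the node case. Suppose $u$ and $v$ are distinct vertices of $\Gamma$ with $\emb(u)=\emb(v)$. If both are leaves, then $\wu{u}$ and $\wu{v}$ are distinct basis vectors, so $\emb(u)\neq\emb(v)$, a contradiction. If at least one, say $v$, is a node, consider the convex hull $T=[u,v]$ together with enough of $\Gamma$ to make it star-full; concretely, let $T$ be the smallest star-full subtree containing $v$ (one may take $T=\starT{\Gamma}{v}$ if $u$ is a leaf adjacent to $v$, and in general the star-full subtree spanned by $v$ and a node or leaf in the direction of $u$). By~\autoref{pr:convexitySimplices}, $\emb(v)$ lies in the relative interior of the simplex $\simplex{T}$, whose vertices are $\{\emb(\mu):\mu\in\leavesT{T}\}$, and these are affinely independent. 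Hence the barycentric coordinates of $\emb(v)$ with respect to $\leavesT{T}$ are strictly positive. On the other hand, if $u\notin T$ then $u$ lies in a branch of $\Gamma$ off of $T$, and $\emb(u)$, written in barycentric coordinates with respect to $\leavesT{\Gamma}$ and then grouped by the branches of $T$, has zero coordinate along at least one leaf of $T$ — contradicting $\emb(u)=\emb(v)$. If instead $u\in T$ is a leaf of $T$, then $\emb(u)$ is a vertex of $\simplex{T}$, not an interior point, again a contradiction; and if $u\in T$ is another node, apply~\autoref{pr:convexitySimplices}~(\ref{item:vInStar-Full}) and compare the two positive barycentric representations.

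To separate two nodes $u\neq v$ directly, the cleanest route is induction on the distance $\distGuv{\Gamma}{u}{v}$. The base case of adjacent nodes is exactly~\autoref{lm:adjacentBarycenters}~(\ref{bary3}), which gives $\emb(u)\neq\emb(v)$ since they are distinct points of the segment $[\baryc{\pag{a}}{[a,b]},\baryc{\pag{b}}{[a,b]}]$. For the inductive step, let $w$ be the node adjacent to $u$ on the geodesic $[u,v]$; by induction $\emb(w)\neq\emb(v)$, and by the base case $\emb(u)\neq\emb(w)$. One then argues that $\emb(u)$, $\emb(w)$, $\emb(v)$ are forced to be ordered along a path in $\simplex{\Gamma}$ — more precisely, the barycentric coordinate of $\emb(u)$ supported on the side of $\Gamma$ beyond $w$ (away from $u$) is strictly smaller than that of $\emb(w)$, which in turn is controlled by the analogous quantity for $\emb(v)$, using~\autoref{lm:adjacentBarycenters} and~\autoref{rem:convexityBaryc}. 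This yields $\emb(u)\neq\emb(v)$. Finally, for points on the (open) edges: if $x$ lies in the relative interior of an edge $e$ of $\Gamma$, then $\emb(x)$ is a nontrivial convex combination of $\emb$ of the two endpoints of $e$, and the support/barycentric-coordinate bookkeeping above distinguishes $\emb(x)$ from the image of any vertex and from the image of any point on a different edge (two distinct open edges meeting at a common vertex are separated because $\emb$ is strictly monotone on each edge toward that vertex by~\autoref{lm:adjacentBarycenters}~(\ref{bary3}) extended to leaf-edges, where it is obvious).

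I expect the main obstacle to be the edge-to-edge case: showing that $\emb$ does not identify an interior point of one edge with an interior point of another. This requires a clean statement that the barycentric coordinates of $\emb(x)$, as $x$ traverses $\Gamma$, vary in a ``laminar'' or monotone fashion dictated by the tree structure — essentially a strengthening of~\autoref{lm:adjacentBarycenters}~(\ref{bary3}) to a global monotonicity statement. The likely slick fix, matching the phrasing ``natural consequence of~\autoref{lm:coeffspos}'', is: for any edge $e=[a,b]$ of $\Gamma$, the full subdiagram $\Gamma$ decomposes as $\pag{a}\sqcup\pag{b}$ on its leaves, and the single barycentric coordinate $\sum_{\lambda\in\pag{b}}(\emb(x))_\lambda$ is a strictly monotone function of $x$ along any path crossing $e$ and is constant (equal to $0$ or the appropriate total) outside the two half-trees; combining such separating functionals over all edges, together with the node and leaf-edge analysis, pins down $x$ uniquely from $\emb(x)$. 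I would carry out the edge-crossing monotonicity first, then deduce node injectivity and edge injectivity as corollaries, and finally assemble the three cases into the full injectivity claim.
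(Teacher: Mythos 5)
Your strategy is genuinely different from the paper's. The paper proves injectivity by induction on the number of nodes of a star-full subtree $T$: it prunes an end-node $v$ and reduces everything to the single identity $\emb([v,u_i])\cap\emb(T')=\{\emb(v)\}$, which it extracts from the affine independence of $\{\emb(\mu)\colon\mu\in\leavesT{T}\}$ together with $\emb(v)\in\relo{(\simplex{T})}$ and the nesting $\simplex{T'}\subseteq\simplex{T}$ of~\autoref{cor:inclusionOfsimplices}; no monotonicity along the tree is ever invoked. You instead propose, for each edge $e=[a,b]$, the linear functional $\phi_e(x)=\sum_{\mu\in\pag{b}}x_\mu$ and a global laminar monotonicity of $\phi_e\circ\emb$ across $e$. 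This separates any two points of $\Gamma$ in one uniform step and would indeed yield injectivity, at the price of a monotonicity lemma that is strictly stronger than anything the paper proves.

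That lemma is exactly where your proposal is incomplete: \autoref{lm:adjacentBarycenters}~(\ref{bary3}) only gives $\phi_e(\emb(a))<\phi_e(\emb(b))$ for the two endpoints of $e$, whereas you need that $\phi_e\circ\emb$ is $<\phi_e(\emb(a))$ on the whole open $a$-side half-tree and $>\phi_e(\emb(b))$ on the open $b$-side, and you present this only as the ``likely slick fix.'' The claim is true and provable with the paper's tools: for a node $u$ with $a\in[u,b]$, $u\neq a$, one has $\wtuv{u}{\mu}=\wtuv{u}{a}\wtuv{a}{\mu}/\du{a}$ for $\mu\in\pag{b}$, so $\phi_e(\emb(u))<\phi_e(\emb(a))$ reduces to $\wtuv{u}{a}\,|\wu{a}|<\du{a}\,|\wu{u}|$, which follows by summing the inequality of~\autoref{pr:hypermetricineq} over all leaves and noting strictness for any leaf in a branch of $u$ not containing $a$; leaves $u$ and interior points of edges are then handled trivially and by linearity of $\phi_e$, respectively. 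You must actually carry this out. Separately, the sub-argument in your first paragraph that $\emb(u)$ ``has zero coordinate along at least one leaf of $T$'' when $u\notin T$ is false as stated: every coordinate of $\emb(u)$ in $\R^n$ is strictly positive for a node $u$, and the leaves of a star-full subtree need not be leaves of $\Gamma$, so no coordinate vanishes and no contradiction with $\emb(u)=\emb(v)$ arises this way. That paragraph should be discarded; the separating-functional argument, once completed, subsumes it.
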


\begin{proof} 
We prove that the statement holds when restricted to any star-full subtree $T$ of $\Gamma$. As in the proof of~\autoref{lm:coeffspos}, we argue by induction on the number $p$ of nodes of $T$.
If $p=0$, then $T$ is either a vertex $u$ or an edge $[u,v]$. The statement in the first case is  tautological. The result for the second one holds by construction because $\{\emb(u), \emb(v)\}$ is linearly independent by~\autoref{pr:convexityBar}~(\ref{linIndep}).

If $p=1$, then $T$ is a star tree. Let $v$ be its unique node and $\{u_1,\ldots, u_{\valv{v}}\}$ be its leaves. Injectivity over 
$T$ is a direct consequence of the following identity:
\[\emb([u_i,v]) \cap \emb([u_j,v])=\{\emb(v)\}\quad \text{ for all }i\neq j,
\]
which we prove by a direct computation. Indeed, pick $0\leq a \leq b\leq 1$ with
\begin{equation}\label{eq:injStar}
  a\,\emb(u_i) + (1-a)   \,\emb(v) =    b\,\emb(u_j) + (1-b)   \,\emb(v).
\end{equation}

\noindent By~\autoref{pr:convexityBar}~(\ref{inRelInt}), $\emb(v)$ admits a unique
expression:
\[\emb(v) = \sum_{k=1}^{\valv{v}}\alpha_k\,\emb(u_k) \quad \text{ with } \quad 
\sum_{k=1}^{\valv{v}} \alpha_k = 1 \quad\text{ and }\quad \alpha_k>0 \;\text{ for all } k.
\]
Substituting this identity in~\eqref{eq:injStar} yields the following affine dependency relation for $\emb(\leavesT{T})$:
\[
  (a+(b-a) \alpha_i) \emb(u_i) + ((b-a)\alpha_j - b) \emb(u_j) + 
  \sum_{k\neq i,j} \alpha_k (b-a) \emb(u_k)=0
\]
By~\autoref{pr:convexityBar}~(\ref{linIndep}), we conclude that 
$a+(b-a)\alpha_i = (b-a)\alpha_j - b = 0$ and $\alpha_k(b-a) = 0$ for all $k\neq i,j$. Since $\alpha_k>0$ for all $k\neq i,j$ and $\valv{v}\geq 3$, 
it follows that $b-a=0$ and $a=-b=0$. Therefore, expression~\eqref{eq:injStar} represents 
$\emb(v)$.

Finally, pick $p\geq 2$ and assume the result holds for star-full subtrees with $p-1$ nodes. Let $T$ be a 
star-full subtree with $p$ nodes and pick  an end-node $v$ of $T$. As in~\autoref{fig:CherryPruning}, write $v'$ for the unique node of $T$ 
adjacent to it and $\{u_1,\ldots, u_{\valv{v}-1}\}$ for the leaves of $T$ adjacent to $v$.

As in~\autoref{lm:cherryPruning}, let $T'$ be the star-full subtree obtained
by pruning $T$ from $v$. Our inductive hypothesis ensures that $\emb$ is injective when
restricted to $T'$. By the $p=1$ case we know that $\emb([v,u_i])\cap \emb([v,u_j]) = \{\emb(v)\}$ if $i\neq j$. Thus, the injectivity of $\emb$ when restricted to $T$ will be proven if we show:
\begin{equation}\label{eq:injectIndStep}
\emb([v,u_i]) \cap \emb(T') = \{\emb(v)\} \quad \text{ for all }i=1,\ldots, \valv{v}-1.
\end{equation}
The identity follows from~\autoref{pr:convexitySimplices}. 
Indeed, we write any $\wu{}$ on the left-hand side of~\eqref{eq:injectIndStep} as
\begin{equation}\label{eq:wuAsConvex}
\wu{} := a\emb(u_i) + (1-a) \emb(v) \in  \emb([v,u_i]) \cap \emb(T')\quad \text{ with }\quad 0\leq a \leq 1.
\end{equation}

Recall that $\wu{}\in \emb(T')\subset \simplex{T'}$ and $\simplex{T'}\subset \simplex{T}$ by~\autoref{cor:inclusionOfsimplices}. 
 Since $\emb(v) \in \relo{(\simplex{T})}$ as in~\autoref{pr:convexitySimplices}~(\ref{item:vInStar-Full}), substituting this expression into~\eqref{eq:wuAsConvex} and comparing it with the known expression for $\wu{}$ as an element of $\simplex{T}$ yields an affine dependency equation for $\{\emb(u)\colon u \in \leavesT{T}\}$. The positivity constraint on the coefficients used to write $\emb(v)$ as an element of $\relo{(\simplex{T})}$ forces $a=0$, and so~\eqref{eq:injectIndStep} holds.
\end{proof}

\section{Local tropicalization of splice type systems}\label{sec:local-trop-newm}

Let $\Gamma$ be a splice diagram with $n$ leaves and let $\cS(\Gamma)$ be a
splice type system associated to it, as in~\autoref{def:splicesystem}. Fixing a
total order on $\leavesT{\Gamma}$ yields an embedding of the corresponding splice type singularity $(X, 0)$ into $\CC^n$. In this section we describe the local tropicalization of this embedded germ, following the characterization from 
\autoref{def:posloctrop}. As a byproduct, we confirm the first half of~\autoref{thm:main1}, namely that $(X,0)$ is a complete intersection in $\CC^n$ with no irreducible components contained in any coordinate subspace. 
\medskip

The injectivity of the map $\emb$ from~\eqref{eq:embedGamma}, discussed in~\autoref{thm:injectivityrho}, fixes a  natural simplicial fan structure on the cone over $\emb(\Gamma)$ in $\R^n$:

\begin{definition}  \label{def:spliceFan}
  Let $\Gamma$ be a splice diagram. Then, the set $\R_{\geq 0} \emb(\Gamma)$ has a natural fan structure, with top-dimensional cones 
  \[
  \{\Rp\emb([u,v]): [u,v] \text{ is an edge of }\Gamma\}.
  \]
We call it the \emph{splice fan of $\Gamma$}. 
\end{definition}

Here is the main result of this section:

 \begin{theorem}\label{thm:tropsG}
   The local tropicalization of $(X,0)\hookrightarrow \CC^n$ is supported on the splice fan of $\Gamma$.
 \end{theorem}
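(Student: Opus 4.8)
The statement asserts a set-theoretic equality between the finite local tropicalization $\Trop X$ (a subset of $(\Rp)^n$) and the cone $\Rp\emb(\Gamma)$. I would prove the two inclusions separately, since they require genuinely different techniques: a combinatorial/``mine-sweeping'' argument for $\Trop X\subseteq \Rp\emb(\Gamma)$, and a balancing/lifting argument for the reverse inclusion.

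\textbf{Step 1: The inclusion $\Rp\emb(\Gamma)\subseteq\Trop X$.} By~\autoref{pr:InitWufvi}, for each node $v$ the weight vector $\wu{v}$ makes $\initwf{\wu{v}}{\fgvi{v}{i}}=\fvi{v}{i}$ monomial-free (it has at least $\valv{v}\geq 3$ terms and the Hamm determinant conditions force genuine cancellation on the torus), and for $u\neq v$ the initial form $\initwf{\wu{u}}{\fgvi{v}{i}}=\fvi{v}{i}-\cvi{v}{[u,v]}\zexp{\wtNve{v}{e}}$ is still a sum of at least two monomials; one then checks that no monomial lies in the initial ideal $\initwf{\wu{u}}{I}$, e.g.\ using~\autoref{thm:end-curvesNW} (the end-curve picture) or~\autoref{cor:initialForms}. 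Hence each ray $\Rp\wu{v}=\Rp\emb(v)$ lies in $\Trop X$; to fill in the edges $\Rp\emb([u,v])$ one uses~\autoref{rm:weightsAdjacent} and~\autoref{lem:keyid} to verify that for $\wu{}$ in the relative interior of $\Rp\emb([u,v])$ (a positive combination interpolating between $\wu{u}$ and $\wu{v}$), the initial form of each $\fgvi{w}{i}$ is again monomial-free — the key identities~\eqref{eq:admMonwuvalue} and~\eqref{eq:gviConditions} guarantee the series tails $\gvi{w}{i}$ never contribute, so one is reduced to the initial forms of the $\fvi{w}{i}$, which are handled by the same end-curve/Hamm-determinant analysis. (Alternatively, invoke balancing as in~\autoref{rem:balancedLoc} to propagate membership along the fan.)

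\textbf{Step 2: The inclusion $\Trop X\subseteq\Rp\emb(\Gamma)$.} This is the ``mine-sweeping'' part and the main obstacle. I would start from the standard simplex $\simplex{n-1}$ (equivalently, the positive orthant, after coning) and perform successive stellar subdivisions dictated by the splice diagram: first subdivide at $\emb(v)$ for each node $v$, using~\autoref{pr:convexityBar} to know that $\emb(v)$ lies in the relative interior of the relevant face, and then recurse over star-full subtrees via~\autoref{lm:cherryPruning}, \autoref{lm:coeffspos}, and~\autoref{pr:convexitySimplices}. For each open cone $\relo{\tau}$ of the resulting subdivision that is \emph{not} contained in $\Rp\emb(\Gamma)$, I would produce an explicit monomial in $\initwf{\wu{}}{I}$ for $\wu{}\in\relo{\tau}$: concretely, for a suitable node $v$ the initial form $\initwf{\wu{}}{\fgvi{v}{i}}$ will consist of a single admissible monomial $\zexp{\wtNve{v}{e}}$ (the inequalities in~\autoref{lem:keyid} become strict for all but one edge $e$ once $\wu{}$ leaves the splice fan, by~\autoref{pr:convexitySimplices}(\ref{item:vInStar-Full}) and the convexity of the barycentric decomposition in~\autoref{lm:adjacentBarycenters}), and this monomial certifies that $\wu{}\notin\Trop X$. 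The combinatorial bookkeeping — showing that every cone in the complement of the splice fan is ``swept'' by some node in this way, organized by an induction on star-full subtrees — is where the real work lies; it is precisely the content foreshadowed in the introduction. Finally, \autoref{prop:compartrop} lets me pass from the positive tropicalization to its closure $\Trop X$ in $(\Rp)^n$, once Step 1 of the overall program (no component inside a coordinate subspace) is in place, which is exactly what is being established in this section via the $G_\roottree$ analysis and~\autoref{cor:dominant-curve-map}.

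\textbf{Remark on the two halves.} The reverse inclusion (Step 1) is the ``easy direction'' in spirit — one exhibits points and uses balancing — but still needs the linear-algebra lemmas of \autoref{sec:splicetypenotation}; the forward inclusion (Step 2) is the heart of the theorem and the subdivision machinery of~\autoref{sec:comb-embedd-splice} was built precisely to run it. I expect the bulk of the proof in the paper to be an inductive traversal of star-full subtrees implementing the stellar subdivisions, with~\autoref{lm:coeffspos} and~\autoref{pr:convexitySimplices} supplying the geometric input at each stage.
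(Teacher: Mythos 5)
Your two-inclusion architecture and your description of the ``mine-sweeping'' direction do match the paper's strategy, but the proposal has two genuine gaps.

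First, the inclusion $\Rp\emb(\Gamma)\subseteq\Trop X$ does not follow the way your Step 1 suggests. That each $\initwf{\wu{}}{\fgvi{v}{i}}$ is monomial-free does \emph{not} imply that the initial ideal $\initwf{\wu{}}{I}$ is monomial-free: the initial ideal is generated by initial forms of \emph{all} elements of $I$, and a linear combination of the generators can acquire a monomial initial form even when no generator has one (this is exactly the mechanism the other inclusion exploits). The two correct repairs are the ones you mention only in passing, and each costs real work. The route via \autoref{cor:initialForms} first requires proving that the initial forms along $\emb(\Gamma)$ form a regular sequence meeting the torus (the end-curve and Pham--Brieskorn--Hamm analysis of Lemmas~\ref{lm:wu}, \ref{lm:wulambda} and~\ref{lm:wuv}) and then invoking the appendix result \autoref{thm:newL3.3}. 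The balancing route, which is the paper's primary one (\autoref{thm:EasyInclusionBalancing}), requires (i) purity of dimension two of $\ptrop\sG{\Gamma}$, which the paper extracts from a full computation of the boundary strata of the extended tropicalization (\autoref{thm:boundaryTrop}, \autoref{cor:expDimensionsG}) --- an entire subsection your plan does not budget for --- and (ii) a starting node already known to lie in the tropicalization, which \autoref{lm:nodeInTrop} obtains from non-emptiness, purity, and the \emph{already established} inclusion $\ptrop\sG{\Gamma}\subseteq\Rp\emb(\Gamma)$. So the ``easy'' inclusion logically depends on the ``hard'' one and on the boundary analysis; it cannot be run first or independently. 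The same boundary computation (not \autoref{cor:dominant-curve-map}) is what licenses passing to closures via \autoref{prop:compartrop}.

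Second, your certificate in Step 2 is too optimistic. Off the splice fan the inequalities of \autoref{lem:keyid} do \emph{not} become strict for all but one edge: for the weights produced by the stellar subdivision (e.g.\ $\wu{}=\alpha_v\emb(v)+\sum_j\wu{j}$ in \autoref{pr:interiorAvoidsTrop}), the value $\wu{}\cdot\wtNve{v}{e}$ is typically tied among several edges $e$ adjacent to $v$, so no individual $\fgvi{v}{i}$ has a monomial initial form. The paper's \autoref{lm:threeMonomialsAreEnough} asks only that one admissible co-weight strictly beat \emph{two} specific others (plus the tails $\gvi{v}{i}$), and then uses the Hamm determinant conditions to produce a \emph{linear combination} of the $\fgvi{v}{i}$ whose initial form is the desired monomial. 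Without this change-of-basis step your sweep fails on precisely the cones that must be removed.
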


 We prove~\autoref{thm:tropsG} by a double inclusion argument, first restricting our attention  to the positive local tropicalization.  In~\autoref{ssec:proof-subseteq} we show that  the positive local tropicalization of $\sG{\Gamma}$ is contained in the  support of the splice fan of $\Gamma$.  We prove this fact by working with simplices  associated to star-full subtrees of $\Gamma$, which were introduced in~\autoref{def:fullSubtree}.  For clarity of exposition, we break the arguments into a series of combinatorial  lemmas and propositions. These results allow us to certify that the ideal generated by the $\wu{}$-initial forms of all the series $\fgvi{v}{i}$ in $\sG{\Gamma}$  always contains a monomial when $\wu{}$ lies in the complement of the splice fan of $\Gamma$ in $(\R_{>0})^n$.

In turn, showing that the support of the splice fan of $\Gamma$ lies in the Euclidean closure of the local tropicalization of $(X,0)$ involves the so-called balancing condition for pure-dimensional local tropicalizations. This is the subject of~\autoref{ssec:proof-supseteq}. An alternative proof will be given in~\autoref{sec:NewtonND} after proving the Newton non-degeneracy of the germ $(X,0)$.
 
 The fact that the positive local tropicalization of $(X,0)$ is pure-dimensional is verified in an indirect way.  Our proof technique relies on the explicit computation of the boundary components of the extended tropicalization, which is done in~\autoref{ssec:infinite-tropicalization}. This establishes the first half of~\autoref{thm:main1} discussed above (see~\autoref{cor:expDimensionsG}).  As a consequence,  we confirm by~\autoref{cor:closureOfPositiveTrop} that the local tropicalization is the Euclidean closure of the positive one. This result together with the findings in~\textcolor{blue}{Sections}~\ref{ssec:proof-subseteq} and~\ref{ssec:proof-supseteq} complete the proof of~\autoref{thm:tropsG}.

 \begin{remark}\label{rm:notationConvention}
   Throughout the next subsections, we adopt the following convenient notation for the admissible exponent vectors  $\wtNve{v}{e}$ from~\eqref{eq:admissibleMonExp}. 
   Given a node $v$ and a vertex $u$ of $\Gamma$ with $u\neq v$, we define  
   $\boxedo{\wtNve{v}{u}}:=\wtNve{v}{e}$ where $e$ is the unique edge adjacent to $v$ 
   and lying  in the geodesic $[v,u]$. Similarly, given a star-full subtree $T$ of $\Gamma$ 
   not containing $v$, we write $\boxedo{\wtNve{v}{T}}:=\wtNve{v}{u}$, where $u$ is any  vertex of $T$.
 \end{remark}

 \smallskip
 \subsection{The positive local tropicalization is contained in the support of the splice fan.}\label{ssec:proof-subseteq}
$\:$ 
\smallskip

In this subsection we show that the only points in $\simplex{n-1}$ contained in the positive local tropicalization $\ptrop \langle\sG{\Gamma}\rangle$ are included in $\emb(\Gamma)$. We exploit the terminology and convexity results stated in~\autoref{sec:comb-embedd-splice}.

Our first technical result will  be used extensively throughout this section to determine $\ptrop \langle\sG{\Gamma}\rangle$. As the proof shows, the Hamm determinant conditions imposed on the system $\sG{\Gamma}$ play a crucial role.

\begin{lemma}\label{lm:threeMonomialsAreEnough} 
    Fix a node $v$ of $\Gamma$ and let  $e,e',e''$  be three distinct edges of
      $\starT{\Gamma}{v}$. Fix $\wu{} \in (\Rp)^n$ and suppose that the admissible exponent vectors $\wtNve{v}{e}, \wtNve{v}{e'}, \wtNve{v}{e''}$ from~\eqref{eq:admissibleMonExp} satisfy:
    \begin{equation}\label{eq:eepepp} \wu{}\cdot \wtNve{v}{e} < \wu{}\cdot \wtNve{v}{e'} \quad \text{ and } 
       \quad \wu{}\cdot \wtNve{v}{e} < \wu{}\cdot \wtNve{v}{e''}.
    \end{equation}
    Then, $\zexp{\wtNve{v}{e}}=\initwf{\wu{}}{f}$ for some  $f$ in the linear span of $\{\fvi{v}{i}: i=1\ldots, \valv{v}-2\}$. 
    If, in addition, $\wu{}$ satisfies 
    \begin{equation}\label{eq:hotDontCount}
\wu{}\cdot \wtNve{v}{e} <     \wu{}\cdot m \quad \text{ for each }\; {m} \in \bigcup_{i=1}^{\valv{v}-2} \Supp(\gvi{v}{i}),
    \end{equation}
then $\zexp{\wtNve{v}{e}}=\initwf{\wu{}}{F}$ for some series $F$  in the linear span of $\{\fgvi{v}{i}: i=1\ldots, \valv{v}-2\}$. In particular,     $\wu{}\notin \Trop \langle\sG{\Gamma}\rangle $.
\end{lemma}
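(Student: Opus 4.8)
The plan is to work inside the $\valv{v}$-dimensional $\CC$-vector space $E$ spanned by the admissible monomials $\{\zexp{\wtNve{v}{e'}} : e' \in \starT{\Gamma}{v}\}$, identifying it with $\CC^{\valv{v}}$ after fixing an ordering of $\starT{\Gamma}{v}$, so that coordinates are indexed by edges and the coefficient of $\zexp{\wtNve{v}{e'}}$ is read off directly. Under this identification the linear span of $\{\fvi{v}{i} : i = 1, \dots, \valv{v}-2\}$ becomes the column space $U := \operatorname{Im}(C)$ of the coefficient matrix $C = (\cvei{v}{e'}{i})_{e',i} \in \CC^{\valv{v}\times(\valv{v}-2)}$, a vector $x = C\mu$ corresponding to $\sum_i \mu_i \fvi{v}{i}$. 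The Hamm determinant conditions say exactly that every maximal minor of $C$ is nonzero, equivalently that for each $J\subseteq\starT{\Gamma}{v}$ with $|J|=\valv{v}-2$ the coordinate projection $\pi_J\colon U\to\CC^J$ is an isomorphism. I would first record the consequence
\[
  \dim\bigl(U\cap\CC^{L}\bigr)=\max\bigl(0,\ |L|-2\bigr)\qquad\text{for every }L\subseteq\starT{\Gamma}{v},
\]
where the bound $\ge$ is the generic-intersection estimate and the reverse follows from surjectivity of $\pi_{L^{c}}\colon U\to\CC^{L^{c}}$ when $|L|\ge 2$ (extend $L^{c}$ to a $(\valv{v}-2)$-subset and use the isomorphism there).

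Next I would set $W:=\wu{}\cdot\wtNve{v}{e}$ and $A:=\{e'\in\starT{\Gamma}{v}:\wu{}\cdot\wtNve{v}{e'}\le W\}$, so that by \eqref{eq:eepepp} we have $e\in A$ while $e',e''\notin A$, whence $|A^{c}|\ge 2$ and $|A|\le\valv{v}-2$. Putting $L:=\{e\}\cup A^{c}$ gives $|L|\ge 3$ and $\dim(U\cap\CC^{L})=|L|-2\ge 1$. The crucial claim is that some $x\in U\cap\CC^{L}$ has $x_{e}\neq 0$: otherwise $U\cap\CC^{L}\subseteq\CC^{A^{c}}$, forcing $U\cap\CC^{L}=U\cap\CC^{A^{c}}$, which is impossible since $\dim(U\cap\CC^{A^{c}})=|A^{c}|-2=|L|-3<|L|-2$. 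Writing such an $x$ as $C\mu$ and rescaling, set $f:=x_{e}^{-1}\sum_{i}\mu_{i}\fvi{v}{i}$; the monomials occurring in $f$ are indexed by a subset of $L$, and $e$ is the unique edge of $L$ with $\wu{}\cdot\wtNve{v}{e'}\le W$ (every $e'\in A^{c}$ has weight $>W$), so $\initwf{\wu{}}{f}=\zexp{\wtNve{v}{e}}$, proving the first assertion. For the second, with the extra hypothesis \eqref{eq:hotDontCount}, put $F:=x_{e}^{-1}\sum_{i}\mu_{i}\fgvi{v}{i}=f+g$ where $g:=x_{e}^{-1}\sum_{i}\mu_{i}\gvi{v}{i}$; every monomial of $g$ lies in $\bigcup_{i=1}^{\valv{v}-2}\Supp(\gvi{v}{i})$ and hence has $\wu{}$-weight strictly above $W=\wu{}(f)$, so $\initwf{\wu{}}{F}=\initwf{\wu{}}{f}=\zexp{\wtNve{v}{e}}$. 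Finally, since $F$ is a $\CC$-linear combination of the series defining $\sG{\Gamma}$, the monomial $\zexp{\wtNve{v}{e}}$ lies in the $\wu{}$-initial ideal $\initwf{\wu{}}{I}\cO$ of the ideal $I$ defining the germ, which is therefore not monomial-free; by \autoref{def:posloctrop}, $\wu{}\notin\Trop\sG{\Gamma}$.

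The only genuinely delicate point is this linear-algebra step — isolating a vector of $U$ supported on $L$ whose $e$-coordinate is nonzero — and it is precisely where the full strength of the Hamm determinant conditions enters, through the exact value of $\dim(U\cap\CC^{L})$ rather than merely an inequality; the rest is routine weight bookkeeping with \eqref{eq:eepepp} and \eqref{eq:hotDontCount}. I would also dispatch the degenerate case $\valv{v}=3$ at the outset: there $A=\{e\}$, the span is one-dimensional, $e$ already has strictly minimal $\wu{}$-weight among the three admissible monomials, and the statement is immediate after rescaling $\fvi{v}{1}$.
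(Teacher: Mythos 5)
Your proof is correct and follows the same strategy as the paper's: use the Hamm determinant conditions to extract from the span of $\{\fvi{v}{i}\}_i$ an element whose coefficient at $\zexp{\wtNve{v}{e}}$ is nonzero and whose other surviving admissible monomials all have strictly larger $\wu{}$-weight, then absorb the $\gvi{v}{i}$ via \eqref{eq:hotDontCount}. The only difference is in how the linear algebra is packaged: the paper inverts the maximal minor indexed by the complement of $\{e',e''\}$ to produce an explicit basis $\fvi{v}{i}' = \zexp{\wtNve{v}{e_i}} + a_i\,\zexp{\wtNve{v}{e'}} + b_i\,\zexp{\wtNve{v}{e''}}$, so its $f$ is supported on exactly three edges, whereas you establish existence non-constructively, finding an element supported on $\{e\}\cup A^{c}$ by comparing $\dim(U\cap\CC^{L})$ with $\dim(U\cap\CC^{A^{c}})$. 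Both are equivalent uses of the hypothesis that every maximal minor of the coefficient matrix is nonzero, and your weight bookkeeping for the first and second assertions is sound.
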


\begin{proof}  We let  $e_1,\ldots, e_{\valv{v}}$ be the edges adjacent to $v$ and assume that $e_{\valv{v}-2}:=e$, $e_{\valv{v}-1}:=e'$ and $e_{\valv{v}}:=e''$. 
  Using the Hamm determinant conditions, we build a basis $\{\fvi{v}{i}'\}_{i=1}^{\valv{v}-2}$ for the linear span of $\{\fvi{v}{i}\}_{i=1}^{\valv{v}-2}$ where
  \begin{equation*}\label{eq:newBasis}\fvi{v}{i}':= \zexp{\wtNve{v}{e_i}} + a_{i} \, \zexp{\wtNve{v}{e'}} + b_i\, \zexp{\wtNve{v}{e''}} \quad \text{ for each }\; i\in \{1,\ldots, \valv{v}-2\}.
  \end{equation*}
 From~\eqref{eq:eepepp} we conclude that $\initwf{\wu{}}{\fvi{v}{\valv{v}-2}'} = \zexp{\wtNve{v}{e}}$. Taking $f:=\fvi{v}{\valv{v}-2}'$ proves the first part of the statement.

 For the second part, the technique yields a new basis $\{\fgvi{v}{i}'\}_{i=1}^{\valv{v}-2}$ for the linear span of $\{\fgvi{v}{i}'\}_{i=1}^{\valv{v}-2}$ with
 \[\fgvi{v}{i}' = \fvi{v}{i}' + \gvi{v}{i}'\qquad \text{ for each }\;i\in \{1,\ldots, \valv{v}-2\},\]
 where each $\gvi{v}{i}'$ is a linear combination of $\{\gvi{v}{j}\}_{j=1}^{\valv{v}-2}$. Condition~\eqref{eq:hotDontCount} then ensures that
\[\initwf{\wu{}}{\fgvi{v}{\valv{v}-2}'} = \initwf{\wu{}}{\fvi{v}{\valv{v}-2}'} = \zexp{\wtNve{v}{e}}.
\]
Thus, the series $F= \fgvi{v}{\valv{v}-2}'$ satisfies the required properties. In particular, the ideal $\initwf{\wu{}}{\langle\sG{\Gamma}\rangle}$ contains the monomial $\zexp{\wtNve{v}{e}}$ and so $\wu{}\notin \Trop \langle\sG{\Gamma}\rangle$ by definition.
\end{proof}

Next, we state the main theorem in this subsection, which yields one of the required inclusions in~\autoref{thm:tropsG} when choosing $T=\Gamma$. More precisely:

\begin{theorem}\label{thm:hardInclusion2D} For every star-full subtree $T$ of $\Gamma$, we have $ \simplex{T}\cap \ptrop \langle\sG{\Gamma}\rangle  \subseteq \emb(T)$.
\end{theorem}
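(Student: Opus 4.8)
The argument proceeds by induction on the number $p$ of nodes of the star-full subtree $T$, mirroring the inductive structure used in~\autoref{lm:coeffspos} and~\autoref{thm:injectivityrho}. The base cases $p=0$ (when $T$ is a vertex or an edge) and $p=1$ (when $T$ is a star tree) should be handled first; the star tree case is essentially the Pham--Brieskorn--Hamm situation and will rely directly on~\autoref{lm:threeMonomialsAreEnough} applied to the unique node of $T$. For the inductive step, I would fix a star-full subtree $T$ with $p\geq 2$ nodes, choose an end-node $v$ of $T$ (which exists since $T$ is not a star tree), let $v'$ be the unique node of $T$ adjacent to $v$, and let $T'$ be the star-full subtree obtained by pruning $T$ from $v$ as in~\autoref{lm:cherryPruning}. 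The leaves of $T$ adjacent to $v$ are, say, $u_1,\ldots,u_{\valv{v}-1}$, and $\leavesT{T'}=\{v\}\cup\{u_{\valv{v}},\ldots,u_d\}$.

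\textbf{Key steps.} Fix $\wu{}\in \simplex{T}\cap\ptrop\sG{\Gamma}$. The goal is to show $\wu{}\in\emb(T)$. By~\autoref{cor:inclusionOfsimplices} we have $\simplex{T'}\subseteq\simplex{T}$, and by~\autoref{pr:convexitySimplices}(\ref{item:vInStar-Full}) the point $\emb(v)$ lies in $\relo{\simplex{T}}$; this lets me compare coordinates of $\wu{}$ in the simplex $\simplex{T}$. The first reduction is to show that either $\wu{}$ lies in the subsimplex $\simplex{T'}$ (in which case the inductive hypothesis applied to $T'$ gives $\wu{}\in\emb(T')\subseteq\emb(T)$, using~\autoref{cor:inclusionOfsimplices} and the fact that $\emb(T')\subset\emb(T)$), or else $\wu{}$ lies on the ``$v$-side'', meaning its barycentric coordinates in $\simplex{T}$ are supported on $\emb(v)$ and the $\emb(u_j)$ with $j<\valv{v}$. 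To make this dichotomy precise I would use~\autoref{lm:adjacentBarycenters}: splitting $\Gamma$ along the edge $[v,v']$ produces the barycenters $\baryc{\pag{v}}{[v,v']}$ and $\baryc{\pag{v'}}{[v,v']}$, and~\autoref{lm:adjacentBarycenters}(\ref{bary3}) orders $\emb(v)$ and $\emb(v')$ strictly between them. In the second case, the behavior of $\wu{}$ against the admissible co-weights $\wtNve{v}{e}$ for $e\in\starT{\Gamma}{v}$ must be controlled: if $\wu{}\cdot\wtNve{v}{e}$ is \emph{not} minimized at the edge $e_0:=[v,v']$ pointing toward $T'$, then two or more of the quantities $\wu{}\cdot\wtNve{v}{e}$ would be strictly dominated by a third, and~\autoref{lm:threeMonomialsAreEnough} would produce a monomial in $\initwf{\wu{}}{\sG{\Gamma}}$, contradicting $\wu{}\in\ptrop\sG{\Gamma}$; this forces the minimum to be attained only along $e_0$, or to be attained at (at least) two edges among $\starT{\Gamma}{v}\setminus\{e_0\}$, the latter forcing $\wu{}$ onto the image $\emb([v,u_j])$ of one of the pendant edges of $T$ at $v$. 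Using~\autoref{lem:keyid} (which tells us $\wu{v}\cdot\wtNve{v}{e}\geq\wtuv{v}{v}$ with equality iff $e\not\subseteq[v,v]$, and more usefully its companion~\autoref{pr:InitWufvi}) together with the convexity expression~\eqref{eq:proportional} for $\emb(u_j)$ in terms of $\emb(v)$ and the branch-barycenter, one pins down exactly which face of $\simplex{T}$ the point $\wu{}$ can lie on, and in the ``$e_0$-minimal'' subcase one shows $\wu{}$ reduces to a point that, when tested against the nodes of $T'$, lands inside $\emb(T')$ by induction, while in the ``two-edge-minimal'' subcase $\wu{}\in\emb([v,u_j])\subseteq\emb(T)$ directly.

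\textbf{Main obstacle.} I expect the hard part to be the bookkeeping in the inductive step that connects the \emph{combinatorial} constraint (which face of $\simplex{T}$ contains $\wu{}$) with the \emph{tropical/algebraic} constraint ($\initwf{\wu{}}{\sG{\Gamma}}$ is monomial-free). Specifically, the subtlety is that the splice type equations $\fgvi{v'}{i}$ attached to the \emph{adjacent} node $v'$ also constrain $\wu{}$ once we know its behavior near $v$, and one must verify that the value of $\wu{}$ against the admissible co-weights of $v'$ is again minimized along the appropriate edge so that the inductive hypothesis for $T'$ can be invoked; this is where the precise equality clause in~\autoref{lem:keyid} (equality holds iff $e\not\subseteq[u,v]$) and the strict inequality in~\autoref{pr:hypermetricineq} do the real work. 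Conceptually, this is a ``mine-sweeping'' argument: each time we prune an end-node, we use the Hamm determinant conditions at that node (via~\autoref{lm:threeMonomialsAreEnough}) to eliminate all directions in $\simplex{T}$ not meeting $\emb([v,u_j])$ for some pendant edge or not restricting into $\simplex{T'}$, and then recurse. The only genuinely delicate point is ensuring the hypotheses~\eqref{eq:eepepp} and~\eqref{eq:hotDontCount} of~\autoref{lm:threeMonomialsAreEnough} are actually met — for~\eqref{eq:hotDontCount} this is exactly where the defining inequality~\eqref{eq:gviConditions} on the higher-order terms $\gvi{v}{i}$, namely $\wu{v}\cdot m>\du{v}$ together with its linking-number consequence, is indispensable, and one checks it propagates to arbitrary $\wu{}$ in the relevant cone by the positivity of the barycentric coefficients established in~\autoref{sec:comb-embedd-splice}.
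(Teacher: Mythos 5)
Your overall architecture (induction on the number of nodes, stellar subdivision at a node, and using the Hamm conditions via~\autoref{lm:threeMonomialsAreEnough} to manufacture monomials in the initial ideal) matches the paper's, but there is a genuine gap at the heart of the inductive step. The dichotomy you propose to establish --- that any $\wu{}\in\simplex{T}\cap\ptrop\sG{\Gamma}$ lies either in $\simplex{T'}$ or in $\conv(\{\emb(v)\}\cup\{\emb(u_j)\colon j<\valv{v}\})$ --- is exactly the hard content of the theorem, and your proposed mechanism for proving it does not work. A point $\wu{}$ of $\simplex{T}$ whose barycentric support meets several branches of $T$ at $v$ (say some but not all leaves of two different branches) can perfectly well have the minimum of $e\mapsto\wu{}\cdot\wtNve{v}{e}$ attained at two or more edges of $\starT{\Gamma}{v}$, so no monomial is produced by testing the equations at $v$ alone, yet such a point is far from $\emb(T)$. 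In particular your claim that ``two-edge-minimality'' at $v$ forces $\wu{}\in\emb([v,u_j])$ is false: $\emb(v)$ itself attains the minimum at every edge, and a whole neighborhood of mixed points shares this behavior. The paper handles these mixed points with \autoref{pr:noMixing} and, crucially, \autoref{lm:properIsEmpty}, whose proof is a tree-traversal: starting from a leaf of a branch missing from the support of $\wu{}$, one walks through successive nodes of $\Gamma$ (not just $v$), at each stage locating a node $u$, a branch $T'$ avoiding the support, and a branch whose leaves all lie in the support, and only there do the hypotheses of~\autoref{lm:threeMonomialsAreEnough} hold. This global search for the right node is the ``mine-sweeping'' step your plan names but does not supply.

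Two smaller points. First, the paper's induction recurses on all branches $[T_i,v]$ of an arbitrary node $v$ simultaneously rather than pruning a single end-node; your end-node version could be made to work but does not simplify anything, since the mixed faces of the stellar subdivision at $v$ still have to be swept. Second, even the ``generic'' case (a point in the relative interior of a full-dimensional simplex $\tau_\lambda$ of the subdivision) requires choosing the node $u$ adjacent to the omitted leaf $\lambda$ --- not the subdivision center $v$ --- and verifying the inequalities~\eqref{eq:eepepp} and~\eqref{eq:hotDontCount} there (this is \autoref{pr:interiorAvoidsTrop}); your plan tests only the equations at $v$ and $v'$, which is not enough when $\lambda$ is far from $v$.
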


\begin{proof} Recall from~\eqref{eq:simplexT} that $\simplex{T}$ is the convex hull of the set of leaves $\leavesT{T}$ of $T$, viewed in $\simplex{n-1}$ via the map $\emb$.   We proceed by induction on the number of nodes of $T$, which we denote by $p$. If $p=0$, then $T$ is either a vertex or an edge of $\Gamma$, and $\simplex{T} = \emb(T)$. For the inductive step, assume $p\geq 1$ and pick a node $v$ of $T$. Let $T_1,\ldots, T_{\valv{v}}$ be the branches of $T$ adjacent to $v$, as in \autoref{def:branches}. We use the point $\emb(v)$ to perform a stellar subdivision of $\simplex{T}$, giving a decomposition  $\simplex{T} = \bigcup_{\lambda\in \leavesT{T}} \tau_{v,\lambda}$, where
\begin{equation}\label{eq:tau_lambda}
 \boxedo{\tau_{v,\lambda}}:=\conv(\{\emb(v)\}\cup \emb(\leavesT{T}\smallsetminus \{\lambda\})) \qquad \text{ for all }\;\lambda \in \leavesT{T}.
\end{equation}

By~\autoref{lm:coeffspos}, $\tau_{v,\lambda}$ is a simplex of dimension $|\leavesT{T}|-1$. \autoref{pr:interiorAvoidsTrop} below shows that $\tau_{v,\lambda} \cap \ptrop\sG{\Gamma}$ lies in the boundary of  $\tau_{v,\lambda}$. In turn,~\autoref{pr:noMixing} below ensures that 
\[\partial\tau_{v,\lambda} \cap \ptrop \langle\sG{\Gamma}\rangle \subseteq \bigcup_{\substack{1\leq i \leq \valv{v}\\i\neq j}}\simplex{[T_i,v]},
     \]
     where $T_j$ is the unique branch of $T$ adjacent to $v$ that contains the leaf $\lambda$, 
     and $\boxedo{[T_i,v]}$ denotes the convex hull in $\Gamma$ of $T_i \cup \{v\}$. 
     Combining this fact with the inductive hypothesis applied to all  
     star-full subtrees $[T_i,v]$ of $\Gamma$ with $i \in \{1, \ldots, \valv{v}\}$ gives
\begin{equation}\label{eq:onlyBoundaryMatters}
  \simplex{T} \cap \ptrop \langle\sG{\Gamma}\rangle \subseteq \bigcup_{i=1}^{\valv{v}} (\simplex{[T_i,v]} \cap \ptrop \langle\sG{\Gamma}\rangle)\subseteq \bigcup_{i=1}^{\valv{v}}\emb([T_i,v]) \subseteq \emb(T).\qedhere
\end{equation}
\end{proof}

As a natural consequence of this result, we deduce one of the two inclusions required to confirm~\autoref{thm:tropsG}:

\begin{corollary}\label{cor:hardInclusion2D} 
      The positive local tropicalization of $\sG{\Gamma}$ is contained in the support 
      of the splice fan of $\Gamma$.
\end{corollary}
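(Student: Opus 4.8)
The plan is to derive this directly from \autoref{thm:hardInclusion2D}, applied to the whole tree, together with the elementary fact that positive local tropicalizations are invariant under positive rescaling. First I would observe that $\Gamma$ is a star-full subtree of itself in the sense of \autoref{def:fullSubtree}, since trivially $\starT{\Gamma}{v}\subseteq\Gamma$ for every node $v$; moreover $\simplex{\Gamma}$ is the standard simplex $\simplex{n-1}$ (as recorded right after~\eqref{eq:simplexT}). Applying \autoref{thm:hardInclusion2D} with $T=\Gamma$ thus yields
\[
\simplex{n-1}\cap\ptrop\sG{\Gamma}\ \subseteq\ \emb(\Gamma).
\]

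Next I would use scaling invariance of initial ideals: for every $\wu{}\in(\Rp)^n$ and $t>0$ one has $\initwf{t\wu{}}{f}=\initwf{\wu{}}{f}$ for all $f\in\cO$, so whether $\initwf{\wu{}}{\sG{\Gamma}}$ is monomial-free depends only on the ray $\Rp\wu{}$; in particular $\ptrop\sG{\Gamma}$ is stable under multiplication by positive scalars. Given $\wu{}\in\ptrop\sG{\Gamma}\subseteq(\R_{>0})^n$, the normalized vector $\wu{}/|\wu{}|$ (with $|\cdot|$ the $1$-norm) then still belongs to $\ptrop\sG{\Gamma}$ and lies in $\simplex{n-1}$, so the displayed inclusion gives $\wu{}/|\wu{}|\in\emb(\Gamma)$ and hence $\wu{}\in\Rp\emb(\Gamma)$. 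Since $\Rp\emb(\Gamma)$ is precisely the support of the splice fan of $\Gamma$ (\autoref{def:spliceFan}), this proves $\ptrop\sG{\Gamma}\subseteq\Rp\emb(\Gamma)$, as claimed.

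For completeness, to get the corresponding statement for the full local tropicalization that enters \autoref{thm:tropsG}, I would then invoke \autoref{prop:compartrop}: once $(X,0)$ is known to have no irreducible component inside a coordinate subspace of $\CC^n$ (established separately in this section), $\Trop\sG{\Gamma}$ is the Euclidean closure of $\ptrop\sG{\Gamma}$ in $(\Rp)^n$; as $\Rp\emb(\Gamma)$ is closed---it is $\Rp$ times the compact set $\emb(\Gamma)$, which is bounded away from the origin since $\emb(\Gamma)\subseteq\simplex{n-1}$---the inclusion above upgrades to $\Trop\sG{\Gamma}\subseteq\Rp\emb(\Gamma)$.

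There is essentially no obstacle here: all the work is packaged in \autoref{thm:hardInclusion2D}, and the only things one must check are the trivial verification that $\Gamma$ is star-full and that $1$-norm normalization sends the open positive orthant into $\simplex{n-1}$ without leaving the positive local tropicalization.
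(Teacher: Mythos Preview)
Your proposal is correct and follows the paper's approach: the paper states the corollary as ``a natural consequence of this result combined with~\autoref{prop:compartrop},'' where ``this result'' is \autoref{thm:hardInclusion2D} applied with $T=\Gamma$. You make explicit the scaling-invariance step that the paper leaves implicit, and you correctly note that \autoref{prop:compartrop} is only needed for the upgrade to the full local tropicalization in \autoref{thm:tropsG}, not for the positive version stated in the corollary itself.
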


In the remainder of this subsection, we discuss the two key propositions used in the proof of~\autoref{thm:hardInclusion2D}. We start by showing that the relative interior of any of the top-dimensional simplices $\tau_{v,\lambda}$ from~\eqref{eq:tau_lambda} obtained from the stellar subdivision of $\simplex{T}$ induced by a node $v$ of $T$ does not meet $\ptrop \langle\sG{\Gamma}\rangle$.~\autoref{lm:threeMonomialsAreEnough} plays a central role. The task is purely combinatorial and the difficulty lies in how to select the triple of admissible exponent vectors required by the lemma. Throughout, we make use of branches of subtrees, which were introduced in~\autoref{def:branches}. 

\begin{proposition}\label{pr:interiorAvoidsTrop} 
Let $T$ be a  star-full subtree of $\Gamma$ and $\lambda \in \leavesT{T}$. 
     Consider the simplex $\tau_{v,\lambda}$ defined by \eqref{eq:tau_lambda}. Then, its relative interior $\relo{\tau_{v,\lambda}} $ is disjoint from $\ptrop \langle\sG{\Gamma}\rangle$.  
\end{proposition}

\begin{proof}
  Let $u$ be the unique node of $T$ adjacent to $\lambda$, and denote by $T_1,\ldots, T_{\valv{u}}$ the branches of $T$ adjacent to $u$. We assume that $T_{\valv{u}}=\{\lambda\}$ and fix any $\wu{}\in \relo{\tau_{v,\lambda}}$.
  Since $\tau_{v,\lambda}$ is a simplex, we can  write $\wu{}$ uniquely as
  \begin{equation}\label{eq:wjExpression}
         \wu{} = \alpha_v\emb(v) + \sum_{j=1}^{\valv{u}-1} \wu{j} \quad \text{ where } \quad     
          \boxedo{\wu{j}}  :=  \sum_{\mu \in \leavesT{T_j}} \alpha_{\mu,j} \emb(\mu)
           \quad    \text{ for all }\; j\in \{1,\ldots, \valv{u}-1\}, 
  \end{equation}
and $\boxedo{\alpha_v},  \boxedo{\alpha_{\mu,j}} >0$ for all $\mu \in \leavesT{T_j}$.

In what follows we analyze the $\wu{}$-initial forms of the series $\fgvi{u}{i}$ from  $\sG{\Gamma}$ for $i\in \{1,\ldots, \valv{u}-2\}$  and use~\autoref{lm:threeMonomialsAreEnough} to confirm that $\wu{} \notin \Trop \langle\sG{\Gamma}\rangle$.  To this end, we compare the $\wu{}$-weights of 
$\zexp{\wtNve{u}{\lambda}}$ and of the remaining monomials in $\fgvi{u}{i}$, for each $i\in \{1,\ldots, \valv{u}-2\}$. We treat the monomials appearing in $\fvi{u}{i}$ and $\gvi{u}{i}$ separately.

\autoref{lm:directCalculationsIForOldProp6_7} discusses the monomials in $\gvi{u}{i}$ and confirms
that the required condition~(\ref{eq:hotDontCount}) of~\autoref{lm:threeMonomialsAreEnough} holds for $\wtNve{u}{\lambda}$. In turn, \autoref{lm:directCalculationsIIForOldProp6_7} verifies that we can find two edges  $e',e''$ of $T$ adjacent to $u$ satisfying the inequalities~\eqref{eq:eepepp} for $e=[u,\lambda]$. Therefore,~\autoref{lm:threeMonomialsAreEnough} applied to the node $u$ confirms that $\zexp{\wtNve{u}{\lambda}}$ is the $\wu{}$-initial form of a series in the linear span of $\{\initwf{\wu{}}{\fgvi{u}{i}}: i=1,\ldots, \valv{u}-2\}$. Thus, $\wu{}\notin \Trop \langle\sG{\Gamma}\rangle$ as we wanted to show.
\end{proof}

In the next two lemmas we place ourselves in the setting of \autoref{pr:interiorAvoidsTrop}. In particular, we use the notations introduced in its proof.

\begin{lemma}\label{lm:directCalculationsIForOldProp6_7} 
       Let $\wu{}$  be a weight vector satisfying condition~\eqref{eq:wjExpression}. 
       Then, for each $i\in \{1,\ldots, \valv{u}-2\}$ and each monomial $\zexp{m}$ appearing in $\gvi{u}{i}$ we have 
  \begin{equation}\label{eq:ineqForGui_muLambda}
       \wu{}\cdot \wtNve{u}{\lambda} < \wu{} \cdot m\,.
  \end{equation}
\end{lemma}

\begin{proof} 
    First, we compare the $\wu{}$-weights of $\zexp{\wtNve{u}{\lambda}}$ and all the monomials $\zexp{m}$ appearing in $\gvi{u}{i}$. We do so by looking at the weight 
contributed by each summand of $\wu{}$ in the decomposition \eqref{eq:wjExpression}. \autoref{lem:keyid} and conditions~\eqref{eq:gviConditions} ensure that
\begin{equation}\label{eq:weightOfEmbv}
      \emb(v)\cdot \wtNve{u}{\lambda} = \frac{\wtuv{u}{v}}{|\wu{v}|} < \emb(v) \cdot m\,.
\end{equation}

     In turn, to compare the $\wu{j}$-weights of the monomials $\zexp{\wtNve{u}{\lambda}}$ and $\zexp{m}$, we notice that the only summands $\alpha_{\mu,j}\emb(\mu)$ of $\wu{j}$ contributing a positive weight to $\zexp{\wtNve{u}{\lambda}}$ are the ones coming from those $\mu \in \leavesT{T_j}$ that are nodes in $\Gamma$. Again, \autoref{lem:keyid} and the conditions on $\gvi{u}{i}$ listed in~\eqref{eq:gviConditions} confirm that 
\[
      \emb(\mu) \cdot \wtNve{u}{\lambda} = \frac{\wtuv{u}{\mu}}{|\wu{\mu}|} < \emb(\mu) \cdot m \qquad \text{ for all }\mu\in  \leavesT{T_j}\smallsetminus \leavesT{\Gamma}\,.\]
Combining this inequality with the positivity of the coefficients $\alpha_{\mu,j}$ yields the inequality
\begin{equation}\label{eq:weightOfWj}
  \wu{j}\cdot \wtNve{u}{\lambda} =  \! \sum_{\mu \in \leavesT{T_j}\smallsetminus \leavesT{\Gamma}} \!  \alpha_{\mu,j} \frac{\wtuv{u}{\mu}}{|\wu{\mu}|}\;\; \leq \sum_{\mu \in \leavesT{T_j}\smallsetminus \leavesT{\Gamma}} \!  \alpha_{\mu,j}\, (\emb(\mu) \cdot m)
  \;\;\leq \;\;\wu{j} \cdot m \;\quad \text{ for } 1\leq j< \valv{u}\,. 
\end{equation}
      Furthermore, the leftmost inequality is strict if the set $\leavesT{T_j}\smallsetminus \leavesT{\Gamma}$ is non-empty.

    Finally, the positivity of $\alpha_v$ together with the inequalities~\eqref{eq:weightOfEmbv} and \eqref{eq:weightOfWj} ensure that each $m$ in the support of $\gvi{u}{i}$ satisfies the  inequality~\eqref{eq:ineqForGui_muLambda}. This concludes our proof.
\end{proof}

\begin{lemma}\label{lm:directCalculationsIIForOldProp6_7} 
       Let $\wu{}$  be a weight vector verifying condition~\eqref{eq:wjExpression}. 
      Then, there exist two different edges $e', e''$ adjacent to the node 
      $u$ of $\Gamma$ satisfying
         \[
                \wu{}\cdot \wtNve{u}{\lambda} < \wu{}\cdot \wtNve{u}{e'} \quad \text{ and } 
                  \quad \wu{}\cdot \wtNve{u}{\lambda} < \wu{}\cdot \wtNve{u}{e''}.
         \]
\end{lemma}

\begin{proof}
We use the  notation $\wtNve{u}{T}$ introduced in~\autoref{rm:notationConvention}. It is enough to find  
     two different branches $T_{j_1}$ and $T_{j_2}$ of $T$ adjacent to $u$ 
     with $j_1,j_2<\valv{v}$, and verifying
\begin{equation}\label{eq:ineqToCheckFirstProp}
     \wu{}\cdot \wtNve{u}{\lambda} < \wu{}\cdot \wtNve{u}{T_{j_1}}\quad  \text{ and } \quad \wu{}\cdot \wtNve{u}{\lambda} < \wu{}\cdot \wtNve{u}{T_{j_2}}.
  \end{equation}
  
 Our choice will depend on the nature of $u$. If $u=v$ we pick any pair of distinct indices $j_1,j_2<\valv{v}$. On the contrary, if $u\neq v$ we take $T_{j_1}$ to be the unique branch of $T$ adjacent to $u$ containing $v$, and let $T_{j_2}$ be any other branch of $T$ adjacent to $u$  not containing $\lambda$ with $j_2<\valv{v}$. After relabeling, we may assume that $j_1 = 1$ and $j_2=2$.

  In the remainder, we confirm that these two branches satisfy the inequalities in~\eqref{eq:ineqToCheckFirstProp} by analyzing the contributions of each summand of $\wu{}$ in the decomposition \eqref{eq:wjExpression} to the total weight of each of the three relevant monomials. We follow the same reasoning as in the proof of~\autoref{lm:directCalculationsIForOldProp6_7}. 
  \autoref{lem:keyid} is again central to our computations. 
  
 We start with the contribution of $\emb(v)$. The lemma confirms that 
  \begin{equation}\label{eq:directCalIIForv}
         \emb(v)\cdot \wtNve{u}{\lambda} =   \frac{\wtuv{u}{v}}{|\wu{v}|} =  \min\{\emb(v)\cdot \wtNve{u}{T_1} ,    \emb(v)\cdot \wtNve{u}{T_2}\}. 
  \end{equation}

     To analyze the $\wu{j}$-weight of the three monomials, we recall from the proof of \autoref{lm:directCalculationsIForOldProp6_7} that we need only consider the contributions of those vertices $\mu\in \leavesT{T_j}$ that are nodes of $\Gamma$. Indeed, we have 
\begin{equation}\label{eq:ineqj}
        \wu{j} \cdot  \wtNve{u}{\lambda} =  \! \sum_{\mu \in \leavesT{T_j}\smallsetminus \leavesT{\Gamma}} \!  \alpha_{\mu,j} \frac{\wtuv{u}{\mu}}{|\wu{\mu}|} \qquad \text{ for } j\in \{1,\ldots, \valv{u}-1\}\,. 
\end{equation}
     This expression agrees with the value of $\wu{j}\cdot \wtNve{u}{T_1}=\wu{j}\cdot \wtNve{u}{T_2}$ when $j>2$ even if $\leavesT{T_j}\smallsetminus \leavesT{\Gamma}$ is the empty-set.

        If $j=1$,~\autoref{lem:keyid} confirms that
\begin{equation}\label{eq:ineq1}
       \wu{1} \cdot  \wtNve{u}{\lambda} = \wu{1} \cdot  \wtNve{u}{T_2} =  \! \sum_{\mu \in \leavesT{T_1}\smallsetminus \leavesT{\Gamma}} \!  \alpha_{\mu,1} \frac{\wtuv{u}{\mu}}{|\wu{\mu}|} < \wu{1} \cdot  \wtNve{u}{T_1}\,. 
\end{equation}
       The inequality is strict even if the set $\leavesT{T_1}\smallsetminus \leavesT{\Gamma}$ is empty. Indeed, the fact that the tree $T$ is star-full ensures that when $\leavesT{T_1}\subset \leavesT{\Gamma}$, any variable appearing in $\zexp{\wtNve{u}{T_1}}$ will be indexed by an element in $\leavesT{T_1}$. Therefore,  the total $\wu{1}$-weight of $\wtNve{u}{T_1}$ is positive no matter the choice of admissible exponent $\wtNve{u}{T_1}$.

     By symmetry, we deduce that $\wu{2} \cdot  \wtNve{u}{\lambda} = \wu{2} \cdot  \wtNve{u}{T_1}  < \wu{2} \cdot  \wtNve{u}{T_2}$. Combining this fact, the positivity of $\alpha_{v}$, and  the expressions~\eqref{eq:directCalIIForv},~\eqref{eq:ineqj} and~\eqref{eq:ineq1} yields the inequalities in~\eqref{eq:ineqToCheckFirstProp}. This concludes our proof.
\end{proof}

Our next result is central to the inductive step in the proof of~\autoref{thm:hardInclusion2D}. As with the previous two lemmas, the proof is combinatorial and the difficulty lies in how to select the triple of admissible exponent vectors required by~\autoref{lm:threeMonomialsAreEnough} in a way that is compatible with a given input proper face of $\tau_{v,\lambda}$.

\begin{proposition}\label{pr:noMixing}
  Let $T$ be a star-full subtree of $\Gamma$, and let  $v$ be a node of $T$. 
  Denote by $T_1,\ldots, T_{\valv{v}}$ the branches of $T$ adjacent to  $v$.  Let $L$ be a proper subset of $\leavesT{T}$ 
  which is not included in any $\leavesT{T_i}$, and set
     \begin{equation}\label{eq:cP}
          \boxedo{\cP} := \conv(\{\emb(v)\} \cup \emb(L)).
     \end{equation}
  Then, $\cP$ is a simplex of dimension $|L|$ and its relative interior 
  $\relo{\cP}$ is disjoint from $\ptrop\sG{\Gamma}$.
\end{proposition}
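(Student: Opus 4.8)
\medskip
This is a two--part statement. That $\cP$ is a simplex of dimension $|L|$ is a routine consequence of the convexity results of \autoref{sec:comb-embedd-splice}. By \autoref{pr:convexitySimplices}~(\ref{item:LIStar-Full}) the vectors $\{\emb(\mu)\colon \mu\in\leavesT{T}\}$ are linearly independent, and by \autoref{lm:coeffspos} we may write $\emb(v)=\sum_{\mu\in\leavesT{T}}\beta_\mu\,\emb(\mu)$ with every $\beta_\mu>0$; since $L\subsetneq\leavesT{T}$, some $\beta_{\mu_0}$ with $\mu_0\notin L$ is positive, which places $\emb(v)$ outside the affine span of $\{\emb(\mu)\colon\mu\in L\}$. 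Hence $\{\emb(v)\}\cup\{\emb(\mu)\colon\mu\in L\}$ is affinely independent and $\cP$ is a $|L|$-simplex.

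For the second part I would fix $\wu{}\in\relo\cP$, written as $\wu{}=\alpha_v\emb(v)+\sum_{\mu\in L}\alpha_\mu\emb(\mu)$ with $\alpha_v>0$, all $\alpha_\mu>0$ and $\alpha_v+\sum_\mu\alpha_\mu=1$; note $\wu{}\in\simplex{n-1}\subseteq(\Rp)^n$. Following the strategy of \autoref{pr:interiorAvoidsTrop}, the goal is to exhibit a node $w$ of $\Gamma$ and three distinct edges $e,e',e''\in\starT{\Gamma}{w}$ with $\wu{}\cdot\wtNve{w}{e}<\wu{}\cdot\wtNve{w}{e'}$ and $\wu{}\cdot\wtNve{w}{e}<\wu{}\cdot\wtNve{w}{e''}$, together with the estimate $\wu{}\cdot\wtNve{w}{e}<\wu{}\cdot m$ for every $m\in\bigcup_i\Supp(\gvi{w}{i})$. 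Once this is in place, \autoref{lm:threeMonomialsAreEnough} produces a series $F$ in the linear span of $\{\fgvi{w}{i}\}$ whose $\wu{}$-initial form is the monomial $\zexp{\wtNve{w}{e}}$, whence $\wu{}\notin\Trop\sG{\Gamma}\supseteq\ptrop\sG{\Gamma}$. The higher-order estimate is obtained exactly as in \autoref{pr:interiorAvoidsTrop} from the defining inequalities \eqref{eq:gviConditions}, so the real content is the production of $w$ and the three edges.

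The underlying computation is the following. For any node $w$ of $\Gamma$ and any $e\in\starT{\Gamma}{w}$, expanding $\wtNve{w}{e}$ against $\emb(v)=\wu{v}/|\wu{v}|$ and against the $\emb(\mu)$ (a standard coordinate vector when $\mu\in\leavesT{\Gamma}$, and $\wu{\mu}/|\wu{\mu}|$ when $\mu$ is a node of $\Gamma$) and invoking \autoref{lem:keyid} together with \eqref{eq:admMonwuvalue} yields
\[
\wu{}\cdot\wtNve{w}{e}=\kappa_w+\sum_{x\in\{v\}\cup L} b_x(e),\qquad b_x(e)\ge 0,
\]
where $\kappa_w$ is independent of $e$, and $b_x(e)>0$ precisely when the edge $e$ points from $w$ towards $x$ and, in case $x$ is a leaf of $\Gamma$, $x$ moreover lies in the support of $\wtNve{w}{e}$. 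Thus the $\wu{}$-weight of $\zexp{\wtNve{w}{e}}$ is minimised exactly at the edges of $w$ carrying no positive boost, and what is needed is a node $w$ at which at least two edges carry boost strictly above this minimum, i.e.\ a node at which the minimum is attained at no more than $\valv{w}-2$ edges.

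Selecting such a $w$ is the main obstacle. When $L$ contains a leaf $\mu$ of $\Gamma$, the natural candidate is the $T$-neighbour $w$ of $\mu$, which is a node of $\Gamma$ since star-full subtrees of $\Gamma$ have no valency-two vertices: the edge $[w,\mu]$ then carries positive boost because $\wtuv{w}{\mu}'=1$ forces $\mu$ into the support of $\wtNve{w}{[w,\mu]}$, and the edge of $w$ pointing towards $v$ carries positive boost by the strict case of \autoref{lem:keyid} (this edge differs from $[w,\mu]$ as $\mu$ is a leaf). To arrange that the remaining $\valv{w}-2$ edges carry no boost at all, one should choose $\mu$ of maximal distance to $v$ among the leaves of $\Gamma$ lying in $L$; the hypothesis that $L$ is not contained in any single $\leavesT{T_i}$, together with this maximality, then forces every other element of $\{v\}\cup L$ to lie on the side of $w$ towards $v$, and a short combinatorial argument finishes this case. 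The remaining configurations, namely $w=v$ and $L\subseteq\nodesT{\Gamma}$ (which can only happen when $T\neq\Gamma$), are handled by running the same weight estimate directly at $v$: every branch of $T$ at $v$ that meets $L$ contains a node of $\Gamma$ in $L$, or a leaf of $\Gamma$ in $L$ lying in the support of the admissible co-weight of the corresponding edge, and so carries positive boost for that edge, whereas the branches disjoint from $L$ carry none; since $L$ straddles at least two branches at $v$ and $\valv{v}\ge 3$, the minimum is again attained at at most $\valv{v}-2$ edges, and \autoref{lm:threeMonomialsAreEnough} applies as before.
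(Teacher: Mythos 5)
Your treatment of the first assertion (that $\cP$ is an $|L|$-simplex) is correct and matches the paper's, and your reduction of the second assertion to \autoref{lm:threeMonomialsAreEnough} via the ``boost'' decomposition of $\wu{}\cdot\wtNve{w}{e}$ is exactly the computation the paper runs (compare the displays surrounding~\eqref{eq:SeriesIneqFullSimplex} and~\eqref{eq:Prop76SeriesComparison}). The gap is in the node selection, which is where all the real work lies. Your key claim --- that choosing a leaf $\mu\in L\cap\leavesT{\Gamma}$ at maximal distance from $v$ forces every other element of $\{v\}\cup L$ onto the $v$-side of its neighbour $w$ --- is false. Take $v$ adjacent to a node $w$ and to leaves $\lambda_1,\lambda_2$, with $w$ adjacent to leaves $\mu_1,\mu_2,\mu_3$, and $L=\{\mu_1,\mu_2,\mu_3,\lambda_1\}$: all three $\mu_i$ realize the maximal distance, so whichever you pick, the other two sit in different branches of $w$ away from $v$, every edge at $w$ then carries positive boost, and the minimum at $w$ need not be attained at $\leq\valv{w}-2$ edges. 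Your fallback case $L\subseteq\nodesT{\Gamma}$ has the same defect: $L\subsetneq\leavesT{T}$ does not give you a branch of $T$ at $v$ disjoint from $L$ (the missing leaf may sit in a branch that $L$ still meets), so at $v$ every edge may again carry positive boost and \autoref{lm:threeMonomialsAreEnough} does not apply there either.

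What the ``short combinatorial argument'' has to deliver is precisely the statement that $L\cap\leavesT{T_j}$ is either empty or all of $\leavesT{T_j}$ for each branch $T_j$ at $v$; only then does $L\subsetneq\leavesT{T}$ force a branch entirely disjoint from $L$, hence a zero-boost edge at $v$, and only then does the application of \autoref{lm:threeMonomialsAreEnough} at $v$ go through. The paper isolates this as \autoref{lm:properIsEmpty}, and its proof is not short: it fixes a leaf $\lambda\in\leavesT{T_j}\smallsetminus L_j$ (note: in the \emph{complement} of $L$, dual to your choice) furthest from $v$, grows a maximal branch avoiding $L_j$, and derives a contradiction with maximality by applying \autoref{lm:threeMonomialsAreEnough} at several successive interior nodes ($v'$, $u$, $u''$ in the paper's notation). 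Your proposal compresses this recursive descent into an unsubstantiated sentence, and the one concrete selection rule you do propose fails; so the proof as written does not close.
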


\begin{proof} By \autoref{lm:coeffspos}, we have $\emb(v)\in \relo{(\simplex{T})}$. In addition,  \autoref{pr:convexitySimplices}~(\ref{item:simplexStar-Full}) and (\ref{item:vInStar-Full}) imply that $\cP$ is a simplex of the expected dimension. It remains to show that $\relo{\cP} \cap \ptrop \langle\sG{\Gamma}\rangle = \emptyset$. 
  For each $j=1,\ldots, \valv{v}$, we set
  \begin{equation}\label{eq:Li}
          \boxedo{L_j}  :=L\cap \leavesT{T_j} \qquad \text{ and } \qquad  
            \boxedo{\tau_j} :=\Rp \langle \emb(L_j)\rangle.
  \end{equation}
  By definition, we have $\tau_j=\{\mathbf{0}\}$ if $L_j=\emptyset$. 
  Moreover, each $\tau_j$ is a simplicial cone of dimension $|L_j|$.
  Our assumptions on $L$ and a suitable relabeling of the  branches $T_j$ (if necessary) ensure the existence of some $q \in \{2,\ldots, \valv{v}\}$ with $L_j\neq \emptyset$ for all $j\in \{1,\ldots, q\}$ and $L_{j}=\emptyset$ for all $j\in \{q+1, \ldots, \valv{v}\}$.

We argue by contradiction and pick $\wu{} \in \relo{\cP}\cap  \ptrop \langle\sG{\Gamma}\rangle$. Since $\cP$ is a simplex and $w$ belongs to its relative interior, we may write $\wu{}$ as 
  \begin{equation}\label{eq:wwInMixedBatch}
             \wu{} = \alpha_v\emb(v) + \sum_{j=1}^{q} \wu{j} \qquad \text{ with } \alpha_v>0 \; 
                \text{ and } \; \wu{j} \in \relo{\tau_j}  \text{ for all }j.
  \end{equation}
  \autoref{lm:properIsEmpty} below implies that $L_j = \leavesT{T_j}$ for all $j\leq q$.   Since $L\subsetneq \leavesT{T}$, it follows that  $q<\valv{v}$.

  Next, we use~\autoref{lm:threeMonomialsAreEnough} to confirm that $\zexp{\wtNve{v}{T_{\valv{v}}}}$ is the $\wu{}$-initial form of a series in the linear span of $\{\initwf{\wu{}}{\fgvi{v}{i}}\}_{i=1}^{\valv{v}-2}$, which contradicts our  assumption $\wu{} \in \ptrop \langle\sG{\Gamma}\rangle$. We use the admissible exponents $\wtNve{v}{T_{1}}$, $\wtNve{v}{T_{2}}$ and $\wtNve{v}{T_{\valv{v}}}$.

First, the star-full property of $T$ combined with the condition that $L_j=\leavesT{T_j}$ for all $j\leq q$ ensures that
\begin{equation}\label{eq:vvalueSecondrop}
  \begin{aligned}
\emb(v) \cdot  \wtNve{v}{T_{\valv{v}}}&    =   \emb(v) \cdot  \wtNve{v}{T_{2}} =  \emb(v) \cdot \wtNve{v}{T_{1}} = \frac{\du{v}}{|\wu{v}|} \qquad 
 \text{and } \\
    \wu{j} \cdot \wtNve{v}{T_{\valv{v}}}&   = \wu{j} \cdot \wtNve{v}{T_1} =  \wu{j} \cdot \wtNve{v}{T_2} \quad \text{ for all }\;j \in \{3, \ldots, q\}.
  \end{aligned}
\end{equation}
These identities follow directly from~\autoref{lem:keyid}. In turn, the same arguments employed in the proof of~\autoref{lm:directCalculationsIIForOldProp6_7}, together with  the equalities $L_1=\leavesT{T_1}$ and $L_2=\leavesT{T_2}$ give 
\begin{equation}\label{eq:weightsw1w2OldProp6_8}
    \wu{1} \cdot \wtNve{v}{T_{\valv{v}}}   = \wu{1} \cdot \wtNve{v}{T_2} <  \wu{1}  \cdot \wtNve{v}{T_1} \quad \text{ and }\quad     \wu{2} \cdot \wtNve{v}{T_{\valv{v}}}   = \wu{2} \cdot \wtNve{v}{T_1} <  \wu{2}  \cdot \wtNve{v}{T_2}.
\end{equation}

 Expressions~\eqref{eq:vvalueSecondrop} and~\eqref{eq:weightsw1w2OldProp6_8} combined yield $\wu{} \cdot \wtNve{v}{T_{\valv{v}}} < \wu{} \cdot \wtNve{v}{T_{1}}$ and $\wu{} \cdot \wtNve{v}{T_{\valv{v}}} < \wu{} \cdot \wtNve{v}{T_{2}}$. Thus, the first condition required by~\autoref{lm:threeMonomialsAreEnough} is satisfied.
        
To finish, we must compare the $\wu{}$-weight of $\zexp{\wtNve{v}{T_{\valv{v}}}}$ with that of any exponent  $m$ in the support of a fixed series $\gvi{v}{i}$. For each $j\in \{1,\ldots, q\}$ we write $\wu{j} :=  \sum_{\mu \in \leavesT{T_j}} \alpha_{\mu,j} \emb(\mu)$ with $\alpha_{\mu,j}>0$ for all $\mu$. Then, the defining  properties \eqref{eq:gviConditions} of $\gvi{v}{i}$ and the reasoning followed in the proof of~\autoref{lm:directCalculationsIForOldProp6_7} imply that
        \begin{equation}\label{eq:Prop76SeriesComparison}
     \emb(v) \cdot  \wtNve{v}{T_{\valv{v}}}  <  \emb(v) \cdot m \quad \text{ and }\quad \wu{j} \cdot \wtNve{v}{T_{\valv{v}}} =\!\!\!\! \sum_{\mu \in L_j\smallsetminus \leavesT{\Gamma}} \!\!\!\!  \alpha_{\mu,j} \frac{\wtuv{v}{\mu}}{|\wu{\mu}|} \leq \wu{j} \cdot m \quad \text{ for } 1\leq j \leq q\,.
        \end{equation}
    Note that the right-most inequality for $j \in \{1\ldots, q\}$ is strict whenever $L_j \nsubseteq \leavesT{\Gamma}$.
    
        Combining both parts of~\eqref{eq:Prop76SeriesComparison} and the positivity of $\alpha_v$ yields $\wu{}\cdot \wtNve{v}{T_{\valv{v}}} < \wu{} \cdot m$ whenever  $\zexp{m}$ appears in $\gvi{v}{i}$. This verifies the second hypothesis required for~\autoref{lm:threeMonomialsAreEnough}, contradicting our choice of $\wu{}\in \ptrop \langle\sG{\Gamma}\rangle$.
\end{proof}

\begin{lemma}\label{lm:properIsEmpty} 
Let $L$ be a set as in \autoref{pr:noMixing} and $\cP$ be the simplex defined by formula \eqref{eq:cP}. 
    For each $j\in\{1,\ldots, \valv{v}\}$, let $L_j$ be the set of elements of $L$ which are leaves of $T_j$,  as in~\eqref{eq:Li}. 
    If $\relo{\cP}\cap \ptrop \langle\sG{\Gamma}\rangle\neq \emptyset$, then $L_j$ is either empty or equal to  $\leavesT{T_j}$. 
\end{lemma}

\begin{proof} We argue by contradiction and assume $\emptyset \subsetneq L_j \subsetneq \leavesT{T_j}$, so in particular $|\leavesT{T_j}|>1$. We break the argument into four combinatorial claims, guided by~\autoref{fig:keyLemma2D}. The left-most picture informs the discussion for \textcolor{blue}{Claims}~\ref{cl:vpEndNode} and~\ref{cl:lambdaInProperBranch}. The central picture refers to \autoref{cl:NoLeafAdjToUInLi}, and the right-most picture illustrates~\autoref{cl:NoBranchAtUMeetsLi}. Throughout, we fix $\wu{}\in \relo{\cP} \cap \ptrop \langle\sG{\Gamma}\rangle$ and write $\wu{}$ as in~\eqref{eq:wwInMixedBatch},
where we write each  $\wu{k} \in \relo{\tau_k}$ as $\wu{k}:=\sum_{\mu \in L_k} \alpha_{\mu,k} \emb(\mu)$ for each $k\in \{1,\ldots, q\}$, with $\alpha_{\mu,k}>0$ 
for all $\mu,k$.

   \begin{figure}[tb]
    \includegraphics[scale=0.5]{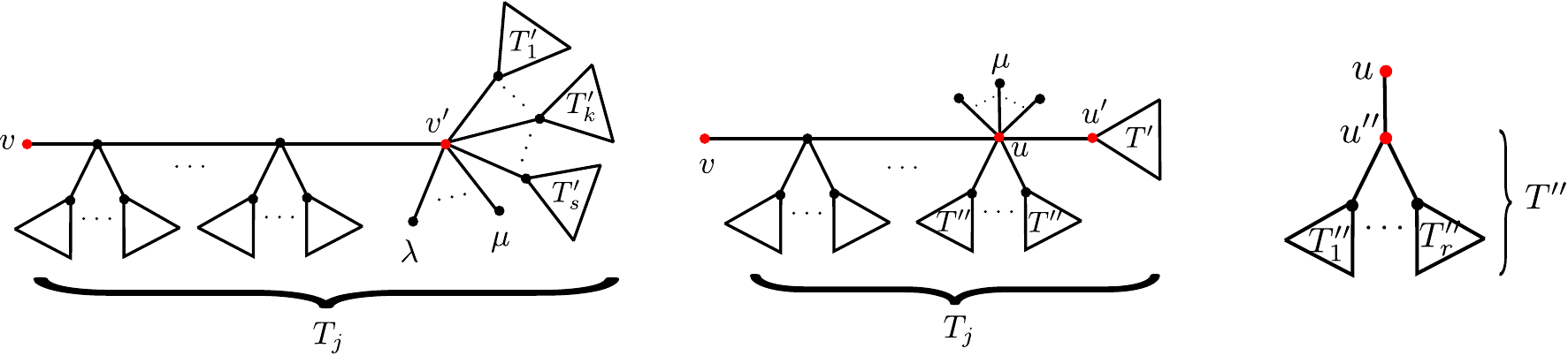}
      \caption{Building a maximal sub-branch of $T_j$ avoiding $L_j$ starting from a suitable leaf $\lambda\in \leavesT{T_j}\smallsetminus L_j$ and moving inwards towards $v$, as in the proof of~\autoref{lm:properIsEmpty}. Here, $s\leq \valv{v'}-2$ and $r=\valv{u''}-1$.\label{fig:keyLemma2D}}
    \end{figure}

   First, we pick $\lambda \in \leavesT{T_j}\smallsetminus L_j$ furthest away from $v$ in the geodesic metric on $T$ (see~\autoref{rm:geodesics}). Let $v'$ be the unique node of $T_j$ adjacent to $\lambda$. The condition $|\leavesT{T_j}|>1$ ensures that $v'\neq v$. The maximality condition satisfied by $\lambda$ restricts the nature of the node $v'$. More precisely:
  \begin{claim}\label{cl:vpEndNode} The node $v'$ is an end-node of $T_j$.
  \end{claim}
  \begin{subproof} We consider all branches $T_k'$ of $T_j$ adjacent to $v'$ and  containing neither $v$ nor $\lambda$. Our goal is to show that  $|\leavesT{T_k'}| = 1$ for all $k$. We argue by contradiction.

Assume that $k$ is such that $|\leavesT{T_k'}|>1$. Then, by the maximality of the distance between $v$ and $\lambda$, we have $\leavesT{T_k'} \subseteq L_j$. We consider the series $\fgvi{v'}{i}$ of $\sG{\Gamma}$ for $i\in \{1,\ldots, \valv{v'}-2\}$, and the admissible exponent vectors at $v'$ associated to $\lambda$, $v$ and $T_k'$. We claim that the weight $\wu{}$ satisfies the inequalities
      \begin{equation}\label{eq:wweightCase1}
           \wu{} \cdot \wtNve{v'}{\lambda} < \wu{} \cdot \wtNve{v'}{T_k'}, 
           \qquad \wu{} \cdot \wtNve{v'}{\lambda} < \wu{} \cdot \wtNve{v'}{v} \quad \text{ and }   
           \quad    \wu{} \cdot \wtNve{v'}{\lambda} < \wu{} \cdot m
      \end{equation}
      for each $m$ in the support of some $\gvi{v'}{i}$. This cannot happen 
      by~\autoref{lm:threeMonomialsAreEnough}, since $\wu{}\in \ptrop \langle\sG{\Gamma}\rangle$.

      To prove the inequalities \eqref{eq:wweightCase1}, we analyze the contributions of each summand of $\wu{}$ in the decomposition \eqref{eq:wwInMixedBatch} to the total $\wu{}$-weight of each admissible monomial, as we did in the proofs of~\textcolor{blue}{Lemmas}~\ref{lm:directCalculationsIForOldProp6_7} and~\ref{lm:directCalculationsIIForOldProp6_7}. Here, the node $u$ appearing in those lemmas is replaced by the node $v'$. As usual, \autoref{lem:keyid} is central to our arguments. The contribution of each weight vector $\wu{p}$ for $p\neq j$ to the total weight of $\zexp{\wtNve{v'}{\lambda}}$ comes from elements in  $L_p\smallsetminus \leavesT{\Gamma}$.

      We start by verifying the left-most inequality in~\eqref{eq:wweightCase1}. Since $v$ is a node of $\Gamma$, \autoref{lem:keyid}  ensures that:
    \begin{equation}\label{eq:case1vweight}
         \emb(v) \cdot \wtNve{v'}{\lambda}  =  \frac{\wtuv{v'}{v}}{|\wu{v}|}  = 
            \emb(v) \cdot \wtNve{v'}{T'_k} <  \emb(v) \cdot \wtNve{v'}{v}\,.
    \end{equation}
  In turn, for each $p\in \{1,\ldots, q\}$ with $p\neq j$, the weight $\wu{p}$ satisfies
    \begin{equation}\label{eq:case1weightLambdaTkp}
        \wu{p} \cdot \wtNve{v'}{\lambda} = \sum_{\mu \in L_p \smallsetminus 
         \leavesT{\Gamma}} \alpha_{\mu,p} \frac{\wtuv{v'}{\mu}}{|\wu{\mu}|} = \wu{p}\cdot \wtNve{v'}{T_k'}\,. 
     \end{equation}
   On the contrary, if $p=j$, the condition $\leavesT{T_k'}\subseteq L_j$ ensures that 
   $\wu{j} \cdot \wtNve{v'}{\lambda} < \wu{j} \cdot \wtNve{v'}{T_k'}$. Indeed, separating the 
   $\wu{j}$-weight of $\wtNve{v'}{T_k'}$ into the contributions of three groups of elements 
   from $L_j$ we have
      \begin{equation}\label{eq:totalwjweightTKprime}
          \wu{j} \cdot \wtNve{v'}{T_k'} = \sum_{\mu \in L_j \smallsetminus  
          (\leavesT{T_k'}\cup\leavesT{\Gamma})} \alpha_{\mu,j} \frac{\wtuv{v'}{\mu}}{|\wu{\mu}|}  +
         \sum_{\mu \in \leavesT{T_k'}\smallsetminus \leavesT{\Gamma}} \alpha_{\mu,j} 
         \frac{\wu{\mu}\cdot \wtNve{v'}{T_k'}}{|\wu{\mu}|} + \sum_{\mu \in \leavesT{T_k'} 
         \cap \leavesT{\Gamma}} \alpha_{\mu,j} \frac{\wu{\mu}\cdot \wtNve{v'}{T_k'}}{|\wu{\mu}|}.
      \end{equation}
   Notice that $\wu{\mu}\cdot \wtNve{v'}{T_k'}>\wtuv{v'}{\mu}$ whenever 
   $\mu\in \leavesT{T_k'}\smallsetminus \leavesT{\Gamma}$ and 
   $\wu{\mu}\cdot \wtNve{v'}{T_k'}\geq 0$ for $\mu \in\leavesT{T_k'}\cap \leavesT{\Gamma}$.

     Similarly, the $\wu{j}$-weight of $\wtNve{v'}{\lambda}$ can be determined by separating 
     the elements of $L_j$ into two types:
      \begin{equation}\label{eq:totalwjweightalpha}
               \wu{j} \cdot \wtNve{v'}{\lambda} = \sum_{\mu \in L_j \smallsetminus  (\leavesT{T_k'}    
               \cup\leavesT{\Gamma})} \alpha_{\mu,j} \frac{\wtuv{v'}{\mu}}{|\wu{\mu}|}  + 
               \sum_{\mu \in \leavesT{T_k'}\smallsetminus \leavesT{\Gamma}} \alpha_{\mu,j} 
               \frac{\wtuv{v'}{\mu}}{|\wu{\mu}|}\,.
      \end{equation} 
      Comparing expressions~\eqref{eq:totalwjweightTKprime} and \eqref{eq:totalwjweightalpha}, the positivity of all the coefficients $\alpha_{\mu,j}$ implies the inequality $\wu{j} \cdot \wtNve{v'}{\lambda} < \wu{j} \cdot \wtNve{v'}{T_k'}$ if the set $\leavesT{T_k'}\smallsetminus \leavesT{\Gamma}$ is non-empty. In turn, if this last set is empty, the  fact that $T$ is star-full ensures that all variables featuring in the admissible monomial  $\zexp{\wtNve{v'}{T_k'}}$ are indexed by elements of $\leavesT{T_k'}\subseteq \leavesT{\Gamma}$. Therefore, the last summand in~\eqref{eq:totalwjweightTKprime} is strictly positive. Thus, the comparison of the same two expressions gives $\wu{j} \cdot \wtNve{v'}{\lambda} < \wu{j} \cdot \wtNve{v'}{T_k'}$ in this situation as well.

By adding up the strict inequality $\wu{j} \cdot \wtNve{v'}{\lambda} < \wu{j} \cdot \wtNve{v'}{T_k'}$ and the equalities  $\wu{p} \cdot \wtNve{v'}{\lambda} = \wu{p}\cdot \wtNve{v'}{T_k'}$ from  \eqref{eq:case1weightLambdaTkp} for each $p \in \{1, \dots, q\} \setminus \{j\}$, we obtain:
    \[ (\sum_{k=1}^{q} \wu{k} ) \cdot \wtNve{v'}{\lambda} <   
        (\sum_{k=1}^{q} \wu{k} ) \cdot  \wtNve{v'}{T_k'}.\] 
Combining this inequality with  the decomposition
\eqref{eq:wwInMixedBatch}, the equality 
$\emb(v) \cdot \wtNve{v'}{\lambda}  =  \emb(v) \cdot \wtNve{v'}{T'_k}$ from 
\eqref{eq:case1vweight} yields the left-most strict inequality in~\eqref{eq:wweightCase1}.

       Next, we confirm the central inequality in~\eqref{eq:wweightCase1}. 
       By ~\eqref{eq:case1vweight} and the positivity of $\alpha_v$, it is enough to check 
       that $\wu{p}\cdot \wtNve{v'}{\lambda}\leq  \wu{p} \cdot \wtNve{v'}{v}$ for all $p\in \{1,\ldots, q\}$. Indeed, \autoref{lem:keyid} yields:
    \begin{equation}\label{eq:case1weightLambdavp}
        \wu{p} \cdot \wtNve{v'}{\lambda} = \sum_{\mu \in L_p \smallsetminus \leavesT{\Gamma}} 
        \alpha_{\mu,p} \frac{\wtuv{v'}{\mu}}{|\wu{\mu}|} \leq \sum_{\mu \in L_p} \alpha_{\mu,p} 
        \frac{\wu{\mu}\cdot \wtNve{v'}{v}}{|\wu{\mu}|} = \wu{p}\cdot \wtNve{v'}{v}\,,
    \end{equation}
  since all coefficients $\alpha_{v,\mu}$ are positive,  $\wtuv{v'}{\mu}\leq \wu{\mu}\cdot \wtNve{v'}{v}$ 
  for each $\mu\in L_p \smallsetminus \leavesT{\Gamma}$ and $0\leq \wu{\mu}\cdot \wtNve{v'}{v}$ 
  if  $\mu \in L_p\cap \leavesT{\Gamma}$.

   To finish, we must certify the right-most inequality in~\eqref{eq:wweightCase1}. 
   We use the same reasoning as in~\autoref{lm:directCalculationsIForOldProp6_7}. 
   The properties defining the series $\gvi{v'}{i}$ combined with the star-full condition of $T$, 
   the inclusion $\leavesT{T_k'}\subseteq L_j$ and the 
   expressions~\eqref{eq:case1vweight},~\eqref{eq:case1weightLambdaTkp}   
   and~(\ref{eq:totalwjweightalpha}) imply that for each monomial $\zexp{m}$ 
   appearing in a fixed $\gvi{v'}{i}$ we have the inequalities
 \[ \emb(v) \cdot \wtNve{v'}{\lambda}  
          <  \emb(v) \cdot m \quad \quad \text{ and } \quad \wu{p} \cdot \wtNve{v'}{\lambda} 
          \leq \wu{p} \cdot m  \quad \text{ for all }p=1,\ldots, q\,.
 \]
 The inequality  $\alpha_v>0$ then confirms the validity of the right-most inequality in~\eqref{eq:wweightCase1}.
  \end{subproof}

    \begin{claim}
      \label{cl:lambdaInProperBranch} We have $\starT{T_j}{v'}\cap L_j =\emptyset$. In particular, none of the leaves of $T_j$ adjacent to $v'$ can lie in $L_j$.
    \end{claim}

    \begin{subproof} 
     Since $v$ is an end-node by~\autoref{cl:vpEndNode}, any branch of $T$ emanating from $v'$ and not containing $v$ is a singleton. The statement follows by the same line of reasoning as~\autoref{cl:vpEndNode}, working with the exponents $\wtNve{v'}{v}$, $\wtNve{v'}{\lambda}$ and $\wtNve{v'}{T'_k}$, where $T_k'$ is any of the remaining branches of $T_j$ adjacent to $v'$.  Indeed, if $T_k' = \leavesT{T_k' \subseteq L_j$}, the inequalities in~\eqref{eq:wweightCase1} will remain valid, and this will contradict $\wu{}\in \ptrop \langle\sG{\Gamma}\rangle$.
    \end{subproof}

    As a consequence of~\autoref{cl:lambdaInProperBranch} we conclude that $\lambda$ is part of a branch of $T_j$ which avoids $L_j$ and has at least one node. Let $T'$ be a maximal branch of $T_j$ with this property and furthest away from $v$. We claim that $T' = T_j$.
    To prove this, we argue by contradicting the maximality of $T'$. We let $u$ be the node of $T_j$ adjacent to $T'$ and $u'$ be the node of $T'$ adjacent to $u$, as seen in the center of~\autoref{fig:keyLemma2D}. 
    Note that $u\neq v$ since $T'\neq T_j$ because $L_j$ is non-empty.

    Next, we analyze the intersections between $L_j$ and the leaves of all relevant branches of $T_j$ adjacent to $u$. We treat two cases, depending on the size of each such branch, starting with singleton branches:

        \begin{claim}\label{cl:NoLeafAdjToUInLi} 
            None of the leaves of $T_j$ adjacent to $u$ belongs to $L_j$.
    \end{claim}
        \begin{subproof} 
           We argue by contradiction and pick a leaf $\mu$ of $T_j$ adjacent to $u$ satisfying 
           $\mu \in L_j$. Then, a reasoning similar to that of~\autoref{cl:vpEndNode}  for the series $\fgvi{u}{i}$ replacing $\lambda$ by $T'$ and $T'_k$ by $\mu$ confirms that
          \begin{equation}\label{eq:cl3inequalities}
                \wu{}\cdot\wtNve{u}{T'} < \wu{}\cdot \wtNve{u}{\mu}, \quad
               \wu{}\cdot\wtNve{u}{T'} <  \wu{}\cdot\wtNve{u}{v}, \quad \text{ and } \quad
               \wu{}\cdot\wtNve{u}{T'} < \wu{} \cdot m
          \end{equation}
     whenever ${m}\in \Supp(\gvi{u}{i})$. Note that the inequality 
     $\wu{j}\cdot \wtNve{u}{T'} < \wu{j}\cdot \wtNve{u}{\mu}$ follows because $T_j$ is star-full. However,~\autoref{lm:threeMonomialsAreEnough} and the assumption $\wu{}\in \ptrop \langle\sG{\Gamma}\rangle$ refute  the validity of~\eqref{eq:cl3inequalities}, so $\mu \notin L_j$.
    \end{subproof}

        \begin{claim}\label{cl:NoBranchAtUMeetsLi} Given a non-singleton branch $T''\neq T'$ of $T_j$ adjacent to $u$ and with  $v\notin T''$, we have $\leavesT{T''}\cap L_j=\emptyset$.
    \end{claim}
        \begin{subproof}
First, we show that $\leavesT{T''}\nsubseteq L_j$. We argue by contradiction and consider the series of $\sG{\Gamma}$ determined by the node $u$. Replacing the roles of $v'$, ${\lambda}$ and ${T'_k}$ in~\eqref{eq:wweightCase1} by
 $u$, ${T'}$ and ${T''}$, respectively, the same proof technique from~\autoref{cl:vpEndNode} yields
          \[
\wu{}\cdot \wtNve{u}{T'}  < \wu{}\cdot \wtNve{u}{T''},  \qquad \wu{}\cdot \wtNve{u}{T'}  < \wu{}\cdot \wtNve{u}{v} \quad \text{ and }\quad \wu{}\cdot \wtNve{u}{T'} <\wu{} \cdot m
\]
for each $m$ in the support of any fixed $\gvi{u}{i}$.
\autoref{lm:threeMonomialsAreEnough} then shows that $\wu{}\notin \ptrop\sG{\Gamma}$, contradicting our original assumption on $\wu{}$.  From here it follows that $\leavesT{T''}\nsubseteq L_j$.

Next, we pick some leaf $\lambda\in \leavesT{T''}\smallsetminus L_j$. Following the reasoning of \textcolor{blue}{Claims}~\ref{cl:vpEndNode} and~\ref{cl:lambdaInProperBranch}, we build a maximal branch of $T''$ not meeting $L_j$. If $\leavesT{T''}\cap L_j\neq \emptyset$ this branch would be a maximal branch of $T$ avoiding $L_j$ and properly contained in $T''$. Furthermore, following the notation of the right-most picture in~\autoref{fig:keyLemma2D}, it would be contained in one of the branches $T_k''$ for $k\in \{1,\ldots, r\}$. As a result, the distance between $v$ and this branch would be strictly larger than $\distGuv{T_j}{v}{T'}$, contradicting the maximality choice of $T'$. Thus, we conclude that $\leavesT{T''}\cap L_j= \emptyset$, as we wanted to show.
        \end{subproof}

To finish, we observe that \textcolor{blue}{Claims}~\ref{cl:NoLeafAdjToUInLi} and~\ref{cl:NoBranchAtUMeetsLi} combined contradict the maximality of $T'$, since the convex hull of all branches adjacent to $u$ and not containing $v$  will be a branch of $T_j$ strictly containing $T'$ and not meeting $L_j$. From here it follows that $T'=T_j$, which cannot happen since  $L_j \neq \emptyset$.
\end{proof}

\smallskip
\subsection{Boundary components of the extended tropicalization}
\label{ssec:infinite-tropicalization}
$\:$
\smallskip 

In this subsection, we characterize the boundary strata of the extended tropicalization of the germ $(X,0)$ defined by $\sG{\Gamma}$ (see~\autoref{rem:newTerminologyPPS}). These strata are determined by  the positive local tropicalization of the intersection of $X$ with each coordinate subspace of  $\CC^n$.  This will serve two purposes. First, it will show by combinatorial methods that $(X,0)$ is a two-dimensional complete intersection with no boundary components 
(that is, without irreducible components contained in some coordinate hyperplane). 
Second, it will help us prove the  reverse inclusion to the one in ~\autoref{thm:hardInclusion2D}. The latter is the subject of~\autoref{ssec:proof-supseteq} 
(see \autoref{thm:EasyInclusionBalancing}).
\smallskip

We start by setting up notation. Throughout, we write $\boxedo{\sigma} := (\Rp)^n$ and fix  a positive-dimensional proper face  $\boxedo{\tau}$ of  $\sigma$. Since $\Nw{\Gamma}  \simeq \Z^{n}$ by  choosing the basis 
$\{\wu{\lambda}: \lambda \in \leavesT{\Gamma}\}$ from~\autoref{sec:splicetypenotation}, 
we define
\begin{equation}\label{eq:Ltau}
     \boxedo{L_{\tau}}:=\{\lambda \in \leavesT{\Gamma}: \wu{\lambda} \text{ is a ray of } \tau\}.
\end{equation}
We let $\boxedo{k}$ be the dimension of $\tau$ and consider the natural projection of vector spaces
\begin{equation}\label{eq:projtau}
  \boxedo{\projT{\tau}}\colon \R^n\to \R^n/\langle \tau\rangle \simeq \R^{n-k}.
\end{equation}
 By abuse of notation, whenever $\wu{\lambda}\notin \tau$, we identify $\wu{\lambda}$ with its image in $\R^{n-k}$ under $\projT{\tau}$.

\begin{definition} 
  Given a series $f\in \CC\{ z_{\lambda}: \lambda \in \leavesT{\Gamma}\}$, we let $\boxedo{f^{\tau}}$ be the series obtained from $f$ by setting all $z_{\lambda}$ with $\lambda \in L_{\tau}$ to be zero. We view $f^{\tau} \in \CC \{z_{\lambda}: \lambda \in  \tau^{\perp}\cap \Z^n\}$ as a series in the  $n-k$ variables in $\{z_{\lambda}: \lambda \in \leavesT{\Gamma}\smallsetminus L_{\tau}\}$. We call it the \emph{$\tau$-truncation} of $f$.
  \end{definition}

  \begin{definition}\label{def:torusOrbitTropicalization}
We let $\boxedo{X_{\tau}}$ be the intersection of the germ $(X,0)$ defined by $\sG{\Gamma}$ with the dense torus in the $(n-k)$-dimensional coordinate subspace of $\CC^n$ associated to $\tau$. This new germ is defined by the vanishing of the $\tau$-truncations of all series in  $\sG{\Gamma}$. The  
\emph{positive local tropicalization of $\sG{\Gamma}$ with respect to $\tau$} is  
defined as the positive local tropicalization of $X_{\tau}$ in $\R^{n-k}$. 
Following~\cite[Section 12]{PPS 13}, we denote it by $\boxedo{\ptrop(\sG{\Gamma},\tau)}$. 
  \end{definition}
  
  Our first result generalizes~\autoref{lm:threeMonomialsAreEnough}, when some, but not all, admissible monomials at a fixed node of $\Gamma$ have trivial $\tau$-truncations. It will simplify the computation of  $\ptrop(\sG{\Gamma},\tau)$ for each proper face $\tau$ of $\sigma$.

\begin{lemma}\label{lm:twoMonomialsAreEnoughforTruncation} Fix a positive-dimensional proper face $\tau$ of $\sigma$, a node $v$  of $\Gamma$ and some  $\wu{} \in \projT{\tau}(\sigma)$. Assume that some  $\tau$-truncated series in $\{\fvi{v}{i}^{\tau}\}_{i=1}^{\valv{v}-2}$ is not identically zero and that one of two conditions holds:
  \begin{enumerate}
  \item \label{item:ifdropmoreThanTwo} the system involves at most $\valv{v}-2$ admissible monomials and there exists an edge $e$ adjacent to $v$ with $\wtNve{v}{e} \notin \N\langle z_{\lambda}: \lambda \in L_\tau\rangle$ satisfying
$ \wu{} \cdot \wtNve{v}{e} <       \wu{}\cdot m$ 
for each $m$ in the support of some  $(\gvi{v}{i})^{\tau}$; or
\item \label{item:ifdropOne} the system involves exactly $\valv{v}-1$ monomials, and we have   two distinct edges $e$, $e'$  of $\starT{\Gamma}{v}$ with  $\wtNve{v}{e}, \wtNve{v}{e'} \notin \N\langle z_{\lambda}: \lambda \in L_\tau\rangle$ such that    $\wu{}\cdot \wtNve{v}{e} < \wu{}\cdot \wtNve{v}{e'}$  and $\wu{}\cdot \wtNve{v}{e} < \wu{}\cdot m$ 
  for each $m$ in the support of some $(\gvi{v}{i})^{\tau}$.
  \end{enumerate}
 Then, $\zexp{\wtNve{v}{e}}$ is the $\wu{}$-initial form of a series in the linear span of $\{(\fgvi{v}{i})^{\tau}\}_{i=1}^{\valv{v}-2}$ and $\wu{}\notin \ptrop (\sG{\Gamma},\tau)$.
\end{lemma}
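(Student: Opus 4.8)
The plan is to adapt the proof of~\autoref{lm:threeMonomialsAreEnough} to the $\tau$-truncated system at $v$, keeping track of which admissible monomials survive the truncation. First I would set $E_1:=\{e\in\starT{\Gamma}{v}:\wtNve{v}{e}\notin\N\langle z_\lambda:\lambda\in L_{\tau}\rangle\}$, the set of edges whose admissible monomial is not killed. Since the co-weights $\wtNve{v}{e}$ have pairwise disjoint supports (indexed by the sets $\nodesTevRoot{v,e}$, which partition $\leavesT{\Gamma}$), the surviving admissible monomials are exactly $\{\zexp{\wtNve{v}{e}}:e\in E_1\}$, so the hypotheses translate into $|E_1|\leq\valv{v}-2$ in case~(\ref{item:ifdropmoreThanTwo}) and $|E_1|=\valv{v}-1$ in case~(\ref{item:ifdropOne}), with $e\in E_1$ in both; moreover some $(\fvi{v}{i})^{\tau}$ is then nonzero, since row $e$ of the Hamm matrix cannot vanish without killing a maximal minor. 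Writing $C=(\cvei{v}{e}{i})_{e,i}$ for the Hamm coefficient matrix, the truncated polynomials $(\fvi{v}{i})^{\tau}=\sum_{e\in E_1}\cvei{v}{e}{i}\,\zexp{\wtNve{v}{e}}$ correspond, under the identification $\zexp{\wtNve{v}{e}}\leftrightarrow$ (the $e$-th coordinate vector of $\CC^{E_1}$), to the columns of the submatrix $C|_{E_1}$ obtained by keeping the rows indexed by $E_1$.

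The linear-algebra input I would record next is that, because all maximal minors of $C$ are nonzero, any collection of at most $\valv{v}-2$ rows of $C$ is linearly independent: extend such a collection to a set of $\valv{v}-2$ rows and observe that a linear dependence would force the corresponding maximal minor of $C$ to vanish. In case~(\ref{item:ifdropmoreThanTwo}) this shows $C|_{E_1}$ has full row rank $|E_1|$, so its columns span $\CC^{E_1}$; I would choose scalars $(c_i)_i$ with $\sum_i c_i\,(\fvi{v}{i})^{\tau}=\zexp{\wtNve{v}{e}}$ and put $F:=\sum_i c_i\,(\fgvi{v}{i})^{\tau}=\zexp{\wtNve{v}{e}}+\sum_i c_i\,(\gvi{v}{i})^{\tau}$. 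In case~(\ref{item:ifdropOne}), $C|_{E_1}$ is a $(\valv{v}-1)\times(\valv{v}-2)$ matrix all of whose $(\valv{v}-2)$-minors are maximal minors of $C$, hence nonzero; thus it has full column rank and a one-dimensional space of left-null row vectors $(k_f)_{f\in E_1}$, each $k_f$ being (up to sign) the $(\valv{v}-2)$-minor of $C|_{E_1}$ with row $f$ deleted, again a maximal minor of $C$, so every $k_f\neq 0$. Its column span is then the hyperplane $\{x\in\CC^{E_1}:\sum_{f\in E_1}k_f x_f=0\}$, which contains the vector with entry $k_{e'}$ at $e$, entry $-k_e$ at $e'$, and zero elsewhere; pulling it back I would get scalars $(c_i)_i$ with $\sum_i c_i\,(\fvi{v}{i})^{\tau}=k_{e'}\,\zexp{\wtNve{v}{e}}-k_e\,\zexp{\wtNve{v}{e'}}$ and put $F:=\sum_i c_i\,(\fgvi{v}{i})^{\tau}=k_{e'}\,\zexp{\wtNve{v}{e}}-k_e\,\zexp{\wtNve{v}{e'}}+\sum_i c_i\,(\gvi{v}{i})^{\tau}$.

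To finish in either case I would compute $\initwf{\wu{}}{F}$ by inspection. The monomial $\zexp{\wtNve{v}{e}}$ occurs in $F$ with nonzero coefficient ($1$, resp.\ $k_{e'}$); the only other admissible monomial that can occur is $\zexp{\wtNve{v}{e'}}$, with $\wu{}\cdot\wtNve{v}{e}<\wu{}\cdot\wtNve{v}{e'}$ by hypothesis (this comparison is only needed in case~(\ref{item:ifdropOne})); and every monomial $\zexp{m}$ coming from some $(\gvi{v}{i})^{\tau}$ satisfies $\wu{}\cdot\wtNve{v}{e}<\wu{}\cdot m$, so in particular $\wtNve{v}{e}$ lies in no $\Supp((\gvi{v}{i})^{\tau})$ and the coefficient of $\zexp{\wtNve{v}{e}}$ in $F$ cannot be cancelled. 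Hence $\initwf{\wu{}}{F}$ is $\zexp{\wtNve{v}{e}}$ up to a nonzero scalar, which I would normalize to $1$. As $F$ lies in the linear span of $\{(\fgvi{v}{i})^{\tau}\}_{i=1}^{\valv{v}-2}$, hence in the ideal defining $X_{\tau}$, the $\wu{}$-initial ideal of that ideal contains a monomial, so $\wu{}\notin\ptrop(\sG{\Gamma},\tau)$ by~\autoref{def:torusOrbitTropicalization}.

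I expect the only genuine difficulty to be combinatorial bookkeeping: matching the phrasing ``the system involves at most $\valv{v}-2$ (resp.\ exactly $\valv{v}-1$) admissible monomials'' with $|E_1|\le\valv{v}-2$ (resp.\ $|E_1|=\valv{v}-1$) through the disjointness of the supports of the $\wtNve{v}{e}$, verifying that the minors of $C|_{E_1}$ used above are actually maximal minors of $C$, and checking that the finitely many weight comparisons in the last step exclude any accidental cancellation at $\zexp{\wtNve{v}{e}}$. Everything else is a direct transcription of the argument of~\autoref{lm:threeMonomialsAreEnough}.
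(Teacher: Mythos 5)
Your proof is correct and follows essentially the same route as the paper: both arguments use the Hamm determinant conditions to produce an element of the linear span of the truncated series whose $\wu{}$-initial form is exactly $\zexp{\wtNve{v}{e}}$ (a monomial in case~(1), a binomial dominated by $\zexp{\wtNve{v}{e}}$ in case~(2)), the only difference being that you make the linear algebra on the submatrix $C|_{E_1}$ explicit, whereas the paper reuses the reordered basis $\{\fvi{v}{i}'\}$ from~\autoref{lm:threeMonomialsAreEnough} with the killed edges placed in the last slots.
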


\begin{proof}  
   Recall from~\eqref{eq:surfaceSeries} that $\fgvi{v}{i} = \fvi{v}{i}+\gvi{v}{i}$ for each node $v$ of $\Gamma$ and each $i\in \{1,\ldots, \valv{v}-2\}$. Our assumptions on $(\fvi{v}{i})^{\tau}$ and the weight vector $\wu{}$ ensure both that the $\tau$-truncation $(\fgvi{v}{i})^{\tau}$ of $\fgvi{v}{i}$ is non-zero and that the $\wu{}$-initial form of $(\fgvi{v}{i})^{\tau}$ agrees with that of $(\fvi{v}{i})^{\tau}$.

  The proof follows the same reasoning as that of~\autoref{lm:threeMonomialsAreEnough}, considering the truncations of a new basis $\{\fgvi{v}{i}'\}_{i=1}^{\valv{v}-2}$ obtained by suitable linear combinations of the original series $\{\fgvi{v}{i}\}_{i=1}^{\valv{v}-2}$ and ordering the edges adjacent to $v$ in a convenient way. Writing each element in the new basis as $\fgvi{v}{i}'=\fvi{v}{i}' + \gvi{v}{i}'$ in the spirit of~\eqref{eq:surfaceSeries}, the Hamm determinant conditions on the original splice system allow us to choose a special form for the  polynomials $\{\fvi{v}{i}'\}_i$. Indeed, their coefficient matrix has the block form $(\operatorname{Id}_{\valv{v}-2}|*|*)$, where $*$ represents a column vector in $(\CC^*)^{\valv{v}-2}$.     By construction, the new tails $\gvi{v}{i}'$ will satisfy conditions (\ref{item:ifdropmoreThanTwo}), respectively (\ref{item:ifdropOne}), if and only if the old tails $\gvi{v}{i}$ did.
  
  If condition~(\ref{item:ifdropmoreThanTwo}) holds, we pick any two edges $e',e''$ with $\wtNve{v}{e'}, \wtNve{v}{e''} \in \N\langle z_{\lambda}: \lambda \in L_\tau\rangle$ and order the edges adjacent to $v$ so that $e_{\valv{v}-2} := e$, $e_{\valv{v}-1}:=e'$ and $e_{\valv{v}}:=e''$. In this situation, the statement follows by our assumptions on the basis $\{\fgvi{v}{i}'\}_i$, combined with~\autoref{lem:keyid}. Indeed, we have 
  \[\initwf{\wu{}}{(\fgvi{v}{\valv{v}-2}')^{\tau}} = \initwf{\wu{}}{(\fvi{v}{\valv{v}-2}')^{\tau}}= \zexp{\wtNve{v}{e}}.
  \]
  Similarly, if condition~(\ref{item:ifdropOne}) holds, we pick $e''$ to be the unique edge with $\wtNve{v}{e''} \in \N\langle z_{\lambda}: \lambda \in L_\tau\rangle$. Indeed, $\wtNve{v}{e''}$ is the single monomial of the polynomials $\{\fvi{v}{i}\}_{i}$ that drops when taking their $\tau$-truncations. We set $e_{\valv{v}}:=e''$ and fix $e:=e_{\valv{v}-2}$ and $e':=e_{\valv{v}-1}$. As in the previous case, we have $\initwf{\wu{}}{(\fgvi{v}{\valv{v}-2}')^{\tau}} =  \zexp{\wtNve{v}{e}}$.
  \end{proof}

Here is the main result of this section:

\begin{theorem}\label{thm:boundaryTrop} 
   Let $\tau$ be a positive-dimensional proper face of $\sigma$ with 
   $\lambda\in L_{\tau}$. Let $v$ be the unique node of $\Gamma$ adjacent to $\lambda$. 
   Then:
  \begin{enumerate}
           \item \label{item:boundaryDim1} If $\dim \tau =1$, then $\ptrop (\sG{\Gamma},\tau)\subseteq 
                 \R_{>0}\langle \projT{\tau}(\wu{v})\rangle$.
           \item \label{item:boundaryDimatleast2} If $\dim \tau \geq 2$, then 
                 $\ptrop (\sG{\Gamma},\tau) = \emptyset$.
  \end{enumerate}
\end{theorem}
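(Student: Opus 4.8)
The plan is to analyze, for each positive-dimensional face $\tau$ of $\sigma$ containing some leaf-ray $\wu{\lambda}$, the $\tau$-truncated splice-type system, and to show that every weight vector $\wu{} \in \projT{\tau}(\sigma)$ admits a monomial in the ideal generated by the truncated initial forms, except possibly (when $\dim\tau = 1$) for those lying on the single ray spanned by $\projT{\tau}(\wu{v})$. The engine for producing such monomials is~\autoref{lm:twoMonomialsAreEnoughforTruncation}, so the task reduces to bookkeeping: identifying which admissible co-weights $\wtNve{u}{e}$ survive $\tau$-truncation (i.e.\ are not supported on $L_\tau$), and choosing the required edge(s) at a suitable node. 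First I would set up the combinatorial picture: $L_\tau$ is a set of leaves, $\lambda \in L_\tau$, and $v$ is the node adjacent to $\lambda$. The key observation is that for a node $u \neq v$, and the edge $e$ of $\starT{\Gamma}{u}$ pointing towards $v$ (hence towards $\lambda$), the admissible co-weight $\wtNve{u}{e} = \wtNve{u}{\lambda}$ is supported on leaves $\mu$ with $e \subseteq [u,\mu]$, a set that contains $\lambda$; one must check whether it is \emph{entirely} inside $L_\tau$. The crucial arithmetic input is the semigroup expansion~\eqref{eq:PowersadmisibleMon} together with~\autoref{lem:keyid}, which control the $\wu{}$-weights of these co-weights relative to the linking numbers.

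For part~(\ref{item:boundaryDim1}), $\dim\tau = 1$ means $L_\tau = \{\lambda\}$ and $\projT{\tau}$ kills only the single coordinate $\wu{\lambda}$. At the node $v$, the admissible monomial $\zexp{\wtNve{v}{[v,\lambda]}}$ may or may not involve $z_\lambda$; the admissible monomials $\zexp{\wtNve{v}{e}}$ for the other $\valv{v}-1$ edges $e$ of $\starT{\Gamma}{v}$ never involve $z_\lambda$ (their support lies in $\nodesTevRoot{v,e}$, which excludes $\lambda$). So the $\tau$-truncated system at $v$ retains at least $\valv{v}-1$ genuine monomials, and by the Hamm determinant conditions its truncation is not identically zero. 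I would then argue: given $\wu{} \in \projT{\tau}(\sigma)$ not on the ray $\Rp\langle \projT{\tau}(\wu{v})\rangle$, I need to exhibit two distinct edges $e, e'$ of $\starT{\Gamma}{v}$ whose co-weights survive truncation with $\wu{}\cdot\wtNve{v}{e} < \wu{}\cdot\wtNve{v}{e'}$ and $\wu{}\cdot\wtNve{v}{e}$ strictly below the support of every $(\gvi{v}{i})^\tau$, then invoke~\autoref{lm:twoMonomialsAreEnoughforTruncation}(\ref{item:ifdropOne}). The inequality against the $g$-series support should follow from~\eqref{eq:gviConditions} much as in the proof of~\autoref{pr:interiorAvoidsTrop}, once one notes that $\wu{} = \projT{\tau}(\wu{})$ can be lifted to a nonnegative vector whose $\lambda$-coordinate is free, and that the relevant weights $\wu{u}\cdot\wtNve{v}{e}$ only decrease or stay equal under killing the $\lambda$-coordinate since $\wtNve{v}{e}$ with $e \neq [v,\lambda]$ does not see $z_\lambda$. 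The remaining point — that such a pair of ``cheapest but not unique'' surviving edges exists whenever $\wu{}$ is off the distinguished ray — is where I expect to spend the most care: one must rule out the degenerate possibility that all surviving co-weights have equal $\wu{}$-weight only on the ray, which is essentially a reprise of the fact that $\emb(v)$ is the unique point of $\emb(\starT{\Gamma}{v})$ where the linear functionals $m \mapsto \langle\,\cdot\,, \wtNve{v}{e}\rangle$ achieve a common value, via~\eqref{eq:admMonwuvalue} and~\autoref{pr:InitWufvi}.

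For part~(\ref{item:boundaryDimatleast2}), $\dim\tau \geq 2$, so $L_\tau$ contains $\lambda$ together with at least one more leaf $\mu$. I would split into cases according to the position of $\mu$ relative to $v$. If $\mu$ is another leaf adjacent to $v$, then at $v$ the edges $[v,\lambda]$ and $[v,\mu]$ both have co-weights supported on $L_\tau$ (being supported on $\{\lambda\}$ and $\{\mu\}$-side sets respectively... actually one must check: $\wtNve{v}{[v,\lambda]}$ is supported on leaves on the $\lambda$-side, i.e.\ just $\lambda$; similarly for $\mu$); the remaining $\valv{v}-2$ edges give co-weights free of $z_\lambda, z_\mu$, so the truncated system has at most $\valv{v}-2$ monomials and~\autoref{lm:twoMonomialsAreEnoughforTruncation}(\ref{item:ifdropmoreThanTwo}) applies once I produce one surviving edge $e$ with $\wu{}\cdot\wtNve{v}{e}$ minimal and below the $g$-series support — and here minimality among $\valv{v}-2 \geq 1$ survivors (using $\valv{v}\geq 3$) is automatic after picking the cheapest. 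If instead $\mu$ lies deeper inside $\Gamma$ on some branch, I would walk from $v$ towards $\mu$ and find a node $u$ on the geodesic $[v,\mu]$ (possibly $u = v$ or the node adjacent to $\mu$) at which at least two of the adjacent admissible co-weights become $L_\tau$-supported — one pointing towards $\lambda$, one towards $\mu$ — again dropping the monomial count to $\leq \valv{u}-2$ and letting~\autoref{lm:twoMonomialsAreEnoughforTruncation}(\ref{item:ifdropmoreThanTwo}) finish, provided the truncated system at $u$ is not identically zero. The genuinely delicate subcase is when the $\tau$-truncation annihilates the entire subsystem at \emph{every} relevant node; I expect to handle this by observing that then $X_\tau$ is contained in a coordinate subspace or is empty outright, forcing $\ptrop(\sG{\Gamma},\tau) = \emptyset$ directly from the definition, and that the non-vanishing of some truncated equation at a well-chosen node $u$ (e.g.\ the node closest to $v$ that has an adjacent edge pointing away from all leaves of $L_\tau$) is guaranteed because $\Gamma$ has at least two leaves outside $L_\tau$ whenever $\tau$ is a proper face — this last bookkeeping, tracking which nodes have ``enough escaping directions,'' is the main obstacle and the part requiring the most careful case analysis.
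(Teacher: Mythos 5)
You have identified the right engine (\autoref{lm:twoMonomialsAreEnoughforTruncation}) and the right bookkeeping principle (truncation kills the admissible co-weights supported on $L_\tau$, dropping the monomial count at suitable nodes below the threshold), but there is a genuine gap in how you locate the node that certifies $\wu{}\notin\ptrop(\sG{\Gamma},\tau)$. For part~(\ref{item:boundaryDim1}) you commit to producing the two required edges at the node $v$ adjacent to $\lambda$, for every positive $\wu{}$ off the ray $\R_{>0}\langle\projT{\tau}(\wu{v})\rangle$. This fails whenever $\Gamma$ is not a star diagram: the locus of strictly positive $\wu{}$ on which \emph{all} $\valv{v}-1$ surviving co-weights $\wtNve{v}{e}$, $e\neq[v,\lambda]$, have equal $\wu{}$-weight is a linear slice of the orthant of dimension $n-\valv{v}+1\geq 2$, hence strictly larger than the distinguished ray. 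On that locus minus the ray no strict inequality of the form required by \autoref{lm:twoMonomialsAreEnoughforTruncation}~(\ref{item:ifdropOne}) is available at $v$, so your ``reprise of the uniqueness of $\emb(v)$'' cannot close the argument; the certifying monomial must be found at a node inside some branch, chosen according to how $\wu{}$ distributes its mass over the sub-branches. The same issue affects part~(\ref{item:boundaryDimatleast2}): the node where the count drops and the strict inequalities hold depends not only on $L_\tau$ (your walk from $v$ towards $\mu$) but on the arbitrary weight vector being excluded, and your proposal never explains how to get, for a general such $\wu{}$, the comparison against the surviving co-weights and against the supports of the truncated $g$-series.

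What is missing is the global sweep the paper runs. One performs a stellar subdivision of the projected simplex $\simplex{n-k-1}$ at $\wu{v}'=\projT{\tau}(\wu{v})/|\projT{\tau}(\wu{v})|$, so every point lies in the relative interior of some cell $\rho=\conv(\{\wu{v}'\}\cup\{\wu{\mu}\colon\mu\in L\})$ with $L\subseteq\leavesT{\Gamma}\smallsetminus L_\tau$ proper, and then proves (\autoref{pr:RelevantSimplicesForTau1}) that $\relo{\rho}$ avoids $\ptrop(\sG{\Gamma},\tau)$ whenever $|L\cup L_\tau|>1$. The engine of that proposition is the tree-search statement \autoref{lm:properIsEmptyBoundary} (the boundary analog of \autoref{lm:properIsEmpty}): if $\relo{\rho}$ met the tropicalization, then for every branch $T_j$ at $v$ the set $L_j\cup L_{\tau,j}$ would have to be empty or all of $\leavesT{T_j}$; this is proved by locating a maximal sub-branch avoiding $L\cup L_\tau$ and producing a monomial at the node attached to it. Only after this reduction does a monomial at $v$ itself (with co-weight $\wtNve{v}{T_{\valv{v}}}$ for a branch disjoint from $L\cup L_\tau$) become available via \autoref{lm:twoMonomialsAreEnoughforTruncation}. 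When $\dim\tau=1$ the only cell not excluded is the vertex $\{\wu{v}'\}$, giving~(\ref{item:boundaryDim1}); when $\dim\tau\geq2$ even $L=\emptyset$ has $|L\cup L_\tau|\geq2$, so the vertex is excluded as well, giving~(\ref{item:boundaryDimatleast2}). Your proposal flags these points as ``the part requiring the most care'' but does not supply the subdivision or the branch-search argument that resolves them.
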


\begin{proof}   
 We fix $k=\dim \tau$. Since $\ptrop (\sG{\Gamma},\tau)$ is included in $(\R_{>0})^{n-k}$, we can determine the positive tropicalization by its intersection with  the standard simplex $\simplex{n-k-1}$. We do this by following the  proof strategy of~\autoref{thm:hardInclusion2D}.
  Since the vector $\projT{\tau}(\wu{v})$ lies in $(\R_{>0})^{n-k}$, we set
  \begin{equation}\label{eq:wvunderTau}
    \boxedo{\wu{v}'} := \frac{\projT{\tau}(\wu{v})}{|\projT{\tau}(\wu{v})|} \in \relo{(\simplex{n-k-1})}
  \end{equation}
  and perform a stellar subdivision of this standard simplex using $\wu{v}'$.
  
  If $k = 1$,   \autoref{pr:RelevantSimplicesForTau1} below ensures that every positive-dimensional simplex $\rho$ of the stellar subdivision that meets $\relo{(\simplex{n-2})}$ satisfies  $\relo{\rho} \cap \ptrop(\sG{\Gamma},\tau) = \emptyset$. The subdivision has only one zero-dimensional simplex, namely $\{\projT{\tau}(\wu{v})\}$, so  item~(\ref{item:boundaryDim1}) holds.
  In turn, when $\dim \tau = 2$, the same proposition ensures the intersection is empty even if $\rho$ is a point.  This forces $\ptrop(\sG{\Gamma},\tau) = \emptyset$, as stated in item~(\ref{item:boundaryDimatleast2}).
\end{proof}

Our next result is an adaptation of~\autoref{pr:noMixing}, needed to rule out points of the extended tropicalization $\ptrop(\sG{\Gamma},\tau)$ in a given simplex $\rho \subseteq \simplex{n-\dim\tau-1}$. It is central to proving~\autoref{thm:boundaryTrop}.

\begin{proposition}\label{pr:RelevantSimplicesForTau1}  
       Let $\tau$, $\lambda$ and $v$ be as in~\autoref{thm:boundaryTrop} and $L_{\tau}$ be as 
       in~\eqref{eq:Ltau}. Fix a proper subset $L$ of $\leavesT{\Gamma}\smallsetminus L_{\tau}$. 
       Let $\wu{v}'$ be as in~\eqref{eq:wvunderTau} and set 
       $\boxedo{\rho} := \conv{(\{\wu{v}'\}\cup\{ \wu{\mu}: \mu \in L\})}\subseteq \simplex{n-k-1}$, 
       where $k=\dim \tau$. If $|L\cup L_{\tau}|>1$, then
    \begin{equation}\label{eq:reloRhoAvoidsTrop}
            \relo{\rho}\cap \ptrop(\sG{\Gamma},\tau) = \emptyset.
    \end{equation}
\end{proposition}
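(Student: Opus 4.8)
The plan is to reproduce, in the truncated setting, the strategy behind \autoref{pr:interiorAvoidsTrop}, \autoref{pr:noMixing} and \autoref{lm:properIsEmpty}, with \autoref{lm:twoMonomialsAreEnoughforTruncation} playing the role of \autoref{lm:threeMonomialsAreEnough}. I would first record the elementary facts: since $\tau$ is a face of $\sigma=(\Rp)^n$ whose rays are exactly $\{\Rp\wu{\mu}:\mu\in L_{\tau}\}$, the projection $\projT{\tau}$ kills precisely the $\wu{\mu}$ with $\mu\in L_{\tau}$, and a direct computation identifies $\wu{v}'$ with the barycenter $\baryc{\leavesT{\Gamma}\smallsetminus L_{\tau}}{v}$; in particular $\wu{v}'$ has a strictly positive entry for every $\mu\in\leavesT{\Gamma}\smallsetminus L_{\tau}$. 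As $L$ is a \emph{proper} subset of $\leavesT{\Gamma}\smallsetminus L_{\tau}$, the set $\{\wu{v}'\}\cup\{\wu{\mu}:\mu\in L\}$ is affinely independent, so $\rho$ is a simplex of dimension $|L|$ and any $\wu{}\in\relo{\rho}$ can be written uniquely as $\wu{}=\alpha\,\wu{v}'+\sum_{\mu\in L}\alpha_{\mu}\wu{\mu}$ with $\alpha,\alpha_{\mu}>0$; viewed in $\R^{n-k}$ this $\wu{}$ has strictly positive entry on every leaf not in $L_{\tau}$ and zero entry on $L_{\tau}$, so every pairing $\wu{}\cdot\wtNve{u}{e}$ only sees the truncated co-weight $(\wtNve{u}{e})^{\tau}$. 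I argue by contradiction: assuming $\wu{}\in\relo{\rho}\cap\ptrop(\sG{\Gamma},\tau)$, the goal is to exhibit a node $u$ of $\Gamma$ and an edge $e\in\starT{\Gamma}{u}$ with $\wtNve{u}{e}\notin\N\langle z_{\mu}:\mu\in L_{\tau}\rangle$ such that $\zexp{\wtNve{u}{e}}$ is the $\wu{}$-initial form of a $\CC$-linear combination of $\{(\fgvi{u}{i})^{\tau}\}_i$; by \autoref{lm:twoMonomialsAreEnoughforTruncation} this forces $\wu{}\notin\ptrop(\sG{\Gamma},\tau)$, the desired contradiction.

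The organizing idea is to route the ``winning'' co-weights through the set $R:=\leavesT{\Gamma}\smallsetminus(L\cup L_{\tau})$, which is non-empty because $L$ is proper. Indeed, if $T'$ is any sub-branch of $\Gamma$ with $\leavesT{T'}\subseteq R$, then for a node $u$ adjacent to $T'$ the admissible co-weight $\wtNve{u}{T'}$ is supported inside $\leavesT{T'}$, hence it survives $\tau$-truncation and pairs to $0$ with each summand $\alpha_{\mu}\wu{\mu}$, $\mu\in L$; moreover the barycentric summand $\alpha\,\wu{v}'$ strictly dominates the exponents of every $\gvi{u}{i}$ against $\wtNve{u}{T'}$ by \eqref{eq:gviConditions} together with $\wu{v}\cdot\wtNve{u}{T'}=\wtuv{v}{u}$ (from \autoref{lem:keyid}, since $T'$ does not contain $v$). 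Note also that, since $\lambda\in L_{\tau}$ is a leaf adjacent to $v$, the co-weight $\wtNve{v}{[\lambda,v]}=\du{v,\lambda}\,m_{\lambda}$ is supported on $\{\lambda\}\subseteq L_{\tau}$, so the edge $[\lambda,v]$ is always trivially truncated at $v$.

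I would then run the recursion of the proof of \autoref{lm:properIsEmpty}, guided by \autoref{fig:keyLemma2D}. Pick $\nu\in R$ and let $T'$ be a maximal sub-branch of $\Gamma$ with $\nu\in\leavesT{T'}\subseteq R$, chosen furthest from $v$; let $u$ be the node of $\Gamma$ adjacent to $T'$. Applying \autoref{lm:twoMonomialsAreEnoughforTruncation} at $u$ with the edge $e$ pointing towards $T'$, one shows successively — exactly as in \textcolor{blue}{Claims}~\ref{cl:vpEndNode}--\ref{cl:NoBranchAtUMeetsLi} — that no leaf of $\Gamma$ adjacent to $u$ lies in $L\cup L_{\tau}$, and that no non-singleton branch of $\Gamma$ at $u$ (other than $T'$ and the $v$-side) has a leaf in $L\cup L_{\tau}$; hence the convex hull of all branches at $u$ not containing $v$ is a strictly larger sub-branch with leaves in $R$, contradicting the maximality of $T'$ — unless $u=v$. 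In the remaining case $u=v$, the branch $T'$ is one of the branches of $v$ with $\leavesT{T'}\subseteq R$, and since $|L\cup L_{\tau}|>1$ the set $L\cup L_{\tau}$ meets at least two branches of $v$ (it contains $\lambda$ and at least one further leaf); working at $v$ then lands in case (1) of \autoref{lm:twoMonomialsAreEnoughforTruncation} when a second edge at $v$ is trivially truncated, and in case (2) otherwise, and in both situations $\wtNve{v}{T'}$ is the required winning monomial, giving the contradiction.

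The main obstacle is the bookkeeping. In \autoref{lm:properIsEmpty} it sufficed that the chosen co-weights ``avoid $L$''; here each of them must in addition not lie in $\N\langle z_{\mu}:\mu\in L_{\tau}\rangle$, so at every stage of the recursion one must track the three classes $L$, $L_{\tau}$ and $R$ simultaneously and control the $L_{\tau}$-part of the support of the relevant co-weights in the weight comparisons. Keeping $\leavesT{T'}\subseteq R$ (rather than merely $\leavesT{T'}\cap L=\emptyset$) is precisely what makes the truncated monomials produced at each step manifestly nonzero. The most delicate point is to verify, at the node where one finally applies \autoref{lm:twoMonomialsAreEnoughforTruncation}, the correct count of surviving admissible monomials — at most $\valv{u}-2$ for case (1), exactly $\valv{u}-1$ for case (2) — which depends on how $L_{\tau}$ is distributed among the branches at that node; one must either find a second trivially truncated edge or produce a second surviving admissible monomial strictly dominated by the winning one against $\wu{}$.
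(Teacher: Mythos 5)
Your proposal is correct and follows essentially the same route as the paper's proof: argue by contradiction, use the maximal-sub-branch recursion of~\autoref{lm:properIsEmpty} (which the paper isolates in the truncated setting as~\autoref{lm:properIsEmptyBoundary}) to show that every branch of $\Gamma$ at $v$ has its set of leaves entirely inside or entirely outside $L\cup L_{\tau}$, and then apply~\autoref{lm:twoMonomialsAreEnoughforTruncation} at $v$ with the admissible monomial of an untouched branch as the winning monomial. The monomial-counting issue you flag as delicate is settled in the paper exactly as you indicate: when $\valv{v}-1$ truncated monomials survive, the survival of $(\zexp{\wtNve{v}{T_2}})^{\tau}$ for a branch with $\leavesT{T_2}\subseteq L\cup L_{\tau}$ forces $\Supp(\wtNve{v}{T_2})\subseteq L\cap\leavesT{T_2}$, which yields the required strictly larger second comparison weight via the $L$-part of $\wu{}$.
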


 \begin{proof} We follow the proof strategy of~\autoref{pr:noMixing} for $T=\Gamma$ 
      but now incorporating the set $L_{\tau}$ into the arguments. We let 
      $T_1,\ldots, T_{\valv{v}}$ be the branches of $\Gamma$ adjacent to  $v$, with 
      $T_{1}=\{\lambda\}$. Given $j\in \{2,\ldots, \valv{v}\}$, we set
   \begin{equation}\label{eq:LjBoundary}
     \boxedo{L_{\tau,j}}:= L_{\tau} \cap \leavesT{T_j}, \qquad \boxedo{L_j} := L \cap \leavesT{T_j} \quad \text{ and }\quad \boxedo{\rho_j} :=\Rp\langle \projT{\tau}(\wu{\mu}) : \mu \in L_j\rangle\subset \R^{n-k}.
   \end{equation}
   If $L_j=\emptyset$, we declare $\rho_j=\{0\}$.
   Since $1< |L\cup L_{\tau}|<n$, upon relabeling the branches $T_2, \ldots, T_{\valv{v}}$ if necessary,  there exists a unique $q\in \{2,\ldots, \valv{v}\}$ with $L_j \cup L_{\tau,j}\neq \emptyset$ for all $j\in\{1,\ldots, q\}$, and $L_j\cup L_{\tau,j}=\emptyset$ for $j>q$.

  We argue by contradiction and pick $\wu{} \in \relo{\rho}\cap  \ptrop (\sG{\Gamma},\tau)$.   Since $\rho$ is a simplex and $T_1\subset L_{\tau}$, we have
  \begin{equation}\label{eq:wwInMixedBatchTau}
           \wu{} = \alpha_v\wu{v}' + \sum_{j=2}^{q} \wu{j} \qquad \text{ with } \alpha_v>0 \; 
           \text{ and } \; \wu{j} \in \relo{\rho_j} \text{ for all }\;j \in \{2,\ldots, q\}.
  \end{equation}
In particular, we know that $\wu{j}=0$ if and only if $\rho_j=\{0\}$.

\autoref{lm:properIsEmptyBoundary} below ensures that $L_j\cup L_{\tau,j} = \leavesT{T_j}$ for all $j\leq q$. Since $1<|L\cup L_{\tau}|<n$, we conclude that:  
    \begin{equation} \label{eq:ineqq} 
         1<q<\valv{v}.
    \end{equation} 
    In particular, we see that  $\leavesT{T_{\valv{v}}} \cap (L\cup L_{\tau}) = \emptyset$, so the $\tau$-truncation of $\zexp{\wtNve{v}{T_{\valv{v}}}}$ is non-zero.

By the Hamm conditions, the admissible monomial $\zexp{\wtNve{v}{T_{\valv{v}}}}$ must feature in some $\fvi{v}{i}$. Therefore,  the  $\tau$-truncated series $\{(\fvi{v}{i})^{\tau}\}_{i=1}^{\valv{v}-2}$ are not all identically zero. In turn, since $\lambda\in \leavesT{T_1}\cap L_{\tau}$ and $\zexp{\wtNve{v}{T_1}}= z_{\lambda}^{\du{v,\lambda}}$, we conclude that its $\tau$-truncation is zero. Thus, the system of $\tau$-truncations $\{(\fvi{v}{i})^{\tau}\}_{i=1}^{\valv{v}-2}$  involves at most $\valv{v}-1$ admissible monomials.

The proposition follows from our next claim, which contradicts our assumption that  $\wu{}\in \ptrop(\sG{\Gamma},\tau)$.

\begin{claimNoNum}\label{cl:initialFormExtendedTrop}
     The monomial  $\zexp{\wtNve{v}{T_{\valv{v}}}}$ is the $\wu{}$-initial form of a series 
     in the  linear span of $\{(\fgvi{v}{i})^{\tau}\}_{i=1}^{\valv{v}-2}$. 
\end{claimNoNum}

\noindent The statement of this claim matches the conclusion of~\autoref{lm:twoMonomialsAreEnoughforTruncation}. Recall that our earlier discussion confirmed that the $\tau$-truncations of $\fvi{v}{i}$ are not all zero, and there are two options for the number of monomials featured in these $\tau$-truncations. We prove the claim via a case-by-case analysis, each one matching one of the two possible scenarios portrayed in the lemma.

  First, we assume that  the system of $\tau$-truncations  $\{(\fvi{v}{i})^{\tau}\}_{i=1}^{\valv{v}-2}$ involves exactly $\valv{v}-1$ monomials.  In particular, $(\zexp{\wtNve{v}{T_2}})^{\tau}\neq 0$, so no variable appearing in $\zexp{\wtNve{v}{T_2}}$ can be indexed by an element of $L_{\tau,2}$. Since $L_2\cup L_{\tau,2} = \leavesT{T_2}$, we conclude that $\rho_2\neq \{0\}$  and $\wu{2}\cdot \wtNve{v}{T_2} > 0$.

  Next, we confirm that the hypotheses of~\autoref{lm:twoMonomialsAreEnoughforTruncation}~(\ref{item:ifdropOne}) hold for the weight vector $\wu{}$ and the edges $e=e_{\valv{v}}$ and $e'=e_2$ giving rise to the admissible exponents $\wtNve{v}{T_{\valv{v}}}$ and  $\wtNve{v}{T_{2}}$, respectively. Notice that our earlier discussion ensures that these two vectors lie outside $\N\langle z_{\lambda}: \lambda \in L_\tau\rangle$. To check the required inequalities between the $\wu{}$-weights of these two admissible monomials and of any monomial $\zexp{m}$ in the support of some $(\gvi{v}{i})^{\tau}$, we compare the contribution of each summand  of $\wu{}$ featured in \eqref{eq:wwInMixedBatchTau} to the total $\wu{}$-weight of each monomial. This was the same method used in the proofs of~\textcolor{blue}{Lemmas}~\ref{lm:directCalculationsIForOldProp6_7} and~\ref{lm:directCalculationsIIForOldProp6_7}.

  First, we focus on $\wu{v}'$. \autoref{lem:keyid} and the conditions (\ref{eq:gviConditions}) on $\gvi{v}{i}$ imply that 
  \begin{equation}\label{eq:wuv'ComparisonCase2Boundary}
         \wu{v}' \cdot \wtNve{v}{T_{\valv{v}}} =  \wu{v}' \cdot \wtNve{v}{T_{2}} =  
         \frac{\du{v}}{|\projT{\tau}(\wu{v})|} <  \wu{v}' \cdot m.
  \end{equation}
     In turn, our earlier discussion and the inequality $q<\valv{v}$ seen in \eqref{eq:ineqq} imply the following relations among the $\wu{j}$-weights:
  \begin{equation}\label{eq:wujComparisonCase2Boundary}
            \wu{2}\cdot \wtNve{v}{T_{\valv{v}}} = 0<\wu{2}\cdot \wtNve{v}{T_2} \quad \text { and } 
            \quad   \wu{j}\cdot \wtNve{v}{T_{\valv{v}}} = \wu{j}\cdot \wtNve{v}{T_2} = 0 \;\leq \wu{j}\cdot m \qquad 
            \text{ for } j \in \{3,\ldots, q\}\,,
  \end{equation}
  where $\zexp{m}$ is any monomial in the support of $\gvi{v}{i}$ surviving the $\tau$-truncation.  Combining the positivity of $\alpha_v$ with expressions~\eqref{eq:wuv'ComparisonCase2Boundary} and~\eqref{eq:wujComparisonCase2Boundary} confirms the validity of the conditions required in~\autoref{lm:twoMonomialsAreEnoughforTruncation}~(\ref{item:ifdropOne}).

 To finish proving the claim, we must analyze the case when the $\tau$-truncated system $\{(\fvi{v}{i})^{\tau}\}_{i=1}^{\valv{v}-2}$ has at most $\valv{v}-2$ admissible monomials. In this case, we consider the edge $e=e_{\valv{v}}$ and certify that the hypotheses imposed in~\autoref{lm:twoMonomialsAreEnoughforTruncation}~(\ref{item:ifdropmoreThanTwo}) hold. This is done by checking that the inequalities in~\eqref{eq:wuv'ComparisonCase2Boundary} and~\eqref{eq:wujComparisonCase2Boundary} involving $\wtNve{v}{T_{\valv{v}}}$ and $m$ remain valid in this new setting.
  \end{proof}

  \begin{figure}[tb]
    \includegraphics[scale=0.5]{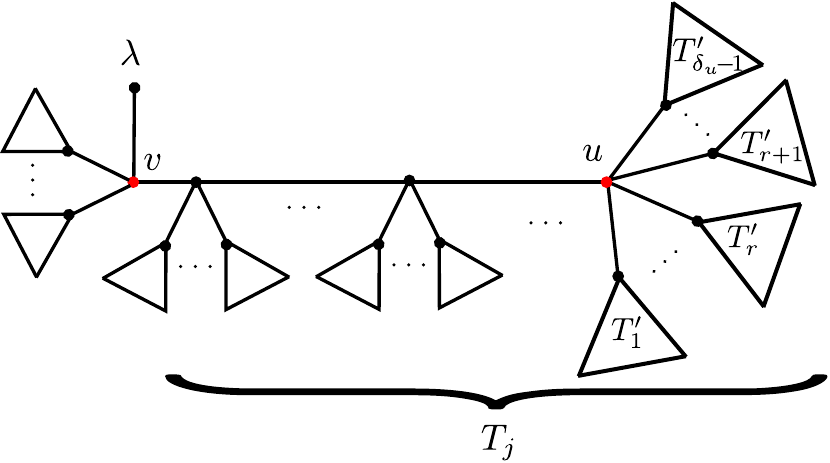}
      \caption{Given a branch $T_j$ of $\Gamma$ adjacent to $v$ and a fixed proper subset of leaves $L$ of $\leavesT{\Gamma}\smallsetminus \{\lambda\}$ meeting $\leavesT{T_j}$ properly, we find a node $u$ in $T_j$ with $\distGuv{\Gamma}{u}{v}$ maximal so that the branches $T'_1,\ldots, T'_r$ have a given property, but $T_{r+1}', \ldots, T_{\valv{u}-1}'$ do not. This construction is the main ingredient in the proof of~\autoref{lm:properIsEmptyBoundary}.
\label{fig:keyBoundaryLemma}}
    \end{figure}

  Our next result is analogous to~\autoref{lm:properIsEmpty}. We prove it by using the same techniques.

  \begin{lemma}\label{lm:properIsEmptyBoundary} 
        Fix $L_{\tau,j}$, $L_j$ and $\rho_j$ as in~\eqref{eq:LjBoundary}. 
        If $\relo{\rho} \cap \ptrop (\sG{\Gamma},\tau) \neq \emptyset$, then the set 
        $L_j\cup L_{\tau,j}$ is either empty or it equals $\leavesT{T_j}$.
\end{lemma}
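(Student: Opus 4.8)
The plan is to argue by contradiction, mirroring the structure of the proof of \autoref{lm:properIsEmpty} while tracking the extra subtlety introduced by the truncation: some admissible monomials $\zexp{\wtNve{u}{e}}$ may vanish under $\tau$-truncation. Assume there is a branch index $j\le q$ with $\emptyset \subsetneq L_j\cup L_{\tau,j}\subsetneq \leavesT{T_j}$, so in particular $|\leavesT{T_j}|>1$. Fix $\wu{}\in \relo{\rho}\cap \ptrop(\sG{\Gamma},\tau)$ written as in~\eqref{eq:wwInMixedBatchTau}, with $\wu{j}=\sum_{\mu\in L_j}\alpha_{\mu,j}\,\projT{\tau}(\wu{\mu})$ and $\alpha_{\mu,j}>0$. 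The key point is that a leaf $\mu$ is ``seen'' by $\wu{}$ (i.e.\ contributes positively) exactly when $\mu\in L\cup L_\tau$, whereas a monomial $\zexp{\wtNve{u}{e}}$ survives $\tau$-truncation exactly when $\nodesTevRoot{u,e}\nsubseteq L_\tau$. Throughout, I would replace the invocations of \autoref{lm:threeMonomialsAreEnough} in the proof of \autoref{lm:properIsEmpty} by invocations of \autoref{lm:twoMonomialsAreEnoughforTruncation}, being careful that in each instance the relevant admissible co-weights are \emph{not} supported entirely on $L_\tau$, so the hypotheses of that lemma are met.

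The inductive-descent skeleton carries over verbatim. Pick $\lambda'\in \leavesT{T_j}\smallsetminus(L_j\cup L_{\tau,j})$ furthest from $v$, let $v'$ be the node of $T_j$ adjacent to $\lambda'$; the maximality of the distance forces every branch $T_k'$ of $T_j$ at $v'$ other than the ones toward $v$ and $\lambda'$ to satisfy $\leavesT{T_k'}\subseteq L_j\cup L_{\tau,j}$ if $|\leavesT{T_k'}|>1$. One then runs the same weight comparison as in~\eqref{eq:wweightCase1}, now with $\zexp{\wtNve{v'}{\lambda'}}=z_{\lambda'}^{\du{v',\lambda'}}$ which is nonzero under $\tau$-truncation because $\lambda'\notin L_\tau$; the co-weights $\wtNve{v'}{T_k'}$ and $\wtNve{v'}{v}$ are likewise not entirely supported on $L_\tau$ (the first because $\leavesT{T_k'}\cap\leavesT{T_j}\neq\emptyset$ forces a leaf outside $L_\tau$ or else $\leavesT{T_k'}\subseteq L_{\tau,j}$, handled by passing to a single-leaf branch; the second since $v$'s side contains leaves outside $L_\tau$, as $L\cup L_\tau\subsetneq\leavesT{\Gamma}$). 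This gives the contradiction via \autoref{lm:twoMonomialsAreEnoughforTruncation}, proving the analogues of \textcolor{blue}{Claims}~\ref{cl:vpEndNode}--\ref{cl:NoBranchAtUMeetsLi} and~\eqref{eq:NontrivialBranchAvoidsLi} with $L_j$ replaced by $L_j\cup L_{\tau,j}$ everywhere. Building the maximal sub-branch $T'$ of $T_j$ avoiding $L_j\cup L_{\tau,j}$ and deriving $T'=T_j$ (hence $L_j\cup L_{\tau,j}=\emptyset$, a contradiction) then proceeds exactly as before, consulting~\autoref{fig:keyBoundaryLemma}.

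The main obstacle I anticipate is bookkeeping the two parallel dichotomies --- ``a leaf is in $L\cup L_\tau$ or not'' versus ``an admissible monomial truncates to zero or not'' --- so that every time \autoref{lm:twoMonomialsAreEnoughforTruncation} is applied, I have verified both that the chosen minimizing co-weight $\wtNve{v'}{e}$ is genuinely not in $\N\langle z_\mu:\mu\in L_\tau\rangle$ and that some $\tau$-truncated equation at that node is not identically zero. The cleanest way to handle this is to note once and for all that since $\lambda'\notin L_\tau$ and $\lambda'$ lies on the far side of every edge $e$ adjacent to $v'$ except the one toward $\lambda'$, the monomial $\zexp{\wtNve{v'}{e}}$ for $e$ pointing \emph{away} from $\lambda'$ has an exponent vector with support meeting $\leavesT{\Gamma}\smallsetminus L_\tau$ --- indeed, $\nodesTevRoot{v',e}\ni$ some leaf not in $L_\tau$ whenever $\lambda'\in\nodesTevRoot{v',e'}$ with $e'\neq e$ --- after which all the inequalities are formal consequences of~\eqref{eq:gviConditions}, \autoref{lem:keyid}, and the positivity of the $\alpha$'s exactly as in \autoref{lm:properIsEmpty}. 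Once this observation is in place, the remaining computations are the same three-monomial comparisons already carried out, so I would present them compactly by reference rather than in full.
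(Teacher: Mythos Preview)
Your overall approach---mirror \autoref{lm:properIsEmpty}, replacing each appeal to \autoref{lm:threeMonomialsAreEnough} by one to \autoref{lm:twoMonomialsAreEnoughforTruncation}---is exactly what the paper does, and the descent to a maximal branch $T'$ of $T_j$ avoiding $L_j\cup L_{\tau,j}$ is the same. The paper streamlines things by skipping straight to that maximal branch (cf.\ \autoref{fig:keyBoundaryLemma}) and working at the adjacent node $u$, rather than redoing the four Claims, but this is cosmetic.

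There is, however, a genuine gap in your bookkeeping. You assert that the comparison co-weights $\wtNve{v'}{T_k'}$ and $\wtNve{v'}{v}$ are ``not entirely supported on $L_\tau$'' and that this suffices; but the condition for $\zexp{\wtNve{v'}{e}}$ to survive $\tau$-truncation is that the support of $\wtNve{v'}{e}$ be \emph{disjoint} from $L_\tau$, not merely that it meet $\leavesT{\Gamma}\smallsetminus L_\tau$. So your justification (``$v$'s side contains leaves outside $L_\tau$'') does not show that $(\zexp{\wtNve{v'}{v}})^\tau\neq 0$: if the chosen admissible co-weight $\wtNve{v'}{v}$ involves any leaf of $L_\tau$ with positive coefficient, the monomial truncates to zero. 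There is in general no a~priori reason for these comparison monomials to survive.

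The paper fixes this not by arguing that specific monomials survive, but by a three-case analysis on the \emph{number} of admissible monomials at the node $u$ that survive truncation: if all $\valv{u}$ survive, apply \autoref{lm:threeMonomialsAreEnough} (with the $\gvi{u}{i}$ replaced by their truncations); if at most $\valv{u}-2$ survive, apply \autoref{lm:twoMonomialsAreEnoughforTruncation}(\ref{item:ifdropmoreThanTwo}); if exactly $\valv{u}-1$ survive, apply part~(\ref{item:ifdropOne}) with $e'$ taken to be whichever of $\wtNve{u}{v}$, $\wtNve{u}{T'_{\valv{u}-1}}$ survives. In every case the minimizing monomial $\zexp{\wtNve{u}{T'_1}}$ survives because $\leavesT{T'_1}\cap L_\tau=\emptyset$ by construction of $T'$, and the weight inequalities are the ones you already know from \autoref{lm:properIsEmpty}. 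Once you reorganize your argument around this case split, the proof goes through.
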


\begin{proof}
  We use the notation established in the proof of~\autoref{pr:RelevantSimplicesForTau1} and pick $\wu{}\in \relo{\rho} \cap \ptrop (\sG{\Gamma},\tau)$. We decompose $\wu{}$ as in~\eqref{eq:wwInMixedBatchTau}.  We  argue by contradiction, assuming that $0< |L_j\cup L_{\tau,j}|< |\leavesT{T_j}|$.

  First, note that we can find a branch $T'$ of $\Gamma$ contained in $T_j$ with $\leavesT{T'}\cap (L_j\cup L_{\tau,j})=\emptyset$. For example, any leaf $\mu$ of $\leavesT{T_j}$ not in $L\cup L_{\tau}$ will produce such a branch.
  Since $T_j$ is finite, we can choose the branch $T'$ to be maximal with respect to the condition $\leavesT{T'}\cap (L_j\cup L_{\tau,j})=\emptyset$ and furthest away from $v$ in the geodesic metric on $\Gamma$. Let $u$ be the unique node of $\Gamma$ adjacent to $T'$. Since $L_j\cup L_{\tau,j} \neq \emptyset$, we know that $u\neq v$.
  
Assume that there are $r$ branches adjacent to $u$ with this maximality property, and denote them by $T_1'=T',\ldots, T_r'$ as in~\autoref{fig:keyBoundaryLemma}.
  We let $T'_{r+1},\ldots, T'_{\valv{u}-1}$ be the remaining branches of $\Gamma$ adjacent to $u$ and not containing $\lambda$. The maximality of both $T'$ and $\distGuv{\Gamma}{u}{v}$ combined with the  condition $u\neq v$  implies that $r<\valv{u}-1$ and  $\leavesT{T_i'}\subseteq L_j \cup L_{\tau,j}$  for each $i\in \{r+1,\ldots, \valv{u}-1\}$. Indeed, if the latter were not the case, we could find a branch $T''$ of $\Gamma$ inside $T'_i$ with the same properties as $T'$, but further away from $v$.

  The construction of $T_1'$ ensures that $(\zexp{\wtNve{u}{T_1'}})^{\tau}\neq 0$. Therefore,  the collection  $\{(\fvi{u}{i})^{\tau}\}_{i=1}^{\valv{u}-2}$ is not identically zero.
Our original assumption that  $\wu{}\in \ptrop (\sG{\Gamma},\tau)$ will be contradicted by the following assertion:
  
  \begin{claimNoNum}\label{cl:T1'initialMix} 
        The admissible monomial $\zexp{\wtNve{u}{T_1'}}$ is the $\wu{}$-initial form of a series 
        in the  linear span of $\{(\fgvi{u}{i})^{\tau}\}_{i=1}^{\valv{u}-2}$.
  \end{claimNoNum}
  
  We prove our claim by  analyzing  three cases (in decreasing order of difficulty), depending on whether the number of admissible monomials  in the $\tau$-truncation of the collection $\{\fvi{u}{i}\}_{i=1}^{\valv{u}-2}$ is exactly $\valv{u}$, at most $\valv{u}-2$, or exactly $\valv{u}-1$. The first case will make use of~\autoref{lm:threeMonomialsAreEnough}, whereas we will invoke~\autoref{lm:twoMonomialsAreEnoughforTruncation} to establish the other two.

    \smallskip
        \noindent\textbf{Case 1:}  We assume that the number of admissible monomials at $u$ remaining after $\tau$-truncation is $\valv{u}$. By the Hamm conditions, this forces $\fvi{u}{i} = (\fvi{u}{i})^{\tau}$ for each $i \in \{ 1,\ldots, \valv{u}-2\}$. After replacing each $\gvi{u}{i}$ with its $\tau$-truncation, we are in the setting of~\autoref{lm:threeMonomialsAreEnough}, 
  where we view ${(\Rp)}^{n-k}\subset \Rp^{n}$ by adding zeros as complementary coordinates. 
  In this situation, the condition $\leavesT{T'_1} \cap L=\emptyset$ and the defining properties of $\gvi{u}{i}$ ensure that
  \begin{equation}\label{eq:case1NoDropping}
           \wu{} \cdot \wtNve{u}{T'_1} < \wu{} \cdot \wtNve{u}{v}, \qquad \wu{} \cdot \wtNve{u}{T'_1} 
           < \wu{} \cdot \wtNve{u}{T'_{\valv{u}-1}} \quad \text{ and } \quad \wu{} \cdot \wtNve{u}{T'_1} 
           < \wu{}\cdot m
  \end{equation}
    for each $\zexp{m}$ appearing in $(\gvi{u}{i})^{\tau}$. The aforementioned lemma then implies the validity of our claim. 

    To prove the inequalities in~\eqref{eq:case1NoDropping}, we proceed by comparing the contributions of each summand of $\wu{}$ in the decomposition \eqref{eq:wwInMixedBatchTau} to the weight of each monomial.~\autoref{lem:keyid} and~\eqref{eq:gviConditions} ensure that 
  \begin{equation}\label{eq:noDropswuv'weights}
        \wu{v}'\cdot \wtNve{u}{T'_1} = \wu{v}'\cdot \wtNve{u}{T'_{\valv{u}-1}} = 
        \frac{\wtuv{u}{v}}{|\projT{\tau}(\wu{v})|} < \min\{\wu{v}'\cdot \wtNve{u}{v}, 
        \,\wu{v}'\cdot m\}\,. 
  \end{equation}
        In turn, for each $p\in \{2,\ldots,\valv{v}\}$ with $p\neq j$, the fact that $L$ contains 
        no node of $\Gamma$ gives 
  \begin{equation}\label{eq:noDropswupweights}
         \wu{p}\cdot \wtNve{u}{T'_1}= \wu{p}\cdot \wtNve{u}{T'_{\valv{u}-1}}
         =0\leq \min\{\wu{p}\cdot \wtNve{u}{v}, \,\wu{p}\cdot m\}\,.
  \end{equation}
        Finally, since $L_j\cap \leavesT{T_1'}=\emptyset$, $\leavesT{T_{\valv{v}-1}'} 
        \subseteq L_j\cap L_{\tau,j}$ and the admissible monomial $\zexp{\wtNve{u}{T'_{\valv{v}-1}}}$ 
        has no variable indexed by an element in $L_{\tau,j}$, we see that 
    \begin{equation}\label{eq:noDropswujweights}
             \wu{j} \cdot \wtNve{u}{T'_1}= 0 \leq \min \{\wu{j}\cdot \wtNve{u}{v}, \,\wu{j}\cdot m\} 
             \quad \text{ whereas } \quad 0< \wu{j} \cdot \wtNve{u}{T'_{\valv{u}-1}}\,.
    \end{equation}
    Expressions~\eqref{eq:noDropswuv'weights}, \eqref{eq:noDropswupweights} and~\eqref{eq:noDropswujweights}, combined with the positivity of $\alpha_v$ confirm~\eqref{eq:case1NoDropping}.

  \smallskip
\noindent\textbf{Case 2:} Assume that the $\tau$-truncated series $\{(\fvi{u}{i})^{\tau}\}_{i=1}^{\valv{u}-2}$ involve at most $\valv{u}-2$ admissible monomials. In this situation, the claim follows by~\autoref{lm:twoMonomialsAreEnoughforTruncation}~(\ref{item:ifdropmoreThanTwo}) since the inequalities involving $\wtNve{u}{T_1'}$ and $m$ in~\eqref{eq:case1NoDropping} remain valid in this scenario.

\noindent\textbf{Case 3:} Suppose that  the $\tau$-truncations of the collection $\{\fvi{u}{i}\}_{i=1}^{\valv{u}-2}$  involve exactly $\valv{u}-1$ admissible monomials. If so, up to relabeling of $T'_{r+1}, \ldots, T'_{\valv{u}-1}$ we may assume that  exactly one of the admissible monomials with exponent  $\wtNve{u}{v}$ or $\wtNve{u}{T'_{\valv{u}-1}}$ vanishes under $\tau$-truncation. The inequalities in~\eqref{eq:case1NoDropping} involving $\wtNve{u}{T'_1}$, $m$ and the aforementioned surviving admissible exponent remain valid. Thus,  the hypotheses required by~\autoref{lm:twoMonomialsAreEnoughforTruncation}~(\ref{item:ifdropOne}) hold, hence confirming the claim.
\end{proof}

 \autoref{thm:boundaryTrop} has the following two important consequences:

    \begin{corollary}\label{cor:codim2boundaryStataGerm}
    The intersection  of the germ $(X,0)$ defined by the system $\sG{\Gamma}$ with any coordinate subspace $H$ of $\CC^n$ of codimension at least two is just the origin.
  \end{corollary}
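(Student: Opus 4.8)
The plan is to deduce this corollary directly from the dimension bounds established in \autoref{thm:boundaryTrop}, using the relationship between the local tropicalization of a germ and its dimension provided by \autoref{prop:puredimltrop}. Let $H$ be a coordinate subspace of $\CC^n$ of codimension $k\geq 2$, cut out by $\{z_{\mu}=0 : \mu \in L\}$ for some subset $L$ of $\leavesT{\Gamma}$ with $|L|=k$. I would let $\tau$ be the face of $\sigma=(\Rp)^n$ spanned by $\{\wu{\mu}: \mu \in L\}$, so that $L_{\tau}=L$ and $\dim\tau=k\geq 2$. The intersection $X\cap H$ decomposes into its intersection with the dense torus of $H$ together with its intersections with smaller coordinate subspaces; equivalently, the intersection $X\cap H$ with the dense torus orbit of $H$ is exactly the germ $X_{\tau}$ from \autoref{def:torusOrbitTropicalization}.

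First I would argue that $X_{\tau}$ is empty, or more precisely that its germ at the origin is trivial, except possibly for the origin itself. By \autoref{thm:boundaryTrop}~(\ref{item:boundaryDimatleast2}), since $\dim\tau\geq 2$ we have $\ptrop(\sG{\Gamma},\tau)=\emptyset$. Now I would invoke the characterization of local tropicalizations: by \autoref{prop:compartrop} and the general theory of \cite{PPS 13}, a germ whose positive local tropicalization is empty and which has no component inside a further coordinate subspace must itself be supported at the origin — indeed, if a positive-dimensional component of $X_{\tau}$ met the dense torus of the relevant coordinate subspace, its local tropicalization would be a nonempty fan by \autoref{prop:puredimltrop}, contradicting emptiness. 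One must be slightly careful here: $\ptrop(\sG{\Gamma},\tau)$ records the tropicalization of $X_{\tau}$ inside the \emph{dense torus} $(\CC^*)^{n-k}$ of $H$, so emptiness of $\ptrop(\sG{\Gamma},\tau)$ tells us that $X_{\tau}\cap(\CC^*)^{n-k}$ has no point whatsoever — equivalently, every point of $X\cap H$ lies in a proper coordinate subspace of $H$.

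Then I would run a downward induction on $\codim H$. Applying the previous paragraph to $H$ of codimension exactly $2$: every point of $X\cap H$ lies in a coordinate subspace of $H$ of codimension $\geq 3$ in $\CC^n$, and inductively (the inductive hypothesis being the statement of the corollary for all coordinate subspaces of larger codimension) such intersections are contained in $\{0\}$. For the base of this reverse induction, note that for $H=\{0\}$ of codimension $n$ the statement is trivial, and the dense-torus part of $X\cap H$ is empty for every $H$ of codimension $\geq 2$ by \autoref{thm:boundaryTrop}~(\ref{item:boundaryDimatleast2}); assembling the finitely many coordinate strata of $H$ gives $X\cap H=\{0\}$ as a germ.

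The main obstacle, as I see it, is purely bookkeeping rather than conceptual: carefully separating the statement "$X_{\tau}$, as a subvariety of the algebraic torus $(\CC^*)^{n-k}$, is empty" from "$X\cap H$ is the origin," since the former only controls the open torus orbit while the latter concerns the whole coordinate subspace $H$ with all its boundary strata. The clean way to handle this is the stratification of $H$ by the torus orbits of the corresponding affine toric variety $\CC^k\times(\CC^*)^{n-k}$ reindexed, together with the observation that every such orbit of $H$ of positive dimension corresponds to a face $\tau'$ of $\sigma$ with $\dim\tau'\geq 2$, so \autoref{thm:boundaryTrop}~(\ref{item:boundaryDimatleast2}) applies uniformly. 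Once that stratification is in place, the conclusion $X\cap H=\{0\}$ is immediate.
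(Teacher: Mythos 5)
Your proposal is correct and follows essentially the same route as the paper: both reduce to \autoref{thm:boundaryTrop}~(\ref{item:boundaryDimatleast2}) plus the dimension equality of \autoref{prop:puredimltrop} to show that $X$ cannot meet the dense torus orbit of any coordinate subspace of codimension $\geq 2$. The only cosmetic difference is that the paper handles the boundary strata by picking a minimal coordinate subspace $H'\subseteq H$ with $X\cap H'\neq\{0\}$ and deriving a contradiction, whereas you run the logically equivalent downward induction on codimension.
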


    \begin{proof} Assume the contrary and let $H$ be a coordinate subspace of $\CC^n$ of maximal codimension such that $X\cap H\neq \{0\}$.  Consider the face $\tau$ of $(\Rp)^n$ associated to $H$ and let $X_{\tau}$ be the intersection of $X$ with the dense torus of $H$. The maximality condition on  $H$ implies that $X_\tau\neq \emptyset$. Since    $\dim X_{\tau} = \dim \ptrop (\sG{\Gamma},\tau)$  by~\autoref{prop:puredimltrop}~(\ref{condsamedim}), we conclude that  $\ptrop (\sG{\Gamma},\tau) \neq\emptyset$.

Since the dimension of $\tau$ agrees with the codimension of $H$ in $\CC^n$, our hypothesis implies that $\dim \tau \geq 2$. Therefore, we have $\ptrop (\sG{\Gamma},\tau)=\emptyset$ by~\autoref{thm:boundaryTrop}~(\ref{item:boundaryDimatleast2}).  This contradicts our earlier observation.
    \end{proof}
    
  \begin{corollary}\label{cor:codim1boundaryStataGerm}
    The germ $(X,0)$  defined by the system $\sG{\Gamma}$ intersects each coordinate hyperplane  along a germ of a curve. All its irreducible components meet the dense torus of the corresponding hyperplane.
  \end{corollary}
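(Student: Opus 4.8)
The plan is to fix a coordinate hyperplane $H_\lambda$ of $\CC^n$, cut out by $z_\lambda = 0$, and let $\tau$ be the corresponding one-dimensional face of $\sigma = (\Rp)^n$, namely $\tau = \Rp\langle \wu{\lambda}\rangle$. Then $X_\tau = X\cap (H_\lambda \cap (\CC^*)^{n-1})$ is the open part of $X\cap H_\lambda$ meeting the dense torus of $H_\lambda$, and by \autoref{thm:boundaryTrop}~(\ref{item:boundaryDim1}) its positive tropicalization $\ptrop(\sG{\Gamma},\tau)$ is contained in the single ray $\Rp\langle \projT{\tau}(\wu{v})\rangle$, where $v$ is the unique node of $\Gamma$ adjacent to $\lambda$. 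The first step is to observe that this ray is genuinely one-dimensional: $\projT{\tau}(\wu{v})$ is a nonzero vector in $(\Rp)^{n-1}$ (this was already noted in the proof of \autoref{thm:boundaryTrop}). I would next argue that $\ptrop(\sG{\Gamma},\tau)$ is in fact \emph{equal} to this ray and not empty. For this I would apply \autoref{prop:puredimltrop}~(\ref{condsamedim}): since $\dim X_\tau = \dim \ptrop(\sG{\Gamma},\tau)$, it suffices to show $X_\tau$ is nonempty and one-dimensional. Nonemptiness follows from the end-curve picture: taking $\roottree = \lambda$ as root, the $\tau$-truncation of $\sG{\Gamma}$ is exactly (the defining system of) the end-curve $\Ccurve_\roottree$ up to the higher-order perturbations $\gvi{v}{i}$, and \autoref{thm:end-curvesNW} gives an explicit torus-translate of a monomial curve inside it; alternatively one invokes \autoref{cor:dominant-curve-map} together with \autoref{def:torusOrbitTropicalization}. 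In particular $X_\tau$ contains points of $(\CC^*)^{n-1}$, so $\ptrop(\sG{\Gamma},\tau)$ is nonempty, hence equals the whole ray $\Rp\langle\projT{\tau}(\wu{v})\rangle$, which is one-dimensional.

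\textbf{From the local tropicalization back to the germ.} Having pinned down $\ptrop(\sG{\Gamma},\tau)$ as a one-dimensional fan, the next step is to lift this to a statement about the germ $X\cap H_\lambda$. The germ $X_\tau = X\cap H_\lambda \cap (\CC^*)^{n-1}$ has complex dimension equal to the dimension of its positive tropicalization by \autoref{prop:puredimltrop}~(\ref{condsamedim}) — but here one must be slightly careful, as $X_\tau$ need not a priori be irreducible. I would handle this by noting that $X\cap H_\lambda$ has no component contained in a smaller coordinate subspace: this is \autoref{cor:codim2boundaryStataGerm}, which says $X$ meets every codimension-$\geq 2$ coordinate subspace only at the origin, so in particular every component of $X\cap H_\lambda$ through the origin meets the dense torus of $H_\lambda$ — this is precisely the second sentence of the corollary. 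Consequently $X\cap H_\lambda$ is the closure of $X_\tau$ (plus possibly the origin), and $\dim(X\cap H_\lambda) = \dim X_\tau + 0$ in the sense that each component has dimension equal to the dimension of its trace in the torus, namely $\dim \ptrop(\sG{\Gamma},\tau) = 1$. Thus $X\cap H_\lambda$ is one-dimensional, i.e. a germ of a curve, and all of its components meet the dense torus of $H_\lambda$.

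\textbf{Main obstacle.} I expect the main subtlety to be the bookkeeping around possible reducibility and embedded components of $X\cap H_\lambda$: \autoref{prop:puredimltrop} is stated for reduced irreducible germs meeting the dense torus, whereas a priori $X_\tau$ could be reducible. The clean way around this is to work component-by-component, or to invoke the already-established fact (used implicitly via \autoref{prop:compartrop} and \autoref{cor:codim2boundaryStataGerm}) that no component of $X\cap H_\lambda$ lies in a coordinate subspace of $H_\lambda$, so that every component contributes a positive-dimensional piece to $\ptrop(\sG{\Gamma},\tau)$; since that tropicalization is a single ray, every component must be exactly one-dimensional and must meet $(\CC^*)^{n-1}$. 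A second, more routine point is confirming the nonemptiness of $X_\tau$: one must check that the $\tau$-truncation of the full splice type system $\sG{\Gamma}$ (including the $\gvi{v}{i}$ tails) still cuts out something nonempty in $(\CC^*)^{n-1}$, which follows because the tail conditions \eqref{eq:gviConditions} guarantee the $\gvi{v}{i}$ do not alter the $\wu{v}$-initial behavior, so $X_\tau$ contains a deformation of the end-curve $\Ccurve_\lambda$ whose nonemptiness and dimension are controlled by \autoref{thm:end-curvesNW}.
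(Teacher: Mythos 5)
Your overall architecture matches the paper's: bound $\ptrop(\sG{\Gamma},\tau)$ above by a single ray via \autoref{thm:boundaryTrop}~(1), use \autoref{cor:codim2boundaryStataGerm} to rule out components of $X\cap H_\lambda$ in smaller coordinate subspaces (which is exactly the second assertion of the corollary), and convert between dimensions of $X_\tau$ and of its tropicalization via \autoref{prop:puredimltrop}~(1). However, your argument for the lower bound $\dim(X\cap H_\lambda)\geq 1$ has a genuine gap. You claim that the $\tau$-truncation of $\sG{\Gamma}$ (setting $z_\lambda=0$) is the end-curve system of $\Gamma_\lambda$ up to the tails $\gvi{v}{i}$. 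This is false in general: the end-curve polynomial $\hvi{v}{i}$ is obtained from $\fvi{v}{i}$ by deleting the admissible monomial at $v$ pointing towards $\lambda$, whereas the truncation only deletes monomials that actually contain $z_\lambda$. For a node $u$ not adjacent to $\lambda$, the admissible co-weight $\wtNve{u}{e}$ along the edge towards $\lambda$ is only required to be supported on $\nodesTevRoot{u,e}$ with nonnegative exponents satisfying the semigroup identity, so its exponent on $m_\lambda$ may well be zero; that monomial then survives truncation but is absent from the end-curve system. (The end-curve is the $\wu{}$-initial degeneration for $\wu{}$ in the open segment $(\wu{\lambda},\wu{v})$ — see \autoref{lm:wulambda} — not the hyperplane section.) Your fallback via \autoref{cor:dominant-curve-map} does not repair this, since that corollary is again a statement about the end-curve map, not about $X\cap H_\lambda$.

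Moreover, the lower bound cannot be recovered from \autoref{cor:codim2boundaryStataGerm} alone, as your last paragraph suggests: that corollary does not exclude the possibility $X\cap H_\lambda=\{0\}$, in which case ``no component lies in a proper coordinate subspace of $H_\lambda$'' is simply vacuous, and there is no positive-dimensional piece to feed into the tropicalization. The paper closes this gap in one line with Krull's principal ideal theorem: $Y:=X\cap\{z_\lambda=0\}$ is cut out in $\CC^n$ by the $n-2$ series of $\sG{\Gamma}$ together with $z_\lambda$, hence every component of $Y$ through the origin has dimension at least $1$. With that lower bound in hand, your remaining steps (every such component must meet $(\CC^*)^{n-1}$ by \autoref{cor:codim2boundaryStataGerm}, and then $\dim X_\tau=\dim\ptrop(\sG{\Gamma},\tau)\leq 1$ forces equality) go through and coincide with the paper's proof.
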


  \begin{proof} We fix a coordinate hyperplane $\{z_{\lambda} = 0\}$ and let $\tau$ be the cone 
  generated by $\wu{\lambda}$ in $\R^n$. Counting the number of equations defining 
  $Y:=X\cap \{z_{\lambda}=0\}$ in $\CC^n$, we see that $\dim Y \geq 1$ by Krull's principal ideal 
  theorem. By construction, $X_{\tau} = Y \cap (\CC^*)^{n-1}$.  Since any face $\tau'$  of 
  $(\Rp)^n$ properly containing $\tau$ satisfies  $X_{\tau'} =\emptyset$ 
  by~\autoref{cor:codim2boundaryStataGerm}, it follows that no component of $Y$ lies in the toric 
  boundary of $\CC^{n-1}$. Furthermore,~\autoref{prop:puredimltrop}~(\ref{condsamedim}) 
  implies that  $\dim X_{\tau} = \dim \ptrop (\sG{\Gamma}, \tau)$. 
  Since ~\autoref{thm:boundaryTrop}~(\ref{item:boundaryDim1}) ensures that $ \dim \ptrop (\sG{\Gamma}, \tau)\leq 1$, we conclude that $\dim X_{\tau} =1$. 
  \end{proof}

  In turn, the last corollary has two consequences. First, it confirms that the  germ defined by $\sG{\Gamma}$ is a complete intersection, and second, it shows that equality must hold in~\autoref{thm:boundaryTrop}~(\ref{item:boundaryDim1}).

  \begin{corollary}\label{cor:expDimensionsG}
    The germ $(X,0)$ defined by the system $\sG{\Gamma}$ is a two-dimensional complete intersection. Each of its irreducible components meets the dense torus $(\CC^*)^n$ non-trivially in dimension two.
  \end{corollary}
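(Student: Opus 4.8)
The plan is to deduce \autoref{cor:expDimensionsG} from \autoref{cor:codim1boundaryStataGerm} together with the general properties of local tropicalizations recorded in \autoref{sec:local-trop}. First I would establish that $(X,0)$ is a complete intersection. The system $\sG{\Gamma}$ consists of $\sum_{v}(\valv{v}-2)$ power series, and a standard count on trees gives $\sum_{v\in\nodesT{\Gamma}}(\valv{v}-2) = n-2$ (this is the same count already used implicitly in~\autoref{def:splicesystem}). By Krull's principal ideal theorem every irreducible component of $(X,0)$ has dimension at least $n-(n-2)=2$. So it suffices to produce an \emph{upper} bound of $2$ on the dimension of each component, and to rule out components contained in a coordinate subspace; once both hold, the germ is cut out by a regular sequence of length $n-2$ and is therefore a complete intersection of pure dimension two.

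The key step is the dimension bound, and here is where I expect the only real subtlety. By~\autoref{cor:codim1boundaryStataGerm}, for each coordinate hyperplane $H_\lambda = \{z_\lambda = 0\}$ the intersection $X\cap H_\lambda$ is a one-dimensional germ all of whose components meet the dense torus $(\CC^*)^{n-1}$; and by~\autoref{cor:codim2boundaryStataGerm}, $X$ meets every coordinate subspace of codimension $\geq 2$ only at the origin. Now let $Z$ be any irreducible component of $(X,0)$. If $Z$ is \emph{not} contained in any coordinate hyperplane, then $Z\cap(\CC^*)^n$ is dense in $Z$, so $Z$ meets the dense torus; restricting to this open part and intersecting with a generic coordinate hyperplane $H_\lambda$ drops the dimension by exactly one, and the result lands inside $X\cap H_\lambda$, which has dimension $1$ by~\autoref{cor:codim1boundaryStataGerm}. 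Hence $\dim Z \leq 2$, forcing $\dim Z = 2$. If instead $Z$ \emph{were} contained in some coordinate hyperplane $H_\lambda$, then $Z$ is a component of $X\cap H_\lambda$; but~\autoref{cor:codim1boundaryStataGerm} says $X\cap H_\lambda$ is a curve whose components all meet the dense torus of $H_\lambda$, so $Z$ would be a curve not contained in any deeper coordinate subspace — in particular $\dim Z = 1 < 2$, contradicting the Krull lower bound applied to $Z$ as a component of the length-$(n-2)$ system $\sG{\Gamma}$. (More directly: a component of $(X,0)$ inside $H_\lambda$ of dimension $\leq 1$ would be a component of dimension strictly less than $2$, impossible.) Therefore no component of $(X,0)$ lies in a coordinate subspace, each has dimension exactly two, and $\sG{\Gamma}$ is a regular sequence, so $(X,0)$ is a pure two-dimensional complete intersection.

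Finally, for the torus statement: having shown no component of $(X,0)$ is contained in a coordinate subspace, each component $Z$ meets $(\CC^*)^n$ in a dense — hence two-dimensional — open subset. This is exactly the hypothesis needed to invoke~\autoref{prop:puredimltrop} componentwise, and it also feeds back into~\autoref{prop:compartrop} so that $\Trop X$ is the Euclidean closure of $\ptrop X$ in $(\Rp)^n$, as needed for the double-inclusion argument in~\autoref{ssec:proof-supseteq}. I expect the main obstacle to be purely bookkeeping: making the dimension-drop argument along a generic coordinate hyperplane precise (for instance via upper semicontinuity of fiber dimension for the coordinate projection $X\setminus\bigcup H_\lambda \to \CC$, in the same spirit as the proof of~\autoref{cor:dominant-curve-map}), and checking that the length count $\sum_v(\valv{v}-2)=n-2$ is correctly justified from the tree structure.
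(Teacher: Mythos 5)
Your proposal is correct and follows essentially the same route the paper intends: the corollary is deduced from \autoref{cor:codim1boundaryStataGerm} by combining the Krull lower bound $\dim Z\geq n-(n-2)=2$ for each component with the fact that every coordinate hyperplane section of $X$ is a curve, which both rules out components inside coordinate subspaces and gives the upper bound $\dim Z\leq 2$. (Only cosmetic remark: no genericity or semicontinuity is needed for the hyperplane section step — Krull's principal ideal theorem applied to any single $z_\lambda$ already gives $\dim(Z\cap H_\lambda)=\dim Z-1$ once $Z\not\subseteq H_\lambda$.)
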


Note that this result allows a priori for the germ $(X,0)$ to have several irreducible components. We will see in~\autoref{cor:isolatedSing} that $(X,0)$ is in fact irreducible.
    \begin{corollary}\label{cor:codim1TropBoundary}
      Let $\lambda$ be a leaf of $\Gamma$ and let $v$ be the unique node of $\Gamma$ adjacent to it. If $\tau = \Rp \langle\wu{\lambda}\rangle$, then $\ptrop (\sG{\Gamma},\tau) = \R_{>0}  \langle\projT{\tau}(\wu{v})\rangle$.      
        \end{corollary}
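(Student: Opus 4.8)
The statement refines \autoref{thm:boundaryTrop}~(\ref{item:boundaryDim1}) from an inclusion to an equality, so the plan is to keep the inclusion already proved there and establish the reverse containment $\R_{>0}\langle\projT{\tau}(\wu{v})\rangle\subseteq \ptrop(\sG{\Gamma},\tau)$ by a dimension count. First I would record two elementary facts. On the combinatorial side, $\projT{\tau}(\wu{v})$ has all strictly positive coordinates: writing $\wu{v}=\sum_{\mu\in\leavesT{\Gamma}}\wtuv{v}{\mu}\,\wu{\mu}$ as in~\eqref{eq:wu} and projecting away the single ray $\Rp\langle\wu{\lambda}\rangle$ leaves $\sum_{\mu\neq\lambda}\wtuv{v}{\mu}\,\wu{\mu}$, a combination of the remaining basis vectors with all coefficients positive; hence the open ray $\R_{>0}\langle\projT{\tau}(\wu{v})\rangle$ is contained in $(\R_{>0})^{n-1}$. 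On the geometric side, \autoref{cor:codim1boundaryStataGerm} tells us that $X_\tau$ --- the intersection of $X$ with the hyperplane $\{z_\lambda=0\}$ and with the dense torus of that hyperplane --- is nonempty and of pure dimension one, being the torus part of a curve germ all of whose components meet the dense torus.

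Next I would feed these into \autoref{prop:puredimltrop}, applied componentwise to $X_\tau$. Condition~(\ref{condsamedim}) forces every maximal cone of a tropicalizing fan of $X_\tau$ to be one-dimensional, and condition~(\ref{condnonempty}) forces at least one such cone to meet $(\R_{>0})^{n-1}$; in particular $\ptrop(\sG{\Gamma},\tau)$ is nonempty. Picking any $\wu{}$ in it, I would use that $\ptrop(\sG{\Gamma},\tau)=\Trop X_\tau\cap(\R_{>0})^{n-1}$ and that a fan is stable under multiplication by nonnegative scalars to conclude $\R_{\geq 0}\langle\wu{}\rangle\subseteq\Trop X_\tau$, hence $\R_{>0}\langle\wu{}\rangle\subseteq\ptrop(\sG{\Gamma},\tau)$. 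Finally, the inclusion $\ptrop(\sG{\Gamma},\tau)\subseteq\R_{>0}\langle\projT{\tau}(\wu{v})\rangle$ from \autoref{thm:boundaryTrop}~(\ref{item:boundaryDim1}) makes $\wu{}$ a positive multiple of $\projT{\tau}(\wu{v})$, so $\R_{>0}\langle\wu{}\rangle=\R_{>0}\langle\projT{\tau}(\wu{v})\rangle$, and the two inclusions combine to give the claimed equality.

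I do not expect a genuine obstacle here: the content is entirely carried by \autoref{cor:codim1boundaryStataGerm} and \autoref{prop:puredimltrop}, which are already available. The only point needing a moment's care is the bookkeeping that distinguishes the positive tropicalization $\ptrop$ from the ambient fan $\Trop$: one must check that the open ray genuinely lies inside the positive orthant (which is precisely why I would record the positivity of the coordinates of $\projT{\tau}(\wu{v})$ first) and one must invoke \emph{both} the dimension clause and the ``meets $(\R_{>0})^{n-1}$'' clause of \autoref{prop:puredimltrop}, so as to be certain that $\ptrop(\sG{\Gamma},\tau)$ is actually nonempty rather than merely a subset of the ray.
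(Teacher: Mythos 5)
Your proposal is correct and follows essentially the same route as the paper, which likewise combines \autoref{thm:boundaryTrop}~(\ref{item:boundaryDim1}), \autoref{cor:codim1boundaryStataGerm}, and the dimension clause of \autoref{prop:puredimltrop} (the paper's proof is a one-line citation of exactly these three ingredients). Your additional bookkeeping --- checking that $\projT{\tau}(\wu{v})$ lies in the open orthant and using the cone structure to pass from a single point to the whole open ray --- is a sound and welcome expansion of the same argument.
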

    \begin{proof} The result follows by combining~\autoref{thm:boundaryTrop}~(\ref{item:boundaryDim1}), \autoref{cor:codim1boundaryStataGerm}, and the equality between the dimensions of $X_{\tau}$ and $\ptrop (\sG{\Gamma},\tau)$ stated in~\autoref{prop:puredimltrop}~(\ref{condsamedim}).
      \end{proof}

   Our last result in this subsection is a special case of~\autoref{prop:compartrop}. It will be used in the next subsection.
    
  \begin{corollary}\label{cor:closureOfPositiveTrop}
      The local tropicalization of the germ defined by the splice type system $\sG{\Gamma}$ is the Euclidean closure of 
      $\ptrop \langle\sG{\Gamma}\rangle$ in $\R^n$.
   \end{corollary}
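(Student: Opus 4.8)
The plan is to invoke \autoref{prop:compartrop}, which states that for a germ with no irreducible component contained in a coordinate subspace, the local tropicalization $\Trop I$ is the Euclidean closure of the positive local tropicalization $\ptrop I$ inside $(\Rp)^n$. So the only thing to verify is that the germ $(X,0)$ defined by $\sG{\Gamma}$ has no irreducible component contained in any coordinate subspace of $\CC^n$.

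First I would recall from \autoref{cor:expDimensionsG} that $(X,0)$ is a two-dimensional complete intersection, each of whose irreducible components meets the dense torus $(\CC^*)^n$ non-trivially in dimension two. The statement that every component meets $(\CC^*)^n$ is precisely the assertion that no component is contained in the union of the coordinate hyperplanes, hence no component is contained in any single coordinate subspace. This is exactly the hypothesis needed to apply \autoref{prop:compartrop}.

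Thus the proof is short: by \autoref{cor:expDimensionsG}, every irreducible component of $(X,0)$ meets $(\CC^*)^n$, so in particular $(X,0)$ has no irreducible component contained in a coordinate subspace of $\CC^n$; applying \autoref{prop:compartrop} to the defining ideal of $\sG{\Gamma}$ then yields that $\Trop \sG{\Gamma}$ is the Euclidean closure of $\ptrop \sG{\Gamma}$ in $(\Rp)^n$, which coincides with its closure in $\R^n$ since $\ptrop \sG{\Gamma}\subseteq(\Rp)^n$ and $(\Rp)^n$ is closed. There is essentially no obstacle here — all the real work has already been done in establishing \autoref{cor:expDimensionsG} (via the analysis of boundary strata in \autoref{thm:boundaryTrop} and its corollaries) and in the general theory from \cite{PPS 13}. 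The one point to state carefully is that the hypothesis of \autoref{prop:compartrop} ("no irreducible component contained in any coordinate subspace") is literally a consequence of the "meets the dense torus" conclusion of \autoref{cor:expDimensionsG}, since a subvariety meets $(\CC^*)^n$ if and only if it is not contained in the complement, i.e.\ in the union of coordinate hyperplanes.

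\begin{proof}
  By \autoref{cor:expDimensionsG}, the germ $(X,0)$ is a two-dimensional complete intersection each of whose irreducible components meets the dense torus $(\CC^*)^n$. In particular, no irreducible component of $(X,0)$ is contained in the union of the coordinate hyperplanes of $\CC^n$, hence none is contained in any coordinate subspace. The hypotheses of \autoref{prop:compartrop} are therefore satisfied, and we conclude that $\Trop \sG{\Gamma}$ equals the closure of $\ptrop \sG{\Gamma}$ inside the cone $(\Rp)^n$. Since $\ptrop \sG{\Gamma}\subseteq (\Rp)^n$ and $(\Rp)^n$ is closed in $\R^n$, this closure coincides with the Euclidean closure of $\ptrop \sG{\Gamma}$ in $\R^n$.
\end{proof}
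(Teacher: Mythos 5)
Your proof is correct and follows exactly the paper's own route: the corollary is stated there as an immediate consequence of combining \autoref{cor:expDimensionsG} (every component of $(X,0)$ meets the dense torus) with \autoref{prop:compartrop}. Your additional remark that the closure in $(\Rp)^n$ agrees with the closure in $\R^n$ is a harmless and correct clarification.
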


\smallskip

    \subsection{The support of the splice fan is contained in the local tropicalization.}
\label{ssec:proof-supseteq}
$\:$ 
\smallskip

In this subsection, we prove the remaining inclusion in~\autoref{thm:tropsG}.  Our arguments are purely combinatorial, and rely on the  balancing condition for pure-dimensional local tropicalizations (see~\autoref{thm:balancingCondition} and \autoref{rem:balancedLoc}). \autoref{cor:expDimensionsG} confirms that such condition holds for  the positive tropicalization of the germ defined by $\sG{\Gamma}$. Furthermore, the proofs in this section imply that no proper two-dimensional subset of $\ptrop \langle\sG{\Gamma}\rangle$ is balanced.
\smallskip

We start by stating the main result in this section. Its proof will be broken into several lemmas and propositions for clarity of exposition.

\begin{theorem}\label{thm:EasyInclusionBalancing}
  For every splice diagram $\Gamma$, we have $\emb(\Gamma)\subseteq \Trop \langle\sG{\Gamma}\rangle$ 
  in $(\Rp)^n$. 
\end{theorem}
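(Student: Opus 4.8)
The plan is to complete the double inclusion begun in~\autoref{ssec:proof-subseteq} by a balancing argument that exploits the convexity of the embedding $\emb$ together with the pure two-dimensionality of $\Trop\sG{\Gamma}$. First I would record the structural consequence of the work already done: by~\autoref{cor:hardInclusion2D} (together with the fact, implicit in its proof, that the $\wu{}$-initial forms of the series $\fgvi{v}{i}$ are constant as $\wu{}$ ranges over the relative interior of each cone of the splice fan) and~\autoref{cor:closureOfPositiveTrop}, the local tropicalization $\Trop\sG{\Gamma}$ is a union of cones of the splice fan; being pure of dimension two by~\autoref{cor:expDimensionsG}, it equals $\bigcup_{e\in E_0}\Rp\emb(e)$ for some set $E_0\subseteq E(\Gamma)$, which is nonempty since $\dim X=2$. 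Thus it remains to prove $E_0=E(\Gamma)$.

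The heart of the argument is an \emph{all-or-nothing lemma}: for every node $v$ of $\Gamma$ the set of edges of $\starT{\Gamma}{v}$ lying in $E_0$ is either empty or all of $\starT{\Gamma}{v}$. To prove it, fix a node $v$ with adjacent vertices $u_1,\dots,u_{\valv v}$ and let $S$ index those $i$ with $[v,u_i]\in E_0$. If $S\neq\emptyset$, then the ray $\Rp\wu v=\Rp\emb(v)$ is a codimension-one cone of $\Trop\sG{\Gamma}$ contained in the open orthant $(\R_{>0})^n$ (recall $\wu v=\sum_{\lambda}\wtuv{v}{\lambda}\wu{\lambda}$ has strictly positive coordinates), so the balancing condition for pure-dimensional local tropicalizations (\autoref{rem:balancedLoc}, \cite{PPS 13}) applies at $\Rp\wu v$: writing $\overline{x}$ for the image of $x$ in $\R^n/\R\wu v$, one gets a relation $\sum_{i\in S}m_i\,\overline{\emb(u_i)}=0$ with coefficients $m_i>0$ (positive multiples of the tropical multiplicities of~\autoref{def:tropMult}, which are positive on maximal cones). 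On the other hand, \autoref{pr:convexityBar} tells us that $\emb(u_1),\dots,\emb(u_{\valv v})$ are linearly independent and that $\emb(v)$ lies in the relative interior of their convex hull; hence $\wu v=\sum_i c_i\,\emb(u_i)$ with all $c_i>0$, so in $\R^n/\R\wu v$ the vectors $\overline{\emb(u_i)}$ span a space of dimension $\valv v-1$ and satisfy exactly one linear relation up to scaling, namely $\sum_i c_i\,\overline{\emb(u_i)}=0$, which has full support. Comparing this with the balancing relation forces $S=\{1,\dots,\valv v\}$, proving the lemma.

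To finish, I would propagate the lemma across the tree. Since $E_0\neq\emptyset$, pick $e_0\in E_0$; because a splice diagram has at least one node and no valency-two vertex, $e_0$ has a node as an endpoint, and that node has nonempty star inside $E_0$, hence full star inside $E_0$ by the lemma. Inductively, whenever a node $v$ satisfies $\starT{\Gamma}{v}\subseteq E_0$, every node adjacent to $v$ acquires a nonempty, hence full, star in $E_0$; as $\Gamma$ is connected and every edge of $\Gamma$ is incident to a node, this yields $E_0=E(\Gamma)$. Therefore $\emb(\Gamma)=\bigcup_{e\in E(\Gamma)}\emb(e)\subseteq\Trop\sG{\Gamma}$, as claimed. (Alternatively, one can use~\autoref{cor:codim1TropBoundary} to exhibit directly that every edge incident to a leaf lies in $\Trop\sG{\Gamma}$; this supplies a stock of base cases for the propagation and sidesteps the appeal to nonemptiness of $E_0$ through a dimension count.)

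I expect the main obstacle to be making the balancing step watertight: confirming that the balancing condition of~\cite{PPS 13} is genuinely available at the interior ray $\Rp\wu v$ with the correct lattice normalization and with strictly positive tropical multiplicities on the adjacent maximal cones, and checking carefully that~\autoref{pr:convexityBar} indeed pins the relation space among the $\overline{\emb(u_i)}$ down to one dimension with a positive, full-support generator (so that the balancing relation cannot be supported on a proper subset $S$). The combinatorial propagation and the structural reduction in the first paragraph should be routine by comparison.
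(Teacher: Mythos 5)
Your proposal follows essentially the same route as the paper: balancing at the ray $\Rp\emb(v)$ over a node, combined with the linear independence and relative-interior facts of~\autoref{pr:convexityBar}, forces the full star of $v$ into the tropicalization once any part of it is there (this is the paper's~\autoref{pr:StarIsIn}), and the conclusion then propagates across the tree by connectedness exactly as in the paper's induction on distance.

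The one step that is under-justified is the reduction in your first paragraph: the claim that $\Trop\sG{\Gamma}$ is a union of \emph{full} cones of the splice fan, i.e.\ equals $\bigcup_{e\in E_0}\Rp\emb(e)$ for some $E_0\subseteq E(\Gamma)$. A priori a maximal cone of a tropicalizing fan could be a proper subcone of some $\Rp\emb([u,u'])$, so your set-up needs an ``all-or-nothing'' statement for edges, not just for stars of nodes. Your parenthetical justification --- constancy of the $\wu{}$-initial forms of the generators $\fgvi{v}{i}$ on relative interiors of splice-fan cones --- does not suffice: membership in $\Trop$ is governed by the initial \emph{ideal}, and constancy of the initial forms of a generating set does not by itself give constancy of the initial ideal (that implication is exactly what~\autoref{thm:newL3.3} in the appendix is needed for, and it requires a regularity hypothesis not yet available at this point). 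The paper closes this gap with~\autoref{lm:EdgesInTrop}: given a point of $\emb(\relo{[u,u']})$ outside $\ptrop\sG{\Gamma}$, take a maximal open subsegment avoiding the tropicalization; if an endpoint were interior to the simplex it would be a point of $\ptrop\sG{\Gamma}$ that, by balancing and the injectivity of $\emb$, must lie in the relative interior of a two-dimensional cone of the tropicalization, contradicting the avoidance on one side. Since you already invoke the balancing condition correctly for the node rays, this is a repairable omission rather than a wrong approach, but as written the first-paragraph reduction is the weakest link rather than the ``routine'' part you anticipate.
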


\begin{proof}  By~\autoref{lm:nodeInTrop} below we know that $\simplex{n-1}\cap \Trop \langle\sG{\Gamma}\rangle$ is a 1-dimensional polyhedral complex and $\emb(v)\in \ptrop \langle\sG{\Gamma}\rangle$ for some node $v$ of $\Gamma$. We claim that, in fact, $\emb(u)\in \ptrop \langle\sG{\Gamma}\rangle$ for each node  $u$.

  We prove this claim  by induction on the distance between $u$ and $v$ (recall that $\Gamma$ is  connected). If $u=v$ there is nothing to show. For the inductive step, pick a node $u$ with $\distGuv{\Gamma}{u}{v} = k \geq 1$ and assume that the claim holds for each node $u'$ of $\Gamma$ with $\distGuv{\Gamma}{u'}{v} = k -1$. Let $u'$ be the unique node of $[u,v]$ adjacent to $u$. Then, $\emb(u')\in \ptrop \langle\sG{\Gamma}\rangle$ by our inductive hypothesis. \autoref{pr:StarIsIn} below and~\autoref{cor:closureOfPositiveTrop} yield:
  \[    \emb(u)\in \emb(\starT{\Gamma}{u'})\cap (\R_{>0})^n \subseteq   \Trop \langle\sG{\Gamma}\rangle.
  \]

  The desired inclusion $\emb(\Gamma) \subseteq \overline{\ptrop \langle\sG{\Gamma}\rangle}$ follows by combining~\autoref{pr:StarIsIn} with the identity
  \[\emb(\Gamma) = \bigcup_{v \text{ node of } \Gamma} \emb(\starT{\Gamma}{v}).\qedhere\]
\end{proof}

Our first lemma ensures that the image of some node of $\Gamma$ lies in the positive tropicalization of $\sG{\Gamma}$: 

\begin{lemma}\label{lm:nodeInTrop}     The intersection $ \simplex{n-1}  \cap \Trop \langle\sG{\Gamma}\rangle$ is a 1-dimensional polyhedral complex. Furthermore, there exists a node $v$ of $\Gamma$ with $\emb(v) \in \simplex{n-1} \cap \ptrop \langle\sG{\Gamma}\rangle$.
  \end{lemma}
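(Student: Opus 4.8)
The plan is to prove the two assertions of the lemma in tandem, using the dimension count already available for $X$ together with a direct analysis of $\Trop \sG{\Gamma}$ restricted to the standard simplex. First I would recall that, by~\autoref{cor:expDimensionsG}, the germ $(X,0)$ is a two-dimensional complete intersection each of whose irreducible components meets $(\CC^*)^n$; hence, by~\autoref{cor:closureOfPositiveTrop}, $\Trop \sG{\Gamma}$ is the Euclidean closure of $\ptrop \sG{\Gamma}$, and by~\autoref{prop:puredimltrop}~(\ref{condsamedim}) every maximal cone of any tropicalizing fan for $X$ is two-dimensional. Consequently $\simplex{n-1} \cap \Trop \sG{\Gamma}$, being a hyperplane slice of a pure two-dimensional rational polyhedral fan with all maximal cones meeting $(\R_{>0})^n$, is a one-dimensional polyhedral complex. (Here one uses that all rays of the relevant fan lie in the open orthant after the first subdivision, so the slice is transverse; the cone over a point of $\relo{\simplex{n-1}}$ is a genuine ray of the fan.) This gives the first assertion.

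For the second assertion I would argue by contradiction. Suppose $\emb(v) \notin \ptrop \sG{\Gamma}$ for every node $v$ of $\Gamma$. By~\autoref{thm:hardInclusion2D} applied with $T = \Gamma$ we already know $\simplex{n-1} \cap \ptrop\sG{\Gamma} \subseteq \emb(\Gamma)$, and $\emb(\Gamma)$ is the union over nodes $v$ of the segments $\emb(\starT{\Gamma}{v})$, i.e.\ of the union over edges $[u,w]$ of $\Gamma$ of the arcs $\emb([u,w])$. So under our assumption the one-dimensional complex $\simplex{n-1}\cap \ptrop \sG{\Gamma}$ would be a nonempty closed subset of $\emb(\Gamma)$ avoiding the images of all nodes; since the slice is nonempty (indeed $\ptrop \sG{\Gamma}$ is nonempty because $X$ meets $(\CC^*)^n$ in dimension two), it would contain a point in the relative interior of some arc $\emb([u,w])$ with $[u,w]$ an edge of $\Gamma$, but it could not contain the whole arc, for that would force one of its endpoints $\emb(u),\emb(w)$ to lie in the closure of the slice, hence in $\Trop \sG{\Gamma}$, and (being a node image in $\relo{\simplex{n-1}}$) in $\ptrop\sG{\Gamma}$, contradicting our assumption. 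Thus the slice would have a boundary point lying in the relative interior of $\emb([u,w])$. But $\Trop \sG{\Gamma}$ is a \emph{closed} fan (a union of closed cones), so its slice with $\simplex{n-1}$ is a closed subcomplex and cannot terminate in the relative interior of an edge arc unless that arc end is a vertex of the fan structure — and the only vertices of the splice-fan slice in the interior of $\simplex{n-1}$ are the $\emb(v)$. This contradiction shows some node image lies in $\ptrop\sG{\Gamma}$.

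To make the last paragraph airtight I expect the main obstacle to be the bookkeeping of \emph{why} the slice cannot end in the interior of an edge arc without invoking more than closedness — one really wants the balancing/local-structure of tropicalizations here. The clean way is: pick $\wu{}\in \simplex{n-1}\cap\ptrop\sG{\Gamma}$ (nonempty), write $\wu{}\in \relo{\emb([u,w])}$ for a unique edge $[u,w]$ using~\autoref{thm:hardInclusion2D} and the injectivity of $\emb$ from~\autoref{thm:injectivityrho}; the cone $\Rp\wu{}$ is then a ray in the relative interior of the two-dimensional cone $\Rp\emb([u,w])$. Since by~\autoref{cor:expDimensionsG} the positive tropicalization is pure two-dimensional and by~\autoref{prop:puredimltrop} it is the support of a fan whose maximal cones are two-dimensional and meet $(\R_{>0})^n$, a two-dimensional cone of that fan through $\Rp\wu{}$ is contained in $\ptrop\sG{\Gamma}\subseteq \Rp\emb(\Gamma)$; but a two-dimensional rational cone inside $\Rp\emb([u,w])$ containing an interior ray must contain one of the two boundary rays $\Rp\emb(u)$ or $\Rp\emb(w)$, hence $\emb(u)$ or $\emb(w)$ lies in $\ptrop\sG{\Gamma}$. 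As $\emb(u),\emb(w)\in \relo{\simplex{n-1}}$, whichever one occurs is the node image we seek. (If it happens to be a leaf $\lambda = u$, note $\emb(\lambda) = \wu{\lambda}$ lies on the boundary of $\simplex{n-1}$, which is excluded since $\ptrop\sG{\Gamma}\subset (\R_{>0})^n$; so it is genuinely a node.) This completes the proof.
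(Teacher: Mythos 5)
Your handling of the first assertion matches the paper (slice the pure two\nobreakdash-dimensional fan given by~\autoref{cor:expDimensionsG} with $\simplex{n-1}$), and your reduction of the second assertion — pick a point of $\simplex{n-1}\cap\ptrop\sG{\Gamma}$, use~\autoref{thm:hardInclusion2D} to place it on $\emb(\Gamma)$, and argue it is either a node image or interior to an edge arc — is also how the paper begins. The final step of your ``clean way'' is, however, false. The assertion that a two-dimensional rational cone inside $\Rp\emb([u,w])$ containing an interior ray must contain one of the boundary rays $\Rp\emb(u)$ or $\Rp\emb(w)$ fails already in the plane: $\Rp\langle (2,1),(1,2)\rangle$ sits inside $\Rp\langle (1,0),(0,1)\rangle$, contains the interior ray through $(1,1)$, and contains neither boundary ray. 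Nothing established up to this point prevents a maximal cone of a tropicalizing fan for $X$ from being such a proper subcone $\Rp\langle p,q\rangle$ spanned by two interior points $p,q$ of the arc $\emb([u,w])$; that is, the two-dimensional part of $\Trop\sG{\Gamma}$ could a priori terminate in the middle of the arc without reaching either endpoint. Your earlier contradiction sketch has the same hole: a closed $1$-dimensional polyhedral complex can perfectly well end at a point interior to an edge arc, and the rays of the tropicalizing fan are not yet known to lie over the $\emb(v)$ — that is essentially what the lemma is trying to prove, so appealing to ``the only vertices of the splice-fan slice'' is circular.

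The missing ingredient is the balancing condition, which you correctly identify as the real issue but then do not actually deploy. The paper's route is~\autoref{lm:EdgesInTrop}: if $\emb(\relo{[u,u']})$ meets $\ptrop\sG{\Gamma}$, then all of $\emb([u,u'])$ lies in $\Trop\sG{\Gamma}$. Its proof takes a maximal open subarc of $[u,u']$ avoiding $\ptrop\sG{\Gamma}$ and shows that at an interior endpoint of that subarc the ray would carry a one-sided two-dimensional piece of $\ptrop\sG{\Gamma}$, violating balancing. Granting that lemma, whichever endpoint of $[u,u']$ is a node maps into $\relo{(\simplex{n-1})}$, hence into $\ptrop\sG{\Gamma}$, and the proof closes. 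So you should either invoke~\autoref{lm:EdgesInTrop} directly or reproduce its balancing argument; the convexity shortcut does not substitute for it.
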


\begin{proof}
 By~\autoref{cor:expDimensionsG} and~\autoref{prop:puredimltrop}~(\ref{condsamedim}) we  know that $\Trop\sG{\Gamma}$ is a fan of pure dimension two, and so  $\simplex{n-1} \cap \Trop \langle\sG{\Gamma}\rangle$ is a pure 1-dimensional  polyhedral complex.  To conclude, we must find a node $v$ with $\emb(v) \in \ptrop \langle\sG{\Gamma}\rangle$.
    
Since $\ptrop \langle\sG{\Gamma}\rangle \subseteq \emb(\Gamma)$ by~\autoref{cor:hardInclusion2D}, we have two possibilities for any  $\wu{} \in \simplex{n-1} \cap \ptrop \langle\sG{\Gamma}\rangle$: either $\wu{}= \emb(v)$ for some node $v$  of $\Gamma$ or $\wu{}\in \emb(\relo{[u,u']})$ for two adjacent vertices $u,u'$ of $\Gamma$. In the second situation,~\autoref{lm:EdgesInTrop} below ensures that $\emb([u,u'])\subset \simplex{n-1} \cap \Trop \langle\sG{\Gamma}\rangle  $. Since one of  $u$ or $u'$ must be a node  of $\Gamma$ and each node maps to $\relo{(\simplex{n-1})}$ under $\emb$, the claim follows.
  \end{proof}

  Our next lemma is central to the proof of both~\autoref{thm:EasyInclusionBalancing} and~\autoref{lm:nodeInTrop}. It describes the possible intersections between the local tropicalization of the ideal $\langle\sG{\Gamma}\rangle$ and the  edges of $\Gamma$ embedded in $\simplex{n-1}$ via the map $\emb$. The balancing condition for positive local tropicalizations plays a prominent role. 
  
\begin{lemma}\label{lm:EdgesInTrop} 
    Let $u, u'$ be two adjacent vertices of $\Gamma$. If $\emb(\relo{[u,u']})$ intersects 
    $\ptrop \langle\sG{\Gamma}\rangle$ non-trivially, then   $\emb([u,u'])\subseteq \Trop \langle\sG{\Gamma}\rangle$.
\end{lemma}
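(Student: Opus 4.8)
Identify the edge $[u,u']$ with the unit interval $[0,1]$, so that $\emb(0)=\emb(u)$, $\emb(1)=\emb(u')$ and $\emb(t)=(1-t)\,\emb(u)+t\,\emb(u')$ for $t\in[0,1]$; by~\autoref{thm:injectivityrho} this parametrization is injective and $\emb$ restricts to a homeomorphism onto $\emb(\Gamma)$. At least one endpoint of $[u,u']$ is a node of $\Gamma$, and $\emb$ sends each node to a point with all coordinates positive, so $\emb(t)$ has all coordinates positive for $t\in(0,1)$; hence $\emb(\relo{[u,u']})\subseteq(\R_{>0})^n$, and for $t\in(0,1)$ we have $\emb(t)\in\Trop\sG{\Gamma}$ if and only if $\emb(t)\in\ptrop\sG{\Gamma}$. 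Moreover, by~\autoref{cor:hardInclusion2D} and~\autoref{cor:closureOfPositiveTrop},
\[
\Trop\sG{\Gamma}=\overline{\ptrop\sG{\Gamma}}\subseteq\Rp\,\emb(\Gamma),
\]
the right-hand set being closed, as it is the cone over the compact set $\emb(\Gamma)\subseteq\simplex{n-1}$. Finally, by~\autoref{cor:expDimensionsG} the germ $(X,0)$ is purely two-dimensional with each irreducible component meeting $(\CC^*)^n$, so by~\autoref{prop:puredimltrop} and~\autoref{rem:balancedLoc} its local tropicalization is the support of a pure two-dimensional rational fan $\cF$ whose two-dimensional cones carry positive tropical multiplicities and which satisfies the balancing condition.

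Let $A:=\{t\in(0,1):\emb(t)\in\Trop\sG{\Gamma}\}$; this set is closed in $(0,1)$ since $\Trop\sG{\Gamma}$ is closed in $(\Rp)^n$, and it is non-empty by hypothesis. It suffices to prove that $A$ is open in $(0,1)$: then $A=(0,1)$ by connectedness, and therefore
\[
\emb([u,u'])=\emb\big(\,\overline{(0,1)}\,\big)\subseteq\overline{\emb((0,1))}\subseteq\overline{\Trop\sG{\Gamma}}=\Trop\sG{\Gamma},
\]
which is the assertion of the lemma.

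Fix $t_0\in A$ and set $\wu{0}:=\emb(t_0)\in(\R_{>0})^n$. Since $\emb$ is a homeomorphism onto $\emb(\Gamma)$ and $t_0$ is an interior point of $[u,u']$, there is an open neighborhood $U$ of $\wu{0}$ in $\R^n$ with $U\cap\Rp\,\emb(\Gamma)\subseteq\Rp\,\emb([u,u'])$, so that $U\cap\Trop\sG{\Gamma}\subseteq\Rp\,\emb([u,u'])$; shrinking $U$, we may also assume that $U\cap\Trop\sG{\Gamma}$ is covered by the cones of $\cF$ that contain $\wu{0}$. Note that $\Rp\,\emb([u,u'])$ is a two-dimensional cone whose relative interior contains the ray $\Rp\wu{0}$, because $\emb(u)$ and $\emb(u')$ are distinct points of $\simplex{n-1}$ and hence linearly independent. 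Let $\sigma$ be the unique cone of $\cF$ with $\wu{0}\in\relo{\sigma}$; as $\wu{0}\neq 0$ and every cone of $\cF$ has dimension at most two, $\dim\sigma\in\{1,2\}$.

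If $\dim\sigma=2$, then $\sigma\subseteq\Trop\sG{\Gamma}$ contains $\wu{0}$ in its relative interior and $\sigma\cap U\subseteq\Rp\,\emb([u,u'])$; hence $\sigma$ contains every ray $\Rp\,\emb(t)$ with $t$ in some interval $(t_0-\varepsilon,t_0+\varepsilon)$, and so $(t_0-\varepsilon,t_0+\varepsilon)\subseteq A$. If $\dim\sigma=1$, then $\sigma=\Rp\wu{0}$ is a ray of $\cF$. Let $\tau_1,\dots,\tau_k$ be the two-dimensional cones of $\cF$ having $\sigma$ as a face; by pure two-dimensionality, $k\geq 1$. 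For each $i$ we have $\tau_i\cap U\subseteq\Rp\,\emb([u,u'])$, and since $\Rp\wu{0}$ lies in the relative interior of the two-dimensional cone $\Rp\,\emb([u,u'])$, near $\sigma$ the cone $\tau_i$ lies on one of the two sides of $\sigma$ inside $\Rp\,\emb([u,u'])$. No two of the $\tau_i$ can lie on the same side, since their relative interiors would then intersect; hence $k\leq 2$, and when $k=2$ the two cones lie on opposite sides. If $k=2$, then, as each $\tau_i$ contains every ray of $\Rp\,\emb([u,u'])$ sufficiently close to $\sigma$ on its side, we get $\Rp\,\emb((t_0-\varepsilon,t_0+\varepsilon))\subseteq\tau_1\cup\tau_2\subseteq\Trop\sG{\Gamma}$ for some $\varepsilon>0$, so $(t_0-\varepsilon,t_0+\varepsilon)\subseteq A$. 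If $k=1$, let $m>0$ be the tropical multiplicity of $\tau_1$ and let $\bar\nu\in\Z^n/(\Z^n\cap\R\wu{0})$ be the primitive generator of the image of $\tau_1$ under the projection $\R^n\to\R^n/\R\wu{0}$ (the target group being a lattice because $\sigma=\Rp\wu{0}$ is a rational ray); this image is a ray, so $\bar\nu\neq 0$, because $\dim\tau_1=2>\dim\sigma=1$. The balancing condition of $\cF$ at the ray $\sigma$ then asserts that $m\,\bar\nu=0$, which is absurd since $m>0$. Hence the case $k=1$ does not occur.

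In every case, a neighborhood of $t_0$ in $(0,1)$ is contained in $A$. Therefore $A$ is open, which completes the proof.
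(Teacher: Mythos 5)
Your proof is correct and follows essentially the same strategy as the paper's: both arguments reduce the lemma to showing that the set of edge parameters landing in $\Trop\sG{\Gamma}$ is open as well as closed in $(0,1)$ (the paper phrases this via a maximal avoided subinterval and a contradiction at its endpoint), and both rule out the tropicalization terminating at an interior point of the embedded edge by invoking the balancing condition at a ray of the pure two-dimensional fan. Your version merely spells out the local fan analysis (the cases $\dim\sigma=1,2$ and the count $k\le 2$) that the paper compresses into the sentence "the balancing condition forces $\emb(\varphi(a))$ to lie in the relative interior of a 2-dimensional cone."
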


\begin{proof} It suffices to show that $\emb(\relo{[u,u']}) \subseteq \Trop \langle\sG{\Gamma}\rangle$. We argue by contradiction and fix a point $\wu{} \in \emb([u,u']) \smallsetminus \ptrop \langle\sG{\Gamma}\rangle$.
  Let  $\varphi\colon [0,1] \to [u,u']$ be the affine map such that $\varphi(0)=u$ and $\varphi(1)=u'$. There exists a unique $t_0 \in [0,1]$ such that  $\wu{} =\emb(\varphi(t_0))$.
  Since $\Trop \langle\sG{\Gamma}\rangle$ is closed in $(\Rp)^n$, we can find a pair $a,b\in [0,1]$ satisfying   $a < t_0 < b$  and such that the open segment $(\emb(\varphi(a)), \emb(\varphi(b)))$ avoids $\ptrop \langle\sG{\Gamma}\rangle$. Furthermore, we pick $(a,b)$ to be the maximal open interval in $[0,1]$ containing $t_0$ with this property. A contradiction  will arise naturally if we prove that $a=0$ and $b=1$.
  
  By symmetry, it suffices to show that $a=0$. We argue by contradiction, and assume $a>0$, so  $\emb(\varphi(a))\in \relo{(\simplex{n-1})}$ by construction. The maximality of $(a,b)$ combined with~\autoref{cor:closureOfPositiveTrop} ensures that $\emb(\varphi(a))\in \ptrop \langle\sG{\Gamma}\rangle$. Recall from~\autoref{cor:hardInclusion2D} that  $ \simplex{n-1} \cap \ptrop \langle\sG{\Gamma}\rangle \subseteq \emb(\Gamma)$.
  Since $\emb$ is an injection by~\autoref{thm:injectivityrho}, any neighborhood of $\emb(\varphi(a))$ in $\ptrop (\langle\sG{\Gamma}\rangle)$ lies in the relative interior of the two-dimensional cone 
 $\Rp \langle\emb([u, \varphi(a)])\rangle$. In turn, since $\ptrop(\langle\sG{\Gamma}\rangle)$ 
  is two-dimensional, and $\relo{\emb([\varphi(a),\varphi(b)])}$ avoids $\ptrop \langle\sG{\Gamma}\rangle$, it follows that $\ptrop \langle\sG{\Gamma}\rangle$ is not balanced at $\emb(\varphi(a))$ 
  when $a>0$. Indeed, the balancing condition (see~\autoref{thm:balancingCondition}) at $\emb(\varphi(a))$ involves a single non-zero vector, namely, the image of $\emb(u)-\emb(\varphi(a))$ 
  in the lattice $(\Z^n\cap \langle \emb(u), \emb(\varphi(a)) \rangle) /
 (\Z^n \cap   \langle \emb(\varphi(a)) \rangle)$,
  and a non-zero scalar. Therefore, we must have $a=0$.
\end{proof}

Our next result plays a prominent role in the induction arguments used to prove~\autoref{thm:EasyInclusionBalancing}. Once again, as in the proof of \autoref{lm:EdgesInTrop}, the balancing condition for local tropicalizations is crucial in our reasoning.

\begin{proposition}\label{pr:StarIsIn}
Let $v$ be a node of $\Gamma$ with $\emb(v)\in \ptrop \langle\sG{\Gamma}\rangle$. Then, $\emb(\starT{\Gamma}{v}) \subseteq \Trop \langle\sG{\Gamma}\rangle$.  
\end{proposition}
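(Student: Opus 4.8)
The strategy is to use the balancing condition for the pure two-dimensional local tropicalization $\Trop\sG{\Gamma}$ (valid by~\autoref{cor:expDimensionsG} and~\autoref{rem:balancedLoc}), together with the combinatorial structure of $\emb(\starT{\Gamma}{v})$ established in~\autoref{pr:convexityBar} and the description of $\ptrop\sG{\Gamma}$ coming from~\autoref{cor:hardInclusion2D}. Concretely, the point $\emb(v)$ lies in $\relo{(\simplex{n-1})}$, so by~\autoref{cor:closureOfPositiveTrop} it lies in the relative interior of a two-dimensional cone $C$ of a fixed simplicial fan structure on $\Trop\sG{\Gamma}$ refining the splice fan. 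Since $\simplex{n-1}\cap\ptrop\sG{\Gamma}\subseteq\emb(\Gamma)$ and $\emb$ is injective (\autoref{thm:injectivityrho}), $C$ must be $\Rp\langle\emb([u,u'])\rangle$ for some edge $[u,u']$ of $\Gamma$ passing through $\emb(v)$, forcing $v\in\{u,u'\}$; say $v=u'$, so $\emb(v)$ is an endpoint of the segment $\emb([u,v])$ and thus lies on the boundary between the cone $\Rp\emb([u,v])$ and whatever cones of $\Trop\sG{\Gamma}$ emanate from the ray $\Rp\emb(v)$. The balancing condition at the ray $\Rp\emb(v)$ then says that the primitive generators of the two-dimensional cones of $\ptrop\sG{\Gamma}$ containing this ray, weighted by their tropical multiplicities and taken modulo $\R\emb(v)$, sum to zero.

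The heart of the argument is then: the only vectors in $\simplex{n-1}\cap\ptrop\sG{\Gamma}$ near $\emb(v)$, hence the only candidate generators for the two-dimensional cones of $\ptrop\sG{\Gamma}$ incident to $\Rp\emb(v)$, lie on $\emb(\Gamma)$, and more precisely on the segments $\emb([v,u_i])$ for the vertices $u_1,\dots,u_{\valv{v}}$ adjacent to $v$ — this is because $\emb$ is injective, so in a neighborhood of $\emb(v)$ the set $\emb(\Gamma)$ is exactly the union of the $\valv{v}$ segments $\emb([v,u_i])$. By~\autoref{pr:convexityBar}(\ref{linIndep}), the vectors $\emb(u_1),\dots,\emb(u_{\valv{v}})$ are linearly independent, and by~(\ref{inRelInt}), $\emb(v)$ is a positive convex combination of them. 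A standard linear-algebra fact about balanced fans through a ray — namely that if the initial directions $\overline{\emb(u_i)}$ (images modulo $\R\emb(v)$) of $k$ of these segments, weighted by positive multiplicities, sum to zero, then necessarily \emph{all} $\valv{v}$ of them must appear (otherwise a proper nonempty subset of a linearly independent, positively-spanning configuration would be balanced, which contradicts the fact that $\emb(v)$ lies in the \emph{interior} of the simplex they span, since a balanced subconfiguration would have $\emb(v)$ on its boundary). Thus every segment $\emb([v,u_i])$ must carry positive tropical multiplicity near $\emb(v)$, which means $\emb(\relo{[v,u_i]})$ meets $\ptrop\sG{\Gamma}$ for each $i$. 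Then~\autoref{lm:EdgesInTrop} upgrades this to $\emb([v,u_i])\subseteq\Trop\sG{\Gamma}$ for all $i$, and taking the union gives $\emb(\starT{\Gamma}{v})\subseteq\Trop\sG{\Gamma}$ as desired.

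The main obstacle I anticipate is making the balancing argument rigorous at the level of a \emph{ray} rather than just at a point: one must fix a fan structure on $\Trop\sG{\Gamma}$ fine enough that $\Rp\emb(v)$ is a ray of it, check that the only two-dimensional cones containing this ray are subcones of the $\Rp\emb([v,u_i])$ (using~\autoref{cor:hardInclusion2D} plus injectivity of $\emb$ to rule out anything not on $\emb(\Gamma)$, and using that no $\emb(u_i)$ can coincide with a point of $\emb([v,u_j])$ for $i\neq j$), and then invoke balancing~\cite[Remark 11.3, Theorem 12.10]{PPS 13} with the correct ambient lattice. The delicate step within this is the linear-algebra claim that a positively-spanning linearly independent family cannot have a proper nonempty balanced subfamily through the interior point $\emb(v)$: concretely, if only $\{u_i : i\in S\}$ with $S\subsetneq\{1,\dots,\valv{v}\}$ contribute, balancing modulo $\R\emb(v)$ gives $\sum_{i\in S} m_i\,\overline{\emb(u_i)} = 0$ with $m_i>0$, which after lifting says $\sum_{i\in S} m_i\,\emb(u_i) = c\,\emb(v)$ for some scalar $c$; but by~\autoref{pr:convexityBar}(\ref{inRelInt}) $\emb(v)=\sum_{j=1}^{\valv{v}}q_j\,\emb(u_j)$ with all $q_j>0$, and linear independence of the $\emb(u_j)$ forces $q_j=0$ for $j\notin S$, a contradiction. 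Once this is in place the rest is routine, and I would present the whole proposition as a fairly short proof citing~\autoref{pr:convexityBar}, \autoref{thm:injectivityrho}, \autoref{cor:hardInclusion2D}, \autoref{cor:closureOfPositiveTrop}, \autoref{lm:EdgesInTrop}, and the balancing results of~\cite{PPS 13}.
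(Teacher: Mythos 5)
Your proposal is correct and follows essentially the same route as the paper's proof: balancing at the ray $\Rp\emb(v)$, restriction of the incident two-dimensional cones to the segments $\emb([v,u_i])$ via \autoref{cor:hardInclusion2D} and injectivity of $\emb$, the linear-independence-plus-interior-point argument from \autoref{pr:convexityBar} to force all multiplicities to be positive, and \autoref{lm:EdgesInTrop} to conclude. The paper phrases the key step as solving the linear system $\sum_j (k_j\alpha_{u_j})\emb(u_j)=\beta\,\emb(v)$ and matching coefficients, which is the same contradiction you derive for a proper balanced subfamily.
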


\begin{proof} After refinement if necessary, we may assume that $\boxedo{\tau}:=\Rp \emb(v)$ is a ray of $\ptrop \langle\sG{\Gamma}\rangle$. In order to exploit the balancing condition, we use analogous notation to that of~\autoref{def:balancingCondition}, simplified slightly by the fact that $\ptrop \langle\sG{\Gamma}\rangle$ is 2-dimensional. 
  
  Given a vertex $u$ of $\Gamma$ adjacent to $v$, we consider the 1-dimensional saturated lattice
  \[
        \boxedo{\Lambda}  :=\frac{\Z^n \cap \R\langle \wu{u}, \wu{v}\rangle}{\Z^n\cap \R\langle \wu{v} \rangle}.\]
    Fix a vector ${\wu{u}}_{|\tau} \in \Z^n\cap \Rp\emb([u,v]) $ whose natural projection to 
    $\Lambda$  
  generates this lattice. In particular, we can write ${\wu{u}}_{|\tau}$ uniquely as
  \begin{equation}\label{eq:balancingVectors}
    {\wu{u}}_{|\tau} = \alpha_{u} \emb(u) + \beta_{u} \emb(v)
  \end{equation}
  for some $\boxedo{\alpha_{u}}, \boxedo{\beta_{u}}  \in \Q$ and with $\alpha_{u}>0$.

We let $u_1,\ldots, u_{\valv{v}}$ be the vertices of $\Gamma$ adjacent to $v$.   The balancing condition for $\ptrop \langle\sG{\Gamma}\rangle$ at $\tau$ combined with~\autoref{thm:hardInclusion2D} ensures the existence of non-negative integers $\{k_{1}, \ldots, k_{\valv{v}}\}$ (i.e., the tropical multiplicities 
of \autoref{def:tropMult}) satisfying:
  \begin{equation}\label{eq:balancingv}
\sum_{i=1}^{\valv{v}} k_i\, {\wu{u_i}}_{|\tau}   \in  \Z^n\cap \R\langle \wu{v} \rangle.
  \end{equation}
  Moreover, by~\autoref{def:tropMult}, $\relo{(\emb([u_j,v]))}$ intersects $\ptrop \langle\sG{\Gamma}\rangle$ non-trivially in a neighborhood of $\emb(v)$ if and only if $k_j\neq 0$.

  Since $\ptrop \langle\sG{\Gamma}\rangle$ is pure of dimension two and $\emb(v)\in \ptrop \langle\sG{\Gamma}\rangle$, we know that $k_{j_0}>0$ for some $j_0 \in \{1,\ldots, \valv{v}\}$.
  We claim that, furthermore, all $k_1,\ldots,k_{\valv{v}}$ are positive
  integers.  The inclusion $\emb(\starT{\Gamma}{v})\subset {\Trop \langle\sG{\Gamma}\rangle}$
  follows by combining this statement with~\autoref{lm:EdgesInTrop}.
   
To prove our claim, we consider the following linear equation in $k_1,\ldots, k_{\valv{v}}$ that is equivalent to~\eqref{eq:balancingv}:
      \begin{equation}\label{eq:equivBalancingv}
        \sum_{j=1}^{\valv{v}} (k_j\,\alpha_{u_j}) \emb(u_j) = \beta\, \emb(v) \qquad \text{ with } \beta \in \Q.
      \end{equation}
      First, we argue that this system admits a unique solution  $(k_1,\ldots, k_{\valv{v}})\in \Q^{\valv{v}}$ for all $\beta$.  Uniqueness follows directly  since $\alpha_{u_j}>0$ for all $j$ and the set $\{\emb(u_1), \ldots, \emb(u_{\valv{v}})\}$ is linearly independent by~\autoref{pr:convexitySimplices}~(\ref{linIndep}).

      Second, we claim that for $\beta\neq 0$, any solution to~\eqref{eq:equivBalancingv} has $k_j\neq 0$ for all $j$. By homogeneity, we may assume  $\beta = 1$. Then,~\autoref{pr:convexityBar}~(\ref{inRelInt}) and  the linear independence of $\{\emb(u_j)\}_{j=1}^{\valv{v}}$   force the coefficients $k_j\alpha_{u_j}$ in~\eqref{eq:equivBalancingv} to be the ones used to write $\emb(v)$ as an element of $\relo{(\simplex{\starT{\Gamma}{v}})}$. Since $\alpha_{u_j}>0$ for all $j$, we have $k_j> 0$ for all $j$, as we wanted.
        
      To finish, we argue that~\eqref{eq:balancingv} has a solution with $k_{j_0}>0$ by the balancing condition. This forces $\beta\neq 0$ in~\eqref{eq:equivBalancingv}, and so   $k_j\neq 0$ for all $j$ by the previous discussion. This concludes our proof.
\end{proof}

 \section{Splice type singularities are Newton non-degenerate}\label{sec:NewtonND}

In this section we discuss the Newton non-degeneracy  of splice type systems 
(see \autoref{thm:NewtonNonDeg}), 
following the original framework introduced by Khovanskii (see~\autoref{def:Nnondegci}). 
  This property only involves the initial forms of the generators of the system $\sG{\Gamma}$, as opposed to the initial ideals of the ideal  generated by the elements of the system $\sG{\Gamma}$. 
In turn,~\autoref{thm:newL3.3} ensures the $\wu{}$-initial forms of $(\fgvi{v}{i})_{v,i}$ generate the $\wu{}$-initial ideal of $\langle\sG{\Gamma}\rangle$, for each rational weight vector $\wu{}$ in the positive local  tropicalization of $\sG{\Gamma}$, thus  giving an alternative proof of~\autoref{thm:EasyInclusionBalancing}. Furthermore, Newton non-degeneracy implies that condition~(\ref{condinit}) of~\autoref{prop:puredimltrop} holds for every cell in $\emb(\Gamma)$, thus showing that the splice fan of $\Gamma$ is a standard tropicalizing fan for the germ defined by the system $\sG{\Gamma}$, 
in the sense of~\autoref{def:tropicalizingFan}. This is the content of~\autoref{cor:tropsG}. In turn, \autoref{cor:onetoricplanecurve} gives an  alternative proof of a recent theorem of de Felipe, Gonz\'alez P\'erez and Mourtada~\cite{FGM 21}, which resolves any germ of a reduced plane curve  by one toric morphism after a reembedding in a higher dimensional complex affine space. The section concludes with an open question regarding embedding dimension of splice type surface singularities preserving the Newton non-degeneracy property (see~\autoref{ques:minNNDemb}).
\medskip

Our first result is a direct consequence of \autoref{lem:keyid} and \eqref{eq:gviConditions}. It allows us to determine the $\wu{u}$-initial form of each series $\fgvi{v}{i}$ in the system $\sG{\Gamma}$  for each  node $u$ of $\Gamma$. This computation is central to all arguments in this section:

\begin{proposition}\label{pr:InitWufvi} 
   For each pair of nodes $u,v$ in $\Gamma$, and each $i \in \{ 1, \dots, \valv{v}-2\}$, we have:
  \[
  \initwf{\wu{u}}{\fgvi{v}{i}} =   \initwf{\wu{u}}{\fvi{v}{i}} =
  \begin{cases}
    \fvi{u}{i} & \text{ if } u=v,\\
    \fvi{v}{i} - \cvi{v}{[u,v]} \,\zexp{\wtNve{v}{e}} & \text{ otherwise},
  \end{cases}
  \]
  where $e$ is the unique edge adjacent to $v$ with $e\subseteq [v,u]$. 
 \end{proposition}
 
\noindent  The second formula, for the case where $u \neq v$, means that we remove from 
   $\fvi{v}{i}$ the term associated to the edge starting from $v$  in the direction of $u$.

\begin{remark}\label{rm:EndCurvesInitialForms} 
   Recall from~\autoref{def:endcurverSR} that rooted splice diagrams $\Gamma_{\roottree}$ with $n+1$ leaves (including the root $r$) give rise to end-curves in $\CC^n$.  Consider a fixed strict splice type system $\sG{\Gamma}$ associated to $\Gamma$ giving rise to this curve and let $u$ be the unique node of $\Gamma_{\roottree}$ adjacent to $r$. In  view of \autoref{pr:InitWufvi}, the curve $\Ccurve_{\roottree}$ is defined by the vanishing set of $z_r$ and the $\wu{u}$-initial forms of the $n-1$ polynomials in the system $\sG{\Gamma}$, viewed in $\{0\}\times \CC^n$.
\end{remark}

     Next, we set up notation that we will use throughout this section. For each weight vector $\wu{} \in (\R_{>0})^n$ we define:
  \begin{equation}\label{eq:JinitFormGens}
    \boxedo{J_{\wu{}}}:=\langle \initwf{\wu{}}{\fgvi{v}{i}}\colon v \text{ node of } \Gamma, i=1,\ldots, \valv{v}-2\rangle \subset \CC[z_{\lambda_1}, \ldots, z_{\lambda_n}],
  \end{equation}
  and let $\boxedo{Z_{\wu{}}}$ be the subscheme of  $\CC^n$ defined by $J_{\wu{}}$.

\smallskip

Next, we state the main result of this section. The rest of the section is devoted to its proof.
\begin{theorem}\label{thm:NewtonNonDeg} 
      The splice type system $\sG{\Gamma}$ is a Newton non-degenerate complete intersection system.
\end{theorem}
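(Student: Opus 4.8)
The strategy is to use the explicit description of the local tropicalization $\ptrop\sG{\Gamma}$ provided by \autoref{thm:tropsG} (namely, that it is supported on the splice fan $\Rp\emb(\Gamma)$) together with the initial-form computation of \autoref{pr:InitWufvi}, and then reduce the smoothness check on $Z_{\wu{}}\cap(\CC^*)^n$ to the behavior of the end-curve maps $G_{\roottree}$ studied in \autoref{sec:splicetypenotation}. By \autoref{def:Nnondegci} and \autoref{cor:expDimensionsG}, it suffices to show that for every $\wu{}\in\ptrop\sG{\Gamma}=\Rp\emb(\Gamma)\cap(\R_{>0})^n$, the differentials of the initial forms $\{\initwf{\wu{}}{\fgvi{v}{i}}\}_{v,i}$ are linearly independent at every point of $Z_{\wu{}}\cap(\CC^*)^n$; for $\wu{}$ outside the splice fan this intersection is empty (by \autoref{thm:tropsG}, since then $\initwf{\wu{}}{\langle\sG{\Gamma}\rangle}$ contains a monomial and a fortiori $Z_{\wu{}}$ misses the torus), so we only need to treat $\wu{}\in\Rp\emb(\Gamma)$.

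\textbf{Key steps.} First I would stratify the cone $\Rp\emb(\Gamma)$ according to its simplicial fan structure: a ray through $\emb(v)$ for a node $v$, or the relative interior of a two-dimensional cone $\Rp\emb([u,v])$ for an edge $[u,v]$ with at least one node endpoint. For $\wu{}$ in the relative interior of such an edge cone $\Rp\emb([u,v])$ (say $v$ a node, $u$ a vertex), \autoref{pr:InitWufvi} shows that $\initwf{\wu{}}{\fgvi{w}{i}}$ depends only on which side of $[u,v]$ the node $w$ lies on: it is $\fvi{w}{i}$ with the single admissible monomial pointing toward the opposite side removed. This is precisely the defining system of an end-curve in the sense of \autoref{def:endcurverSR}, after grouping leaves on one side. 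The crucial observation is that the initial ideal $J_{\wu{}}$ is then, up to a monomial change of coordinates on $(\CC^*)^n$ and a product decomposition, the ideal of a product of an affine space factor with an end-curve $\Ccurve_{\roottree}$; by \autoref{thm:end-curvesNW} the curve $\Ccurve_{\roottree}$ is smooth away from the origin, hence smooth inside $(\CC^*)^{n-1}$, which gives the required linear independence of differentials on the torus. For $\wu{}=\emb(v)$ a ray (node) vector, \autoref{pr:InitWufvi} gives $\initwf{\emb(v)}{\fgvi{v}{i}}=\fvi{v}{i}$ (the full $\wu{v}$-homogeneous polynomial at $v$) and $\initwf{\emb(v)}{\fgvi{w}{i}}=\fvi{w}{i}-\cvi{w}{[v,w]}\zexp{\wtNve{w}{e}}$ for $w\neq v$; the Jacobian criterion then reduces to a simultaneous end-curve statement at $v$ together with the binomial systems at the other nodes, and one invokes both \autoref{thm:end-curvesNW} and the Hamm determinant conditions to verify smoothness on the torus. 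One clean way to organize this uniformly is: pick any $\wu{}\in\Rp\emb(\Gamma)$ and any leaf $\roottree$; then on $(\CC^*)^n$ the system $J_{\wu{}}$ is torus-equivalent to the end-curve system for $\Gamma_{\roottree}$ (possibly trivially extended in the leaf-coordinates lying on the root side), and \autoref{thm:end-curvesNW} and \autoref{cor:dominant-curve-map} finish the argument.

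\textbf{Main obstacle.} The technical heart is matching the initial forms produced by \autoref{pr:InitWufvi} with end-curve equations in a coordinate-independent way, and checking that the change of variables involved is monomial (hence a torus automorphism, preserving smoothness on $(\CC^*)^n$). Concretely, one must verify that after removing the root-pointing monomial from each $\fvi{w}{i}$, the Hamm determinant conditions still let us diagonalize each node's subsystem into binomials $\zexp{\wtNve{w}{e_j}}-a_{w,j}\zexp{\wtNve{w}{e_{\valv{w}-1}}}$ as in \autoref{thm:end-curvesNW}, and that the resulting binomial ideal cuts out a smooth subscheme of $(\CC^*)^n$ of the expected codimension $n-2$. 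Showing the Jacobian matrix of these binomials has full rank at torus points is a determinant computation whose nonvanishing ultimately traces back to the edge determinant condition and \autoref{pr:convexitySimplices}~(\ref{linIndep}) (linear independence of the $\emb(u_i)$ around a node), together with the fact that the end-curve is a reduced complete intersection smooth off the origin. Once that linear-independence-of-differentials statement is in hand for each cell of $\emb(\Gamma)$, the theorem follows immediately from \autoref{def:Nnondegci}, since $\emb(\Gamma)$ already exhausts $\ptrop\sG{\Gamma}$ by \autoref{thm:tropsG}.
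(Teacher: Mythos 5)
Your overall strategy is the paper's: stratify the splice fan into node rays and edge cones, identify each initial system $J_{\wu{}}$ with end-curve/Pham--Brieskorn--Hamm data via \autoref{pr:InitWufvi}, and deduce smoothness on $(\CC^*)^n$ from \autoref{thm:end-curvesNW} and the Hamm determinant conditions. The edge-cone cases you describe are exactly the paper's Lemmas~\ref{lm:wulambda} and~\ref{lm:wuv}: for a leaf--node edge one gets a cylinder over the end-curve $\Ccurve_{\lambda}$, and for a node--node edge a product of two end-curves, both smooth in the torus by \autoref{thm:end-curvesNW}.

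Two points in your treatment of the node rays $\wu{}=\wu{u}$ need repair. First, your proposed ``uniform organization'' --- that for \emph{any} $\wu{}\in\Rp\emb(\Gamma)$ the system $J_{\wu{}}$ is torus-equivalent to an end-curve system for some rooted $\Gamma_{\roottree}$ --- fails precisely on the node rays: by \autoref{pr:InitWufvi} the subsystem at $u$ retains \emph{all} $\valv{u}$ admissible monomials, so $Z_{\wu{u}}\cap(\CC^*)^n$ is two-dimensional and is not a (cylinder over a) curve. The paper's Lemma~\ref{lm:wu} handles this by parameterizing each branch's end-curve monomially, substituting into the equations at $u$, and recognizing the result as a torus-translated monomial image of a Pham--Brieskorn--Hamm complete intersection attached to $\starT{\Gamma}{u}$; Hamm's minor conditions then give smoothness of each component off the coordinate subspaces. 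Second, and more substantively, $Z_{\wu{u}}$ can be \emph{reducible} (the end-curves of the branches have $\gcd$-many components), and linear independence of the $n-2$ differentials fails at any point where two components meet. So you must also prove that distinct components of $Z_{\wu{u}}$ intersect only outside $(\CC^*)^n$ --- this is item~(\ref{smoothness}) of Lemma~\ref{lm:wu}, proved by projecting to the branch factors and using that distinct components of an end-curve meet only at the origin (\autoref{thm:end-curvesNW}). Your sketch never addresses this, and without it the node-ray case is incomplete.
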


\begin{proof}
  We certify the two conditions from \autoref{def:Nnondegci} starting with the regularity of the sequence defined by  $\sG{\Gamma}$. This property follows by~\autoref{lm:wu} and~\autoref{thm:newL3.3}. Indeed, since the weight vector $\wu{u}$ associated to any node $u$ of $\Gamma$ lies in $(\Z_{>0})^n$, and $Z_{\wu{u}}$ is a complete intersection of dimension two with defining ideal $J_w$ by the lemma, we conclude that the $n-2$ generators of $J_w$ form a regular sequence in $\cO$. In turn, the theorem confirms that the series $\{\fgvi{v}{i}\}_{v,i}$ form a regular sequence in $\cO$.

Recall from~\autoref{def:posloctrop} that $Z_{\wu{}}\cap (\CC^*)^n$ is the empty-set whenever $\wu{}\notin \ptrop \langle\sG{\Gamma}\rangle$. Thus, it suffices to certify the linear independence condition of the differentials of $\initwf{\wu{}}{\fgvi{v}{i}}$ when $\wu{} \in (\R_{>0})^n\cap \ptrop \langle\sG{\Gamma}\rangle$. In turn, by \autoref{thm:tropsG} it is enough to restrict our analysis to vectors $\wu{}\in (\R_{>0})^n \cap \emb(\Gamma)$.

Since the generators of $J_{\wu{}}$ form a regular sequence in $\cO$ by~\autoref{cor:initialForms},~\autoref{pr:regSmoothIsNND} simplifies our task:  we need only verify that $Z_{\wu{}}\cap (\CC^*)^n$ is smooth for each such $\wu{}$. We analyze three cases, depending on the nature of the unique cell of $\emb(\Gamma)$ containing $\wu{}$ in its relative interior. Each case matches the settings of \textcolor{blue}{Lemmas}~\ref{lm:wu},~\ref{lm:wulambda} and~\ref{lm:wuv} below.

First, we pick a node $u$ of $\Gamma$ and consider the weight vector $\wu{}={\wu{u}}/{|\wu{u}|}$. By~\autoref{lm:wu}, the components of the scheme $Z_{\wu{u}} \subseteq \CC^n$ are Pham-Brieskorn-Hamm singularities, and they can only meet at coordinate subspaces of $\CC^n$. Furthermore, the Hamm conditions ensure that these germs are smooth away from the origin. This fact confirms the smoothness of $Z_{\wu{u}}\cap (\CC^*)^n$.

Next, we consider a weight vector $\wu{}\in \relo{\emb([\lambda, u])}$, were $\lambda$ is a leaf of $\Gamma$ and $u$ is a node of $\Gamma$ adjacent to $\lambda$. Then, by \autoref{lm:wulambda}, $Z_{\wu{}}$ is a cylinder over an end-curve $\Ccurve_{\lambda}$ associated to the rooted diagram $\Gamma_{\lambda}$. By~\autoref{thm:end-curvesNW}, such curves are smooth away from the origin. This  confirms that $Z_{\wu{}}\cap (\CC^*)^n$ is smooth in this case as well.

Finally, we fix a weight vector $\wu{}\in \relo{\emb([u,u'])}$ where $u,u'$ are adjacent nodes of $\Gamma$. Then,~\autoref{lm:wuv} confirms that the scheme $Z_{\wu{}}$ is the product of two end-curves $\Ccurve_{\Gamma'_u}$ and $\Ccurve_{\Gamma''_v}$ associated to two splice subdiagrams of $\Gamma$ rooted at $u$ and $v$, respectively. More precisely, we have $\Gamma':=[u, \nodesTevRoot{u,[u,v]}]$ and $\Gamma'':=[v,\nodesTevRoot{v,[u,v]}]$ in the notation of~\autoref{def:semgpcond}. By~\autoref{thm:end-curvesNW}, these curves are smooth in the corresponding ambient tori, so $Z_{\wu{}}\cap (\CC^*)^n$ is smooth. This concludes our proof.
\end{proof}

  \begin{figure}[tb]
        \includegraphics[scale=0.5]{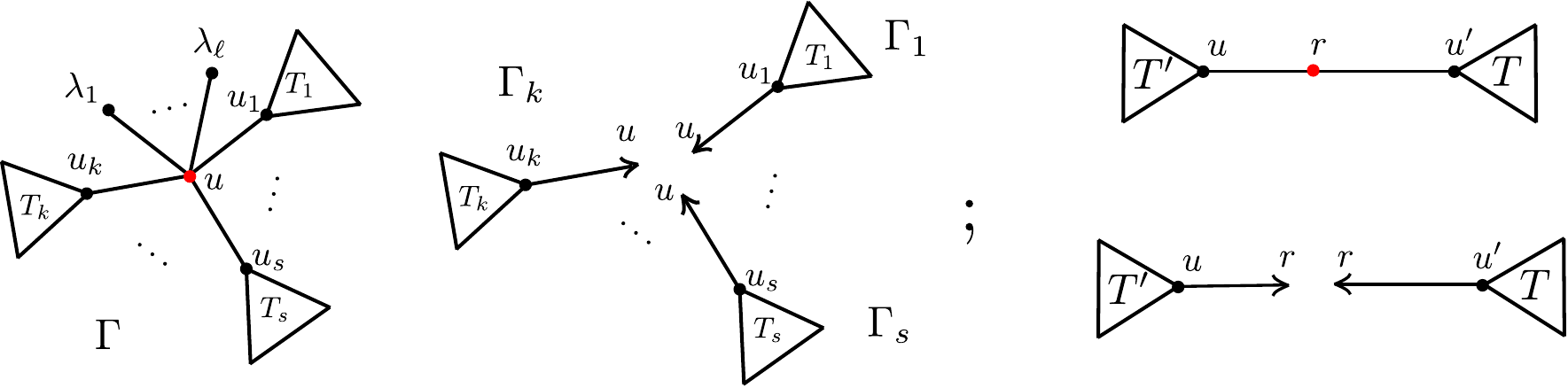}
    \caption{Rooted splice diagrams used in the proofs of~\textcolor{blue}{Lemmas}~\ref{lm:wu},~\ref{lm:wuv} and~\ref{lem:intersect-with-coord-subsp}. The vertex $u'$ on the right is allowed to be a leaf of $\Gamma$, whereas $u$ is always a node.}\label{fig:figSplicing}  \end{figure}

The next three lemmas characterize the ideal $J_{\wu{}}$ associated to points in the relative interior of each cell in the  piecewise-linearly embedded tree $\emb(\Gamma) \hookrightarrow \simplex{n-1}$. 

\begin{lemma}\label{lm:wu} 
    Let $u$ be a node of $\Gamma$ and let   $\wu{} = \wu{u}$.  Let $\Gamma_0$ be the star 
    of $\Gamma$ at $u$, viewed as a splice diagram with inherited weights around $u$. Then:
    \begin{enumerate}
      \item \label{Jusesf} $J_{\wu{}} = \langle \initwf{\wu{}}{\fvi{v}{i}}\colon v \text{ node of } 
            \Gamma, i=1,\ldots, \valv{v}-2\rangle$;
      \item\label{ZisCI} $Z_{\wu{}}$ is  a complete intersection of dimension two;
      \item \label{noBoundaryComp} 
         $Z_{\wu{}}$ has no irreducible component  contained in a coordinate hyperplane of $\CC^n$, so $J_{\wu{}}$ is monomial-free;
    \item \label{PBHForNodes} 
        all components of $Z_{\wu{}}$ are obtained as images of torus-translated monomial maps $\CC^{\valv{u}}\to \CC^{n}$ and their preimages lie in Pham-Brieskorn-Hamm complete intersections in $\CC^{\valv{u}}$ determined by $\Gamma_0$;
    \item \label{smoothness}
        the intersection of any two distinct components of $Z_{\wu{}}$ lies outside of $(\CC^*)^n$. 
    \end{enumerate}
\end{lemma}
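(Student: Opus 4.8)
The key observation is that by~\autoref{pr:InitWufvi}, when $\wu{} = \wu{u}$ the initial form $\initwf{\wu{u}}{\fgvi{v}{i}}$ equals $\fvi{u}{i}$ if $v = u$, and otherwise equals $\fvi{v}{i}$ with the single term indexed by the edge of $\starT{\Gamma}{v}$ pointing towards $u$ deleted. In the latter case, this is precisely the polynomial $\hvi{v}{i}$ appearing in the end-curve construction of~\autoref{def:endcurverSR}, for the rooted diagram $\Gamma_{\roottree}$ obtained by choosing a leaf $\roottree$ of $\Gamma$ with $u \in [v,\roottree]$ for all nodes $v \neq u$ — this choice is possible because $u$ separates the tree and every non-$u$ node has its root-ward edge pointing towards $u$. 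Thus, after reordering variables, the defining system of $Z_{\wu{u}}$ is a union of: (i) the $\wu{u}$-homogeneous polynomials $\fvi{u}{i}$, $i = 1,\ldots,\valv{u}-2$, which after applying the Hamm determinant conditions become the Pham-Brieskorn-Hamm system in the variables indexed by $\leavesT{\starT{\Gamma}{u}}$, i.e. the monomials $z_v^{\du{u,v}}$ over the edges $[u,v]$ adjacent to $u$ (here viewing each branch of $\Gamma$ at $u$ as contributing one ``block variable''); and (ii) the end-curve equations $\hvi{v}{i}$ for all other nodes $v$, which by~\autoref{thm:end-curvesNW} cut out a reduced complete intersection curve inside each coordinate branch. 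The first step of the proof is to make this identification precise: establish~(\ref{Jusesf}) directly from~\autoref{pr:InitWufvi} (since each $\gvi{v}{i}$ has strictly larger $\wu{u}$-weight on its support by~\eqref{eq:gviConditions} and~\autoref{lem:keyid}, its initial form contributes nothing).

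\textbf{The structure of $Z_{\wu{u}}$.} For~(\ref{PBHForNodes}) I would exploit the block/branch decomposition of $\Gamma$ at $u$. Write $\starT{\Gamma}{u}$ with edges $e_1,\ldots,e_{\valv{u}}$, and for each $j$ let $\Gamma_j$ be the branch of $\Gamma$ at $u$ attached along $e_j$. The system~(\ref{Jusesf}) splits: the equations $\fvi{u}{i}$ involve only the admissible monomials $\zexp{\wtNve{u}{e_j}}$, while every other $\hvi{v}{i}$ with $v$ a node of $\Gamma_j$ involves only variables $z_\lambda$ with $\lambda \in \leavesT{\Gamma_j}$. By~\autoref{thm:end-curvesNW} applied to the rooted subdiagram $(\Gamma_j)$ rooted at its attaching point (equivalently, the end-curve $\Ccurve$ for the component $\Gamma_j$ with the node $u$ playing the role of the root $\roottree$), the subsystem in the $\leavesT{\Gamma_j}$-variables cuts out a monomial curve $C_j$ (up to torus translation and up to finitely many components, each a torus-translate of the monomial curve with exponents $\wtuv{u}{\lambda}/g_j$). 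Parametrizing $C_j$ by $t_j$, the remaining equations $\fvi{u}{i} = 0$ become, via the Hamm determinant conditions, $\valv{u}-2$ independent binomial relations among the monomials $t_j^{\wtuv{u}{e_j} \cdot (\text{something})}$ — in other words precisely a Pham-Brieskorn-Hamm complete intersection in $\CC^{\valv{u}}$ with exponents the edge weights $\du{u,e_j}$, whose solutions (intersected with the torus) form a monomial curve in the $t_j$'s. Composing the $t_j \mapsto C_j$ parametrizations with this Pham-Brieskorn-Hamm monomial curve gives the claimed description of each component of $Z_{\wu{u}}$ as the image of $\CC$ under a torus-translated monomial map.

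\textbf{Dimension, monomial-freeness, and smoothness.} Once~(\ref{PBHForNodes}) is in place,~(\ref{ZisCI}) follows by counting: the full system has $\sum_v (\valv{v}-2) = n - 2$ equations (a standard identity for trees with $n$ leaves, proved by Euler characteristic), and we have exhibited a two-dimensional solution set, so by Krull's principal ideal theorem $Z_{\wu{u}}$ is a complete intersection of pure dimension two. For~(\ref{noBoundaryComp}), every component of $Z_{\wu{u}}$ meets $(\CC^*)^n$ because the monomial maps in~(\ref{PBHForNodes}) have image meeting the torus (this uses~\autoref{thm:end-curvesNW}, which says the end-curves meet coordinate subspaces only at the origin, together with the Pham-Brieskorn-Hamm maximal-minor conditions); consequently $J_{\wu{u}}$ contains no monomial, since a monomial in $J_{\wu{u}}$ would force some component into a coordinate hyperplane. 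For~(\ref{smoothness}), the distinct components correspond to distinct choices of the finitely many torus-translate branches; two such components agree on a coordinate-subspace locus only (again by the end-curve property that they meet coordinate subspaces only at the origin, and the Pham-Brieskorn-Hamm components meet only on coordinate subspaces by Hamm's maximal-minors condition), hence their intersection lies outside $(\CC^*)^n$. I expect the main obstacle to be item~(\ref{PBHForNodes}): carefully setting up the ``block variable'' $t_j$ for each branch and verifying that the composition of the branch parametrizations with the Pham-Brieskorn-Hamm binomial system genuinely realizes every component of $Z_{\wu{u}}$ as a single monomial-parametrized curve (and that the weight vector $\wu{u}$ restricted to each branch is compatible with the end-curve grading, which follows from~\autoref{lem:keyid} and~\autoref{pr:InitWufvi}). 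The bookkeeping with finitely many components (the $\gcd$'s $g_j$ and the analogous $\gcd$ for the Pham-Brieskorn-Hamm part) requires care but no new ideas.
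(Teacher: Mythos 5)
Your proposal follows essentially the same route as the paper's proof: item~(\ref{Jusesf}) via the weight conditions on the $\gvi{v}{i}$, then the branch decomposition of $\Gamma$ at $u$, identification of each branch's subsystem with an end-curve parametrized via~\autoref{thm:end-curvesNW}, substitution of those parametrizations into the node-$u$ equations to produce a Pham-Brieskorn-Hamm system for $\Gamma_0$, the equation count $n-2$ for~(\ref{ZisCI}), and the fact that end-curves meet coordinate subspaces only at the origin for~(\ref{noBoundaryComp}) and~(\ref{smoothness}). The only cosmetic slip is your attempt to package all the truncated equations under a single global root $\roottree$ with $u\in[v,\roottree]$ for every node $v\neq u$ (such a leaf exists only when $u$ is adjacent to a leaf); since you immediately revert to rooting each branch at $u$ itself, which is exactly what the paper does, this does not affect the argument.
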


  \begin{proof} Item~(\ref{Jusesf}) follows from the identity 
      $\initwf{\wu{u}}{\fgvi{v}{i}}=\initwf{\wu{u}}{\fvi{v}{i}}$  for all $v$ and $i$, seen in \autoref{pr:InitWufvi}.
    To prove the remaining items, we let $\boxedo{k}$ be the number of nodes 
    of $\Gamma$ adjacent to $u$ and $\boxedo{T_1},\ldots, \boxedo{T_k}$ be the  branches of $\Gamma$ adjacent to $u$ containing these nodes, as seen in~\autoref{fig:figSplicing}. Set $\boxedo{\Gamma_i} := [T_i,u]$ and  express  
$J_{\wu{}}$ as the sum of the following $k+1$ ideals:
\begin{equation*}
\boxedo{J_0}: =\langle \initwf{\wu{}}{\fvi{u}{j}}\colon j=1, \ldots, \valv{u}-2\rangle, \quad
\boxedo{J_i}: =\langle \initwf{\wu{}}{\fvi{v}{j}}\colon v \text{ node of } \Gamma_i, j=1,\ldots, \valv{v}-2\rangle,
\; \text{ for } i=1,\dots,k.
\end{equation*}
Note that the generators of each $J_i$ with $i\neq 0$ lie in $\CC[z_{\lambda}: \lambda \in \leavesT{T_i}]$. In particular, no variable $z_{\lambda}$ with $\lambda$ adjacent to $u$ appears in them.  We call these leaves $\boxedo{\lambda_1},\ldots,\boxedo{\lambda_l}$,  with $k+l=\valv{u}$.

To simplify notation, we write $\boxedo{Z} :=Z_{\wu{}}$.
We start by characterizing the components of $Z$.
Each ideal $J_i$ with $i\in \{1,\ldots,k\}$ defines an 
end-curve $\Ccurve_i$ determined by the splice diagram $\Gamma_i$ rooted at $u$, as seen in the center of \autoref{fig:figSplicing}.  In turn, \autoref{thm:end-curvesNW} ensures that each of the ($g_i$-many) components of $\Ccurve_i$ admits a (torus-translated) monomial parameterization of the form
\begin{equation}\label{eq:monMap}
  \CC[z_{\lambda}\colon \lambda \in \rDGi{u}{\Gamma_i}] \to \CC[t_{i}] \quad \text{ where } z_{\lambda} \mapsto \,c_{\lambda, i}^{(u)}\; t_{i}^{\wtuv{u}{\lambda}/g_i},
\end{equation}
with $\boxedo{c_{\lambda, i}^{(u)}}  \neq 0$ for all $\lambda, i$.
 Thus, each  component of $\Ccurve_1\times \cdots \times \Ccurve_k$ can be parameterized using  $\CC^{k}$ by combining these (torus-translated) monomial maps.  

  Substituting the expressions from~\eqref{eq:monMap} for the chosen component of each $\Ccurve_i$ into the generators of $J_0$ shows that the closure of each component of $Z$ can be parameterized using a component of a splice type singularity defined by the diagram $\Gamma_0$. Note that the Hamm determinant conditions at $u$ arising from  $\sG{\Gamma_0}$  agree with those determined by the generators of $J_0$ up to multiplying the columns corresponding to the branches $\Gamma_i$,
$i=1,\ldots,k$ by non-zero constants. Therefore, each component of $Z$ is the image of a (torus-translated) monomial map from $\CC^{\valv{u}}=\CC^k\times \CC^{\ell}$ restricted to a Pham-Brieskorn-Hamm complete intersection defined by  $\Gamma_0$. In particular, no component of $Z$ lies in a coordinate subspace of  $\CC^n$, so $J_{\wu{}}$ is monomial-free. This proves both~(\ref{noBoundaryComp}) and~(\ref{PBHForNodes}).

Since $Z$ is equidimensional of dimension two and it is  defined by $n-2$ polynomial  equations, it is a complete intersection. This proves~(\ref{ZisCI}).
It remains to address~(\ref{smoothness}) when $Z$ is not irreducible.

To determine the intersection of two distinct components (say, $Z_1$ and $Z_2$) of $Z$ we exploit the parameterization described earlier.
Let $I\subseteq \{1,\ldots, k\}$ be the collection of indices $i$ for which the projections of $Z_1$ and $Z_2$ to $\spec(\CC[t_i])$ disagree. If $I=\emptyset$, then the two components are parameterized using the same Pham-Brieskorn-Hamm system of equations associated to  $\Gamma_0$ and  the same (torus-translated) monomial map. This cannot happen since such germs are irreducible and $Z_1\neq Z_2$.

Next, assume $|I|\geq 1$. In this setting, the projection of $Z_1\cap Z_2$ to $\CC^{\rDGi{u}{\Gamma_i}}$ for each $i\in I$ lies in the intersection of two components of the end-curve $\Ccurve_i$, which can only be the origin of $\CC^{\rDGi{u}{\Gamma_i}}$ by~\autoref{thm:end-curvesNW}. This means that $Z_1\cap Z_2$ lies in the coordinate subspace of $\CC^n$ defined by the vanishing of all $z_{\lambda}$ with $\lambda \in \bigcup_{i\in I} \leavesT{T_i}$. This concludes our proof.
  \end{proof}

  \begin{lemma}\label{lm:wulambda} Let $u$ be a node of $\Gamma$ adjacent to a leaf $\lambda$ and pick $\wu{} \in \relo{[\emb({u}),\emb({\lambda})]}$. Then, $Z_{\wu{}}$ is a cylinder over a  monomial curve in  $\CC^{n-1}$ with $\gcd(\wtuv{\lambda}{\mu} :\mu\in \leavesT{\Gamma}\smallsetminus \{\lambda\})$ many components and $J_{\wu{}}$ is monomial-free.
  \end{lemma}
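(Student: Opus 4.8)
The plan is to compute $\initwf{\wu{}}{\fgvi{v}{i}}$ for every node $v$ and every $i$, for a weight $\wu{} = \emb(w)$ with $w$ in the relative interior of the edge $[\wu{u},\wu{\lambda}]$, and then recognize the resulting ideal. First I would write $\wu{} = \alpha\,\wu{u} + \beta\,\wu{\lambda}$ with $\alpha,\beta>0$; since $\wu{\lambda}$ is the $\lambda$-th coordinate vector, the pairing $\wu{}\cdot m$ differs from $\alpha\,(\wu{u}\cdot m)$ only by a nonnegative multiple of $m_\lambda$, the $\lambda$-coordinate of $m$. Using~\autoref{lem:keyid}, \autoref{pr:InitWufvi}, \autoref{rem:convexityBaryc}, and especially~\autoref{lm:adjacentBarycenters}\,(\ref{bary3}), I would verify that for a node $v\neq u$, the strict inequality $\wu{u}\cdot m > \wtuv{u}{v}$ that already separates the $\fvi{v}{i}$-term indexed by the edge pointing away from $u$ is only strengthened after adding the $\beta\,m_\lambda$ term, because the admissible monomials for $(v,e)$ with $e$ not on $[v,u]$ do not involve $z_\lambda$ at all (the leaf $\lambda$ sits on the $u$-side). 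Hence $\initwf{\wu{}}{\fgvi{v}{i}} = \initwf{\wu{u}}{\fvi{v}{i}}$ for $v\neq u$, i.e. the binomial end-curve relations survive unchanged. For the node $v=u$ itself, the term of $\fvi{u}{i}$ indexed by the leaf $\lambda$ has $\wu{}$-weight $\alpha\,\du{u} + \beta\,\du{u,\lambda}$, strictly larger than $\alpha\,\du{u}$, the common $\wu{}$-weight of all the other terms (whose exponents have $m_\lambda = 0$); so $\initwf{\wu{}}{\fgvi{u}{i}} = \fvi{u}{i} - \cvei{u}{[u,\lambda]}{i}\,z_\lambda^{\du{u,\lambda}}$, which is a $\wu{u}$-homogeneous polynomial not involving $z_\lambda$.

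The upshot is that $J_{\wu{}}$ is generated by polynomials in $\CC[z_\mu : \mu\in\leavesT{\Gamma}\smallsetminus\{\lambda\}]$, and in fact these are precisely the equations $\hvi{v}{i}$ defining the end-curve $\Ccurve_\lambda$ of the splice diagram $\Gamma$ rooted at $\lambda$ (\autoref{def:endcurverSR}): for $v\neq u$ they are the truncations of the $\fvi{v}{i}$ with the term toward $\lambda$ removed, and for $v=u$ removing the $z_\lambda$-term is exactly the truncation prescribed by rooting at $\lambda$. Therefore $Z_{\wu{}} = \Ccurve_\lambda \times \Spec\CC[z_\lambda]$, a cylinder over $\Ccurve_\lambda \subset \CC^{n-1}$. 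By~\autoref{thm:end-curvesNW} the curve $\Ccurve_\lambda$ is a reduced complete intersection with $g := \gcd\{\wtuv{\lambda}{\mu} : \mu\in\leavesT{\Gamma}\smallsetminus\{\lambda\}\}$ components, each a torus-translate of the monomial curve $t\mapsto(t^{\wtuv{\lambda}{\mu}/g})_\mu$; pulling back along the cylinder projection gives the claimed description of $Z_{\wu{}}$, with the same number of components.

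Finally, $J_{\wu{}}$ is monomial-free: a monomial in $J_{\wu{}}$ would restrict to a monomial in the ideal defining $\Ccurve_\lambda$ in $\CC^{n-1}$, forcing $\Ccurve_\lambda$ to lie in a coordinate subspace, which contradicts~\autoref{thm:end-curvesNW} (the curve meets every coordinate subspace of $\CC^{n-1}$ only at the origin, and it is positive-dimensional). The main obstacle I anticipate is the bookkeeping in the first step — carefully confirming that adding the $\beta\,m_\lambda$ contribution never creates a new tie among the terms of any $\fvi{v}{i}$ nor promotes a $\gvi{v}{i}$-term to minimal weight — but this is exactly what~\autoref{lem:keyid} together with the strictness clause of~\autoref{pr:hypermetricineq} and the edge-determinant inequality~\eqref{eq:equivIneq} are designed to guarantee, and the leaf $\lambda$'s special position (so $m_\lambda = 0$ on all admissible monomials not pointing toward $\lambda$) makes the perturbation monotone in the right direction.
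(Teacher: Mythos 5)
Your proposal is correct and follows essentially the same route as the paper: compute that for $\wu{}$ on the open segment the $\wu{}$-initial form of each $\fgvi{v}{i}$ is exactly the end-curve equation $\hvi{v}{i}$ for $\Gamma$ rooted at $\lambda$ (the paper packages the weight comparison via~\autoref{lem:keyid} and~\autoref{pr:hypermetricineq}, while you split $\wu{}\cdot m$ into $\alpha\,\wu{u}\cdot m+\beta\,m_\lambda$ and use that $m_\lambda=0$ on the surviving admissible monomials — the same estimate), then identify $Z_{\wu{}}$ as the cylinder over $\Ccurve_\lambda$ and invoke~\autoref{thm:end-curvesNW} for the component count and monomial-freeness. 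The only blemish is the phrase ``the edge pointing away from $u$'' where you mean the edge of $\starT{\Gamma}{v}$ lying on $[v,u]$ (equivalently on $[v,\lambda]$), whose term is the one dropped; the subsequent computation makes clear you have the right edge.
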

  
  \begin{proof}  We prove that $J_{\wu{}}$   defines the cylinder over the end-curve $\Ccurve_{\lambda}$ associated to the rooted splice diagram $\Gamma_{\lambda}$. 
       A simple inspection shows that for all nodes $v$ of $\Gamma$ and each $i\in \{1,\ldots, \valv{v}-2\}$, the initial form $\initwf{\wu{}}{\fgvi{v}{i}}$  agrees with the corresponding polynomial $\hvi{v}{i}$ defining $\Ccurve_{\lambda}$ (see~\autoref{def:endcurverSR}). Indeed, if we write $\wu{}$ as 
  \begin{equation}\label{eq:wwInTropTest}
\wu{} = \alpha\,\emb(\lambda) + (1-\alpha)\,\emb(u) \quad \text{ with }\quad 0<\alpha < 1,
  \end{equation}
then the defining properties of $\gvi{v}{i}$ ensure that $\initwf{\wu{}}{\fgvi{v}{i}} = \initwf{\wu{}}{\fvi{v}{i}}$ for all $v,i$. In turn, combining this fact with~\autoref{lem:keyid} and \autoref{pr:InitWufvi} yields
   \[\wu{} \cdot \wtNve{v}{e} \geq  (1-\alpha)\,\frac{\wtuv{u}{v}}{|\wu{u}|}   \qquad \text{    for each edge }e \text{ in }\starT{\Gamma}{v}.
   \]
   Furthermore, equality holds if and only if $e \nsubseteq [v,\lambda]$. Thus, $\initwf{\wu{}}{\fvi{v}{i}}$ is obtained from $\fvi{v}{i}$ by dropping the admissible monomial at $v$ pointing towards  $\lambda$.    In particular, these initial forms do not involve $z_{\lambda}$.
   
  The above discussion shows that 
    $\boxedo{J_{\wu{}}'} :=J_{\wu{}}\cap \CC[z_{\mu}\colon \mu \in \leavesT{\Gamma}\smallsetminus\{\lambda\}]$
      defines  $\Ccurve_{\lambda}$. The general point of this curve lies in  $(\CC^*)^{n-1}$ by~\autoref{thm:end-curvesNW}, so $J_{\wu{}}'$ is monomial-free.
   In turn, the same result confirms that $J_{\wu{}}'$ defines a reduced complete intersection, smooth outside the origin, and with $\gcd(\wtuv{\lambda}{\mu} :\mu\in \leavesT{\Gamma}\smallsetminus \{\lambda\})$ many irreducible components.
    Since $J_{\wu{}}$ is obtained from $J_{\wu{}}'$ by base change to $\CC[z_{\lambda}\colon \lambda \in \leavesT{\Gamma}]$, and both ideals admit a common generating set, the result follows.
  \end{proof}

  Given two adjacent nodes $u$ and $u'$ of $\Gamma$, we let $T'$  be the branch of $\Gamma$ adjacent to $u'$ containing $u$. Similarly, we let $T$ be the branch of $\Gamma$ adjacent to $u$ and containing $u'$.  Our final lemma is analogous to~\autoref{lm:wulambda}: 

  \begin{lemma}\label{lm:wuv} Let $u$ and $u'$ be two adjacent nodes of $\Gamma$ and pick    $\wu{} \in \relo{[\emb(u), \emb(u')]}$. Then, $Z_{\wu{}}$ is isomorphic to a product of two monomial curves in $\CC^{|\leavesT{T'}|} \times \CC^{|\leavesT{T}|}$. Furthermore, the number of irreducible components of $Z_{\wu{}}$ equals $\gcd(\wtuv{u'}{\lambda}':\lambda \in \leavesT{T'})\gcd(\wtuv{u}{\mu}': \mu \in \leavesT{T})$ and $J_{\wu{}}$ is monomial-free.
  \end{lemma}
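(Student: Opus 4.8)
The strategy closely parallels that of \autoref{lm:wulambda}, but now we split $\Gamma$ along the \emph{edge} $[u,u']$ instead of at a leaf. First I would write $\wu{} = \alpha\wu{u'} + (1-\alpha)\wu{u}$ with $0<\alpha<1$ and use the defining inequalities~\eqref{eq:gviConditions} for each $\gvi{v}{i}$ to reduce to the minimal system, i.e. $\initwf{\wu{}}{\fgvi{v}{i}} = \initwf{\wu{}}{\fvi{v}{i}}$ for all nodes $v$ and all $i$. Next I would determine each $\initwf{\wu{}}{\fvi{v}{i}}$ explicitly. Combining~\autoref{lem:keyid} (applied with respect to both $u$ and $u'$) with~\autoref{pr:hypermetricineq}, and using that $\wu{} $ is a positive convex combination of $\wu{u}$ and $\wu{u'}$, one checks that for each node $v$ and each edge $e\in\starT{\Gamma}{v}$,
\[
\wu{}\cdot\wtNve{v}{e}\;\geq\;\alpha\,\wtuv{u'}{v} + (1-\alpha)\,\wtuv{u}{v},
\]
with equality if and only if $e$ does \emph{not} lie on the geodesic $[v,u]$ and does \emph{not} lie on $[v,u']$ --- equivalently, $e\not\subseteq [u',v]\cup[u,v]$. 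Since $v$ is a single node and $[u,u']$ is a fixed edge, there is exactly one edge $e_v\in\starT{\Gamma}{v}$ pointing towards the edge $[u,u']$ (namely, towards $u$ if $v$ lies on the $u$-side, towards $u'$ if $v$ lies on the $u'$-side; for $v\in\{u,u'\}$ it is the edge $[u,u']$ itself), and $\initwf{\wu{}}{\fvi{v}{i}}$ is obtained from $\fvi{v}{i}$ by deleting the admissible monomial indexed by $e_v$.

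\textbf{Structure of $J_{\wu{}}$.} Having identified the generators, I would observe that deleting, for every node $v$, the monomial indexed by $e_v$ decouples the system into two groups: the nodes on the $u$-side contribute polynomials in the variables $\{z_\lambda : \lambda\in\pag{u}\}$ only, and the nodes on the $u'$-side contribute polynomials in $\{z_\mu : \mu\in\pag{u'}\}$ only (the monomials pointing across the edge $[u,u']$ have been removed precisely from both the node $u$ and the node $u'$ parts). Thus $J_{\wu{}} = J' + J''$ with $J'\subset\CC[z_\lambda:\lambda\in\pag{u}]$ and $J''\subset\CC[z_\mu:\mu\in\pag{u'}]$. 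Now $J'$ is exactly the defining ideal of the end-curve $\Ccurve_{u'}$ associated to the branch $T'$ rooted towards $u'$ --- more precisely, the rooted splice diagram $[T',u']_{u'}$ --- because removing the monomial at each node pointing towards $u'$ is the end-curve construction of~\autoref{def:endcurverSR}; similarly $J''$ defines the end-curve $\Ccurve_{u}$ for the branch $T$ rooted towards $u$. Here one must be slightly careful that the admissible co-weights restricted to each side still satisfy the semigroup relation needed, but this is immediate since $\wtNve{v}{e}$ for $e\neq e_v$ only involves leaves on the far side of $e$ from $v$, all of which lie in the appropriate $\pag{\cdot}$.

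\textbf{Conclusion via \autoref{thm:end-curvesNW}.} Applying \autoref{thm:end-curvesNW} to each of the two rooted diagrams gives that the curve defined by $J'$ in $\CC^{|\pag{u}|}$ is a reduced complete intersection, smooth away from the origin, with $\gcd(\wtuv{u'}{\lambda}':\lambda\in\pag{u})$ irreducible components (using that the linking number $\wtuv{u'}{\lambda}$ of the rooted diagram equals $\wtuv{u'}{\lambda}'\,\du{u'}/(\text{weight at }u')$ up to the common factor, so the gcd of the reduced linking numbers is the relevant one); likewise for $J''$ with $\gcd(\wtuv{u,\mu}':\mu\in\pag{u'})$ components. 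Since $J_{\wu{}} = J' + J''$ lives in the polynomial ring on the disjoint variable sets $\{z_\lambda:\lambda\in\pag{u}\}\sqcup\{z_\mu:\mu\in\pag{u'}\}$ (these exhaust $\leavesT{\Gamma}$ because $\pag{u}\sqcup\pag{u'}=\leavesT{\Gamma}$), the scheme $Z_{\wu{}}$ is the product of the two curves, hence isomorphic to a product of two monomial curves in $\CC^{|\pag{u}|}\times\CC^{|\pag{u'}|}$, with the number of components being the product of the two gcds. Finally, each monomial curve has its generic point in the torus of the corresponding coordinate space (again by \autoref{thm:end-curvesNW}), so the generic point of $Z_{\wu{}}$ lies in $(\CC^*)^n$ and $J_{\wu{}}$ is monomial-free.

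\textbf{Expected main obstacle.} The delicate point is the bookkeeping in the equality case of the inequality $\wu{}\cdot\wtNve{v}{e}\geq\alpha\,\wtuv{u'}{v}+(1-\alpha)\,\wtuv{u}{v}$: one must verify that for \emph{every} node $v$ (not just $u$ and $u'$) exactly one admissible monomial is dropped, and that it is the one pointing consistently ``towards the edge $[u,u']$''. This requires using \autoref{lem:keyid} with respect to both endpoints of the edge simultaneously and checking that the two equality conditions ($e\not\subseteq[v,u]$ and $e\not\subseteq[v,u']$) fail together exactly on the single edge $e_v$ --- which holds because for a node $v$ off the geodesic $[u,u']$ the geodesics $[v,u]$ and $[v,u']$ share their initial edge at $v$, while for $v$ on $[u,u']$ they are the two halves and the shared initial edge is $[u,u']$ itself. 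Once this combinatorial fact is nailed down, the rest is a direct application of \autoref{thm:end-curvesNW} and the product structure is automatic.
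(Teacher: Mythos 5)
Your proposal is correct and follows essentially the same route as the paper's proof: write $\wu{}$ as a positive convex combination of $\wu{u}$ and $\wu{u'}$, use~\eqref{eq:gviConditions} to discard the higher-order terms, apply~\autoref{lem:keyid} at both endpoints to see that exactly the admissible monomial pointing across $[u,u']$ is dropped at each node, and then recognize the two decoupled groups of equations as the end-curves of $[T',u']$ and $[T,u]$, to which~\autoref{thm:end-curvesNW} applies. The paper's proof is just a terser version of this argument, so no further comparison is needed.
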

  \begin{proof} We write $\wu{} = \alpha \wu{u} + (1-\alpha) \wu{u'}$ with $0<\alpha<1$ and follow the same proof-strategy as in~\autoref{lm:wulambda}. By~\autoref{thm:injectivityrho}, we can find a unique  $\roottree\in \relo{[u,u']}$ with $\emb(\roottree) = \wu{}/|\wu{}|$ (see the rightmost picture in~\autoref{fig:figSplicing}). 
    
  The conditions on $\alpha$ together with~\autoref{pr:InitWufvi} guarantee that for each node $v$ in $T'$, $\initwf{\wu{}}{\fgvi{v}{i}}=\initwf{\wu{}}{\fvi{v}{i}}$. Furthermore, each form is obtained from $\fvi{v}{i}$ by dropping the admissible  monomial at $v$  pointing towards $u'$. This observation and the symmetry between $u$ and $u'$ determine a partition of the generating set of $J_{\wu{}}$, where each initial form $\initwf{\wu{}}{\fgvi{v}{i}}$ for any node $v$ of $T'$ (respectively, in $T$) only involves the leaves of $T'$ (respectively, of $T$). Thus, each set determines an end-curve for the diagrams $[T',\roottree]$ and $[T, \roottree]$, respectively, both rooted at $\roottree$.

    By construction, $J_{\wu{}}$ defines the product of these two monomial curves. Since each of them is a complete intersection in their respective ambient spaces, the same is true for $J_{\wu{}}$. The number of components is determined  by~\autoref{thm:end-curvesNW}. Since $Z_{\wu{}}$ meets $(\CC^*)^n$, we conclude that $J_{\wu{}}$ is monomial-free.
  \end{proof}

  The next result is a direct consequence of~\autoref{thm:newL3.3} and the previous three lemmas. It gives an alternative proof of~\autoref{thm:EasyInclusionBalancing} since for any $\wu{}$ in $\relo{\emb(\Gamma)}$, these lemmas confirm that  $J_{\wu{}}$ is monomial-free.

  \begin{corollary}\label{cor:initialForms} 
         For any strictly positive vector  $\wu{}$ in the splice fan of $\Gamma$, we have $\initwf{\wu{}}{\langle \sG{\Gamma} \rangle}=J_{\wu{}}$, where $\langle \sG{\Gamma} \rangle$ is the ideal generated by the splice type system $ \sG{\Gamma}$. 
  \end{corollary}

  \begin{proof} By construction, we have $J_{\wu{}} \subseteq\initwf{\wu{}}{\sG{\Gamma}}$ for each $\wu{} \in \relo{(\simplex{n-1})}\cap \emb(\Gamma)$. In addition, \textcolor{blue}{Lemmas}~\ref{lm:wu},~\ref{lm:wulambda} and~\ref{lm:wuv}  imply that the generators of $J_{\wu{}}$ form a regular sequence in the ring of convergent power series  $\CC\{z_{\lambda}: \lambda \in \leavesT{\Gamma}\}$ near  the origin.  Thus,~\autoref{thm:newL3.3} implies that $J_{\wu{}}=\initwf{\wu{}}{\sG{\Gamma}}$  as ideals of $\CC\{z_{\lambda}: \lambda \in \leavesT{\Gamma}\}$ for each $\wu{}\in \Q^n\cap \Rp\langle \emb(\Gamma)\rangle$. 
    Finally, since these ideals are constant when we consider all rational points in the relative interior of any fixed edge of $\emb(\Gamma)$, \autoref{lem:fromRationalToIrrationa} below 
    confirms that the same must be true for all points in these open segments. 
  \end{proof}

  \begin{lemma}\label{lem:fromRationalToIrrationa}
    Fix an edge $[u,v]$ of $\Gamma$. If $J_{\wu{}}$ is constant for all $\wu{} \in \emb([u,v])\cap (\Q_{>0})^n$, the same is true for all $\wu{}\in \emb(\relo{[u,v]})$. 
  \end{lemma}

\begin{proof}
We set $\boxedo{J} :=  \langle \sG{\Gamma} \rangle$, $\boxedo{\sigma} := \Rp\langle \emb([u,v])\rangle$, and fix a weight vector $\wu{}'\in \relo{\sigma}\smallsetminus \Q^n$. We wish to show that $\initwf{\wu{}'}{J} = \initwf{\wu{}}{J}$ for all $\wu{}\in \relo{\sigma}\cap \Q^n$. We prove this equality by double-inclusion. 
  
  The inclusion $\initwf{\wu{}'}{J} \subseteq \initwf{\wu{}}{J}$ is a direct consequence   of the following claim:
\begin{claimNoNum} For each $f\in J$, we have $\initwf{\wu{}'}{f} \in \initwf{\wu{}}{J}$.
\end{claimNoNum}
\begin{subproof}

     Throughout our reasoning, we need the following auxiliary fact. By~\cite[Lemma 2.4.6]{MS 15}, given $\wu{1}$ and $\wu{2}$ in $(\R_{>0})^n$, there exists $\varepsilon >0$ with
    \begin{equation}\label{eq:iteratedInitialForms}
          \initwf{\wu{1}}{\initwf{\wu{2}}{f}} = \initwf{\wu{2} + \varepsilon' \wu{1}}{f}
    \end{equation}
for all $0<\varepsilon'<\varepsilon$ and for each $f\in \CC[z_{\lambda}: \lambda \in \leavesT{\Gamma}]$. Restricting further the value of $\varepsilon>0$ to require that
$\wu{2}+\varepsilon'\wu{1}\in (\R_{>0})^n$ whenever $0<\varepsilon'<\varepsilon$, extends the validity of~\eqref{eq:iteratedInitialForms} to convergent power series $f\in \CC\{z_{\lambda}:\lambda \in \leavesT{\Gamma}\}$.

  Since $\wu{}'\in (\R_{>0})^n$ by construction, the initial form $\initwf{\wu{}'}{f}$ is a polynomial. Its Newton fan is rational and complete, and we get an induced rational fan (denoted by $\Sigma$) on the cone $\sigma$ by common refinement. Thus,  $\wu{}'$ cannot be a ray of $\Sigma$ (the vector is not rational), so it lies in a 2-dimensional cone $\tau$ of $\Sigma$. By construction, $\tau\subseteq \sigma$. Picking any $\wu{}''\in \tau \cap (\Q_{>0})^n$, our earlier discussion ensures the existence of $\varepsilon>0$ with 
  \[ \initwf{\wu{}'}{f} = \initwf{\wu{}''}{\initwf{\wu{}'}{f}} = \initwf{\wu{}'+\varepsilon'\wu{}''}{f} \qquad \text{ for all }0<\varepsilon'<\varepsilon.
\]
Since the set $\{\wu{}'+\varepsilon'\wu{}''\colon 0<\varepsilon'<\varepsilon\}$ contains a point $\wu{}$ in $\relo{\sigma} \cap \Q^n$, we conclude that $\initwf{\wu{}'}{f}\in \initwf{\wu{}}{J}$.
\end{subproof}

For the reverse inclusion, we argue as follows. Direct calculations used in the proofs of \textcolor{blue}{Lemmas}~\ref{lm:wulambda} and~\ref{lm:wuv} confirm the identities
\[
\initwf{\wu{}}{\fgvi{v}{i}}=\initwf{\wu{}'}{\fgvi{v}{i}} \qquad \text{ for all nodes } v \in \Gamma \;\text{ and each}\; i\in\{1,\ldots, \valv{v} - 2\}.
  \]
  In turn,~\autoref{thm:newL3.3} ensures that this collection generates  $\initwf{\wu{}}{J}$. Thus, $\initwf{\wu{}}{J}\subseteq \initwf{\wu{}'}{J}$, as desired.
  \end{proof}

  \begin{remark}\label{rm:extendedSpliceFan} 
       The geometric information  collected in~\textcolor{blue}{Lemmas}~\ref{lm:wulambda} 
       and~\ref{lm:wuv} combined with~\autoref{cor:initialForms} determines the tropical multiplicities 
       of $\ptrop \langle\sG{\Gamma}\rangle$. Using~\autoref{rem:linkvsRedLink} 
       and~\autoref{def:tropMult}, we have:
\begin{enumerate}
       \item if $\tau = \Rp\emb([\lambda,u])$ for a node $u$ of $\Gamma$, then  
             $\tau$ has multiplicity 
                \[ \frac{1}{\du{u,\lambda}}\gcd(\wtuv{u}{\mu}: \mu \in   \leavesT{\Gamma}\smallsetminus 
                     \{\lambda\});  \]
       \item if $\tau = \Rp\emb([u,v])$ for two adjacent nodes $u$ and $v$ of $\Gamma$, 
            then $\tau$ has multiplicity 
         \[\frac{1}{\du{u,v}\du{v,u}}\gcd(\wtuv{u}{\lambda}: \lambda \in 
          \leavesT{\Gamma}, u\in [\lambda, v])\,     \gcd(\wtuv{v}{\lambda}: \lambda \in 
          \leavesT{\Gamma}, v\in [\lambda, u]).\]
\end{enumerate}
      This information completes the characterization of $\ptrop \langle\sG{\Gamma}\rangle$ as a tropical object, 
      i.e., as a weighted balanced polyhedral fan. We use this data
      in~\autoref{sec:recov-splice-diagr} when discussing how to recover $\Gamma$ from its splice fan. 
 \end{remark}

Our next statement follows naturally from~\autoref{prop:puredimltrop} and~\autoref{cor:initialForms}:

 \begin{corollary}\label{cor:tropsG}
   The  splice fan of $\Gamma$ is a standard tropicalizing fan 
   for the germ $(X,0)$ defined by $\sG{\Gamma}$, in the sense of~\autoref{def:tropicalizingFan}.
 \end{corollary}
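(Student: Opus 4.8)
The plan is to verify that $\cF$, the splice fan of $\Gamma$ from \autoref{def:spliceFan}, satisfies the three defining conditions of a tropicalizing fan in the sense of \autoref{def:tropicalizingFan}, i.e.\ conditions~(\ref{condsamedim})--(\ref{condinit}) of \autoref{prop:puredimltrop} with $Y=X$. By \autoref{thm:tropsG} the support $|\cF|$ already equals the local tropicalization $\Trop\sG{\Gamma}$, so only the three conditions themselves need to be checked. Throughout, write $I:=\langle\sG{\Gamma}\rangle$.

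For condition~(\ref{condsamedim}), the maximal cones of $\cF$ are the cones $\Rp\emb([u,v])$ over the edges $[u,v]$ of $\Gamma$. Since $\emb$ is injective by \autoref{thm:injectivityrho} and its image avoids the origin (it is contained in $\simplex{n-1}$), the points $\emb(u)\neq\emb(v)$ span two distinct rays, so each maximal cone is two-dimensional; by \autoref{cor:expDimensionsG} this matches $\dim_{\CC}X=2$. For condition~(\ref{condnonempty}) I would check that $\relo{(\Rp\emb([u,v]))}$ meets $(\R_{>0})^n$: every edge of $\Gamma$ has at least one node endpoint, so either both $u,v$ are nodes, in which case $\emb([u,v])\subseteq\relo{(\simplex{n-1})}$ by \autoref{pr:convexitySimplices}~(\ref{item:vInStar-Full}) and convexity of the relative interior of a simplex, or one endpoint is a leaf $\lambda$, in which case every $\wu{}\in\emb(\relo{[u,\lambda]})$ has all coordinates strictly positive because each linking number $\wtuv{u}{\mu}$ is a product of positive weights; in both cases $\relo{(\Rp\emb([u,v]))}$ contains strictly positive vectors.

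The only condition requiring genuine input is~(\ref{condinit}): for each cone $\tau$ of $\cF$ the initial degeneration $\initwf{\wu{}}{X}$ must be independent of $\wu{}\in\relo{\tau}$. For $\tau=\{0\}$ this is vacuous, and for a ray $\tau=\Rp\emb(u)$ it follows from the scaling invariance $\initwf{t\wu{}}{I}=\initwf{\wu{}}{I}$ for $t>0$. For a two-dimensional cone $\tau=\Rp\emb([u,v])$, that same scaling invariance reduces the claim to constancy of $\initwf{\wu{}}{I}$ along $\emb(\relo{[u,v]})$; since those weight vectors are strictly positive by the computation above, \autoref{cor:initialForms} identifies $\initwf{\wu{}}{I}$ with the explicit ideal $J_{\wu{}}$, and the descriptions in \textcolor{blue}{Lemmas}~\ref{lm:wulambda} and~\ref{lm:wuv}, together with the closing argument in the proof of \autoref{cor:initialForms}, show that $J_{\wu{}}$—and hence $\initwf{\wu{}}{X}=Z_{\wu{}}$—depends only on the cell of $\emb(\Gamma)$ containing $\wu{}$, i.e.\ only on $\tau$.

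I expect step~(\ref{condinit}) to be the point one must be most careful about, but all the hard geometry is already packaged in \autoref{cor:initialForms} and the three structural lemmas of \autoref{sec:NewtonND}, so what remains there is bookkeeping about the combinatorics of $\emb(\Gamma)$, controlled by the convexity results of \autoref{sec:comb-embedd-splice}. One minor point to address is that \autoref{prop:puredimltrop} is phrased for irreducible germs: here one simply applies it to each irreducible component of $X$ and takes the union, which is legitimate since $X$ is equidimensional of dimension two with every component meeting $(\CC^*)^n$ by \autoref{cor:expDimensionsG}, so none of the three conditions is affected.
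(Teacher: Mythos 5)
Your proposal is correct and follows the same route as the paper, whose entire proof is the one-line observation that the corollary "follows naturally from \autoref{prop:puredimltrop} and \autoref{cor:initialForms}"; you have simply written out that verification in full, checking conditions~(\ref{condsamedim})--(\ref{condinit}) with the same inputs (\autoref{thm:tropsG} for the support, \autoref{cor:expDimensionsG} for the dimension, and \autoref{cor:initialForms} together with \textcolor{blue}{Lemmas}~\ref{lm:wu}--\ref{lm:wuv} for the constancy of the initial degenerations on relative interiors of cones). Your closing remark about applying \autoref{prop:puredimltrop} componentwise is a reasonable way to handle a point the paper leaves implicit.
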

 
Next, we show how we can use their local tropicalizations 
to recover some known facts about splice type systems and their associated end-curves from~\cite{NW 05bis, NW 05}.  Our first statement discusses the intersections of the initial degenerations of the germ defined by $\sG{\Gamma}$ will suitable codimension-2 coordinate subspaces of $\CC^n$:

\begin{lemma}\label{lem:intersect-with-coord-subsp}
Let $\wu{}$ be a vector in $(\R_{>0})^n$ which lies in the embedded splice diagram $\emb(\Gamma)$, and let $\lambda$ and $\mu$ be two leaves of $\Gamma$ in different branches of $\emb(\Gamma)$ adjacent to $\wu{}$. 
Then, the system
\begin{equation}\label{eq:initialSystem2Leaves}
  \begin{cases}
\initwf{\wu{}}{\fgvi{v}{i}}=0, \; v \text{ is a node of } \Gamma, \;\; i\in \{1,\dots, \valv{v} - 2\}, \\
z_\lambda=z_\mu=0
\end{cases}
\end{equation}
has $0\in \CC^n$ as its only solution.
\end{lemma}

\begin{proof}
    If $\wu{}=\emb(u)$ is the (normalized) weight vector corresponding to a
node $u$ of $\Gamma$, this statement is proved as part of~\cite[Theorem~2.6]{NW 05bis}, more precisely, on ~\cite[page 710]{NW 05bis}.
For any point $\wu{}$ in $\emb(\Gamma)\cap \relo{(\simplex{n})}$, the claim follows by similar arguments, but for the reader's convenience, we give a complete proof that includes the node case. Our reasoning is guided by~\autoref{fig:figSplicing}, and it is based on the techniques discussed in the proofs of \textcolor{blue}{Lemmas}~\ref{lm:wu},~\ref{lm:wulambda} and~\ref{lm:wuv}.

By~\autoref{thm:injectivityrho}, we can find a unique  point $r$ in $\Gamma$ with $\wu{}=\emb(\roottree)$. As in the figure, we write $\roottree =u$ if $u$ is a node of $\Gamma$. Otherwise, $\roottree$ is in the relative interior of a unique edge $[u,u']$, where $u$ is a node of $\Gamma$ but $u'$ is allowed to be a leaf of $\Gamma$. The proofs of the three aforementioned lemmas ensure that $\initwf{\wu{}}{\fgvi{v}{j}} = \initwf{\wu{}}{\fvi{v}{j}}$ for all nodes $v$ in $\Gamma$ and $j\in \{1,\ldots, \valv{v}-2\}$. Thus, to prove the statement, we may assume that $\gvi{v}{i}=0$ for all $v, i$.

By abuse of notation, we think of the point $\roottree$ as a node of $\Gamma$ (potentially, of valency two). We assume $\roottree$ is adjacent to $k$ nodes and $\ell$ leaves of $\Gamma$, with $k+\ell=\valv{\roottree}$. We let $T_1, \ldots, T_{\valv{\roottree}}$ be the branches of $\Gamma$ adjacent to $T$. We assume $T_1,\ldots, T_k$ contain nodes of $\Gamma$, whereas the remaining branches are singletons. We view the single vertex of a singleton branch as a leaf of this branch.

Using $\roottree$, we build a collection of  rooted splice diagrams $\Gamma_1,\ldots, \Gamma_k$ rooted at $\roottree$. Each  $\Gamma_i$ is obtained as the convex hull $[T_i,r]$.  In particular, if $\roottree\in \relo{[u,u']}$ (as in the right of the figure), then $k$ is the number of nodes of $\Gamma$ among $\{u,u'\}$. We assume that $u\in T_1$ whenever $\valv{\roottree}=2$.

   Throughout, we consider the ideals 
 \begin{equation}\label{eq:initialwJi}
\boxedo{J_i}  := \langle \initwf{\wu{}}{\fvi{v}{j}}\colon v \text{ node of } 
        \Gamma_i, j=1,\ldots, \valv{v}-2\rangle \; \;\text{ for } i\in \{1,\dots,k\}.
 \end{equation}
 The generators of $J_i$ are among the series of the system~\eqref{eq:initialSystem2Leaves} per our assumption on the tails $\gvi{v}{i}$. Since $\roottree$ is not a node of $\Gamma_i$, \autoref{lem:keyid} confirms that $J_i$ is generated by polynomials in  $\CC[z_{\lambda'}: \lambda'\in \leavesT{T_i}]$. Viewed in this ring, the ideal $J_i$ defines an end-curve $\Ccurve_i$ in $\CC^{|\leavesT{T_i}|}$ associated to the rooted diagram $\Gamma_i$ (see~\autoref{rm:EndCurvesInitialForms}).

Next, we consider a solution $\underline{p} \in \CC^n$ of the system~\eqref{eq:initialSystem2Leaves}. Our assertion that $\underline{p}$ is the origin will follow by combining the next two claims:

\begin{claimA}\label{cl:oneLeafThenAll}
      Fix $i\in \{1,\ldots, \valv{\roottree}\}$ and assume that $p_{\lambda'}=0$ for some leaf 
        $\lambda'$ of $T_i$. Then, $p_{\lambda}=0$ for all leaves $\lambda$ of $T_i$. 
 \end{claimA}
 
 \begin{subproof} If $i>k$, then $T_i=\{\lambda'\}$ and the statement is tautological. On the contrary, if $i\leq k$, then the projection of $\underline{p}$ to $\CC^{|\leavesT{T_i}|}$ lies in the end-curve $\Ccurve_i$. Since $p_{\lambda'}=0$,~\autoref{thm:end-curvesNW} guarantees that the projection of $\underline{p}$ is the origin of $\CC^{|\leavesT{T_i}|}$, as we wanted.
 \end{subproof}

 \begin{claimB}\label{cl:ifTwoBranchesThenAll}
     Assume that all coordinates of $\underline{p}$ indexed by two different branches (say, $T_{i_1}$ and $T_{i_2}$) adjacent to $\roottree$ vanish. Then, the same is true for each remaining branch.\end{claimB}
 \begin{subproof} If $\valv{\roottree}=2$, there is nothing to show. On the contrary, if $\valv{\roottree} \geq 3$, we consider the splice type polynomials 
 $\fvi{\roottree}{j}$ for $j\in \{1,\ldots, \valv{\roottree}-2\}$. By~\autoref{pr:InitWufvi}, $\initwf{\wu{}}{\fvi{\roottree}{j}} = \fvi{\roottree}{j}$ for each $j$. 

   Using the Hamm conditions, we may assume that each $\fvi{\roottree}{j}$ involves exactly three admissible monomials, two of which are $\zexp{\wtNve{\roottree}{T_{i_1}}}$ and $\zexp{\wtNve{\roottree}{T_{i_2}}}$. The third one equals $\zexp{\wtNve{\roottree}{T_{i_j}}}$. Since the first two monomials vanish along $\underline{p}$ by construction, we deduce that the same is true for the remaining monomial in $\fvi{\roottree}{j}$. Thus, for each of the remaining branches $T_{i_j}$ adjacent to $\roottree$ we have $p_{\lambda'_j}=0$ for some leaf $\lambda'_j$ in $T_{i_j}$. \textcolor{blue}{Claim A} 
   ensures that the same is true for all leaves of $T_{i_j}$. This concludes our proof.
 \end{subproof}

 To finish our proof, it is enough to notice that the hypothesis of \textcolor{blue}{Claim B} 
 holds by combining \textcolor{blue}{Claim A} 
 with the fact that the leaves $\lambda$ and $\mu$ belong to distinct branches of $\Gamma$ 
 adjacent to $\roottree$.
\end{proof}

Our second statement gives a stronger version of~\autoref{cor:dominant-curve-map}:

\begin{corollary}\label{cor:dom-map}
  Assume that the conditions of~\autoref{lem:intersect-with-coord-subsp} hold and consider the map $F_{\wu{}}\colon \CC^n\to \CC^{n-2}$ obtained from the collection 
$\{\initwf{\wu{}}{\fgvi{v}{i}}\}_{v,i}$, ordered appropriately. Then, the restriction
of this map to the codimension-$2$ subspace $L=\{z_\lambda=z_\mu=0\}$ of $\CC^n$ is surjective.
\end{corollary}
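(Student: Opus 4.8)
The plan is to deduce surjectivity of the restriction $F_{\wu{}}|_L$ from~\autoref{lem:intersect-with-coord-subsp} together with a dimension count, exactly as in the proof of~\autoref{cor:dominant-curve-map}. First I would note that $L = \{z_\lambda = z_\mu = 0\}$ has dimension $n-2$, and that the target $\CC^{n-2}$ also has dimension $n-2$, so it suffices to show that $F_{\wu{}}|_L$ is dominant; surjectivity will then follow once we know the generic fiber and a special fiber are both finite, since a dominant morphism between affine spaces of the same dimension whose fibers are generically finite is in fact surjective when one fiber (namely the one over $0$) is finite. Actually, the cleanest route is: \autoref{lem:intersect-with-coord-subsp} says precisely that the fiber of $F_{\wu{}}|_L$ over $0 \in \CC^{n-2}$ is the single point $0 \in L$, hence $0$-dimensional. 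By upper semicontinuity of fiber dimension, the generic fiber of $F_{\wu{}}|_L$ is also $0$-dimensional (of dimension $\leq 0$), so $\dim \overline{F_{\wu{}}(L)} = \dim L - 0 = n-2 = \dim \CC^{n-2}$, which forces $F_{\wu{}}|_L$ to be dominant.

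The remaining step is to upgrade dominance to surjectivity. Here I would use that $F_{\wu{}}|_L$ is given by $n-2$ polynomials (the initial forms $\initwf{\wu{}}{\fgvi{v}{i}}$ restricted to $L \cong \CC^{n-2}$), and that the fiber over $0$ is finite (indeed just $\{0\}$). A standard argument (e.g. via the fact that a finite-type morphism with a finite fiber is quasi-finite on an open neighborhood, combined with properness considerations, or more directly via the graded structure) shows that a homogeneous polynomial map $\CC^{n-2} \to \CC^{n-2}$ with $0$-dimensional zero fiber is finite and flat, hence surjective. In our setting the initial forms $\initwf{\wu{}}{\fgvi{v}{i}}$ are $\wu{}$-weighted-homogeneous, so their restrictions to the coordinate subspace $L$ remain weighted-homogeneous in the surviving variables; a weighted-homogeneous polynomial endomorphism of affine space with only the origin in the zero fiber is surjective (this is the classical fact used for Pham--Brieskorn systems, and is essentially how~\autoref{thm:end-curvesNW} and its analogues are proved). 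I would cite this as the key input, perhaps referencing the argument in~\cite[Theorem 3.1]{NW 05bis} or the standard homogeneity argument.

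The main obstacle I anticipate is making the passage from "dominant with finite zero fiber" to "surjective" fully rigorous without invoking more machinery than needed: one must be careful that the relevant polynomials on $L$ are genuinely weighted-homogeneous with positive weights (which holds because $\wu{}$ has strictly positive coordinates and the initial forms are $\wu{}$-homogeneous by construction, cf.~\autoref{pr:InitWufvi} and the discussion preceding~\autoref{def:initwf}), and that restricting to $L$ does not accidentally kill a whole equation — but this is exactly what~\autoref{lem:intersect-with-coord-subsp} rules out, since if some $\initwf{\wu{}}{\fgvi{v}{i}}|_L$ were identically zero the fiber over $0$ would be positive-dimensional. So the logical skeleton is short: (1) $L$ and $\CC^{n-2}$ have equal dimension $n-2$; (2) by~\autoref{lem:intersect-with-coord-subsp} the zero fiber of $F_{\wu{}}|_L$ is $\{0\}$; (3) upper semicontinuity gives generically finite fibers, hence dominance; (4) weighted-homogeneity of the defining polynomials together with the finiteness of the zero fiber promotes dominance to surjectivity.

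\begin{proof}
Since $\wu{}$ has strictly positive coordinates, each polynomial $\initwf{\wu{}}{\fgvi{v}{i}}$ is $\wu{}$-weighted-homogeneous, and hence so is its restriction to the coordinate subspace $L = \{z_\lambda = z_\mu = 0\} \cong \CC^{n-2}$. Thus $F_{\wu{}}|_L \from L \to \CC^{n-2}$ is a weighted-homogeneous polynomial map between affine spaces of the same dimension $n-2$. By~\autoref{lem:intersect-with-coord-subsp}, the fiber $(F_{\wu{}}|_L)^{-1}(0)$ consists only of the origin, so it is $0$-dimensional. In particular no coordinate of $F_{\wu{}}|_L$ is identically zero. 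Upper semicontinuity of fiber dimension then implies that the generic fiber of $F_{\wu{}}|_L$ is also $0$-dimensional, so
\[
\dim \overline{F_{\wu{}}(L)} = \dim L = n-2 = \dim \CC^{n-2},
\]
and $F_{\wu{}}|_L$ is dominant. Finally, a weighted-homogeneous polynomial endomorphism of $\CC^{n-2}$ (with positive weights) whose zero fiber is reduced to the origin is finite and flat, hence surjective, by the standard argument used for Pham--Brieskorn--Hamm systems (compare the proof of~\autoref{thm:end-curvesNW} and~\cite[Theorem 3.1]{NW 05bis}). Therefore $F_{\wu{}}|_L$ is surjective.
\end{proof}
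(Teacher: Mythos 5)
Your proposal is correct and follows essentially the same route as the paper: use \autoref{lem:intersect-with-coord-subsp} to get a zero-dimensional fiber over the origin, apply upper semicontinuity of fiber dimension to conclude dominance, and then use weighted homogeneity of the initial forms to upgrade dominance to surjectivity. The only cosmetic difference is in the last step, where the paper projectivizes to a morphism of weighted projective spaces and invokes closedness of images of projective morphisms, while you phrase the same properness input as finiteness of a weighted-homogeneous map with isolated zero fiber.
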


\begin{proof}
By \autoref{lem:intersect-with-coord-subsp}, we see that the fiber of the restriction of $F_{\wu{}}$ 
to $L$ over the origin of $\CC^{n-2}$ is $0$-dimensional. By upper semicontinuity of fiber dimensions,
the generic fiber of $F_{\wu{}|L}$ is also $0$-dimensional. Since $\dim L=n-2$, the map 
$F_{\wu{}|L}$ is dominant.

Since $F_{\wu{}|L}$ is defined by weighted homogeneous functions, it admits a projectivization as a map between weighted projective spaces. But a dominant
projective map must be surjective. Thus, as an affine map, $F_{\wu{}|L}$ is surjective
as well.
\end{proof}

Next, we  recover \autoref{thm:spliceicis} (originally due to Neumann and Wahl)  by combining~\autoref{cor:codim2boundaryStataGerm} with the following result:

\begin{corollary}\label{cor:isolatedSing}
The singularity defined by the splice type system $\sG{\Gamma}$ is isolated. In particular, it is also irreducible.
\end{corollary}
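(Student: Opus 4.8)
The plan is to combine the dimension-count results already established with the local-tropical description of $(X,0)$ to pin down where $X$ can fail to be smooth. By~\autoref{cor:expDimensionsG}, $(X,0)$ is a two-dimensional complete intersection, so it suffices to prove that $X$ is smooth away from the origin; irreducibility of the germ then follows automatically, since a reduced complete intersection surface germ that is smooth on a punctured neighborhood of the origin must be connected in that punctured neighborhood (a two-dimensional smooth analytic space is locally connected, and the link of an isolated singularity of a complete intersection surface is connected), and a connected reduced complete intersection germ is irreducible.

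First I would reduce the smoothness check to a statement about initial degenerations. Fix a point $q\neq 0$ in a small representative of $X$. There is a coordinate stratum of $\CC^n$, corresponding to a face $\tau$ of $\sigma=(\Rp)^n$, whose open torus orbit contains $q$. By~\autoref{cor:codim2boundaryStataGerm}, if $\dim\tau\geq 2$ then $X\cap\{\text{that stratum}\}=\{0\}$, so $q$ cannot lie there; hence $\tau$ is either $\{0\}$ (so $q\in(\CC^*)^n$) or a ray $\Rp\langle\wu{\lambda}\rangle$ for a leaf $\lambda$ (so $q$ lies in the dense torus of the hyperplane $\{z_\lambda=0\}$). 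In the first case, smoothness of $X$ at $q$ follows from Newton non-degeneracy: by~\autoref{thm:NewtonNonDeg} together with~\autoref{cor:tropsG}, the splice fan is a tropicalizing fan and the associated toric morphism $\pi\from\widetilde{X}\to X$ has smooth fibers over all torus orbits (this is exactly what Newton non-degeneracy in the sense of~\autoref{def:Nnondegci} buys us, via~\autoref{prop:purecodimlift}: each $\initwf{\tau}{X}\cap(\CC^*)^n$ is a normal crossings complete intersection, hence smooth in the torus, and smoothness of all the torus-orbit fibers of $\widetilde X$ implies smoothness of $X$ on $(\CC^*)^n$). In the second case, smoothness at $q$ follows from~\autoref{thm:end-curvesNW}: by~\autoref{cor:codim1boundaryStataGerm}, $X\cap\{z_\lambda=0\}$ is (set-theoretically near $q$) the end-curve $\Ccurve_\lambda$ inside $\CC^{n-1}$, which is smooth away from the origin; and $X$ is a complete intersection whose intersection with the smooth hypersurface $\{z_\lambda=0\}$ is transverse at $q$ of the expected dimension (again since the splice fan is a tropicalizing fan, by~\autoref{prop:purecodimlift}(2) the strict transform meets this orbit in the expected codimension, which forces transversality there), so $X$ is smooth at $q$ as well.

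Having shown $X$ is smooth on a punctured neighborhood of the origin, I would conclude irreducibility as follows. Suppose $X=X_1\cup\dots\cup X_r$ is the decomposition into irreducible components, each of dimension two by~\autoref{cor:expDimensionsG}. If $r\geq 2$, then $X_i\cap X_j$ is a nonempty analytic subset of dimension $\geq 1$ passing through $0$ (since the germ is connected, or directly because any two components of a complete intersection germ meet in codimension one), hence contains a point $q\neq 0$; but at such a $q$ the germ $X$ is not smooth, contradicting the previous paragraph. Therefore $r=1$, $X$ is irreducible, and in particular the singularity at the origin is isolated.

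\textbf{Main obstacle.} The delicate point is the rigorous passage from "Newton non-degenerate, with the splice fan a tropicalizing fan" to "$X$ smooth on $(\CC^*)^n$", and likewise on the coordinate hyperplanes — i.e., upgrading the torus-orbit-wise smoothness and expected-codimension statements of~\autoref{prop:purecodimlift} to genuine smoothness of $X$ itself at points away from the origin, including the transversality of $X$ with $\{z_\lambda=0\}$ at the relevant $q$. This is where one must be careful that the toric morphism $\tv_{\cF}\to\CC^n$ really resolves the relevant degenerations and that strict transforms behave as expected; once this is in hand, the combinatorial bookkeeping with faces of $\sigma$ and the irreducibility deduction are routine.
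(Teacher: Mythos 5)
Your overall strategy --- show $X$ is smooth away from the origin using Newton non-degeneracy together with the boundary analysis, then deduce irreducibility --- is the same as the paper's, and both your case analysis via \autoref{cor:codim2boundaryStataGerm} and your Hartshorne-connectedness argument for irreducibility are fine (the paper instead deduces irreducibility from the fact that an isolated two-dimensional complete intersection singularity is normal; either works). The problem is that the step you flag as the ``main obstacle'' is the entire content of the proof, and neither of your two proposed ways to handle it works as stated. First, Newton non-degeneracy (\autoref{def:Nnondegfunct}, \autoref{def:Nnondegci}) is a condition on the initial forms $\initwf{\wu{}}{f_i}$ for \emph{strictly positive} $\wu{}$; it says nothing directly about $X\cap(\CC^*)^n$ itself, and ``smoothness of all the torus-orbit fibers of $\widetilde X$ implies smoothness of $X$ on $(\CC^*)^n$'' is not a formal implication --- it is precisely the conclusion of the embedded toric resolution theorem for Newton non-degenerate complete intersections (Varchenko, Oka, Khovanskii), which is what the paper cites. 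Second, your transversality claim on the coordinate hyperplanes is unjustified: \autoref{prop:purecodimlift}~(2) is a purely dimensional statement, and expected codimension of an intersection does not force transversality (consider $\{y^2=x^3\}\cap\{y=0\}$). Moreover, $X\cap\{z_\lambda=0\}$ is not the end-curve $\Ccurve_\lambda$: the end-curve is cut out by the polynomials $\hvi{v}{i}$, whereas $X\cap\{z_\lambda=0\}$ is cut out by the $\tau$-truncations of the full series $\fgvi{v}{i}$, which retain higher-order terms and other admissible monomials; \autoref{cor:codim1boundaryStataGerm} only tells you this intersection is a curve all of whose components meet the dense torus, not that it is smooth away from $0$.

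The paper closes the gap in one stroke. Newton non-degeneracy yields an embedded toric resolution $\pi\colon X_\Sigma\to\CC^n$ for a suitable regular subdivision $\Sigma$ of the splice fan, so the strict transform $\widetilde X$ is smooth in a neighborhood of the exceptional fiber. Since $\Trop X$ meets the boundary of $(\Rp)^n$ only along the coordinate rays --- equivalently, by \autoref{cor:codim2boundaryStataGerm}, $X\smallsetminus\{0\}$ lies in the union of the dense torus and the open orbits of the coordinate hyperplanes, over which any toric morphism induced by a subdivision is an isomorphism --- the map $\pi$ restricts to an isomorphism over $X\smallsetminus\{0\}$, and hence $X$ is smooth there. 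This handles your torus case and your hyperplane case simultaneously; if you wish to keep your structure, you should replace both of your smoothness arguments by this single appeal to the resolution theorem.
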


\begin{proof}
Let $(X,0)\hookrightarrow \CC^n$ be the germ defined by $\sG{\Gamma}$. By~\autoref{cor:expDimensionsG}, we know that $X \cap (\CC^*)^n$ is dense in $X$. 
Since $\sG{\Gamma}$ is a Newton non-degenerate complete intersection system by~\autoref{thm:NewtonNonDeg}, we conclude that $X \subset \CC^n$ admits an embedded toric resolution  (see, e.g., Khovanskii~\cite[Section~2.7]{K 84} and Oka~\cite[Chapter III, Theorem (3.4)]{O 97}). 
In particular, for a suitable subdivision $\Sigma$ of the splice fan of $X$, the corresponding toric morphism $\pi\from \tv_\Sigma \to \CC^n$ induces an embedded resolution of the pair $(\CC^n,X)$. But as we saw in~\autoref{thm:tropsG}, the local tropicalization of $\sG{\Gamma}$ intersects the boundary of the non-negative orthant only along the canonical basis elements. It follows from this that the morphism $\pi$ is an isomorphism outside the origin, i.e., $\pi$ is a resolution of $(X,0)$. We conclude that the singularity at the origin is isolated. 
As $(X,0)$ is moreover a complete intersection of dimension two by~\autoref{cor:expDimensionsG}, it is automatically irreducible by Hartshorne's connectedness theorem \cite[Theorem 2.2]{H 62} (see also \cite[Theorem 18.12]{E 95}), which states that a complete intersection singularity cannot be disconnected by removing a closed subgerm of codimension at least two.
\end{proof}

We end this section by showing how to use our results to get embedded resolutions of complex plane curve singularities by composing re-embeddings of 
$\CC^2$ into higher-dimensional smooth spaces $\CC^n$
with toric modifications of $\CC^n$.
The fact that such resolutions are possible was proven by
Goldin and Teissier~\cite{GT 00} in the irreducible case
and recently by de Felipe, Gonz\'alez P\'erez and Mourtada~\cite[Theorem 2.27]{FGM 21}
in full generality.

\begin{corollary}  \label{cor:onetoricplanecurve}
   Let $(X, 0) \hookrightarrow \CC^2$ be the germ of a reduced complex analytic plane curve. Then, the ambient germ $(\CC^2,0)$ can be holomorphically re-embedded into a suitable higher-dimensional germ $(\CC^n,0)$ in such a way that the induced germ $(X, 0) \hookrightarrow \CC^n$ can be resolved by a single toric modification of $\CC^n$.
 \end{corollary}

\begin{proof}
     This result is a consequence of~\autoref{thm:NewtonNonDeg}, as we now explain. 
     Indeed, consider a
      \emph{completion}  $(\hat{X}, 0)\hookrightarrow \CC^2$ of the input germ
      $(X,0)\hookrightarrow \CC^2$ in the sense of~\cite[Definition 1.4.15]{GGP 20}.
      By construction, $(\hat{X},0)$ is also the germ of a reduced plane curve,
      it contains $(X, 0)$ as a subgerm
    and it admits an embedded resolution (that is, a modification $\pi \colon S \to \CC^2$ where $S$ is smooth and the total transform of $\hat{X}$ on $S$ has normal crossings) such that the strict transform of $\hat{X}$ intersects all the leaf components of the exceptional divisor. Here, leaf components correspond to leaves 
    of the dual graph of the exceptional divisor.
    
    Moreover, possibly after extra blowups at points,
    the modification $\pi \colon S \to \CC^2$ can be chosen to ensure that  each leaf component
    is intersected by exactly one irreducible component of the strict transform of $\hat{X}$.
    Since all the irreducible components of $(\hat{X}, 0)$ are principal divisors on $(\CC^2, 0)$,
    and $(\CC^2,0)$ has an integral homology sphere link, Neumann and Wahl's
    \emph{end-curve theorem} \cite[Theorem 4.1 (3)]{NW 05} guarantees the existence of   
    a holomorphic embedding $\phi \from (\CC^2, 0) \to (\CC^n, 0)$
    such that $(\phi(\CC^2), 0)$ is a splice type singularity. Furthermore, 
    the irreducible components of $(\phi(\hat{X}), 0)$ 
    are exactly the intersections of $(\phi(\CC^2), 0)$ with the 
    coordinate hyperplanes of $(\CC^n, 0)$.

    Let $\Gamma$ be the splice diagram of the splice type singularity
    $(\phi(\CC^2), 0) \hookrightarrow (\CC^n, 0)$. Consider the splice fan $\cF$ of $\Gamma$,
    in the sense of \autoref{def:spliceFan}. By \autoref{cor:tropsG}, $\cF$
    is a standard tropicalizing fan of $(\phi(\CC^2), 0)$, in the sense of
    \autoref{def:tropicalizingFan}. Consider a regular subdivision
    $\cF'$ of $\cF$. It is also a standard tropicalizing fan of $(\phi(\CC^2), 0)$.
    Let $ \tv_{\cF'}$ be the toric variety defined by the fan $\cF'$ and let
    $\pi_{\cF'} \colon \tv_{\cF'} \to \CC^n$ be the birational toric morphism defined
    by the inclusion of the support of the fan $\cF'$ in the non-negative orthant
    $(\R_{\geq 0})^n$ of the weight lattice $\Z^n$ of $\CC^n$.

    Consider the strict transform $S'$ of $(\phi(\CC^2), 0)$ by $\pi_{\cF'}$ and
    denote by
       \[   \pi'\colon S' \to (\phi(\CC^2), 0)  \]
    the restriction of $\pi_{\cF'}$ to $S'$. By construction, the support of $\cF'$  equals the
    local tropicalization of $(\phi(\CC^2), 0)$ in $\CC^n$. Thus, 
    \autoref{prop:purecodimlift}~(\ref{condsamedim})
      implies that the bimeromorphic morphism $\pi'$ is proper.

      By \autoref{thm:NewtonNonDeg},
    $(\phi(\CC^2), 0)$ is defined by a Newton non-degenerate complete intersection system
    inside $\CC^n$. Therefore, the scheme-theoretic intersections of $S'$ with the
    orbits of the toric variety $\tv_{\cF'}$ are either empty or smooth 
    (see~\cite[Remark 6.4.18]{MS 15}). As a consequence,
    $S'$ intersects transversally the toric boundary of $\tv_{\cF'}$ 
    (see \cite[Proposition 3.9]{CPPS 22}). As $\cF'$ is a regular fan, the toric
    variety $\tv_{\cF'}$ is smooth. Therefore, $S'$ is also smooth
    and the total transform of $(\phi(\hat{X}), 0)$ in $S'$ is a normal crossings curve.
    
This fact and the properness of $\pi'$ imply that $\pi'$ is an embedded resolution of $(\phi(\hat{X}), 0) \hookrightarrow (\phi(\CC^2), 0)$.
    As $(X, 0) \hookrightarrow (\hat{X}, 0)$, the morphism $\pi'$ is also an embedded resolution
    of  $(\phi(X), 0) \hookrightarrow (\phi(\CC^2), 0)$. Therefore,
    $\phi^{-1} \circ \pi' \colon S' \to (\CC^2, 0)$ is an embedded resolution of 
    $(X,0) \hookrightarrow (\CC^2, 0)$, as we wanted to show.
 \end{proof}

\begin{remark}
Results from~\cite[Section 5]{GGP 19} allow to describe the splice diagram  associated to a plane curve singularity  in terms of the Newton-Puiseux series of its branches.  Applying \autoref{cor:tropsG} to this splice diagram yields a concrete description of the local tropicalization 
 of  the embedding $\phi$ from the proof of~\autoref{cor:onetoricplanecurve}
 in terms of standard combinatorial invariants of the given plane curve singularity.
 This method is similar to the one used by de Felipe, Gonz\'alez P\'erez and Mourtada 
 in~\cite[Section 3]{FGM 21} to characterize such local tropicalizations.
\end{remark}

The construction of splice type systems by Neumann and Wahl implies that the embedding dimension of a splice type singularity is bounded  from above by the number of leaves of the associated splice diagram. If an edge ending in a leaf has weight one, then it can be removed by simple elimination to produce a germ in lower dimension. As the following example illustrates, this operation need not necessarily produce a splice type singularity.

\begin{figure}[tb]  
         \includegraphics[scale=0.65]{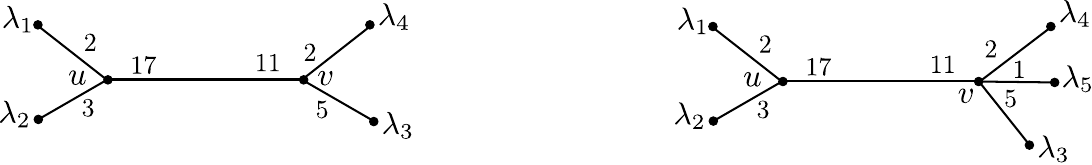}
         \caption{Two splice diagrams used in~\autoref{ex:weight1Leaf}: the right one is obtained from the left one by attaching an extra leaf through a weight one edge.\label{fig:weight1Example}
}               
  \end{figure}

\begin{example}\label{ex:weight1Leaf} Consider the splice diagram $\Gamma$ on the right of~\autoref{fig:weight1Example} satisfying the edge determinant and semigroup conditions. We consider two strict splice type systems for $\Gamma$:
  
  \begin{minipage}{0.5\textwidth}
    \begin{equation}
    \label{eq:leaft1weight}
    \begin{cases}
z_1^2 + z_2^3+ z_3z_4^3 = 0,\\
\;\;z_1z_2^4 + \;\;z_3^5+\;\; z_4^2 + z_5 = 0,\\
2\,z_1z_2^4 + 3 \,z_3^5 - 2\,z_4^2 + z_5 = 0,
  \end{cases}
    \end{equation}
  \end{minipage}
  \begin{minipage}{0.45\textwidth}
    \begin{equation*}
    \label{eq:leaft1weightmixed}
    \begin{cases}
      z_1^2 + z_2^3+ z_3z_4z_5 = 0,\\
\;\;z_1z_2^4 + \;\;z_3^5+\;\; z_4^2 + z_5 = 0,\\
2\,z_1z_2^4 + 3 \,z_3^5 - 2\,z_4^2 + z_5 = 0.
  \end{cases}
    \end{equation*}
  \end{minipage}

Eliminating the $z_5$ variable from the left system yields a splice system associated to the splice diagram on the left of the figure, namely $z_1^2 + z_2^3+ z_3z_4^3 = -z_1z_2^4 -2 z_3^5+ 3 z_4^2  = 0$. On the other hand, elimination on the rightmost system in~\eqref{eq:leaft1weight} produces a system of two equations in four unknowns that is not of splice type:
  \[
  \begin{cases}
    z_1^2+z_2^3 + z_3^6z_4 - 4z_3z_4^3 = 0,\\
    -z_1z_2^4 -2 z_3^5+ 3 z_4^2=0.
  \end{cases}\qedhere
  \]
\end{example}

But even if no such terminal edge with weigh one exists, the embedding dimension of a splice type singularity can still be smaller than the number of leaves in the splice diagram. In particular, \cite[Example 3]{NW 05} exhibits a hypersurface singularity $Z(f)$ in $\CC^3$ realizing a splice type surface singularity $(X,0)$ whose associate splice diagram  $\Gamma$ has six leaves.

A simple calculation reveals that the standard local tropicalization of this hypersurface is the cone over a star-shaped tree with three leaves (corresponding to the standard coordinate basis vectors) and a single node $\wu{}$. The $\wu{}$-initial form of $f$ is non-reduced so the Newton non-degeneracy condition fails for this hypersurface presentation, despite the fact that the splice type system $\sG{\Gamma}$ defining $(X,0)$ is a Newton non-degenerate complete intersection system by  \autoref{thm:NewtonNonDeg}.

Motivated by this example, we define the \emph{Newton non-degenerate embedding dimension} $\mathrm{edim}_{NND}(X,0)$
of an abstract germ 
$(X,0)$ as the smallest $n \geq 0$ such that $(X,0)$  may be embedded in $\CC^n$ as a 
Newton non-degenerate subgerm. In this context, the following question arises naturally:

\begin{question}   \label{ques:minNNDemb}
       Fix a splice diagram with $n$ leaves satisfying the edge determinant and semigroup conditions. 
       Assume that no edge of  $\Gamma$   ending in a leaf has  weight one. 
       Let $(X,0)$ be a splice type singularity associated to $\Gamma$. 
Is it true that  $\mathrm{edim}_{NND}(X,0) = |\leavesT{\Gamma}|$?
\end{question}

\section{Recovering splice diagrams from  splice fans}
\label{sec:recov-splice-diagr}

Throughout this section we assume that the splice type diagrams satisfy the 
edge determinant condition of \autoref{def:edgedet}  and the semigroup condition of \autoref{def:semgpcond}.
The construction of splice fans from splice diagrams introduced in \autoref{def:spliceFan} raises a natural question: how much data about $\Gamma$ can be recovered from its  splice fan, decorated with the tropical multiplicities? 
The main result of this section  answers this question 
under the following coprimality restrictions, which are central to~\cite{NW 05}:

\begin{definition}\label{def:coprimeSplice}
       We say that a splice diagram $\Gamma$ satisfying the 
edge determinant condition and the semigroup condition is \emph{coprime} if the weights around  each node of $\Gamma$ are pairwise coprime.
\end{definition}

 As mentioned in~\autoref{thm:charZHSlinks}, if $\Gamma$ is coprime, then there exists a unique integral homology 
sphere 3-manifold $\Sigma(\Gamma)$ associated to $\Gamma$. However, there may be also non-integral rational 
homology sphere links with the same splice diagram, since 
$\Gamma$ only determines the topological types of their universal abelian 
covers (see~\cite[page 2]{NW 05bis}).

Our next result highlights the restrictions on the  tropically weighted splice fan imposed by a coprime splice diagram. Its proof will be postponed until the end of this section. Precise formulas for the tropical multiplicities are given in~\autoref{rm:extendedSpliceFan}.
\begin{theorem}\label{thm:coprimePrimitiveAndWeightOne} Let $\Gamma$ be a coprime splice diagram. Then:
  \begin{enumerate}
            \item \label{item:wuPrim} for each node $v$ of $\Gamma$ the vector 
                   $\wu{v}\in \Nw{\Gamma} \simeq \Z^n$ is primitive;
             \item \label{item:tropMultOne} all tropical multiplicities of  $\ptrop \langle\sG{\Gamma}\rangle$ equal one.
  \end{enumerate}
\end{theorem}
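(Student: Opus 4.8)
\textbf{Proof plan for Theorem~\ref{thm:coprimePrimitiveAndWeightOne}.}
The plan is to prove part~(\ref{item:wuPrim}) first by induction on the number of nodes of $\Gamma$, and then derive part~(\ref{item:tropMultOne}) as a combinatorial consequence using the explicit formulas from~\autoref{rm:extendedSpliceFan}. For the primitivity statement, recall that $\wu{v} = \sum_{\lambda \in \leavesT{\Gamma}} \wtuv{v}{\lambda}\,\wu{\lambda}$, so I must show $\gcd\{\wtuv{v}{\lambda}\colon \lambda \in \leavesT{\Gamma}\} = 1$. The base case is when $\Gamma$ is a star tree with node $v$: then $\wtuv{v}{\lambda} = \prod_{\mu \neq \lambda} \du{v,\mu}$, and a prime $q$ dividing all these products would, by coprimality of the weights around $v$, have to divide $\du{v,\mu}$ for at least two distinct edges, contradicting pairwise coprimality. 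For the inductive step, I would fix a node $v$ adjacent to some node $u$, let $e = [v,u]$, and split the leaves into $\nodesTevRoot{v,e}$ (on the $u$-side) and its complement (the leaves $\lambda$ with $e \not\subseteq [v,\lambda]$). On the $v$-side, the linking numbers $\wtuv{v}{\lambda}$ still factor through the weights around $v$ pointing away from $u$, so the same gcd-is-one argument using coprimality around $v$ handles them — in fact it suffices to use two leaves sitting behind two distinct edges of $\starT{\Gamma}{v}$ other than $e$, which exist since $\valv{v} \geq 3$. This already forces $\gcd\{\wtuv{v}{\lambda}\} = 1$ without even needing the inductive hypothesis, so the induction collapses to essentially the star-tree computation applied locally at $v$.

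More carefully: writing $\wtuv{v}{\lambda} = \wtuv{v}{\lambda}' \cdot \prod_{e' \in \starT{\Gamma}{v},\, e' \not\subseteq [v,\lambda]} \du{v,e'}$ by~\autoref{def:linkingNumbers}, any prime $q$ dividing $\wtuv{v}{\lambda}$ for \emph{every} leaf $\lambda$ must divide, for each choice of $\lambda$, the product of all weights at $v$ except the one pointing toward $\lambda$. Picking two leaves in two different branches at $v$ shows $q$ divides $\du{v,e'}$ for two distinct edges $e'$ at $v$, contradicting coprimality of $\Gamma$ at $v$. This proves~(\ref{item:wuPrim}). I expect this to be routine; the only subtlety is bookkeeping the factorization of linking numbers, which~\autoref{def:linkingNumbers} and the remark following it already supply.

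For part~(\ref{item:tropMultOne}), I would invoke the two formulas in~\autoref{rm:extendedSpliceFan}. For a ray $\tau = \Rp\emb([\lambda,u])$ with $u$ a node adjacent to the leaf $\lambda$, the multiplicity is $\frac{1}{\du{u,\lambda}}\gcd\{\wtuv{u}{\mu}\colon \mu \in \leavesT{\Gamma}\smallsetminus\{\lambda\}\}$. Now every $\wtuv{u}{\mu}$ with $\mu \neq \lambda$ is divisible by $\du{u,\lambda}$ (since the edge toward $\lambda$ is adjacent to but not on $[u,\mu]$), so the gcd is $\du{u,\lambda}$ times $\gcd\{\wtuv{u}{\mu}/\du{u,\lambda}\colon \mu \neq \lambda\}$, and the latter gcd is $1$ by the same coprimality argument at $u$: a common prime divisor of all $\wtuv{u}{\mu}/\du{u,\lambda}$ would divide two distinct weights at $u$ among those not equal to $\du{u,\lambda}$ (using $\valv{u} \geq 3$ to get two leaves $\mu$ in distinct branches at $u$ neither of which is the $\lambda$-branch — here I should check the edge case $\valv{u}=3$ where one branch is $\{\lambda\}$, but then there are still two other branches). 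Hence the multiplicity is $1$. For an edge $\tau = \Rp\emb([u,v])$ between adjacent nodes, the multiplicity is $\frac{1}{\du{u,v}\du{v,u}}\,g_u\,g_v$ where $g_u = \gcd\{\wtuv{u}{\lambda}\colon u \in [\lambda,v]\}$ and symmetrically $g_v$; each such $\wtuv{u}{\lambda}$ is divisible by $\du{u,v}$, and dividing through, the reduced gcd is $1$ by coprimality at $u$ applied to two leaves behind two distinct edges at $u$ other than $[u,v]$ (again $\valv{u}\geq 3$). The same for $g_v$, giving total multiplicity $1$.

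The main obstacle I anticipate is purely notational: keeping straight which weights divide a given linking number, and verifying the valence-three edge cases where one branch at a node is a single leaf. Neither is conceptually hard — everything reduces to the observation that a common prime divisor of a full family of linking numbers based at a node $v$ forces that prime to divide two of the weights at $v$, which coprimality forbids. I would present the primitivity claim as a standalone lemma (the reduced-linking-number version) and then apply it verbatim three times for the three types of cones, so the write-up stays short.
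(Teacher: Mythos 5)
There is a genuine gap, and it sits exactly where you wave it off as ``bookkeeping.'' Your key lemma asserts that a prime $q$ dividing $\wtuv{v}{\lambda}$ for every leaf $\lambda$ must, for each $\lambda$, divide $\prod_{e'\neq e_\lambda}\du{v,e'}$, so that two leaves in two different branches force $q$ to divide two distinct weights at $v$. But the factorization $\wtuv{v}{\lambda}=\wtuv{v}{\lambda}'\cdot \du{v}/\du{v,\lambda}$ has the reduced linking number $\wtuv{v}{\lambda}'$ as a factor, and $\wtuv{v}{\lambda}'$ is a product of weights at the \emph{intermediate} nodes of $[v,\lambda]$, not at $v$. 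So $q\mid\wtuv{v}{\lambda}$ only gives ``$q$ divides some weight at $v$ off the geodesic, \emph{or} $q$ divides a weight at some deeper node,'' and the second alternative kills the two-leaf argument. Concretely, take two adjacent nodes $v,u$ with $v$ adjacent to leaves $\lambda_1,\lambda_2$ and $u$ adjacent to leaves $\mu_1,\mu_2$, with $7\mid\du{v,u}$ and $7\mid\du{u,\mu_2}$ (coprimality at each node is unaffected). Then $7$ divides $\wtuv{v}{\lambda_1}$, $\wtuv{v}{\lambda_2}$ \emph{and} $\wtuv{v}{\mu_1}$ while dividing only one weight at $v$; the only linking number it misses is $\wtuv{v}{\mu_2}$, and it misses it because $7\nmid\du{u,\mu_1}$ --- a coprimality condition at $u$, invisible to any argument local to $v$. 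Your claim that ``the induction collapses to the star-tree computation applied locally at $v$'' is therefore false: the induction into the branches is unavoidable, and the same flaw propagates into your part~(\ref{item:tropMultOne}), where each reduced gcd is again handled by ``the same coprimality argument at $u$.''

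This is precisely why the paper isolates~\autoref{lm:gcdCoprimeSetting}: for $u,v$ adjacent nodes, the gcd of the $\wtuv{v}{\lambda}$ over the leaves $\lambda$ on the $v$-side is \emph{not} $1$ but exactly $\du{v,u}$, and proving even that requires an induction over the branch structure using coprimality at every node encountered (via the identity $\wtuv{v}{\mu}=(\wtuv{v_j}{\mu}/\du{v_j,v})(\du{v}/\du{v,v_j})$). The full primitivity of $\wu{v}$ then comes from combining the gcd $\du{v,u}$ from one side with $\du{u,v}$-type contributions from the other and invoking pairwise coprimality at the node one last time; the paper packages this as an induction on the number of nodes using the pruning matrix $A$ of~\eqref{eq:Auv} and~\autoref{lm:ImageOfA}. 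Your overall architecture (primitivity first, then read off the multiplicities from~\autoref{rm:extendedSpliceFan}) matches the paper's, and your base case (star trees, where every linking number genuinely is a product of weights at the unique node) is correct; what is missing is the recursive gcd computation that replaces your local two-leaf step. Repair the proposal by proving the analogue of~\autoref{lm:gcdCoprimeSetting} by induction on branches and then rerunning your three gcd computations with $\du{v,u}$ in place of the claimed $1$ for the one-sided gcds.
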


The following is the main result in this section. It ensures that coprime splice diagrams 
can be recovered from their  tropically weighted splice fans:
\begin{theorem}\label{thm:recoverGammaFromTrop} Assume that all tropical multiplicities on the splice fan equal one. Then, there is a unique coprime  splice diagram $\Gamma$ yielding  the given weighted splice fan.
  
\end{theorem}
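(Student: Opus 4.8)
The plan is to reconstruct $\Gamma$, its underlying tree structure, and all of its weights, purely from the combinatorics and metric of the abstract weighted fan $\mathcal{F}$ (the putative splice fan with all tropical multiplicities equal to one), and then verify that the two assignments (splice diagram $\leadsto$ weighted splice fan, and back) are mutually inverse, with uniqueness following from injectivity of the forward map on coprime diagrams. First I would recover the \emph{abstract tree}: by \autoref{def:spliceFan} the support $|\mathcal F|$ is a cone over a one-dimensional polyhedral complex, and its link is a finite graph; the rays of $\mathcal F$ are in bijection with the vertices of $\Gamma$, and the two-dimensional cones with the edges of $\Gamma$. The leaves of $\Gamma$ are precisely those rays lying on the boundary of the positive orthant $(\Rp)^n$ (these are exactly the coordinate rays $\Rp\wu{\lambda}$, by \autoref{cor:codim1TropBoundary} and \autoref{thm:boundaryTrop}), while the nodes are the interior rays; the valency condition and the no-bivalent-vertex condition are read off directly from the graph. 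Since the link of $\mathcal F$ is connected and simply connected (being homeomorphic to $\Gamma$, a tree, by \autoref{thm:main2}), this combinatorial datum is a tree with no bivalent vertices and at least one node, i.e. the underlying graph of a splice diagram.

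Next I would recover the \emph{leaf labels and the lattice structure}. Fixing the coordinate lattice $N=\Z^n=\Nw{\Gamma}$, each boundary ray is a coordinate ray, giving a canonical identification $\leavesT{\Gamma}\leftrightarrow\{1,\dots,n\}$ up to the (irrelevant) choice of ordering. For each node $v$, the primitive integer generator of the corresponding interior ray is a well-defined vector $\wu{v}\in\Z^n$; I must check this equals the vector from \eqref{eq:wu}. This is exactly \autoref{thm:coprimePrimitiveAndWeightOne}\eqref{item:wuPrim}: in the coprime case $\wu{v}$ is already primitive, so the ray remembers it exactly. Then the weights can be read off: writing $\wu{v}=\sum_\lambda\wtuv{v}{\lambda}\wu{\lambda}$, the linking numbers $\wtuv{v}{\lambda}$ are simply the coordinates of $\wu{v}$, and from the linking numbers one recovers all edge-weights $\du{v,e}$ since $\du{v,e}=\wtuv{v}{\lambda}/\wtuv{u}{\lambda}\cdot(\text{known quantities})$ — more precisely, for a node $v$ with adjacent branches, $\du{v,e}=\gcd_{\lambda:\,e\subseteq[v,\lambda]}\{\text{appropriate ratios}\}$, or most directly, $\du{v,e}$ is determined by the fact that $\wtuv{v}{\lambda}=\du{v,e}\cdot\wtuv{v}{\lambda}'$ for $\lambda$ on the $e$-side, together with coprimality of the weights around $v$ (which forces $\du{v,e}=\gcd\{\wtuv{v}{\lambda}:\lambda\text{ on }e\text{-side}\}$ after the other factors are divided out inductively, starting from leaves of a star-full subtree and pruning via \autoref{lm:cherryPruning}). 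I expect the cleanest route is induction on the tree using the identity of \autoref{lm:usefulIdentityLinkingEdge} together with \autoref{rm:extendedSpliceFan}: the tropical multiplicity on the edge $\Rp\emb([u,v])$ equals $\frac{1}{\du{u,v}\du{v,u}}\gcd(\cdots)\gcd(\cdots)$, and this being $1$ pins down $\du{u,v}\du{v,u}$ once the gcd factors are known, while the two weights $\du{u,v},\du{v,u}$ are separated by looking at the two endpoints' multiplicities on the leaf-edges. The edge determinant condition is then automatically satisfied because the recovered data came from an actual fan supported in the positive orthant with the convexity of \autoref{pr:convexitySimplices}, which is equivalent to $\det([u,v])>0$ by the computation in Case~1 of that proof; the semigroup condition, however, is \emph{not} recovered and need not hold — it plays no role in the statement, which only asserts recovery of the (edge-determinant-satisfying, coprime) diagram, consistent with the fact that a splice fan is defined for any edge-determinant splice diagram once we have \autoref{sec:comb-embedd-splice}.

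For \textbf{uniqueness}, suppose two coprime splice diagrams $\Gamma,\Gamma'$ yield the same weighted splice fan. By the reconstruction above, the trees, the leaf identifications, the vectors $\wu{v}$, hence all linking numbers, hence all weights, coincide; therefore $\Gamma=\Gamma'$. The only genuine obstacle is the separation step for the pair of weights $(\du{u,v},\du{v,u})$ on an internal edge from a single product $\du{u,v}\du{v,u}$: a priori the fan might not distinguish them. The resolution is that the individual weights are recovered not from the internal edge but from the gcd-data attached to the \emph{leaf} edges incident to $u$ and to $v$ respectively — namely $\du{v,\lambda}$ for a leaf $\lambda$ adjacent to $v$ is read off from the tropical multiplicity $\frac{1}{\du{v,\lambda}}\gcd(\wtuv{v}{\mu}:\mu\ne\lambda)=1$ in \autoref{rm:extendedSpliceFan}, and then internal-edge weights follow by the chain of ratios $\du{v,e}=\wtuv{v}{\lambda}/\wtuv{v}{\lambda}'$ with $\wtuv{v}{\lambda}'$ computed from already-recovered weights on the subtree beyond $e$, proceeding by pruning end-nodes as in \autoref{lm:cherryPruning}. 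Thus the main obstacle is really a bookkeeping one: organizing the induction so that at each stage the reduced linking numbers $\wtuv{v}{\lambda}'$ entering the ratio are expressed solely in terms of weights already determined, which is guaranteed by the tree structure and the star-full pruning lemma. I would write this as a single induction on the number of nodes, the base case being a star tree (where all weights $\du{v,e}$ are directly the coordinates of $\wu{v}$ after dividing out the unique common factor, which is $1$ by coprimality), and the inductive step pruning one end-node and invoking \autoref{lm:cherryPruning}, \autoref{lm:usefulIdentityLinkingEdge}, and the multiplicity formulas of \autoref{rm:extendedSpliceFan}.
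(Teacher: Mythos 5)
Your proposal follows essentially the same route as the paper: read off the tree from the link of the fan, take the primitive generators of the interior rays (which equal the $\wu{v}$ by \autoref{thm:coprimePrimitiveAndWeightOne}), and then extract the individual weights from gcd's of the coordinates of these vectors by an induction that prunes end-nodes; your use of the leaf-edge multiplicity formula of \autoref{rm:extendedSpliceFan} gives literally the same identity $\du{u,\lambda}=\gcd(\wtuv{u}{\mu}:\mu\neq\lambda)$ that the paper derives from coprimality in its base case.

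Two of your explicit formulas are inverted, though, and would need fixing in a write-up. From \autoref{def:linkingNumbers} one has $\wtuv{v}{\lambda}=\wtuv{v}{\lambda}'\,\du{v}/\du{v,e}$ when $e\subseteq[v,\lambda]$, not $\wtuv{v}{\lambda}=\du{v,e}\,\wtuv{v}{\lambda}'$; the factor $\du{v,e}$ divides $\wtuv{v}{\lambda}$ precisely for the leaves $\lambda$ on the side \emph{away} from $e$, and the correct gcd identity is $\du{v,e}=\gcd\{\wtuv{v}{\lambda}:\lambda\text{ with }e\not\subseteq[v,\lambda]\}$ — this is exactly \autoref{lm:gcdCoprimeSetting}. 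Likewise the ratio $\wtuv{v}{\lambda}/\wtuv{v}{\lambda}'$ recovers $\du{v}/\du{v,e}=\prod_{e'\neq e}\du{v,e'}$ rather than $\du{v,e}$ itself; this is still enough (knowing $\du{v}/\du{v,e}$ for all $e$ at $v$ determines every $\du{v,e}$, which is how the paper separates $\du{u,v}$ from $\du{v,u}$ via $\du{u,v}=\gcd_i(\du{u}/\du{u,\lambda_i})$ and $\du{u}=\operatorname{lcm}_i(\du{u}/\du{u,\lambda_i})$ at an end-node), but the chain of ratios must be set up with the sides swapped relative to what you wrote.
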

The rest of the section is devoted to the proof of these  two results. A series of lemmas and propositions will simplify the exposition.

\begin{remark}\label{rem:recoverFailsIfNonCoprime} Notice that the analog of~\autoref{thm:recoverGammaFromTrop} may fail if we drop the tropical multiplicity one restrictions.  This can be seen by looking at the example in~\autoref{fig:recoverDiagram}. For each choice of edge weight $d_1=1,2$ or $4$, the diagram $\Gamma$ satisfies the semigroup and edge determinant conditions. Furthermore, all tropical multiplicities on the 2-dimensional cones of the splice fan  equal  four. For each value of $d_1$ we can choose systems $\sG{\Gamma}$ defining a germ in $\CC^4$, whose local tropicalization is supported on the input splice fan.
\end{remark}
\begin{figure}
  \includegraphics[scale=0.55]{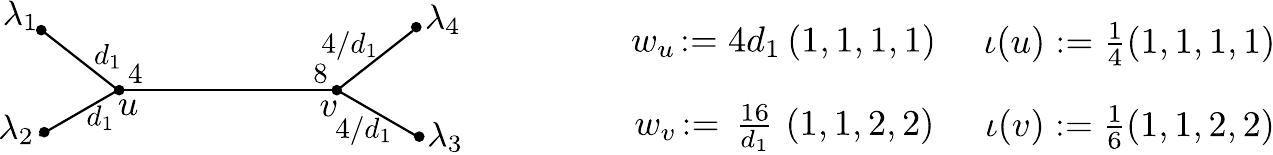}
  \caption{A splice diagram $\Gamma$ which cannot be recovered uniquely from  its splice fan, as in~\autoref{rem:recoverFailsIfNonCoprime}. Here, $d_1$ may take the values $1,2$ or $4$ and all tropical multiplicities of the splice fan equal $4$.\label{fig:recoverDiagram}}
  \end{figure}

Our first technical result will allow us to employ a pruning argument to prove~\autoref{thm:recoverGammaFromTrop}. To this end, we use superscripts to indicate the underlying splice diagram considered for the  computation of each linking number. The absence of a superscript refers to $\Gamma$. The same notation will be used for weight vectors. 

\begin{proposition}\label{pr:subDiagram} 
       Let $[u,v]$ be an internal edge of a splice diagram $\Gamma$. Let $T$ be the branch 
       of $\Gamma$ adjacent to  $u$ and containing $v$. Consider $\Gamma' = [u,T]$ 
       with weights around its nodes inherited from $\Gamma$. Then, the weighted tree 
       $\Gamma'$ also satisfies the semigroup and the edge determinant conditions.
\end{proposition}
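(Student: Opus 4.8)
The plan is to prove that $\Gamma' = [u,T]$ inherits both arithmetic conditions from $\Gamma$, treating the two conditions separately. The key observation throughout is that passing from $\Gamma$ to $\Gamma'$ only changes the local picture at the node $u$: the edge $[u,v]$ of $\Gamma$ is replaced by a half-edge of $\Gamma'$ at $u$ (a leaf edge pointing in the old direction of $v$), and no other node or weight is affected. Note that the weight $\du{u,v}$ at $u$ pointing toward $v$ survives as the weight of this new half-edge in $\Gamma'$, and all weights at nodes inside $T$ are literally unchanged. I will first record how linking numbers transform: for nodes $a,b$ both lying in $T$, the geodesic $[a,b]$ in $\Gamma'$ equals the geodesic in $\Gamma$, and the weights adjacent to but not on it are the same in both diagrams except possibly those hanging off $u$ — but $u\notin[a,b]$ when $a,b\in T$, so in fact $\wtuv{a}{b}^{\Gamma'}=\wtuv{a}{b}^{\Gamma}$. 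For the node $u$ itself and a node $a\in T$, the linking number $\wtuv{u}{a}^{\Gamma'}$ is the product of weights adjacent to but not on $[u,a]$; here the only difference from $\Gamma$ is that in $\Gamma$ there were additional branches of $\Gamma\smallsetminus T$ contributing weights at $u$, which are now replaced by the single new half-edge of weight $\du{u,v}$. Concretely $\du{u}^{\Gamma'}=\du{u,v}\cdot\prod_{e}\du{u,e}$ where $e$ ranges over the edges at $u$ lying inside $T$, and this is exactly the subproduct of $\du{u}^{\Gamma}$ corresponding to those same edges times $\du{u,v}$.

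\textbf{Edge determinant condition.} Every internal edge $[a,b]$ of $\Gamma'$ with both endpoints nodes of $T$ has $\det([a,b])$ computed from weights that are identical to those in $\Gamma$ (by the remark above, both $\du{a,b}\du{b,a}$ and $\wtuv{a}{b}$ are unchanged), so its determinant is positive because $\Gamma$ satisfies the condition. The only edge that requires genuine checking is $[u,v']$ where $v'$ is the node of $T$ adjacent to $u$ in $\Gamma'$ — and this edge is $[u,v]$ when $v'=v$, or a shorter edge of the path if $T$ had a node strictly between. Wait: since $T$ is the branch adjacent to $u$ containing $v$, and $[u,v]$ is an edge of $\Gamma$, in fact $v$ itself is adjacent to $u$, so $v'=v$ and the edge in question is literally $[u,v]$. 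Its two decorations in $\Gamma'$ are $\du{u,v}$ (unchanged, the new half-edge weight) and $\du{v,u}$ (unchanged), so $\du{u,v}\du{v,u}$ is the same. The quantity $\wtuv{u}{v}^{\Gamma'}$ is the product of the remaining decorations at $u$ and at $v$ in $\Gamma'$; at $v$ this is the same as in $\Gamma$, while at $u$ it is the product of the weights $\du{u,e}$ over edges $e$ at $u$ inside $T$ — which is a \emph{divisor} of the corresponding product in $\Gamma$ (we dropped the factors coming from branches of $\Gamma\smallsetminus([u,T])$). Hence $\wtuv{u}{v}^{\Gamma'}\le \wtuv{u}{v}^{\Gamma}$, and therefore $\det_{\Gamma'}([u,v]) = \du{u,v}\du{v,u}-\wtuv{u}{v}^{\Gamma'}\ge \det_{\Gamma}([u,v])>0$.

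\textbf{Semigroup condition.} For a node $v''$ of $T$ other than $u$, and an edge $e\in\starT{\Gamma'}{v''}$, the set $\nodesTevRoot{v'',e}^{\Gamma'}$ of leaves of $\Gamma'$ with $e\subseteq[v'',\lambda]$ either coincides with $\nodesTevRoot{v'',e}^{\Gamma}$ (when $e$ does not point toward $u$) or is obtained from it by replacing the leaves of $\Gamma$ beyond $u$ with the single new leaf $\roottree'$; in either case the total weight $\du{v''}$ is unchanged and, by the invariance $\wtuv{v''}{\lambda}^{\Gamma'}=\wtuv{v''}{\lambda}^{\Gamma}$ for $\lambda\in T$ (and an analogous matching for the new leaf $\roottree'$ against any leaf $\lambda$ of $\Gamma$ in the $u$-direction, using $\wtuv{v''}{\roottree'}^{\Gamma'}=\du{u,v}\cdot\wtuv{v''}{v}^{\Gamma'}/\du{v}^{\Gamma}$-type identities from the definition of linking numbers), the same nonnegative integer combination~\eqref{eq:PowersadmisibleMon} that witnesses the semigroup condition for $(v'',e)$ in $\Gamma$ witnesses it in $\Gamma'$. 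For the node $u$ and an edge $e\in\starT{\Gamma'}{u}$ inside $T$: the set $\nodesTevRoot{u,e}^{\Gamma'}$ is a subset of $\nodesTevRoot{u,e}^{\Gamma}$ (the leaves of $\Gamma$ beyond $u$ that lay on the far side are gone, together with their contribution), but crucially the semigroup generated by the linking numbers $\{\wtuv{u}{\lambda}^{\Gamma'}:\lambda\in\nodesTevRoot{u,e}^{\Gamma'}\}$ need not be smaller — the linking numbers themselves have shrunk by the same factor (the product of the dropped half-edge weights at $u$) by which $\du{u}$ shrank. Precisely, writing $c$ for the product of the weights at $u$ pointing into the discarded branches, one has $\du{u}^{\Gamma'} = \du{u}^{\Gamma}/c'$ and $\wtuv{u}{\lambda}^{\Gamma'}=\wtuv{u}{\lambda}^{\Gamma}/c''$ for appropriate common factors coming from those branches; the point is that dividing the semigroup equation~\eqref{eq:PowersadmisibleMon} for $(u,e)$ in $\Gamma$ through by the common factor yields the required equation in $\Gamma'$. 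Also, for the \emph{new} half-edge $\roottree'$ at $u$, the semigroup condition for $(u,\roottree')$ in $\Gamma'$ amounts to writing $\du{u}^{\Gamma'}$ as a nonnegative combination of $\{\wtuv{u}{\lambda}^{\Gamma'}:\lambda$ a leaf of $\Gamma'$ beyond this half-edge$\}$ — but there is only one such leaf, namely $\roottree'$, with $\wtuv{u}{\roottree'}^{\Gamma'}=\du{u}^{\Gamma'}/\du{u,v}$, so this is automatic since $\du{u}^{\Gamma'}=\du{u,v}\cdot\wtuv{u}{\roottree'}^{\Gamma'}$.

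\textbf{Main obstacle.} The delicate point is bookkeeping the exact common factors by which $\du{u}$ and the linking numbers $\wtuv{u}{\lambda}$ rescale when we collapse the branches of $\Gamma$ on the $u$-side into a single half-edge of weight $\du{u,v}$, and verifying that these factors are \emph{equal} so that the semigroup relation at $u$ survives division. This requires carefully invoking the definition of linking numbers (\autoref{def:linkingNumbers}) to see that every $\wtuv{u}{\lambda}$ with $\lambda\in\nodesTevRoot{u,e}^{\Gamma'}$ contains, as a factor, precisely the product of the weights at $u$ in the collapsed directions — which is exactly the ratio $\du{u}^{\Gamma}/\du{u}^{\Gamma'}$ divided by $\du{u,v}$, since the new half-edge contributes $\du{u,v}$ to $\du{u}^{\Gamma'}$. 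Everything else (edge determinants, the semigroup condition at nodes of $T$, the new half-edge) is a routine consequence of the invariance of all data inside $T$.
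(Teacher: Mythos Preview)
You have misread the construction of $\Gamma'$. Since $T$ is a \emph{single} branch of $\Gamma$ adjacent to $u$ (the one containing $v$), the tree $\Gamma'=[u,T]$ consists of $T$ together with the sole edge $[u,v]$ attaching $u$ to it. Hence $u$ has valency~$1$ in $\Gamma'$: it is a \emph{leaf}, not a node. There are no weights around $u$ in $\Gamma'$, no edge determinant to check for the leaf edge $[u,v]$, and no semigroup condition at $u$. Your ``Main obstacle'' paragraph, the rescaling argument at $u$, and the check of the ``new half-edge $\roottree'$'' all address a nonexistent difficulty; likewise the phrase ``edges at $u$ lying inside $T$'' (there is exactly one, $[u,v]$) signals the same confusion.

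What actually must be checked is the semigroup condition at each node $v''$ of $T$. Here you gesture at the right idea but never nail the key identity. In $\Gamma'$ the new leaf is $u$ itself, and for every leaf $\lambda$ of $\Gamma$ lying in a branch at $u$ other than $T$ one has
\[
\wtuv{v''}{\lambda} \;=\; \wtuvG{v''}{u}{\Gamma'}\cdot\frac{\wtuv{u}{\lambda}}{\du{u,v}},
\]
with $\wtuv{u}{\lambda}/\du{u,v}\in\Z_{>0}$ because $\du{u,v}$ is one of the weights adjacent to but not on $[u,\lambda]$. Thus each such $\wtuv{v''}{\lambda}$ is a positive integer multiple of $\wtuvG{v''}{u}{\Gamma'}$, so any witness of the semigroup condition for $(v'',e)$ in $\Gamma$ rewrites as a witness in $\Gamma'$ by absorbing the discarded-leaf terms into the term for the new leaf $u$. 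This is precisely the paper's argument, encoded in~\eqref{eq:linkingNumberRelns}. The edge determinant condition is immediate once the picture is correct: every internal edge of $\Gamma'$ joins two nodes of $T$, whose stars and weights are unchanged from $\Gamma$, so its determinant equals the one in $\Gamma$.
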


\begin{proof} 
   We only need to check that the semigroup conditions hold for $\Gamma'$. 
   The linking numbers for $\Gamma$ involving a vertex $v'$ of $\Gamma'$ with $v'\neq u$ 
   and a leaf $\lambda$ of $\Gamma$  can be obtained from those in $\Gamma'$ via:
  \begin{equation}\label{eq:linkingNumberRelns}
      \wtuv{v'}{\lambda} =
  \begin{cases}
    \wtuvG{v'}{\lambda}{\Gamma'} & \text{ if }\lambda  \in \leavesT{\Gamma'}\cap \leavesT{\Gamma},\\
    \wtuvG{v'}{u}{\Gamma'}\,\frac{\wtuv{u}{\lambda}}{\du{u,v}} & \text{ otherwise }.
  \end{cases}
      \end{equation}
Since the semigroup condition at each $v'$ holds for $\Gamma$, expression~\eqref{eq:linkingNumberRelns} implies the same is true for $\Gamma'$.
\end{proof}
Assume that $u$ and $v$ are adjacent nodes of $\Gamma$ and let $\Gamma'$ be the associated splice diagram introduced in~\autoref{pr:subDiagram}. Up to relabeling, we write $\leavesT{\Gamma}\smallsetminus \leavesT{\Gamma'}=\{\lambda_1,\ldots,\lambda_s\}$ for some $s$. Consider the following $n\times(n-s+1)$ matrix with integer entries in block form obtained from~\eqref{eq:linkingNumberRelns}:
\begin{equation}\label{eq:Auv}
A:=  \left (
    \begin{array}{c|c}
      \wtuv{u}{\lambda_1}/\du{u,v} & \\
      \vdots & \mathbf{0} \\
      \wtuv{u}{\lambda_s}/\du{u,v} & \\
      \hline
      \mathbf{0} & \operatorname{Id}_{n-s} 
    \end{array}
    \right ).
\end{equation}
A direct computation yields the following identity for the tree $\Gamma'$:
\begin{lemma}\label{lm:ImageOfA}  For each vertex $v'$ of $\Gamma'$ with $v'\neq u$ we have $A \,\wu{v'}^{\Gamma'} = \wu{v'}$. 
\end{lemma}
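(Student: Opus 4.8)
The statement asserts the identity $A\,\wu{v'}^{\Gamma'} = \wu{v'}$ for each vertex $v'$ of $\Gamma'$ with $v'\neq u$, where $A$ is the block matrix from~\eqref{eq:Auv}. Since both sides lie in $\Nwp{\leavesT{\Gamma}}\simeq\Z^n$, it suffices to compare coordinates with respect to the basis $\{\wu{\lambda}:\lambda\in\leavesT{\Gamma}\}$, i.e.\ to show that the $\lambda$-th coordinate of $A\,\wu{v'}^{\Gamma'}$ equals $\wtuv{v'}{\lambda}$ for every leaf $\lambda$ of $\Gamma$. The plan is to unwind the definitions of $\wu{v'}^{\Gamma'}$ (via~\eqref{eq:wu} applied inside $\Gamma'$) and of the matrix $A$, and then invoke~\autoref{pr:subDiagram}'s relation~\eqref{eq:linkingNumberRelns} between linking numbers in $\Gamma$ and in $\Gamma'$.

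\textbf{Key steps.} First I would write $\wu{v'}^{\Gamma'} = \sum_{\mu\in\leavesT{\Gamma'}} \wtuvG{v'}{\mu}{\Gamma'}\,\wu{\mu}$ using~\eqref{eq:wu} in the tree $\Gamma'$; recall that $\leavesT{\Gamma'} = (\leavesT{\Gamma}\cap\leavesT{\Gamma'})\cup\{u\}$, since the leaf $u$ of $\Gamma'$ is a node of $\Gamma$. Then I would split the sum into the contribution of the leaves $\mu\in\leavesT{\Gamma}\cap\leavesT{\Gamma'}$ and the contribution of the single leaf $u$. Applying $A$ to this vector: by the block structure of $A$, the identity block $\operatorname{Id}_{n-s}$ fixes the coordinates indexed by $\leavesT{\Gamma'}\cap\leavesT{\Gamma}$, and the first column of $A$ sends the $u$-coordinate $\wtuvG{v'}{u}{\Gamma'}$ to the vector with $\lambda_i$-entry $\wtuvG{v'}{u}{\Gamma'}\cdot\wtuv{u}{\lambda_i}/\du{u,v}$ for each $i\in\{1,\dots,s\}$. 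Comparing each coordinate of the result with the two cases of~\eqref{eq:linkingNumberRelns}: for $\lambda\in\leavesT{\Gamma'}\cap\leavesT{\Gamma}$ we get exactly $\wtuvG{v'}{\lambda}{\Gamma'} = \wtuv{v'}{\lambda}$ (first case of~\eqref{eq:linkingNumberRelns}), and for $\lambda = \lambda_i$ we get $\wtuvG{v'}{u}{\Gamma'}\,\wtuv{u}{\lambda_i}/\du{u,v} = \wtuv{v'}{\lambda_i}$ (second case of~\eqref{eq:linkingNumberRelns}, using that $v'\in T$ so the geodesic $[v',\lambda_i]$ passes through $u$ and then through the edge $[u,v]$). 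Hence every coordinate of $A\,\wu{v'}^{\Gamma'}$ matches that of $\wu{v'} = \sum_{\lambda\in\leavesT{\Gamma}}\wtuv{v'}{\lambda}\,\wu{\lambda}$, which gives the claim.

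\textbf{Expected obstacle.} This is a genuinely routine bookkeeping calculation, so I do not expect a serious obstacle; the only points requiring mild care are (i) correctly identifying $\leavesT{\Gamma'}$ as containing $u$ as a leaf and tracking which coordinates of the ambient $\Z^n$ are affected by which block of $A$, and (ii) checking that the second case of~\eqref{eq:linkingNumberRelns} genuinely applies for \emph{all} $v'\neq u$ in $\Gamma'$ — this holds because, for such $v'$, the edge $[u,v]$ (equivalently, the weight $\du{u,v}$) lies on the geodesic from $v'$ to any leaf $\lambda_i\notin\leavesT{\Gamma'}$, precisely the hypothesis under which formula~\eqref{eq:linkingNumberRelns} was derived. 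Once these identifications are in place the proof is a one-line coordinate comparison, which is presumably why the authors call it "a direct computation."
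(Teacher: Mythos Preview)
Your proposal is correct and is precisely the direct computation the paper alludes to (the paper gives no proof beyond the phrase ``a direct computation yields the following identity''). The only trivial refinement is that your use of~\eqref{eq:wu} presumes $v'$ is a node; when $v'$ is a leaf of $\Gamma'$ other than $u$, the claim is immediate from the identity block of $A$.
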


\begin{lemma}\label{lm:gcdCoprimeSetting} 
    Fix a coprime splice diagram  $\Gamma$.  Let $u,v$ be two adjacent nodes of $\Gamma$, 
    and let $\Gamma'$ be the diagram from~\autoref{pr:subDiagram}. Then, we have
    \[ \du{v,u} = \gcd(\wtuv{v}{\lambda}: \lambda \in \leavesT{\Gamma'}\cap \leavesT{\Gamma}). \]
\end{lemma}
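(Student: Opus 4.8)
The claim is a statement about linking numbers in a coprime splice diagram: when we split $\Gamma$ along the edge $[u,v]$ and keep the side $\Gamma' = [u,T]$ containing $v$, the $\gcd$ of the linking numbers $\wtuv{v}{\lambda}$ over the leaves $\lambda$ of $\Gamma$ that survive in $\Gamma'$ equals the single weight $\du{v,u}$ sitting on the edge $[u,v]$ at $v$. The plan is to analyze the prime factorization of each linking number $\wtuv{v}{\lambda}$ for $\lambda \in \leavesT{\Gamma'}\cap \leavesT{\Gamma}$ and show that the common part is exactly $\du{v,u}$.

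First I would recall that $\wtuv{v}{\lambda}$ is, by~\autoref{def:linkingNumbers}, the product of all the weights of $\Gamma$ adjacent to but not lying on the geodesic $[v,\lambda]$. For a leaf $\lambda \in \leavesT{\Gamma'}\cap \leavesT{\Gamma}$, the geodesic $[v,\lambda]$ lies entirely inside $\Gamma'$ and in particular does not traverse the edge $[u,v]$; hence the weight $\du{v,u}$ (sitting at $v$, on the edge pointing towards $u$) is \emph{not} on this geodesic and therefore appears as a factor in $\wtuv{v}{\lambda}$ for every such $\lambda$. This shows $\du{v,u} \mid \gcd(\wtuv{v}{\lambda}: \lambda \in \leavesT{\Gamma'}\cap \leavesT{\Gamma})$, giving one divisibility. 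The reverse divisibility is where coprimality enters. I would fix a leaf $\lambda_0$ on the far $u$-side (i.e.\ $\lambda_0\in\pag{u}=\leavesT{T'}$) and use~\eqref{eq:linkingNumberRelns}, which expresses $\wtuv{v}{\lambda_0} = \wtuvG{v}{u}{\Gamma'}\,\wtuv{u}{\lambda_0}/\du{u,v}$; comparing with the leaves of $\Gamma'\cap\Gamma$ and the factorization structure of linking numbers around $v$, the extra common factor beyond $\du{v,u}$ would have to be a common divisor of distinct weights in $\starT{\Gamma}{v}$, which coprimality forbids — so that factor is $1$.

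More concretely, the cleanest route is a direct argument at the node $v$ itself. Write $\starT{\Gamma}{v} = \{e_0, e_1, \ldots, e_{k}\}$ with $e_0 = [u,v]$ the edge towards $u$, so $\du{v,e_0} = \du{v,u}$. Every leaf $\lambda \in \leavesT{\Gamma'}\cap\leavesT{\Gamma}$ reaches $v$ through exactly one of $e_1, \ldots, e_k$; say $\lambda$ enters through $e_{j(\lambda)}$. Then $\wtuv{v}{\lambda} = \big(\prod_{i \neq j(\lambda)} \du{v,e_i}\big) \cdot (\text{weights strictly inside the branch through } e_{j(\lambda)} \text{ not on } [v,\lambda])$, and in particular $\du{v,u} = \du{v,e_0}$ divides it while $\du{v,e_{j(\lambda)}}$ does not. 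Since every branch $e_1, \ldots, e_k$ of $\Gamma'$ at $v$ contains at least one leaf of $\leavesT{\Gamma'}\cap\leavesT{\Gamma}$ (because $\Gamma'$ is a splice diagram, every edge adjacent to a node lies in some geodesic to a leaf), for each $j \in \{1,\ldots,k\}$ there is a leaf $\lambda$ with $\du{v,e_j} \nmid \wtuv{v}{\lambda}$ in the sense that the $e_j$-factor is missing. Taking the $\gcd$ over all such $\lambda$ therefore removes every $\du{v,e_j}$ with $j \geq 1$ from the common part; coprimality of the $\du{v,e_i}$ ensures that once a prime divides some $\du{v,e_j}$ with $j\geq 1$ it divides none of the others, so no such prime survives in the $\gcd$ except through $\du{v,e_0} = \du{v,u}$. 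One must also check that factors coming from weights \emph{strictly inside} a branch (i.e.\ around nodes other than $v$) do not contribute to the $\gcd$: for a node $v'' \neq v$ inside the branch through $e_j$, picking a leaf $\lambda$ in a different sub-branch at $v''$ kills that weight too, and one can arrange simultaneously (by choosing leaves branch-by-branch) that no interior weight is common to all chosen leaves. This finishes the reverse divisibility, and hence equality.

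\textbf{Expected main obstacle.} The bookkeeping in the reverse divisibility is the delicate part: I must exhibit, for \emph{each} prime-power factor appearing in some $\wtuv{v}{\lambda}$ other than those in $\du{v,u}$, a single leaf $\lambda'$ of $\leavesT{\Gamma'}\cap\leavesT{\Gamma}$ whose linking number $\wtuv{v}{\lambda'}$ omits that factor — and do so uniformly so that the $\gcd$ really collapses to $\du{v,u}$. The coprimality hypothesis is precisely what makes this possible at the node $v$ (separating the $\du{v,e_i}$), but one still has to handle interior weights, where the argument is a branch-selection bookkeeping rather than a use of coprimality; I would likely organize this as a short lemma saying that for any finite set of interior weights there is a leaf avoiding all of them, which follows because each interior weight is ``avoidable'' by steering into a sibling sub-branch and these choices are independent across distinct nodes on disjoint sub-branches. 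Alternatively, a slicker proof might induct on the number of nodes of $\Gamma'$, using~\autoref{pr:subDiagram} and the relation~\eqref{eq:linkingNumberRelns}, pruning an end-node of $\Gamma'$ away from $v$; I would try both and keep whichever is shorter.
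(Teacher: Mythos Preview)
Your proposal is correct, and in fact you outline \emph{both} viable routes. The paper takes precisely your second suggestion: an induction on the number of nodes of $\Gamma'$. In the inductive step it fixes the branches $T_1,\dots,T_k$ of $\Gamma'$ adjacent to $v$ (through the nodes $v_1,\dots,v_k$) and the leaves $\mu_1,\dots,\mu_q$ adjacent to $v$, applies the inductive hypothesis to each pruned sub-diagram $\Gamma'_j=[v,T_j]$ to get $\gcd(\wtuv{v}{\mu}:\mu\in\leavesT{T_j})=\du{v}/\du{v,v_j}$, then combines these with $\gcd(\wtuv{v}{\mu_1},\dots,\wtuv{v}{\mu_q})=\du{v,u}\prod_j\du{v,v_j}$ and the pairwise coprimality of the weights at $v$ to conclude.

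Your primary (direct) route also works, but the ``interior weights'' step needs sharper phrasing than you give it. The clean statement is: for each prime $p\nmid\du{v,u}$ there is a leaf $\lambda\in\leavesT{\Gamma'}\cap\leavesT{\Gamma}$ with $p\nmid\wtuv{v}{\lambda}$. You prove this by \emph{following the prime}: starting at $v$ and, at each node reached, exiting along the (at most one, by coprimality at that node) adjacent edge whose weight is divisible by $p$---or any edge away from $v$ if no such edge exists or it is the incoming one. This path ends at a leaf $\lambda$, and along $[v,\lambda]$ every side weight (i.e.\ every factor of $\wtuv{v}{\lambda}$) is coprime to $p$ by construction. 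This is more elementary than the paper's induction and exposes exactly where coprimality is used (once per node). Your sketch conflated ``one leaf per prime'' with ``one leaf avoiding a set of weights,'' and your first paragraph's detour through a leaf $\lambda_0$ on the $u$-side via~\eqref{eq:linkingNumberRelns} is a dead end; drop both and state the prime-following lemma directly.
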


\begin{proof} 
   The result follows by an easy induction on the number of nodes of $\Gamma'$. If $\Gamma'$ has a single node, the identity holds by the coprimality of the weights around $v$.    For the inductive step, we assume that  $v$ is adjacent to $q$ leaves  and $k$ nodes other than $u$, denoted by $\{\mu_1,\ldots, \mu_q\}$ and  $\{v_1,\ldots, v_k\}$, respectively. Then,
  \begin{equation}\label{eq:leafBranchesAtv}
    \gcd(\wtuv{v}{\mu_1}, \ldots, \wtuv{v}{\mu_q}) = \du{v,u} \prod_{j=1}^k \du{v,v_j}.
  \end{equation}

  For each $j \in \{1,\ldots,k\}$ we let $T_j$ be the branch of  $\Gamma'$ adjacent to  $v$ 
  and containing $v_j$. Let $\Gamma_j' = [v,T_j]$ be the corresponding splice diagram with 
  inherited weights. The inductive hypothesis on each $\Gamma_j'$ yields
    $ \du{v_j,v} = \gcd(\wtuv{v_j}{\mu}: \mu \in \leavesT{\Gamma_j'}\smallsetminus \{v\})$.
   The identity $\wtuv{v}{\mu} = (\wtuv{v_j}{\mu}/\du{v_j,v})\, (\du{v}/\du{v,v_j})$ where 
   $\mu \in \leavesT{\Gamma_j'}\smallsetminus \{v\}$ gives
    \begin{equation}\label{eq:RelbranchesAtV}
          \gcd(\wtuv{v}{\mu}: \mu \in \leavesT{\Gamma_j'}\smallsetminus \{v\}) = \frac{\du{v}}{\du{v,v_j}}.
    \end{equation}
    The result follows by combining~\eqref{eq:leafBranchesAtv} and~\eqref{eq:RelbranchesAtV} 
    with the coprimality of the weights at $v$.
 \end{proof}

\begin{proof}[Proof of~\autoref{thm:coprimePrimitiveAndWeightOne}]
  We prove the statement by induction on the number of nodes of $\Gamma$, which we denote by $p$. If $p=1$, we let $u$ be the unique node of $\Gamma$. Then, the coprimality condition
  \[
  \gcd(\du{u,\lambda_i}, \du{u,\lambda_j})=1 \quad \text{ for }i\neq j
  \]
  implies that $\wu{u}$ is a primitive vector. Furthermore, the formula in~\autoref{rm:extendedSpliceFan}  confirms that the tropical multiplicity associated to the edge $[u,\lambda]$ equals one since
  \[
  \frac{1}{\du{u,\lambda}}\gcd(\du{u,\lambda}\prod_{\gamma\neq \lambda, \mu} \du{u,\gamma}: \mu \in \leavesT{\Gamma}\smallsetminus \{\lambda\})=1.
  \]

  Next, assume $p>1$ and let $u$ be an end-node of $\Gamma$. Let $\{\lambda_1,\ldots, \lambda_s\}$ be the leaves adjacent to $u$ and let $v$ be the unique node of $\Gamma$ adjacent to $u$. We let $\Gamma'$ be the splice sub-diagram of $\Gamma$ obtained from $u$ and $v$, as in~\autoref{pr:subDiagram}. The coprimality condition for $\Gamma'$ and our inductive hypothesis confirm that for each vertex $v'$ of $\Gamma'$,  $\wu{v'}^{\Gamma'}$  is a primitive vector in $\Nw{\Gamma'}$, and all tropical multiplicities of $\ptrop\sG{\Gamma'}$ are one.

  Since the $\gcd$ of all maximal minors of the matrix $A$ equals 1 by the coprimality condition around $u$, it follows that $A$ maps primitive vectors in $\Nw{\Gamma'}$ to primitive vectors in $\Nw{\Gamma}$. This fact together with~\autoref{lm:ImageOfA} ensures that the vector $\wu{v'}$ is primitive whenever $v'\neq u$ is a node of $\Gamma'$.
If  $v'=u$ we have
\[  \wu{u}= \sum_{i=1}^s \frac{\du{u}}{\du{u,\lambda_i}} \wu{\lambda_i} + \frac{\du{u}}{\du{u,v}} \sum_{\mu\in \pag{v}} \frac{\wtuv{v}{\mu}}{\du{v,u}}\wu{\mu},
\]
where $\pag{v}$ is the set $\nodesTevRoot{u,[u,v]}$ from~\autoref{def:semgpcond}. Since $\gcd(\wtuv{v}{\mu}: \mu \in \pag{v})=\du{v,u}$ by~\autoref{lm:gcdCoprimeSetting}, and $\gcd({\du{u}}/{\du{u,\lambda_i}}: i=1,\ldots, s) = \du{u,v}$, the pairwise coprimality of weights around $u$ ensures that $\wu{u}$ is a primitive vector in $\Nw{\Gamma}$.

  To finish, we compute the  tropical multiplicities. \autoref{rm:extendedSpliceFan},~\autoref{lm:gcdCoprimeSetting} and the coprimality of weights around $u$ implies that the multiplicity corresponding to  the edge $[u,\lambda_i]$ of $\Gamma$ is one. Indeed, we have
\begin{equation*}
  \begin{aligned}
    \gcd(\frac{\wtuv{u}{\mu}}{\du{u,\lambda_i}} : \mu \in \leavesT{\Gamma}\smallsetminus \{\lambda_i\}) & = \gcd(\gcd(\frac{\du{u}}{\du{u,\lambda_i}\du{u,\lambda_j}} j=1,\ldots, s, j\neq i), \gcd(\frac{\wtuv{v}{\mu}}{\du{v,u}} \frac{\du{u}}{\du{u,\lambda_i}\du{u,v}} : \mu \in \pag{v}))\\
    & = \gcd(\gcd(\frac{\du{u}}{\du{u,\lambda_i}\du{u,\lambda_j}} j=1,\ldots, s, j\neq i), \frac{\du{u}}{\du{u,\lambda_i}\du{u,v}}) =  1.
  \end{aligned}
\end{equation*}
If we pick an edge $[v',\lambda_j]$ with $j>s$ we get multiplicity one by the inductive hypothesis applied to $\Gamma'$ combined with~\eqref{eq:linkingNumberRelns} and the coprimality of the weights around $u$. More precisely,
\begin{equation*}
  \begin{aligned}
    \gcd(\gcd(\frac{\wtuv{v'}{\lambda_k}}{\du{v',\lambda_j}}: k=s+1,\ldots, n, k\neq j),\gcd(\frac{\wtuv{v'}{\lambda_i}}{\du{v',\lambda_j}} : i=1,\ldots, s)) & = \\
\gcd(\gcd(\frac{\wtuvG{v'}{\lambda_k}{\Gamma'}}{\du{v',\lambda_j}}: k=s+1,\ldots, n, k\neq j),\frac{\wtuvG{v'}{u}{\Gamma'}}{\du{v',\lambda_j}}\underbrace{\gcd(\frac{\wtuv{u}{\lambda_i}}{\du{u,v}} : i=1,\ldots, s)}_{=1}) & = 1.
  \end{aligned}
\end{equation*}
Finally, the cone associated to an edge between two adjacent nodes $u',v'$ of $\Gamma$ will have tropical multiplicity one by~\autoref{lm:gcdCoprimeSetting} since
$\gcd({\wtuv{u'}{\lambda}}/{\du{u',v'}} : \lambda \in 
\nodesTevRoot{v',[u',v']})  =  \gcd({\wtuv{v'}{\mu}}/{\du{v',u'}}: \mu \in 
\nodesTevRoot{u',[u',v']}) = 1$.
\end{proof}
  
\begin{proof}[Proof of~\autoref{thm:recoverGammaFromTrop}] The combinatorial type of $\Gamma$ is completely determined by intersecting $\simplex{n-1}$ and the  splice fan. In turn,  \autoref{thm:coprimePrimitiveAndWeightOne}~(\ref{item:wuPrim})  allows us to characterize the vector $\wu{u}$ as the primitive vector associated to the corresponding ray of the fan $\Rp\emb(\Gamma)$.
All that remains is to determine the weights around each node of $\Gamma$ from this data.  We do so by induction on the number of nodes of $\Gamma$, which we denote by $p$.

  If $p=1$, then the coprimality of the weights around the single node $u$ of $\Gamma$ determines each $\du{u,\lambda_i}$ uniquely as follows. By construction, the entries of $\wu{u}$ are coprime and we have
  \[\du{u,\lambda} = \gcd((\wu{u})_{\mu} : \mu \in \leavesT{\Gamma}\smallsetminus \{\lambda\}).
\]

Next, assume $p>1$ and fix an end-node $u$ of $\Gamma$. Let $\lambda_1,\ldots, \lambda_s$ be the leaves of $\Gamma$ adjacent to $u$, and $v$ be  the unique node of $\Gamma$ adjacent to $u$. Let $\Gamma'$ be the tree obtained by pruning $\Gamma$ from $u$, as in~\autoref{pr:subDiagram}.
The weights around $u$ can be recovered uniquely from  the splice fan of $\Gamma$.  Indeed, write 
\begin{equation}
  \wu{u} = 
  \sum_{i=1}^s \frac{\du{u}}{\du{u,\lambda_i}}\,\wu{\lambda_i} + \frac{\du{u}}{\du{u,v}}  \sum_{j=s+1}^n  \frac{\wtuvG{v}{\lambda_{j}}{\Gamma'}}{\du{v,u}} \,\wu{\lambda_j}.
\end{equation}
Notice that the coprimality condition gives $\du{u,v} = \gcd(\du{u}/\du{u,\lambda_i}: i =1,\ldots, s)$ and $\du{u} = \lcm(\du{u}/\du{u,\lambda_i}: i =1,\ldots, s)$. From this we recover all remaining $s$ weights at $u$ since $\du{u,\lambda_i} = \du{u}/(\du{u}/\du{u,\lambda_i})$ for every $i\in \{1,\ldots, s\}$.

Next, for each node $v'$ of $\Gamma$ with $v'\neq u$, we use the full-rank matrix $A$ from~\eqref{eq:Auv} to recover $\wu{v'}^{\Gamma'}\in \Z^{n-s+1}$ uniquely from $\wu{v'}$. Since $\wu{u}^{\Gamma'}$ is a prescribed canonical basis element of $\Nw{\Gamma'}\simeq \Z^{n-s+1}$, the set of vectors $\{\wu{v'}^{\Gamma'}: v' \text{ is a node of } \Gamma'\}$ allows us to determine the  splice fan of $\Gamma'$. The inductive hypothesis then uniquely recovers the splice diagram $\Gamma'$, and hence $\Gamma$ has been fully determined.
\end{proof}

\section{Intrinsic nature of splice type singularities}
\label{sec:natur-splice-type}

\autoref{thm:tropsG} shows that the local tropicalization of the germ defined by a given splice type system $\sG{\Gamma}$ is independent of the choice of admissible monomials and higher order terms used to define it.  In fact, Neumann and Wahl proved that the set of splice type singularities defined by splice type systems with fixed admissible monomials associated to a given splice diagram is independent of these choices, both in the coprime setting and in the general case under a suitable equivariant hypothesis on the series $\gvi{v}{i}$ collecting the higher order terms of each series $\fgvi{v}{i}$. In this section we give a variant of their proof in the coprime case and we show by an example that without the equivariance hypothesis, the result no longer holds.
\medskip

Here is the precise statement for coprime diagrams, which can be deduced from the equivariant case~\cite[Theorem 10.1]{NW 05bis}. For completeness, we include a direct proof:

        \begin{theorem}\label{thm:naturality} 
           Let $\Gamma$ be a coprime splice diagram with $n$ leaves, and let 
           $$\mathcal{M}:=\{\zexp{\wtNve{v}{e}}: v \text{ is a node in }\Gamma, e \in \starT{\Gamma}{v}\}$$ 
           be  a complete set of admissible monomials for $\Gamma$. 
          Let $\sG{\Gamma}_{\mathcal{M}}$ be the set of all splice type systems 
           that can be constructed using the set $\mathcal{M}$. 
          Then, the set  $X_{\mathcal{M}}$ of all germs $(X,0)\hookrightarrow \CC^n$ 
          defined by the vanishing of a system in $\sG{\Gamma}_{\mathcal{M}}$ 
          is independent of $\mathcal{M}$.
        \end{theorem}
        
        \begin{proof} Let $\mathcal{M'}$ be a second complete set of admissible monomials. We proceed by induction on the size $p$ of the set $\mathcal{ M}\smallsetminus \mathcal{M'}$. 
        If $p=0$, there is nothing to show. For the inductive step, it suffices to analyze the case 
        when $p=1$, i.e. when $\mathcal{M}$ and $\mathcal{M'}$ differ by exactly 
        one admissible monomial.
          Fix a pair $(v_0,e_0)$ for which the corresponding monomials in $\mathcal{M}$ and 
          $\mathcal{M'}$ differ, and let $\wtNve{v_0}{e_0}$ and $\wtNve{v_0}{e_0}'$ be the associated admissible exponents.
          To prove the statement, we use the  $\wu{v_0}$-filtration 
            $\mathcal{I}_{\bullet}\colon I_0\supseteq I_1 \supseteq I_2 \supseteq \ldots$
          of ideals of the local ring $\cO :=\CC\{z_1,\ldots, z_n\}$, where
          \begin{equation}\label{eq:vFiltrationIdeal}
            I_d:=\{f\in \cO: \wu{v_0}(f) \geq d\}.
          \end{equation}
          
           Let $\{\fgvi{v}{i}: v, i\}$ be a splice type system in $\sG{\Gamma}_{\mathcal{M}}$. 
           It determines a germ 
          $(X,0) \in X_{\mathcal{M}}$ together with an embedding 
          $\varphi\colon X \hookrightarrow \CC^n$.           We write $\fgvi{v}{i}=\fvi{v}{i} + \gvi{v}{i}$ 
          for each node $v$ of $\Gamma$ and $i\in \{0, \ldots, \valv{v}-2\}$ as in~\eqref{eq:surface} 
          and~\eqref{eq:surfaceSeries}.  The $\wu{v_0}$-filtration $\mathcal{I}_{\bullet}$ 
          restricted to $X$ yields a 
          filtration $\mathcal{J}_{\bullet}$ under the corresponding surjective map of 
          local rings $\varphi^{\sharp}\colon \cO \twoheadrightarrow \cO_{(X,0)}$, i.e.,
          \begin{equation}\label{eq:JIdealFiltration}
                 J_{k}:= \varphi^{\sharp}(I_k) \quad \text{ for all } k\geq 0.
          \end{equation}

          By~\autoref{lm:PropSection10NW} below, there exist $a\in \CC^*$, $g\in I_{\du{v_0}+1}$ 
          and $h$ in the ideal of $\cO$ spanned by $\{\fgvi{v}{i}: v, i\}$, satisfying the equality
          \begin{equation}\label{eq:swapAdmMonomials}
            \zexp{\wtNve{v_0}{e_0}} - a\, \zexp{\wtNve{v_0}{e_0}'} = h + g.
          \end{equation}
          Furthermore, up to moving  to $g$ all higher-order contributions of terms in $h$  
          coming from each $\fgvi{v_0}{i}$,  
          we may assume that $h = \sum_{v,i} a_{v,i} \fgvi{v}{i}$ with $a_{v_0,i}\in \CC$ 
          for each $i\in \{1,\ldots, \valv{v_0}\}$. 
          This can be achieved thanks to~\eqref{eq:admMonwuvalue} and~\eqref{eq:gviConditions}.
          
          We prove the inclusion $X_{\mathcal{M}} \subseteq X_{\mathcal{M}'}$ 
          by constructing an explicit 
          system $\{\fgvi{v}{i}': v, i\}$ in $\sG{\Gamma}_{\mathcal{M}'}$ whose vanishing set equals $X$. 
          The reverse inclusion then follows by exploiting the symmetry between $\mathcal{M}$ 
          and $\mathcal{M'}$.
          We consider the set $\{\fgvi{v}{i}': v,i\}$ with $\fgvi{v}{i}' := \fgvi{v}{i}$  for each $v\neq v_0$ and  $i\in \{1,\ldots, \valv{v}-2\}$, whereas   
for $v=v_0$ and $i\in \{1,\ldots, \valv{v_0}-2\}$ we pick  $\fgvi{v_0}{i}':= \fvi{v_0}{i}' + \gvi{v_0}{i}'$ with
          \begin{equation}\label{eq:newSystemMp}
                 \fvi{v_0}{i}' :=
                  (\fvi{v_0}{i} - \cvei{v_0}{e_0}{i} \zexp{\wtNve{v_0}{e_0}}) +  
                    (\cvei{v_0}{e_0}{i}\,a) \zexp{\wtNve{v_0}{e_0}'} \quad 
                \text{ and } \quad \gvi{v_0}{i}':= \gvi{v_0}{i} +  \cvei{v_0}{e_0}{i}\, g.
           \end{equation}
The scalar $a$ and the series $g \in I_{\du{v_0}+1}$ are those from~\eqref{eq:swapAdmMonomials}.

          We claim that $\{\fgvi{v}{i}'\}\in \sG{\Gamma}_{\mathcal{M}'}$. Indeed, by construction, each series $\gvi{v_0}{i}'$ lies in $I_{\du{v_0}+1}$, as required by~\eqref{eq:gviConditions} (see~\autoref{rm:gviConditions}). In addition, the matrix of coefficients for the polynomials $\{\fgvi{v_0}{i}'\}_{i}$ is obtained from the matrix $(\cvei{v_0}{e}{i})_{i,e}$ after rescaling by $a$ the column labeled with $e_0$. Thus, the Hamm determinant conditions  of~\autoref{def:splicesystem} are satisfied.
          
          Combining~\eqref{eq:swapAdmMonomials} and \eqref{eq:newSystemMp} yields
          \[
              \fgvi{v_0}{i}' = \fgvi{v_0}{i} -  \cvei{v_0}{e_0}{i}\, h \in \langle \fgvi{v}{j}: v 
              \text{ node of } \Gamma, j = 1,\ldots, \valv{v}-2\rangle.\]
We use the expression of $h$ given above to replace  $\{\fgvi{v}{i}' : v,i\}$ by a set  generating the same ideal, i.e.,
          \begin{equation}\label{eq:newGens}
              \{\fgvi{v}{i}: v \neq v_0, i=1,\ldots, \valv{v}-2\} \cup \; \{
                \fgvi{v_0}{i} - \cvei{v_0}{e_0}{i}\sum_{j=1}^{\valv{v_0}-2} a_{v_0,j} \,\fgvi{v_0}{j} 
                : i=1,\ldots, \valv{v_0}-2\}.
          \end{equation}

          Since both $\{\fgvi{v}{i}: v, i\}\in \sG{\Gamma}_{\mathcal{M}}$ and 
          $\{\fgvi{v}{i}': v, i\}\in \sG{\Gamma}_{\mathcal{M'}}$ determine complete intersection 
          systems of equations by~\autoref{thm:spliceicis}, the 
          $(\valv{v_0}-2)\times (\valv{v_0}-2)$-matrix of scalars 
          $\operatorname{Id} - (\cvei{v_0}{e_0}{i}\,a_{v_0,j})_{i,j}$ associated to the second set 
          in~\eqref{eq:newGens} must be invertible. From here it follows that the vanishing 
          sets of both collections $\{\fgvi{v}{i}': v, i\}$ and $\{\fgvi{v}{i}: v, i\}$ agree. Thus,  
          the germ $X$ lies in $X_{\mathcal{M}'}$, as we wanted to show.
        \end{proof}

        The following technical lemma gives a more precise version of the first half of the statement of~\cite[Theorem 10.1]{NW 05bis}. Its proof follows the same reasoning, so we omit it here:
        \begin{lemma}\label{lm:PropSection10NW} Fix two collections of admissible monomials $\mathcal{M}, \mathcal{M'}$ with $|\mathcal{M}\smallsetminus \mathcal{M'}|=1$. Assume $X\in X_{\mathcal{M}}$, and let $\zexp{\wtNve{v_0}{e_0}}\in \mathcal{M}\smallsetminus \mathcal{M'}$ and $\zexp{\wtNve{v_0}{e_0}'}\in\mathcal{M'}\smallsetminus \mathcal{M}$. Then, there exists $a\in \CC^*$ such that the restriction $(\zexp{\wtNve{v_0}{e_0}} - a \zexp{\wtNve{v_0}{e_0}'}\big)_{|_{X}}$ belongs to $J_{\du{v_0}+1}$, where $J_{\du{v_0}+1}$ is the ideal from~\eqref{eq:JIdealFiltration}.
        \end{lemma}

        As we mentioned earlier, if $\Gamma$ is not coprime,  analogous results 
        to~\autoref{thm:naturality} and~\autoref{lm:PropSection10NW} can be proved 
        under the condition that the higher order terms of each system 
        (i.e., the terms in each $g_{v,i}$) satisfy an equivariant condition under the action 
        of a suitable finite abelian group, namely, the discriminant group of a given plumbing graph with associated splice diagram $\Gamma$.
        For a precise statement, we refer to~\cite[Theorem 10.1]{NW 05}.

        It is natural to ask whether this equivariant condition can be weakened. 
        The next example shows that \emph{this is not the case}.

                \begin{figure}
          \includegraphics[scale=0.6]{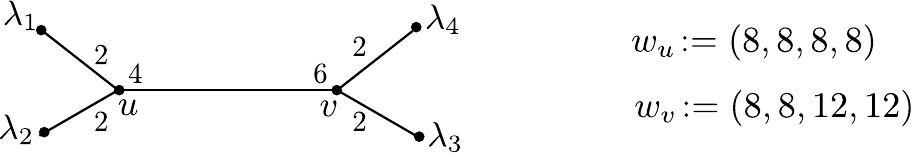}
        \caption{An example confirming the dependency of splice-type systems 
        on the choice of admissible monomials.\label{fig:MonomialDependency}}\end{figure}

        \begin{example}\label{ex:notNatural} 
            Consider the (non-coprime) splice diagram $\Gamma$ from
            ~\autoref{fig:MonomialDependency} and pick two sets of admissible monomials 
            for $\Gamma$ that differ only in the choice of exponent vectors for 
            the pair $(v,[v,u])$:
          \[
          \mathcal{M}:=\{z_1^2, z_2^2, z_3z_4, z_3^2, z_4^2\}\cup \{z_1^3\}, \quad          
           \mathcal{M}':=\{z_1^2, z_2^2, z_3z_4, z_3^2, z_4^2\}\cup \{z_1^2z_2\}.
          \]
          We claim that $X_{\mathcal{M}}\neq X_{\mathcal{M}'}$, i.e., the elements of 
          $\sG{\Gamma}_{\mathcal{M}}$ and $\sG{\Gamma}_{\mathcal{M}}$ 
          determine different sets of subgerms of $(\CC^n,0)$. 
          More precisely, we show that the germ in $X_{\mathcal{M}}$ defined by the system
  \begin{equation}\label{eq:noNaturalM}
    \begin{cases}  \fvi{u}{1}:= \;\;\;\, z_1^{2}\;\; \;+\;\;\; z_2^2\; +\;\;\; z_3\,z_4 , \\
  \fvi{v}{1}:=\;\;\;\; z_1^3 \;\; \;+\;\;\;  z_3^2 \;\; \;+\;\;\;z_4^2  ,
    \end{cases}
  \end{equation}
  in $\sG{\Gamma}_{\mathcal{M}}$  cannot be a member of $X_{\mathcal{M}'}$. 
  To do so, it suffices to show that no power series  associated to the node $v$ of $\Gamma$, that is,  no power series of the form
  \[
       \fgvi{v}{1}'  :=  b_1\,z_1^2z_2 + b_2\,  z_3^2  + b_3\,z_4^2 + \gvi{v}{1}, 
  \]
with $b_1,b_2, b_3\in \CC^*$ and $\gvi{v}{1}$ satisfying~\eqref{eq:gviConditions} 
can be an element of the ideal $(\fvi{u}{1}, \fvi{v}{1})$ of the power series ring 
$\CC\{z_1,\ldots, z_4\}$.
  
  We argue by contradiction and pick elements $A_1,A_2\in \CC\{z_1,\ldots, z_4\}$ with
\begin{equation}\label{eq:badSeries}
b_1\,z_1^2z_2 + b_2\,  z_3^2  + b_3\,z_4^2 + \gvi{v}{1} = A_1 \,\fvi{u}{1} + A_2\, \fvi{v}{1}.
\end{equation}
By construction, the $\wu{v}$-initial form on the left-hand side is $b_1\,z_1^2z_2 + b_2\,  z_3^2  + b_3\,z_4^2$ and its $\wu{v}$-weight is 24.
We claim that the $\wu{v}$-initial form on the right-hand side 
of~\eqref{eq:badSeries} may be written as
\begin{equation*}\label{eq:rightHandSideInitial}
  \alpha_1(\zu) (z_1^2+z_2^2) + \alpha_2(\zu) (z_1^3 + z_3^2 + z_4^2),
\end{equation*}
for two $\wu{v}$-homogeneous polynomials  $\alpha_1(\zu), \alpha_2(\zu)$. 

We prove this claim by explicit computation, comparing the $\wu{v}$-weights of both summands and noticing that
$\initwf{\wu{v}}{\fvi{u}{1}} = {z_1^2+z_2^2}$, $\initwf{\wu{v}}{\fvi{v}{1}} = \fvi{v}{1}$.
Three situations can occur.
First, if $\wu{v}(A_1 \fvi{u}{1}) < \wu{v}(A_2 \fvi{v}{1})$, 
then the $\wu{v}$-initial form on  the right-hand side of~\eqref{eq:badSeries} comes from the first summand, i.e., $\alpha_1(\zu) = \initwf{\wu{v}}{A_1}$ and $\alpha_2(\zu)=0$. Similarly, if $\wu{v}(A_1\fvi{u}{1}) > \wu{v}(A_2\fvi{v}{1})$, then the second summand determines the $\wu{v}$-initial form on the right-hand side of~\eqref{eq:badSeries}, so $\alpha_1(\zu)=0$ and $\alpha_2(\zu)=\initwf{\wu{v}}{A_2}$. Finally, if $\wu{v}(A_1\fvi{u}{1}) = \wu{v}(A_2\fvi{v}{1})$, the condition that the total $\wu{v}$-weight of $A_1\fvi{u}{1}+A_2\fvi{v}{1}$ and $\fvi{v}{1}$ agrees with the $\wu{v}$-weight of  $\fgvi{v}{1}'$ confirms that both terms contribute to the $\wu{v}$-initial form, with $\alpha_1(\zu) = \initwf{\wu{v}}{A_1}$ and $\alpha_2(\zu)=\initwf{\wu{v}}{A_2}$.

Comparing the $\wu{v}$-initial forms on both sides of~\eqref{eq:badSeries} yields an identity of $\wu{v}$-homogeneous polynomials:
\begin{equation*}\label{eq:initialIdentity}
 b_1\,z_1^2z_2 + b_2\,  z_3^2  + b_3\,z_4^2 =  \alpha_1(\zu) (z_1^2+z_2^2) + \alpha_2(\zu) (z_1^3 + z_3^2 + z_4^2).
\end{equation*}
Since the $\wu{v}$-weight on both sides equals $24$, we conclude that $\alpha_2(\zu)$ must be a constant. Evaluating both sides at  $z_1=z_2=0$ forces $b_2=b_3=\alpha_2(\zu)$, so in particular $\alpha_2:=\alpha_2(\zu)\in \CC^*$. We conclude from this that
\[b_1\,z_1^2z_2 =   \alpha_1(\zu) (z_1^2+z_2^2) + \alpha_2 \,z_1^3.\]
This identity implies that the function $z_1^3/(z_1^2z_2) = b_1\alpha_2^{-1}$ is constant on $Z(z_1^2+z_2^2)\cap (\CC^*)^2$, which is false. This contradiction confirms that $X_{\mathcal{M}}\neq X_{\mathcal{M}'}$, as we wanted to show.
        \end{example}

    \section*{Acknowledgments}
Maria Angelica Cueto was supported by an NSF
postdoctoral fellowship DMS-1103857 and NSF Standard Grants DMS-1700194 and DMS-1954163 (USA). 
Patrick Popescu-Pampu was supported by French grants ANR-12-JS01-0002-01 SUSI, 
ANR-17-CE40-0023-02 LISA and Labex CEMPI (ANR-11-LABX-0007-01). 
Labex CEMPI also financed a one month research stay of the first author in Lille during Summer 2018.

Part of this project was carried out during two Research in triples programs, 
one at the Centre International de Rencontres Math\'ematiques 
(Marseille, France, Award number: 1173/2014)
 and one at the Center International Bernoulli (Lausanne, Switzerland).  
 The authors would like to thank both institutes for their hospitality, 
 and for providing excellent working conditions. 
 
The authors are very grateful to Jonathan Wahl for contributing the proof presented 
in the appendix of this paper. Finally, we express our warm thanks to the referee 
for comments and suggestions that helped us improve the exposition.

 \medskip
 
\appendix
\section{Initial ideals and local regular sequences \\ \small{(by Jonathan Wahl)}}
  \label{sec:appendixL3.3}

  In~\cite{NW 05bis}, the authors invoke a folklore lemma in commutative algebra 
  in order to prove several of their main theorems. This result involves regular sequences 
  in a polynomial ring and their initial forms with respect to integer weight vectors.
    As originally stated,~\cite[Lemma 3.3]{NW 05bis} is not quite-correct: the global setting must be replaced by a local one. 
  This appendix provides a complete proof of this result in the local setting of convergent power series near the origin, a result we could not locate in the literature.  This local version  agrees with the general  framework of~\cite{NW 05bis}.
Throughout, we let $n$ be a positive integer and let $\boxedo{(\cO, \mfkm)}$ denote the local ring of convergent power series $\CC\{z_1,\ldots, z_n\}$ near the origin. 
\medskip

  We start by stating our main result, namely, a reformulation of~\cite[Lemma 3.3]{NW 05bis} in the local setting. Its proof will be given at the end of this appendix, after discussing a series of preliminary technical results. Note that the same statement and proof will hold if $\cO$ denotes the localization of the polynomial ring $\CC[z_1, \dots, z_n]$ at the maximal ideal of the origin of $\CC^n$.
  
  \begin{theorem}\label{thm:newL3.3} Let $(f_1,\ldots, f_s)$ be a finite sequence of elements in the maximal ideal $\mfkm$ of $\cO$, and let $J$ be the ideal generated by them. Fix a positive weight vector $\wu{}\in (\Z_{>0})^n$. Assume that $(\initwf{\wu{}}{f_1},\ldots, \initwf{\wu{}}{f_s})$ is a regular sequence in $\cO$. Then:
    \begin{enumerate}
    \item\label{item:fregSeq} the sequence $(f_1,\ldots, f_s)$ is also regular, and
    \item \label{item:initialFormsGenerate}
      the $\wu{}$-initial ideal $\initwf{\wu{}}{J}\cO$ is generated by $\{\initwf{\wu{}}{f_1},\ldots, \initwf{\wu{}}{f_s}\}$.
    \end{enumerate}
  \end{theorem}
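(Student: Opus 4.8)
The plan is to pass to the $\wu{}$-adic completion $\hat{\cO}=\CC\llbracket z_1,\dots,z_n\rrbracket$, run a filtered Koszul complex argument there, and then descend back to $\cO$ using that $\cO\hookrightarrow\hat{\cO}$ is faithfully flat. Set $d_i:=\wu{}(f_i)$ and $g_i:=\initwf{\wu{}}{f_i}$ (each $g_i$ is a $\wu{}$-homogeneous polynomial, as $\wu{}$ has positive entries), and equip $\cO$ and $\hat{\cO}$ with the decreasing $\wu{}$-adic filtration $F_{\ge k}:=\{f:\wu{}(f)\ge k\}$. Positivity of $\wu{}$ makes every graded piece finite dimensional and yields $\operatorname{gr}_{\wu{}}\cO\cong\operatorname{gr}_{\wu{}}\hat{\cO}\cong\CC[z_1,\dots,z_n]$ as $\wu{}$-graded rings, under which $\initwf{\wu{}}{J}\cO$ is identified with the graded ideal generated by the initial forms of all elements of $J$. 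The containment $(g_1,\dots,g_s)\subseteq\initwf{\wu{}}{J}\cO$ is automatic; the substance of the theorem — the reverse containment and the regularity of $(f_1,\dots,f_s)$ — will both follow once we establish, over $\hat{\cO}$, that $\operatorname{gr}_{\wu{}}(\hat{\cO}/J\hat{\cO})\cong\CC[z]/(g_1,\dots,g_s)$ and that the Koszul complex of $(f_1,\dots,f_s)$ over $\hat{\cO}$ is acyclic in positive homological degrees.

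For the Koszul argument I would filter $K_\bullet:=K_\bullet(f_1,\dots,f_s;\hat{\cO})$ by assigning to a basis wedge $e_I=e_{i_1}\wedge\cdots\wedge e_{i_p}$ the filtration weight $d_I:=\sum_{j\in I}d_j$, so that $F_{\ge k}K_p=\bigoplus_{|I|=p}F_{\ge k-d_I}\cdot e_I$. A direct check shows the Koszul differential preserves this filtration, and — crucially using that $g_i$ is the component of $f_i$ of weight \emph{exactly} $d_i$ — the associated graded complex is precisely the Koszul complex $K_\bullet(g_1,\dots,g_s;\CC[z])$ of the homogeneous polynomials $g_i$. Since the $g_i$ are $\wu{}$-homogeneous, the hypothesis that $(g_1,\dots,g_s)$ is a regular sequence in $\cO$ is equivalent to $\dim\CC[z]/(g_1,\dots,g_s)=n-s$, hence — the polynomial ring being Cohen--Macaulay — to $(g_1,\dots,g_s)$ being a regular sequence in $\CC[z]$; so this graded Koszul complex is acyclic in positive degrees with $H_0=\CC[z]/(g_1,\dots,g_s)$. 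Each $K_p$ is a finite direct sum of shifted copies of $\hat{\cO}$, which is complete and separated for the $\wu{}$-adic filtration, so a successive-approximation argument (lift the symbol of a cycle — it is a boundary — correct by a strictly higher-filtration term, iterate; the corrections form a $\wu{}$-adically convergent series) upgrades acyclicity of $\operatorname{gr}_{\wu{}}K_\bullet$ to acyclicity of $K_\bullet$ in positive degrees, and identifies $\operatorname{gr}_{\wu{}}(\hat{\cO}/J\hat{\cO})=\CC[z]/\initwf{\wu{}}{J\hat{\cO}}$ with $H_0$ of the graded complex, i.e. $\CC[z]/(g_1,\dots,g_s)$. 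As $(g_1,\dots,g_s)\subseteq\initwf{\wu{}}{J\hat{\cO}}$, this forces $\initwf{\wu{}}{J\hat{\cO}}=(g_1,\dots,g_s)$, proving both claims over $\hat{\cO}$.

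To return to $\cO$: faithful flatness of $\cO\hookrightarrow\hat{\cO}$ gives $H_p(K_\bullet(f;\cO))\otimes_{\cO}\hat{\cO}=H_p(K_\bullet(f;\hat{\cO}))=0$ for $p\ge 1$, hence $H_p(K_\bullet(f;\cO))=0$ for $p\ge 1$; since $f_1,\dots,f_s\in\mfkm$ and $\cO$ is Noetherian local, $(f_1,\dots,f_s)$ is a regular sequence, which is~(\ref{item:fregSeq}). For~(\ref{item:initialFormsGenerate}): any $f\in J$ lies in $J\hat{\cO}$ with the same initial form, so $\initwf{\wu{}}{J}\cO\subseteq\initwf{\wu{}}{J\hat{\cO}}=(g_1,\dots,g_s)$, and together with the automatic reverse containment this yields $\initwf{\wu{}}{J}\cO=(g_1,\dots,g_s)$, proving~(\ref{item:initialFormsGenerate}). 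The same argument applies verbatim with $\cO$ the localization of $\CC[z_1,\dots,z_n]$ at the origin, since only Noetherianity, faithful flatness into the completion, and Cohen--Macaulayness of $\CC[z]$ were used.

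The main obstacle — and the reason the completion must be introduced — is exactly the step upgrading acyclicity of $\operatorname{gr}_{\wu{}}K_\bullet$ to acyclicity of $K_\bullet$: the approximation series converges only because the ambient ring is $\wu{}$-adically complete, which $\CC\{z\}$ is not (its $\wu{}$-adic completion is $\CC\llbracket z\rrbracket$), although the $\wu{}$-adic filtration on $\CC\{z\}$ is at least separated by Krull's intersection theorem, which is what makes the descent step legitimate. The remaining work is bookkeeping: pinning down the shifted filtration on the Koszul complex and checking the differential is strict enough for the filtered-complex convergence (equivalently, for the associated spectral sequence to degenerate appropriately).
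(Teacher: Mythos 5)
Your proposal is correct, and it reaches the theorem by a genuinely different route from the one in~\autoref{sec:appendixL3.3}. The paper never filters the full Koszul complex: it works over $\cO$ itself with the truncated complex $F\to E\to \cO$, reduces regularity to surjectivity of $d_2$ onto the module of relations $R=\ker d_1$, proves surjectivity on each graded piece $F_p/F_{p+1}\to R_p/R_{p+1}$ by an explicit rewriting argument, passes to completions of $F$ and $R$ (using cofinality of the weight and $\mfkm$-adic filtrations together with Artin--Rees), and descends $\hat M=0$ to $M=0$ via the fact that $\ker(M\to\hat M)$ is annihilated by a unit; part~(\ref{item:initialFormsGenerate}) is handled there by a separate elementary lemma showing every $g\in J$ admits a representation $g=\sum a_if_i$ with $\wu{}(a_if_i)\ge\wu{}(g)$. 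You instead base-change the entire Koszul complex to $\hat\cO$, identify its associated graded with the Koszul complex of the homogeneous initial forms over $\CC[z]$, invoke the standard ``complete filtered complex with acyclic associated graded is acyclic'' lemma, and descend by faithful flatness of $\cO\hookrightarrow\hat\cO$; part~(\ref{item:initialFormsGenerate}) then falls out of the identification $\initwf{\wu{}}{J\hat\cO}=(\initwf{\wu{}}{f_1},\dots,\initwf{\wu{}}{f_s})$ rather than being proved separately. Your route is more conceptual and buys the result from standard machinery (degeneration to the associated graded, depth-sensitivity of Koszul homology); the paper's is more self-contained and yields the explicit standard-basis-type representation as a by-product. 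Two points you should spell out in a full write-up, though neither is a gap: (i) the equivalence between regularity of the initial forms in $\cO$ and in $\CC[z]$ uses that they are homogeneous for a positive weight, so their zero locus is a cone whose local dimension at $0$ equals its global dimension, after which Cohen--Macaulayness applies; and (ii) in the final descent of part~(\ref{item:initialFormsGenerate}), each $\initwf{\wu{}}{h}$ with $h\in J$ is a $\wu{}$-homogeneous polynomial, so its membership in the ideal generated by the $\initwf{\wu{}}{f_i}$ over $\hat\cO$ implies membership over $\CC[z]$, hence over $\cO$, by taking homogeneous components of the coefficients.
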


  \begin{remark}\label{rm:PolyVsLocal} 
   As mentioned earlier,~\autoref{thm:newL3.3} does not hold in the polynomial setting. 
   For instance, $(z_1(1-z_1), z_2(1-z_1))$ is a regular sequence in the local ring 
   $\CC\{z_1,z_2\}$ but not in the polynomial ring $\CC[z_1,z_2]$. However, 
   the sequence $(z_1,z_2)$ of initial forms with respect to any weight vector $\wu{}\in (\Z_{>0})^2$ 
   is regular in both rings.
  \end{remark}

  \begin{remark}\label{rm:converseFails} The regularity of the sequence of $\wu{}$-initial forms is needed in~\autoref{thm:newL3.3}. As an example, fix $n=4$,  $\wu{}=(1,1,1,1)$, and consider the sequence $(f_1,f_2)$ with
    \[f_1 :=z_1^2+z_2^4-z_3^3 \qquad \text{ and } \qquad f_2:=z_1\,z_2-z_4^3.
    \]
    By construction, $(f_1,f_2)$ is a regular sequence in $\cO$ defining an isolated complete intersection surface singularity. The sequence of  initial forms $(\initwf{\wu{}}{f_1}, \initwf{\wu{}}{f_2}) = (z_1^2, z_1\,z_2)$ is not regular, and the $\wu{}$-initial ideal of $(f_1,f_2)\cO$ is generated by $\initwf{\wu{}}{f_1}$, $\initwf{\wu{}}{f_2}$ and $\initwf{\wu{}}{z_2\,f_1 -z_1\,f_2} =  - z_2\,z_3^3 + z_1\,z_4^3$.
  \end{remark}

Throughout, we fix $\wu{}\in (\Z_{>0})^n$ and an arbitrary sequence $(f_1,\ldots, f_s)$ of elements of the maximal ideal $\mfkm$. We let $J$ be the ideal generated by the $f_i$'s. Consider the first few steps in the Koszul complex of $\cO$-modules determined by it (see, e.g., \cite[Sect. IV.A]{S 65}):
     \begin{equation} \label{eq:Koszul} 
\xymatrix{
  \boxedo{F} :=\displaystyle{\bigoplus_{1 \leq i<j \leq s} \cO \cdot e_{ij}}\ar[r]^-{d_2} & \boxedo{E} :=\displaystyle{\bigoplus_{1 \leq i \leq s} \cO\cdot e_i}\ar[r]^-{d_1} & \cO \ar[r] & \cO/J.}
     \end{equation}
     The map $d_1\colon E\to \cO$ sends $e_i$ to $f_i$ for each $i=1,\ldots, s$ and the kernel $\boxedo{R}$ of $d_1$ is the \emph{module of relations} between the given generators of $J$. The morphism $d_2\colon F\to E$ sends $e_{ij}$ to $f_j\,e_i-f_i\,e_j$, and its image is the submodule of ``trivial relations'' between $(f_1,\ldots, f_s)$. By definition, the image of $d_2$ lies in $R$, so we view $d_2$ also as as a map $d_2\colon F\to R$.

     By a standard result in commutative algebra (see, e.g., \cite[Prop.3, Chapter IV.A.2]{S 65}) we have:
    \begin{proposition}  \label{prop:charreg1} 
        The sequence $(f_1,\cdots,f_s)$ of elements in $\mfkm$ is regular in $\cO$ if and only if the Koszul complex \eqref{eq:Koszul} is exact at $E$.
    \end{proposition}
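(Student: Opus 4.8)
The plan is to prove both implications by induction on $s$, exploiting the standard fact that the Koszul complex on $(f_1,\ldots,f_s)$ is the mapping cone of multiplication by $f_s$ on the Koszul complex on $(f_1,\ldots,f_{s-1})$. Writing $K_\bullet(g_1,\ldots,g_r)$ for the Koszul complex over $\cO$ on a sequence $(g_1,\ldots,g_r)$, this yields a short exact sequence of complexes
\[
0 \to K_\bullet(f_1,\ldots,f_{s-1}) \to K_\bullet(f_1,\ldots,f_s) \to K_\bullet(f_1,\ldots,f_{s-1})[-1] \to 0,
\]
whose associated long exact sequence in homology has the shape
\[
\cdots \to H_i\bigl(K_\bullet(f_1,\ldots,f_{s-1})\bigr) \xrightarrow{\ f_s\ } H_i\bigl(K_\bullet(f_1,\ldots,f_{s-1})\bigr) \to H_i\bigl(K_\bullet(f_1,\ldots,f_s)\bigr) \to H_{i-1}\bigl(K_\bullet(f_1,\ldots,f_{s-1})\bigr) \to \cdots,
\]
the relevant connecting maps being (up to sign) multiplication by $f_s$. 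Since the truncation \eqref{eq:Koszul} computes $H_0$, $H_1$, and the relevant part of $H_2$ of $K_\bullet(f_1,\ldots,f_s)$, exactness of \eqref{eq:Koszul} at $E$ is exactly the statement $H_1\bigl(K_\bullet(f_1,\ldots,f_s)\bigr)=0$ (equivalently, every relation among the $f_i$ is a $\cO$-combination of the trivial relations $f_j e_i - f_i e_j$).

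For the ``only if'' direction (regular $\Rightarrow$ exact at $E$), the base case $s=1$ is immediate: $H_1\bigl(K_\bullet(f_1)\bigr)=\ker\bigl(f_1\colon\cO\to\cO\bigr)=0$ since $f_1$ is a non-zero-divisor. For the inductive step, if $(f_1,\ldots,f_{s-1})$ is regular, then by induction $H_i\bigl(K_\bullet(f_1,\ldots,f_{s-1})\bigr)=0$ for all $i\ge 1$ and $H_0=\cO/(f_1,\ldots,f_{s-1})\cO$; the long exact sequence then forces $H_i\bigl(K_\bullet(f_1,\ldots,f_s)\bigr)=0$ for $i\ge 2$, and in degree $1$ identifies $H_1\bigl(K_\bullet(f_1,\ldots,f_s)\bigr)$ with $\ker\bigl(f_s\colon \cO/(f_1,\ldots,f_{s-1})\cO\to \cO/(f_1,\ldots,f_{s-1})\cO\bigr)$, which vanishes precisely because $(f_1,\ldots,f_s)$ is a regular sequence.

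For the ``if'' direction — the substantive one — I would induct on $s$ assuming $H_1\bigl(K_\bullet(f_1,\ldots,f_s)\bigr)=0$. The case $s=1$ is as above. For $s>1$, the tail of the long exact sequence produces a \emph{surjection} $H_1\bigl(K_\bullet(f_1,\ldots,f_{s-1})\bigr)\xrightarrow{f_s}H_1\bigl(K_\bullet(f_1,\ldots,f_{s-1})\bigr)$. Here the hypotheses enter decisively: $\cO$ is Noetherian, so this Koszul homology module is finitely generated, and $f_s\in\mfkm$, so Nakayama's lemma gives $H_1\bigl(K_\bullet(f_1,\ldots,f_{s-1})\bigr)=0$. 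By the inductive hypothesis $(f_1,\ldots,f_{s-1})$ is regular, whence $H_0\bigl(K_\bullet(f_1,\ldots,f_{s-1})\bigr)=\cO/(f_1,\ldots,f_{s-1})\cO$, and the next segment of the long exact sequence shows $\ker\bigl(f_s\colon \cO/(f_1,\ldots,f_{s-1})\cO\to \cO/(f_1,\ldots,f_{s-1})\cO\bigr)=0$; thus $(f_1,\ldots,f_s)$ is regular.

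The main obstacle is this last direction: one must deduce regularity from the vanishing of $H_1$ alone, and the only tool that makes this work is Nakayama's lemma, which is why the local, Noetherian hypothesis together with $f_i\in\mfkm$ is indispensable — over a general base a sequence with $H_1=0$ need not be regular. In writing this up I would be careful to fix the sign and indexing conventions in the mapping-cone short exact sequence so that the connecting homomorphisms are genuinely $\pm f_s$, and to record explicitly that $\cO$ Noetherian ensures all the Koszul homology modules appearing are finitely generated.
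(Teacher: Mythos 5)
Your proof is correct: the mapping-cone short exact sequence, the long exact sequence in Koszul homology, and Nakayama's lemma (valid here since $\cO=\CC\{z_1,\dots,z_n\}$ is a Noetherian local ring and each $f_i\in\mfkm$) together give both implications exactly as you describe. The paper itself offers no proof of this proposition — it simply cites Serre, \emph{Alg\`ebre locale. Multiplicit\'es}, Prop.~3 of Chap.~IV.A.2 — and the argument you have written out is precisely the standard one underlying that reference, so there is nothing to reconcile.
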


    Since the definition of $E$ does not depend on the order of the sequence $(f_1,\ldots, f_s)$, the following consequence arises naturally:
    \begin{corollary}\label{cor:reorderRegSeq}
      If $(f_1,\ldots, f_s)$ is a regular sequence in $\cO$, any reordering of it is also a regular sequence.
    \end{corollary}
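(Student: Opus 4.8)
The plan is to deduce this directly from the Koszul-complex characterization of regularity in \autoref{prop:charreg1}, by observing that the complex \eqref{eq:Koszul} depends on the sequence $(f_1,\ldots,f_s)$ only up to a relabeling of basis vectors. Fix a permutation $\pi\in\Sn{s}$ and denote by $(f_{\pi(1)},\ldots,f_{\pi(s)})$ the reordered sequence, with associated Koszul data $E^{\pi}$, $F^{\pi}$, $d_1^{\pi}$, $d_2^{\pi}$ as in \eqref{eq:Koszul}. First I would define the $\cO$-linear isomorphism $\varphi\colon E\to E^{\pi}$ sending $e_i\mapsto e_{\pi^{-1}(i)}$ and check that $d_1^{\pi}\circ\varphi = d_1$, since both send $e_i$ to $f_i$. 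Similarly I would define an isomorphism $\psi\colon F\to F^{\pi}$ on the basis $e_{ij}$ by $e_{ij}\mapsto \pm\, e_{\pi^{-1}(i),\pi^{-1}(j)}$, with the sign chosen so that $d_2^{\pi}\circ\psi = \varphi\circ d_2$; this sign merely accounts for the convention $i<j$ used to index the basis of the free modules $F$ and $F^{\pi}$.

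Since $\varphi$ and $\psi$ are isomorphisms making the two squares commute, the truncated complex $F\xrightarrow{d_2} E\xrightarrow{d_1}\cO$ is exact at $E$ if and only if $F^{\pi}\xrightarrow{d_2^{\pi}} E^{\pi}\xrightarrow{d_1^{\pi}}\cO$ is exact at $E^{\pi}$. By \autoref{prop:charreg1}, the former holds precisely when $(f_1,\ldots,f_s)$ is a regular sequence in $\cO$, and the latter precisely when $(f_{\pi(1)},\ldots,f_{\pi(s)})$ is. Chaining these two equivalences yields the corollary.

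The argument presents no real obstacle; the only point needing a moment of care is the sign bookkeeping for $\psi$, since $\pi$ permutes the index set $\{(i,j): i<j\}$ only after transposing each pair into increasing order. This is routine and, in any case, irrelevant to exactness. If one prefers to avoid signs altogether, an alternative is to note that exactness of \eqref{eq:Koszul} at $E$ amounts to the vanishing of the first Koszul homology $H_1(f_1,\ldots,f_s;\cO)$, which is manifestly symmetric in the $f_i$; but the explicit isomorphism above stays within the low-degree truncation already set up in the excerpt.
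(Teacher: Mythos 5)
Your argument is correct and is exactly the paper's approach: the paper derives this corollary from \autoref{prop:charreg1} with the one-line observation that the Koszul complex (hence exactness at $E$) does not depend on the order of the sequence, and your proposal simply makes the implicit isomorphisms $\varphi$ and $\psi$ explicit. The sign bookkeeping you flag is handled correctly and, as you note, is immaterial to exactness.
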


    The weight vector $\wu{}$ inducing the $\wu{}$-weight valuation~\eqref{eq:valwf} on $\cO$ endows this ring with a weight filtration by ideals $(I_p)_{p\geq 0}$, where
          $\boxedo{I_p} :=\{g\in \cO : \wu{}(g)\geq p\}$.
    Similarly, we can filter $\cO$ via the ideals $(\mfkm^p)_{p\geq 0}$. Both filtrations are cofinal since
    \begin{equation}\label{eq:cofinal}
      I_{d p} \subseteq \mfkm^p \subseteq I_p \quad \text{ for all }\; p\geq 0,
      \end{equation}
    where  $\boxedo{d}$ is the maximum among all coordinates of $\wu{}$. It follows from this that the completions of $\cO$ with respect to both filtrations are canonically isomorphic. The completion induced by the $\mfkm$-adic filtration $(\mfkm^p)_{p\geq 0}$ is the ring of formal power series in $n$ variables.

    In a similar fashion, we can filter the modules $E$ and $F$ appearing in~\eqref{eq:Koszul}  via $(E_p)_{p\geq 0}$ and $(F_p)_{p\geq 0}$, respectively, by assigning the weights $\wu{}(f_i)$ and  $\wu{}(f_i)+\wu{}(f_j)$ to $e_i$  and $e_{ij}$, respectively. More precisely,
    \begin{equation}\label{eq:EFwtFiltration}
      \boxedo{E_p}:=\{ \sum_{i=1}^s a_i e_i : \wu{}(a_i) \geq p-\wu{}(f_i) \;\forall i
      \} \:\; \text{ and } \:\; \boxedo{F_p}:=\{ \sum_{i<j} b_{ij} e_{ij} : \wu{}(b_{ij}) \geq p-\wu{}(f_i) - \wu{}(f_j)\; \forall i,j\}.
    \end{equation}
    These choices ensure that the maps $d_1$ and $d_2$ from the Koszul complex~\eqref{eq:Koszul}  preserve the filtration. In addition, the module $R$ of relations is filtered as well, via
    \begin{equation}\label{eq:RwtFiltration}
      \boxedo{R_p}:=E_p \cap R.      
    \end{equation}

    We use these filtrations to define the $\wu{}$-initial forms on $E$ and $F$.  We state the definition for $E$, since the one for $F$ is analogous. The definition for $R$ is given by restriction.

    \begin{definition}\label{def:initwE}   Given any $g\in E$ with $g \neq 0$, we let $p$ be the unique integer such that $g\in E_{p}\smallsetminus E_{p+1}$. An element $g:=\sum_{i=1}^s r_i \, e_i \in E_p\smallsetminus E_{p+1}$ satisfies $\wu{}(r_i) + \wu{}(f_i) \geq p$ for all $i\in \{1,\ldots, s\}$ and equality must hold for some index $i$. Let $I$ be the set of indices where equality is achieved.   The \emph{$\wu{}$-initial form} of $g$ is $\boxedo{\initwf{\wu{}}{g}}:=\sum_{i\in I} \initwf{\wu{}}{r_i} e_i$.
        We set $\initwf{\wu{}}{0} = 0$.
\end{definition}

    By~\autoref{prop:charreg1}, the regularity of the sequence $(f_1,\ldots, f_s)$ is equivalent to the surjectivity of the map $d_2\colon F\to R$ induced by~\eqref{eq:Koszul}. We prove the latter in~\autoref{lm:zerocompl}, assuming the regularity of the sequence of $\wu{}$-initial forms of all $f_i$'s.
    
    Our first  two lemmas use the regularity assumptions for the sequence of $\wu{}$-initial forms to prove the surjectivity of $d_2\colon F\to R$  by working with the filtrations of $F$ and $R$ described above.

    \begin{lemma}\label{lm:surjall}
         Assume that  the sequence 
    $(\init_w(f_1),\cdots, \init_w(f_s))$ is regular in $\cO$. 
      Then, the morphism of $\CC$-vector spaces $\varphi_p\colon F_p/F_{p+1}\rightarrow R_p/R_{p+1}$ induced by the 
      morphism of $\cO$-modules $d_2\colon F \rightarrow R$ 
      is surjective for all integers $p \geq 0$.
    \end{lemma}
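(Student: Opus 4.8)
The plan is to reduce the surjectivity of $\varphi_p$ at the level of associated graded pieces to the exactness of the Koszul complex of the initial forms, which holds because $(\initwf{\wu{}}{f_1},\ldots,\initwf{\wu{}}{f_s})$ is a regular sequence in $\cO$. Observe first that $F_p/F_{p+1}$ and $R_p/R_{p+1}$ are finite-dimensional $\CC$-vector spaces, so it suffices to identify $R_p/R_{p+1}$ explicitly and exhibit a lift. Given a relation $g=\sum_i r_i e_i \in R_p$, its class in $E_p/E_{p+1}$ is recorded by the initial form $\initwf{\wu{}}{g}=\sum_{i\in I}\initwf{\wu{}}{r_i}\,e_i$, and the condition $d_1(g)=0$, i.e. $\sum_i r_i f_i = 0$, forces the \emph{leading} part $\sum_{i\in I}\initwf{\wu{}}{r_i}\initwf{\wu{}}{f_i}$ to vanish: indeed each term $r_if_i$ has $\wu{}$-weight $\geq p$, the terms of weight exactly $p$ are precisely $\initwf{\wu{}}{r_i}\initwf{\wu{}}{f_i}$ for $i\in I$, and their sum must be zero because there is nothing else at weight $p$ in the identity $\sum_i r_i f_i = 0$. (If all such cancellations occur at strictly higher weight the class in $R_p/R_{p+1}$ is zero and there is nothing to prove.) Hence $\initwf{\wu{}}{g}$ is an element of the first syzygy module of $(\initwf{\wu{}}{f_1},\ldots,\initwf{\wu{}}{f_s})$.

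Next I would invoke~\autoref{prop:charreg1} applied to the regular sequence $(\initwf{\wu{}}{f_1},\ldots,\initwf{\wu{}}{f_s})$: its Koszul complex is exact at the middle term, so every syzygy of the $\initwf{\wu{}}{f_i}$ is a $\cO$-combination of the trivial (Koszul) syzygies $\initwf{\wu{}}{f_j}\,e_i - \initwf{\wu{}}{f_i}\,e_j$. Thus $\initwf{\wu{}}{g} = \sum_{i<j} \bar b_{ij}\,(\initwf{\wu{}}{f_j}\,e_i - \initwf{\wu{}}{f_i}\,e_j)$ for suitable $\bar b_{ij}\in\cO$; by comparing $\wu{}$-weights (the left side is homogeneous of weight $p$, and each Koszul generator $\initwf{\wu{}}{f_j}\,e_i-\initwf{\wu{}}{f_i}\,e_j$ is homogeneous of weight $\wu{}(f_i)+\wu{}(f_j)$ once we discard lower-weight junk) we may take each $\bar b_{ij}$ to be $\wu{}$-homogeneous of weight $p - \wu{}(f_i) - \wu{}(f_j)$. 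Lifting each $\bar b_{ij}$ arbitrarily to $b_{ij}\in\cO$ (a homogeneous representative), set $h := \sum_{i<j} b_{ij}\,e_{ij} \in F_p$. Then $d_2(h) = \sum_{i<j} b_{ij}(f_j e_i - f_i e_j) \in R_p$, and by construction $\initwf{\wu{}}{d_2(h)} = \sum_{i<j}\bar b_{ij}(\initwf{\wu{}}{f_j}e_i - \initwf{\wu{}}{f_i}e_j) = \initwf{\wu{}}{g}$. Consequently $g - d_2(h)$ has strictly larger $\wu{}$-weight, hence lies in $R_{p+1}$, and $\varphi_p(\overline h) = \overline g$ in $R_p/R_{p+1}$. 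This proves surjectivity of $\varphi_p$ for all $p\geq 0$.

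The main point requiring care — and the place where the argument could go wrong if stated carelessly — is the weight bookkeeping: one must ensure that when expressing a homogeneous syzygy of the $\initwf{\wu{}}{f_i}$ in terms of the Koszul relations, the coefficients can genuinely be chosen $\wu{}$-homogeneous of the predicted weight, and that the chosen lift $h$ lands in the prescribed filtration piece $F_p$ (rather than some $F_{p'}$ with $p'<p$, which would make the weight-comparison for $\initwf{\wu{}}{d_2(h)}$ fail). This follows because the ring $\cO$, although not graded, has a $\wu{}$-weight filtration whose associated graded ring is the polynomial ring $\CC[z_1,\ldots,z_n]$ graded by $\wu{}$; a relation among homogeneous elements of a graded ring decomposes into homogeneous relations, so the homogeneous components of any expression of $\initwf{\wu{}}{g}$ in terms of Koszul syzygies again give such an expression, and we keep only the component of top weight $p$. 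I expect this graded-versus-filtered translation to be the only genuinely delicate step; everything else is a direct unwinding of~\autoref{def:initwE} and~\autoref{prop:charreg1}.
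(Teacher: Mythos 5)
Your proof is correct, but it is organized differently from the paper's. You argue in one shot: you observe that $\initwf{\wu{}}{g}$ is a syzygy of the regular sequence of initial forms, invoke the Koszul-exactness characterization of regularity (\autoref{prop:charreg1}) for \emph{that} sequence to write $\initwf{\wu{}}{g}$ as an $\cO$-combination of the trivial Koszul syzygies, homogenize the coefficients by extracting the weight-$(p-\wu{}(f_i)-\wu{}(f_j))$ components, and verify that the resulting $h\in F_p$ satisfies $g-d_2(h)\in R_{p+1}$. The paper instead runs an induction on the number $k$ of indices appearing in $\initwf{\wu{}}{g}$: after reordering the sequence (\autoref{cor:reorderRegSeq}) it uses only the single nonzerodivisor statement that $\initwf{\wu{}}{f_k}$ is regular modulo $\langle\initwf{\wu{}}{f_1},\dots,\initwf{\wu{}}{f_{k-1}}\rangle$ to peel off one leading index at a time, subtracting a trivial Koszul relation at each step. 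Both arguments hinge on the same delicate point — that a syzygy or membership statement among $\wu{}$-homogeneous elements admits $\wu{}$-homogeneous coefficients of the predicted weight — and your justification is sound: since $\wu{}$ has strictly positive integer entries, each weight-homogeneous component of a convergent power series is a polynomial, so one may decompose the coefficients and retain only the weight-$p$ part of the identity. (The paper asserts the analogous fact for the single element $\initwf{\wu{}}{r_k}$.) Your route is shorter and dispenses with both the induction and the reordering lemma; the paper's route has the mild advantage of using only the elementary ``nonzerodivisor modulo the previous ones'' property rather than the full exactness of the Koszul complex of the initial forms.
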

    \begin{proof} We must show that modulo $R_{p+1}$, every element $g$ of $R_p$ is the image of an element of $F_p/F_{p+1}$ under the map $\varphi_p$. If $g=0$, there is nothing to show, so we assume $g\neq 0$. In particular, $g$ lifts to an element in $R_p\smallsetminus R_{p+1}$, which we denote by $g$ as well.  We write $g=\sum_{j=1}^s r_j\,e_j$.

      Assume that $\initwf{\wu{}}{g}$ has $k$ many terms, with $k\in \{1,\ldots, s\}$ (see~\autoref{def:initwE}). By~\autoref{cor:reorderRegSeq}, we can reorder the original sequence while preserving its regularity, and write  $\initwf{\wu{}}{g}$ as
      \[\initwf{\wu{}}{g}= \sum_{j=1}^k \initwf{\wu{}}{r_{j}} e_{j}\quad \text{  with   }\quad \wu{}(r_{j}) + \wu{}(f_{j}) = p\quad \text{ for all }\;j\in \{1,\ldots, k\}.
      \]
      We claim that  $g$ is congruent, modulo the image of $\varphi_p$, to an element of $R_p$ whose $\wu{}$-initial form lies in the ideal generated by $\{\initwf{\wu{}}{f_1},\ldots, \initwf{\wu{}}{f_{k-1}}\}$. The original statement will follow by  induction on $k\leq s$.

Since $\sum_{j=1}^s r_j f_j = 0$ by definition of $R$ and $\wu{}(r_j)+\wu{}(f_j)>p$ for $j\in \{k+1, \ldots, s\}$, we conclude that the expected $\wu{}$-initial form of $\sum_{j=1}^s r_jf_j$ must vanish, i.e.,
      \[\sum_{j=1}^k \initwf{\wu{}}{r_{j}} \initwf{\wu{}}{f_{j}} = 0.
      \]
      Therefore, $ \initwf{\wu{}}{r_{k}} \initwf{\wu{}}{f_{k}}$ is zero modulo  the ideal $I:=\langle \initwf{\wu{}}{f_1}, \ldots, \initwf{\wu{}}{f_{k-1}}\rangle \cO$. Since the sequence $(\initwf{\wu{}}{f_1}, \ldots, \initwf{\wu{}}{f_s})$ is regular, we conclude that $\initwf{\wu{}}{r_{k}}$ must lie in $I$.

      Taking the $\wu{}$-weight value of $r_{k}$ and each $f_{j}$ into account we write $\initwf{\wu{}}{r_{k}}$ as
      \[
           \initwf{\wu{}}{r_{k}}  = \sum_{j=1}^{k-1} a_j \initwf{\wu{}}{f_{j}},
      \]
      where $a_j$ is either $0$ or a non-zero $\wu{}$-weighted homogeneous polynomial with $\wu{}(a_j)= p-\wu{}(f_{k})-\wu{}(f_{j})\geq 0$ for all $j\in \{1,\ldots, k-1\}$. It follows from this that the element
      \begin{equation*}\label{eq:newr}
      r_{k}':=r_{k}-\sum_{j=1}^{k-1} a_j f_j
      \end{equation*}
      satisfies $\wu{}(r_{k}')> p-\wu{}(f_{k})$, so $r_{k}'\, e_{k} \in E_{p+1}$. Simple arithmetic manipulations give a new formula for $g$, i.e,
      \begin{equation}\label{eq:newg}
        g = \sum_{j=1}^s r_j\, e_j = \sum_{j=1}^{k-1} (r_j + a_j\,f_{k}) \, e_j + \underbrace{\sum_{j=1}^{k-1} a_j(f_j\,e_{k} - f_{k} \,e_j)}_{=: \, h} + r_{k}'\, e_{k} + \sum_{j=k+1}^s r_j \,e_j.
      \end{equation}
      By construction, it follows that $h=\varphi_p(\sum_{j=1}^{k-1} a_j e_{jk}) \in \varphi_p(F_p/F_{p+1})$. Furthermore, $\initwf{\wu{}}{g-h}$ only involves terms in the first of the four summands on the right-hand side of~\eqref{eq:newg} since the last two summands lie in $E_{p+1}$. This establishes the claim.
    \end{proof}

    \begin{lemma}\label{lm:geninit} Let $J$ be the ideal of $\cO$ generated by $\{f_1,\ldots, f_s\}$ and assume that  $(\init_w(f_1),\cdots, \init_w(f_s))$ is a regular sequence in $\cO$. If  $g\in J$ has $\wu{}$-weight equal to $p \in \N$, then $g$ admits an expression of the form  $g=\sum_{i=1}^s a_if_i$, where $\wu{}(a_i f_i)\geq p$ for all $i$.  In particular, $\initwf{\wu{}}{g}$ belongs to the ideal 
      of $\cO$ generated by $\{\initwf{\wu{}}{f_1}, \dots, \initwf{\wu{}}{f_s}\}$.  
    \end{lemma}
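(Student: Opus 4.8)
\textbf{Proof plan for Lemma~\ref{lm:geninit}.} The plan is to run an iterative ``cancellation of leading forms'' argument, using the regularity of the initial sequence to control what happens when the naive leading forms cancel. Start with any expression $g=\sum_i b_i f_i$ (one exists since $g\in J$) and let $q:=\min_i\{\wu{}(b_i)+\wu{}(f_i)\}$. If $q\ge p$ we are done, so assume $q<p$. Then the degree-$q$ part of $\sum_i b_i f_i$ must vanish, which says $\sum_{i\in I}\initwf{\wu{}}{b_i}\,\initwf{\wu{}}{f_i}=0$ in $\cO$, where $I$ is the set of indices realizing the minimum $q$. After reordering (permissible by \autoref{cor:reorderRegSeq}) we may assume $I=\{1,\dots,k\}$. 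The relation $\sum_{j=1}^{k}\initwf{\wu{}}{b_j}\,\initwf{\wu{}}{f_j}=0$ forces $\initwf{\wu{}}{b_k}\,\initwf{\wu{}}{f_k}\in\langle\initwf{\wu{}}{f_1},\dots,\initwf{\wu{}}{f_{k-1}}\rangle\cO$, and since $(\initwf{\wu{}}{f_1},\dots,\initwf{\wu{}}{f_s})$ is regular, $\initwf{\wu{}}{b_k}$ itself lies in $\langle\initwf{\wu{}}{f_1},\dots,\initwf{\wu{}}{f_{k-1}}\rangle\cO$. This is exactly the same mechanism used in the proof of \autoref{lm:surjall}.

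Next, write $\initwf{\wu{}}{b_k}=\sum_{j=1}^{k-1}a_j\,\initwf{\wu{}}{f_j}$ with each $a_j$ either zero or $\wu{}$-homogeneous of weight $\wu{}(b_k)-\wu{}(f_j)\ge 0$. Replace the expression $g=\sum b_i f_i$ by the rewritten expression obtained from $b_k f_k = \bigl(b_k-\sum_{j=1}^{k-1}a_j f_j\bigr)f_k + \sum_{j=1}^{k-1}a_j f_j f_k$, namely
\[
g=\sum_{j=1}^{k-1}\bigl(b_j+a_j f_k\bigr)f_j + \bigl(b_k-\textstyle\sum_{j=1}^{k-1}a_j f_j\bigr)f_k + \sum_{j=k+1}^{s}b_j f_j.
\]
In this new expression the coefficient of $f_k$ has strictly larger $\wu{}$-weight than $\wu{}(b_k)$, because $\initwf{\wu{}}{b_k}$ has been cancelled; the coefficients of $f_j$ for $j\le k-1$ are modified by terms $a_j f_k$ of $\wu{}$-weight $\wu{}(a_j)+\wu{}(f_k)=\wu{}(b_k)-\wu{}(f_j)+\wu{}(f_k)=q-\wu{}(f_j)$, i.e. exactly the same weight $q$ as before (or higher if $a_j=0$); and the remaining $b_j$ are untouched. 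So the number of indices $i$ with $\wu{}(b_i)+\wu{}(f_i)=q$ has strictly decreased. Iterating, after finitely many steps no index realizes weight $q$ anymore, so the minimum weight of the expression strictly increases.

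The one point requiring care — and the main obstacle — is the termination of the overall procedure: each inner loop (fixed $q$) terminates after at most $s$ steps, but one must check that raising the minimum weight $q$ eventually reaches $p$, and that passing to the limit is legitimate. Here is where the cofinality \eqref{eq:cofinal} of the $\wu{}$-weight filtration with the $\mfkm$-adic filtration, together with completeness (or rather, the Artin–Rees / Krull intersection property in the Noetherian local ring $\cO$), enters: the successive rewrites of each coefficient $b_i$ form a Cauchy sequence in the $\mfkm$-adic (equivalently $\wu{}$-weight) topology of $\cO$, hence converge; since $\cO$ is $\mfkm$-adically complete, the limiting coefficients $a_i\in\cO$ exist, satisfy $g=\sum_i a_i f_i$, and have $\wu{}(a_i f_i)\ge p$ for all $i$ by construction. (Alternatively one runs the argument in the completion $\hat{\cO}$ and then invokes faithful flatness of $\cO\to\hat{\cO}$ to descend the conclusion, exactly as in \autoref{rm:convergentToPowerSeries}.) Once $g=\sum_i a_i f_i$ with $\wu{}(a_i f_i)\ge p=\wu{}(g)$ is established, the degree-$p$ part of the right-hand side equals $\initwf{\wu{}}{g}$ and is a sum of terms $\initwf{\wu{}}{a_i}\,\initwf{\wu{}}{f_i}$ over those $i$ with $\wu{}(a_i f_i)=p$, so $\initwf{\wu{}}{g}\in\langle\initwf{\wu{}}{f_1},\dots,\initwf{\wu{}}{f_s}\rangle\cO$, which is the second assertion.
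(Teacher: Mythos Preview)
Your rewriting step is exactly the paper's: reorder so the minimum $q$ is realized by indices $1,\dots,k$, use regularity of the initial sequence to write $\initwf{\wu{}}{b_k}=\sum_{j<k}a_j\initwf{\wu{}}{f_j}$, and absorb the $a_jf_jf_k$ terms into the earlier coefficients. Where you diverge from the paper is in the termination argument, and there you both overcomplicate and introduce an error.

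The overcomplication: the weight vector is assumed to lie in $(\Z_{>0})^n$ (this is the standing hypothesis of the appendix, see the statement of \autoref{thm:newL3.3}), so every $\wu{}(b_if_i)$ is a non-negative integer. Your inner loop shrinks the set of indices realizing the minimum $q$, so after at most $s$ inner steps the minimum strictly increases, hence jumps by at least $1$. Thus after at most $s\cdot(p-q_0)$ rewrites you reach $q\ge p$ and stop. No limits, no Cauchy sequences, no completion needed --- this is exactly what the paper means by ``an easy induction combined with the fact that $p',p\in\Z_{\ge 0}$.''

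The error: the ring $\cO=\CC\{z_1,\dots,z_n\}$ of \emph{convergent} power series is \emph{not} $\mfkm$-adically complete; its completion is the formal power series ring $\hat{\cO}=\CC\llbracket z_1,\dots,z_n\rrbracket$, which is strictly larger. So your sentence ``since $\cO$ is $\mfkm$-adically complete, the limiting coefficients $a_i\in\cO$ exist'' is false as stated. Your parenthetical fix (work in $\hat{\cO}$ and descend by faithful flatness) can be made to work, but it is not a one-liner: you would need to check that the image of the $\cO$-linear map $\bigoplus_i I_{p-\wu{}(f_i)}\to\cO$ sending $(a_i)\mapsto\sum a_if_i$ already contains $g$, given that the corresponding map over $\hat{\cO}$ does. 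Since the finite termination argument above makes all of this unnecessary, just drop the limit discussion entirely.
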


    \begin{proof}
  Since $g \in J$, we may write $g$ as $g=\sum_{i=1}^s b_i\,f_i$ with $b_i\in \cO$ for each $i$. Consider
  \[
  p'=\min\{\wu{}(b_i f_i): i=1,\ldots, s\}.
  \]
  Assume that this weight is achieved at $k$ many terms, which we can fix to be   $\{b_{1}\,f_{1}, \ldots, b_{k}\,f_{k}\}$
 upon reordering. If $p'\geq p$ we have $\wu{}(a_if_i)\geq p$ for all $i$ and equality must hold for some $i$ by definition of $p$. From here it follows that $p'=p$, so 
 $\initwf{\wu{}}{g} = \sum_{j=1}^k \initwf{\wu{}}{a_{j}}\initwf{\wu{}}{f_{j}}$, as we wanted to show.

  On the contrary assume that $p'<p$.  We claim that  we can find an alternative expression $g=\sum_{j=1}^{s} b_j'\, f_j$  where the corresponding minimum weight $p'':=\min\{\wu{}(b_j'\, f_j)\}$ satisfies $p''\geq p'$ and the number of summands realizing $p''$ is strictly smaller than $k$. An easy induction combined with the fact that $p',p\in \Z_{\geq 0}$ will then yield a new expression for $g$ with $p'\geq p$, as in our previous case.

  It remains to prove the claim. Since  $p'<p$, the terms in $g$ with $\wu{}$-weight $p'$ must cancel out, i.e.,  $\sum_{j=1}^k \initwf{\wu{}}{b_{j}}\,\initwf{\wu{}}{f_{j}} =0$.  As in the proof of~\autoref{lm:surjall}, the fact that $(\initwf{\wu{}}{f_1}, \ldots, \initwf{\wu{}}{f_s})$ is a regular sequence in $\cO$ ensures that
  \[\initwf{\wu{}}{b_{k}} = \sum_{j=1}^{k-1} c_j \initwf{\wu{}}{f_{j}},
  \]
  where $c_j$ is either zero or a $\wu{}$-homogeneous polynomial with $\wu{}(c_j) = p'-\wu{}(f_j)-\wu{}(f_k)\geq 0$.  It follows from here that the element $b_{k}' := b_{k} - \sum_{j=1}^{k-1} c_j \, f_j$ has weight $\wu{}(b_{k}')> \wu{}(b_{k})$, so $\wu{}(b_{k}'\, f_{k}) > p'$.

  An arithmetic manipulation allows us to rewrite $g$ as follows:
  \begin{equation}\label{eq:newgL2}
    g = \sum_{j=1}^{k-1} (b_j + c_j\, f_{k}) \,f_j + b_{k}'\,f_{k} + \sum_{j=k+1}^s b_j\, f_j
  \end{equation}
  By construction, the terms with minimum $\wu{}$-weight only appear in the first of the three summands on the right-hand side of~\eqref{eq:newgL2}. Furthermore, the corresponding minimum weight $p''$ satisfies $p''\geq p'$ since
  $\wu{}(b_jf_j)\geq p'$ for all $j$ and $\wu{}(c_j \, f_{k}f_j)\geq p'$ for $j<k$. This confirms the validity of our claim.\end{proof}
     
A standard commutative algebra result (see, e.g.,~\cite[Lemma 10.23]{AM 69}) combined with~\autoref{lm:geninit} yields:

\begin{lemma} \label{lm:surjcompl}
    Assume that  the sequence 
    $(\init_w(f_1),\cdots, \init_w(f_s))$ is regular in $\mathcal{O}$. 
    Then, the  map $d_2:F \to R$ of filtered modules induces a surjection between 
    their completions relative to the filtrations $(F_p)_{p \geq 0}$ and $(R_p)_{p \geq 0}$ respectively. More precisely, $\varprojlim F/F_p\twoheadrightarrow \varprojlim R/R_p$.
\end{lemma}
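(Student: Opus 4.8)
The statement \autoref{lm:surjcompl} asks to upgrade the graded surjectivity from \autoref{lm:surjall} to a surjection on completions, $\varprojlim F/F_p \twoheadrightarrow \varprojlim R/R_p$. The plan is to run the standard "lift through a filtration by successive approximation'' argument, using only \autoref{lm:surjall}, the fact that the filtrations $(F_p)$ and $(R_p)$ are exhaustive and separated (so that the completions behave well), and the compatibility of $d_2$ with the filtrations. Concretely, I would invoke the general lemma from commutative algebra cited just before the statement: if $d_2\from F\to R$ is a map of filtered abelian groups (or $\cO$-modules) compatible with the filtrations, and the associated graded map $\operatorname{gr}(d_2)\from \operatorname{gr} F\to \operatorname{gr} R$ is surjective in every degree, then the induced map on the completions is surjective. \autoref{lm:surjall} is precisely the statement that $\operatorname{gr}(d_2)$ is surjective in each degree $p\ge 0$; so \autoref{lm:surjcompl} is the direct output of this lemma.

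\textbf{Key steps in order.} First, I would note that the map $\varphi_p\from F_p/F_{p+1}\to R_p/R_{p+1}$ of \autoref{lm:surjall} is exactly the degree-$p$ piece of $\operatorname{gr}(d_2)$, once one observes that $d_2(F_p)\subseteq R_p$ by construction of the filtrations in~\eqref{eq:EFwtFiltration} and~\eqref{eq:RwtFiltration}. Second, I would carry out the approximation argument explicitly if one does not want to cite a black box: given a compatible system $(\bar r_p)_p \in \varprojlim R/R_p$, lift $\bar r_1$ to some $x_1\in F$ with $d_2(x_1)\equiv \bar r_1 \bmod R_1$ using surjectivity of $\varphi_0$ (or rather: lift step by step). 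Then inductively, having constructed $x_p \in F$ with $d_2(x_p) \equiv r \bmod R_{p}$ for a fixed representative $r\in R$ of the given element modulo $R_p$, the difference $r - d_2(x_p)$ lies in $R_p$; using surjectivity of $\varphi_p$, choose $y_p \in F_p$ with $d_2(y_p) \equiv r - d_2(x_p) \bmod R_{p+1}$, and set $x_{p+1} = x_p + y_p$. Since $y_p\in F_p$, the sequence $(x_p)$ is Cauchy for the $(F_p)$-filtration, hence defines an element $\hat x\in \varprojlim F/F_p$; by continuity of the induced map on completions, $d_2(\hat x)$ maps to the given element of $\varprojlim R/R_p$. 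Third, I would remark that the final ``$\varprojlim F/F_p \twoheadrightarrow \varprojlim R/R_p$'' is literally the conclusion of this construction, since every element of the target was hit.

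\textbf{Main obstacle.} The only genuinely delicate point is bookkeeping: one must be careful that the filtrations $(F_p)$, $(R_p)$ are \emph{separated} (i.e. $\bigcap_p F_p = 0$ and $\bigcap_p R_p = 0$) so that convergence of the Cauchy sequence $(x_p)$ in the completion is meaningful and the limit is well-defined; this follows because $\wu{}$ has strictly positive entries, so weights of nonzero homogeneous elements are bounded below and the $\wu{}$-filtration on $\cO$ is separated, and \eqref{eq:EFwtFiltration}–\eqref{eq:RwtFiltration} inherit this. A second, minor, point is to make sure the induction is phrased so that at each stage one genuinely only needs to adjust by an element of $F_p$ (not of lower filtration degree), which is exactly what surjectivity of $\varphi_p$ provides. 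Once those two observations are in place, the argument is the routine "lift a graded surjection to a surjection of completions,'' and citing~\cite[Lemma~10.23]{AM 69} as in the excerpt short-circuits even that. So I expect \autoref{lm:surjcompl} to be essentially a one-paragraph formal consequence of \autoref{lm:surjall}.
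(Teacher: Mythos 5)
Your proposal matches the paper's proof: the paper obtains \autoref{lm:surjcompl} by citing the standard filtered-module lemma \cite[Lemma 10.23]{AM 69} (surjectivity of the associated graded map implies surjectivity on completions), with the graded surjectivity supplied by \autoref{lm:surjall} --- exactly the two ingredients you identify. Your explicit successive-approximation argument is simply the proof of that cited lemma written out, so the approaches are essentially identical.
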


We let  $\boxedo{\hat{F}}$ and $\boxedo{\hat{R}}$ be the $\mfkm$-adic completions of $F$ and $R$ respectively,  which can be computed with standard methods. Indeed, by~\cite[Theorem 10.13]{AM 69}, we have
\begin{equation}\label{eq:hatFR}\hat{F} \simeq F\otimes_{\cO} \hat{\cO}\quad \text{ and }\quad \hat{R} \simeq R\otimes_{\cO} \hat{\cO}.
\end{equation}
The double inclusions in~\eqref{eq:cofinal} allow us to compare the completions in~\autoref{lm:surjcompl} induced by $(F_p)_{p \geq 0}$ and $(R_p)_{p\geq 0}$,  with $\hat{F}$ and $\hat{R}$, respectively. More precisely,
\begin{lemma} \label{lm:samecompl}
     Assume that  the sequence 
    $(\initwf{\wu{}}{f_1},\cdots, \initwf{\wu{}}{f_s})$ is regular in $\cO$.
    Then, the completions appearing in \autoref{lm:surjcompl} agree with  the $\mfkm$-adic ones, i.e.
    \begin{equation}\label{eq:ComplFR}\varprojlim F/F_p\simeq \varprojlim F/\mfkm^p F\simeq 
              F\otimes_{\mathcal{O}} \hat{\mathcal{O}} \quad \text{ and }\quad
\varprojlim R/R_p\simeq \varprojlim R/\mfkm^p R \simeq 
R\otimes_{\mathcal{O}} \hat{\mathcal{O}}.
    \end{equation}
\end{lemma}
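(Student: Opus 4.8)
The statement asserts that under the regularity hypothesis on $(\initwf{\wu{}}{f_1}, \ldots, \initwf{\wu{}}{f_s})$, the two filtrations on each of $F$ and $R$ -- the "weight" filtration $(F_p)$, $(R_p)$ defined in~\eqref{eq:EFwtFiltration} and~\eqref{eq:RwtFiltration}, and the $\mfkm$-adic filtration -- yield the same completion, which in turn coincides with $F\otimes_{\cO}\hat{\cO}$ and $R\otimes_{\cO}\hat{\cO}$ as recorded in~\eqref{eq:hatFR}. The identification of the $\mfkm$-adic completions with the tensor products is exactly~\eqref{eq:hatFR} via~\cite[Theorem 10.13]{AM 69} (note $F$ and $R$ are finitely generated $\cO$-modules, $\cO$ Noetherian), so the real content is that the weight and $\mfkm$-adic filtrations are cofinal on $F$ and on $R$.

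First I would handle $F$ and $E$ directly, since for them the cofinality is purely formal: $F$ is free on the generators $e_{ij}$ with assigned weights $\wu{}(f_i)+\wu{}(f_j)$, and the double inclusion $\mfkm^{p+c}\subseteq I_p \subseteq \mfkm^p$ coming from~\eqref{eq:cofinal} applied coordinatewise (with a shift by the bounded quantities $\wu{}(f_i)+\wu{}(f_j)$) gives $\mfkm^{q}F\subseteq F_p\subseteq \mfkm^{q'}F$ for suitable $q,q'$ depending linearly on $p$. Hence $\varprojlim F/F_p\simeq \varprojlim F/\mfkm^pF \simeq F\otimes_{\cO}\hat{\cO}$, and the same argument applies to $E$. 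This does not require the regularity hypothesis.

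The step requiring the regularity hypothesis is the corresponding statement for $R$. Here the subtlety is that $R_p = E_p\cap R$ is defined as an \emph{induced} filtration from $E$, and while $E_p\cap R \supseteq \mfkm^{q}E\cap R \supseteq \mfkm^{q}R$ is immediate from the $E$-case, the reverse inclusion -- that $R_p$ is contained in $\mfkm^{q'}R$ for $q'\to\infty$ as $p\to\infty$ -- is where the Artin--Rees lemma enters, but a naive Artin--Rees only compares $E_p\cap R$ with the $E_p$-adic-ish filtration on $R$, not with the $\mfkm$-adic one. The clean route is: by~\autoref{lm:geninit}, the weight filtration on $R$ is well-behaved enough that the associated graded of $R$ for $(R_p)$ injects into (in fact, by~\autoref{lm:surjall}, is a quotient of, hence equals) the image of $\gr F$; combined with the $E$-case cofinality one gets that $(R_p)$ and $(R\cap \mfkm^{p}E)$ are cofinal, and then the classical Artin--Rees lemma~\cite[Corollary 10.10]{AM 69} applied to the submodule $R\subseteq E$ shows $(R\cap \mfkm^pE)$ is cofinal with $(\mfkm^pR)$. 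Stringing these together gives~\eqref{eq:ComplFR} for $R$.

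I expect the Artin--Rees bookkeeping for $R$ to be the main obstacle: one must be careful that the regularity hypothesis is genuinely used (it is needed precisely to control $\gr R$ via~\textcolor{blue}{Lemmas}~\ref{lm:surjall} and~\ref{lm:geninit}, without which the two filtrations on $R$ need not be cofinal), and that all the cofinality inclusions are uniform -- i.e., the shift $q'=q'(p)$ tends to infinity with $p$. Once cofinality is in hand on both $F$ and $R$, the isomorphisms~\eqref{eq:ComplFR} follow formally, since completions depend only on the filtration up to cofinality, and the final identification with the flat base change $(-)\otimes_{\cO}\hat{\cO}$ is~\eqref{eq:hatFR}.
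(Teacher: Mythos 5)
Your plan follows essentially the same route as the paper's proof: cofinality of the weight and $\mfkm$-adic filtrations on the free modules $E$ and $F$ via the double inclusion \eqref{eq:cofinal}, the Artin--Rees lemma applied to the submodule $R\subseteq E$, and the identification of the $\mfkm$-adic completions with $(-)\otimes_{\cO}\hat{\cO}$ from \eqref{eq:hatFR}.

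One assertion in your plan is incorrect, although it does not break the argument. You claim that the regularity of $(\initwf{\wu{}}{f_1},\ldots,\initwf{\wu{}}{f_s})$ is genuinely needed to compare $(R_p)_p$ with $(R\cap\mfkm^pE)_p$, via control of the associated graded module of $R$ through \autoref{lm:surjall} and \autoref{lm:geninit}, and that without it ``the two filtrations on $R$ need not be cofinal.'' In fact, by \eqref{eq:RwtFiltration} one has $R_p=E_p\cap R$ by definition, so intersecting the chain $E_{dp+\ell}\subseteq I_{dp}E\subseteq\mfkm^pE\subseteq E_p$ (with $\ell:=\max_j\wu{}(f_j)$) with $R$ already shows that $(R_p)_p$ and $(R\cap\mfkm^pE)_p$ are cofinal; the classical Artin--Rees lemma then compares the latter with $(\mfkm^pR)_p$. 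This is exactly what the paper does, and its proof of this lemma never invokes the regularity hypothesis, which appears in the statement only because it is needed in the surrounding results (\autoref{lm:surjall} through \autoref{lm:zerocompl}). Your detour through the graded modules is therefore an unnecessary complication rather than the place where the hypothesis enters. The remaining steps of your outline (the free-module case, the flat base change identification) are correct and match the paper.
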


\begin{proof} We let $\ell:=\max\{\wu{}(f_j): j=1,\ldots, s\}$. It suffices to prove the first isomorphism on each side of~\eqref{eq:ComplFR}, since the remaining ones appear in~\eqref{eq:hatFR}.
    By~\eqref{eq:EFwtFiltration}, we have
  \[
  E_p:=\bigoplus_{i} I_{p-\wu{}(f_i)} \, e_{i}\qquad \text{ and }\qquad  F_p:=\bigoplus_{i<j} I_{p-\wu{}(f_i)-\wu{}(f_j)} \, e_{ij}\qquad \text{ for each }p\geq 0.
    \]
    It follows from here that
    $I_p E \subseteq E_p \subseteq I_{p-\ell} E$  and $I_p F \subseteq F_p \subseteq I_{p-2\ell} F$  for each $p\geq 0$.  Combining these inclusions with~\eqref{eq:cofinal} yields:
     \begin{equation}\label{eq:cofinalsEFs}
       E_{dp+\ell} \subseteq I_{dp}E \subseteq \mfkm^p E\subseteq E_p\quad \text{ and } \quad F_{dp+2\ell} \subseteq I_{dp} F \subseteq \mfkm^p F  \subseteq F_p \quad  \text{ for each }p\geq 0.
     \end{equation}
The inclusions appearing on the right of~\eqref{eq:cofinalsEFs} ensure that the filtrations $(\mfkm^pF)_{p\geq 0}$ and $(F_p)_{p\geq 0}$ are cofinal in $F$. Thus, they yield isomorphic  completions.  This proves the  first isomorphism in~\eqref{eq:ComplFR}.

Next, consider the filtration $R_p$ from~\eqref{eq:RwtFiltration}. First, notice that $\mfkm^pR \subseteq R_p$ by~\eqref{eq:cofinalsEFs}. To finish, we claim the existence of some $k\geq 0$ for which  $R_{dp+(dk+\ell)} \subseteq \mfkm^pR$ for all $p\gg 0$. Indeed, by the Artin-Rees Lemma (see, e.g.,~\cite[Theorem 10.10]{AM 69}), there exists an integer $k\geq 0$ satisfying
\[
R\cap \mfkm^p E = \mfkm^{p-k}(R \cap \mfkm^k E) \qquad \text{ for all } p\geq k.
\]
Therefore, combining this fact with  property~\eqref{eq:cofinalsEFs} we obtained the desired inclusion:
\[
R_{dp+(dk+\ell)} = R \cap E_{d(p+k)+\ell} \subset R\cap \mfkm^{p+k} E = \mfkm^{p}(R\cap \mfkm^k E) \subset \mfkm^p R.
\]
We conclude that $(\mfkm^pR)_{p\geq 0}$ and $(R_p)_{p\geq 0}$ are cofinal filtrations in $R$, so they yield isomorphic completions. 
\end{proof}

We let $M$ be the cokernel of the map $d_2\colon F\to R$ given by~\eqref{eq:Koszul}, and we let $\boxedo{\hat{M}}$ be its $\mfkm$-adic completion. \autoref{lm:samecompl} yields the following result:

\begin{lemma}\label{lm:zerocompl}
     Assume that  the sequence 
    $(\initwf{\wu{}}{f_1},\cdots, \initwf{\wu{}}{f_s})$ is regular in $\cO$. Then, $\hat{M}=0$ and  $M=0$. In particular, the Koszul complex~\eqref{eq:Koszul} is exact at $E$.
\end{lemma}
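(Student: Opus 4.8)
The plan is to exploit the surjectivity on completions established in~\autoref{lm:surjcompl} and~\autoref{lm:samecompl}, together with the faithful flatness of the completion, to kill the cokernel $M$ of $d_2\colon F\to R$. Since $\cO=\CC\{z_1,\ldots,z_n\}$ is a Noetherian local ring and $R=\ker(d_1)$ is a submodule of the finite free module $E$, the module $R$ — and hence $M=\coker(d_2\colon F\to R)$ — is finitely generated over $\cO$. Consequently the $\mfkm$-adic completion of $M$ is $\hat M = M\otimes_{\cO}\hat\cO$, by~\cite[Theorem 10.13]{AM 69}.

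First I would compute $\hat M$. Right-exactness of the functor $-\otimes_{\cO}\hat\cO$ applied to the exact sequence $F\xrightarrow{d_2} R\to M\to 0$ yields an exact sequence $\hat F\xrightarrow{\,\hat{d}_2\,}\hat R\to\hat M\to 0$. By~\autoref{lm:samecompl}, the $\mfkm$-adic completions $\hat F$ and $\hat R$ coincide with the completions relative to the filtrations $(F_p)_{p\geq 0}$ and $(R_p)_{p\geq 0}$, and by~\autoref{lm:surjcompl} the induced map between the latter is surjective. Hence $\hat{d}_2\colon\hat F\to\hat R$ is surjective, and therefore $\hat M=0$.

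Next I would descend this vanishing to $\cO$. As $\cO$ is Noetherian local, its $\mfkm$-adic completion $\hat\cO$ is faithfully flat over $\cO$; so $M\otimes_{\cO}\hat\cO=\hat M=0$ forces $M=0$. (Alternatively, one may invoke Krull's intersection theorem, $\bigcap_{p}\mfkm^{p}M=0$, whence the canonical map $M\hookrightarrow\hat M=0$ is injective.) Now $M=0$ says precisely that $d_2\colon F\to R$ is surjective, i.e.\ the image of $d_2$ equals $R=\ker(d_1)$. Since the image of $d_2$ is contained in $\ker(d_1)$ by construction, this is exactly exactness of the Koszul complex~\eqref{eq:Koszul} at $E$; via~\autoref{prop:charreg1} this simultaneously re-establishes part~(\ref{item:fregSeq}) of~\autoref{thm:newL3.3}.

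Given the machinery in~\autoref{lm:surjall}--\autoref{lm:samecompl}, this final step is essentially formal. The only points needing care are the identification of the two completions — which is exactly the content of~\autoref{lm:samecompl}, resting on the cofinality inclusions~\eqref{eq:cofinalsEFs} and the Artin–Rees lemma — and the standard facts that $\cO$ is Noetherian (so that $R$ and $M$ are finitely generated) and that $\hat\cO$ is faithfully flat over $\cO$. The real difficulty of the whole argument lies upstream, in~\autoref{lm:surjall}, where the regularity of the sequence of $\wu{}$-initial forms is leveraged to lift relations weight-by-weight; so I do not anticipate a genuine obstacle at this stage.
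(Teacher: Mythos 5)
Your proposal is correct and follows essentially the same route as the paper: deduce $\hat M=0$ from the surjectivity on completions given by Lemmas~\ref{lm:surjcompl} and~\ref{lm:samecompl}, then descend to $M=0$ by a standard fact about completions of finitely generated modules over the Noetherian local ring $\cO$ (you invoke faithful flatness or Krull's intersection theorem, the paper cites \cite[Theorem 10.17]{AM 69}; these are interchangeable here). The only cosmetic difference is that you obtain $\hat M=0$ from right-exactness of $-\otimes_{\cO}\hat\cO$, whereas the paper appeals to flatness of $\hat\cO$ — both are fine.
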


\begin{proof} By standard commutative algebra~(see, e.g.,\cite[Corollaire 2, Chap.~II.A.5]{S 65}) we know that $\hat{\cO}$ is a flat $\cO$-module. Therefore, taking $\mfkm$-adic completion is an exact functor. Since $\hat{F}\to \hat{R}$ is surjective (by combining~\textcolor{blue}{Lemmas}~\ref{lm:surjcompl} and~\ref{lm:samecompl}) it follows that $\hat{M}=0$.

  By~\cite[Theorem 10.17]{AM 69}, the kernel of the canonical morphism $M\to \hat{M}$ is annihilated by an element of the form $(1+z)$ where $z\in \mfkm$. As $\cO$ is a local ring, the element $(1+z)$ must be a unit of $\cO$, thus $M=0$ as claim. The exactness of the Koszul complex at $E$ follows immediately, as it is equivalent to the surjectivity of the morphism $d_2\colon F\to R$.
\end{proof}

We end this appendix by proving its main result:
\begin{proof}[Proof of~\autoref{thm:newL3.3}]
  Since $(\initwf{\wu{}}{f_1},\cdots, \initwf{\wu{}}{f_s})$ is regular in $\cO$, \autoref{lm:zerocompl} ensures that the Koszul complex~\eqref{eq:Koszul} is exact at $E$. In turn,~\autoref{prop:charreg1} implies that $(f_1,\ldots, f_s)$ is a regular sequence in $\cO$. This proves item~(\ref{item:fregSeq}) of the statement.

  To finish, we must show that the $\wu{}$-initial forms $\{\initwf{\wu{}}{f_1},\cdots, \initwf{\wu{}}{f_s}\}$ generate the $\wu{}$-initial ideal $\initwf{\wu{}}{J}\cO$.   By definition, the ideal generated by these forms is contained in $\initwf{\wu{}}{J}\cO$.
  As $\initwf{\wu{}}{J}\cO$ is generated over $\cO$ by all elements   $\initwf{\wu{}}{g}$ with $g\in J$, the reverse inclusion will follow immediately if we show that
    $\initwf{\wu{}}{g} \in (\initwf{\wu{}}{f_1},\cdots, \initwf{\wu{}}{f_s}) \cO$.
This identity is a direct consequence of~\autoref{lm:geninit}.  Therefore, item~(\ref{item:initialFormsGenerate}) holds. This concludes our proof.
\end{proof}

\normalsize

 \bigskip
 
\noindent
\textbf{\small{Authors' addresses:}}
\smallskip
\

\noindent
\small{M.A.\ Cueto,  Mathematics Department, The Ohio State University, 231 W 18th Ave, Columbus, OH 43210, USA.
\\
\noindent \emph{Email address:} \url{cueto.5@osu.edu}}
\vspace{2ex}

\noindent
\small{P.\ Popescu-Pampu,
  Univ.~Lille, CNRS, UMR 8524 - Laboratoire Paul Painlev{\'e}, F-59000 Lille, France.
  \\
\noindent \emph{Email address:} \url{patrick.popescu-pampu@univ-lille.fr}}
\vspace{2ex}

\noindent
\small{D.\ Stepanov, Department of Higher Mathematics and Center of Fundamental Mathematics, Moscow Institute of Physics and Technology,
9 Institutskiy per., Dolgoprudny, Moscow, 141701, Russia.
  \\
\noindent \emph{Email address:} \url{stepanov.da@phystech.edu}}

\vspace{3ex}
\noindent
\small{J.\ Wahl, Department of Mathematics, The University of North Carolina at Chapel Hill, Chapel Hill, NC 27599, USA.
  \\
  \noindent \emph{Email address:} \url{jmwahl@email.unc.edu}}

\end{document}